\newtheorem{theorem}{Theorem}[section]
\newtheorem{proposition}{Proposition}[section]
\newtheorem{lemme}{Lemma}[section]
\newtheorem{definition}{Definition} [section]
\newtheorem{notation}{Notation}[section]
\newtheorem{remarque}{Remark}[section]
\newtheorem{result}{Result}
\begin{document}
\setcounter{tocdepth}{1}
\title[Partitions and geometry]{Combinatorial theory of permutation-invariant random matrices~I: \\ Partitions, geometry and renormalization.}
\author[Franck Gabriel]{Franck Gabriel \\ E-mail: \parbox[t]{0.45\linewidth}{\texttt{franck.gabriel@normalesup.org}}}
\email{franck.gabriel@normalesup.org}
\address{Université Pierre et Marie Curie (Paris 6)\\ Laboratoire de Probabilités et Modèles Aléatoires\\ 4, Place Jussieu\\  F-75252 Paris Cedex 05. }
\address{Mathematics Institute\\ 
Zeeman Building\\ 
University of Warwick\\ 
Coventry CV4 7AL.}

\begin{abstract}Using a natural distance, we define and study a family of orders on partitions of a given set $X$. When the set $X$ is the disjoint union of two copies of $\{1,...,k\}$, there exists a canonical choice of order in the family we constructed. We show that the two ordered sets of partitions and non-crossing partitions of $k$ elements can be seen as subsets of the ordered set $\mathcal{P}_k$ of partitions on $X$. We generalize the notion of Kreweras complement to the set of partitions $\mathcal{P}_k$. These notions allow us to define new structures on the linear forms on partitions: some triangular transformations, two convolutions, a multiplicative bi-albegra structure and an Hopf algebra structure and some natural projections. We study the set of characters on partitions and their interaction with the newly constructed structures. 
At last we show that the abstract structures appear naturally when one considers the notion of convergence in moments for sequence of partitions. The notion of convergence is generalized in order to study the algebraic fluctuations. 
\end{abstract}

\maketitle

\tableofcontents

\section{Introduction}

This article is the first of a serie of three in which we generalize the notions of independence and freeness in order to define a notion of $\mathcal{A}$-freeness in the setting of $\mathcal{A}$-tracial algebras. This setting unifies classical and free probabilities and allows us to study random matrices which are not asymptotically invariant in law by conjugation by the unitary group. In this article, the reader will find the needed combinatorial tools; in the article \cite{Gab2}, he will find the study of $\mathcal{A}$-tracial algebras and applications to random matrices; the article \cite{Gab3} uses the previous result and focuses on the study of general random walks on the symmetric group and the construction of the $\mathfrak{S}(\infty)$-master field.

The set of partitions of $k$ elements, denoted by ${\sf P}_k$ and the set of non-crossing partitions of $k$ elements, denoted by ${\sf NC}_k$, both endowed with the finer-order $\trianglelefteq$, are two important ordered sets in classical probabilities and free probabilities. Their importance comes from the fact that one can define the notions of cumulants and independence or freeness using these ordered sets. For example, let $X_1$,...,$X_k$ be random variables which have all moments bounded. The cumulants of $X_1$, ..., $X_k$ are defined by the fact that for any integer $l$ and any $i_1,...,i_l \in \{1,...,k\}$:
\begin{align*}
\mathbb{E}[X_{i_1}...X_{i_l}] = \sum_{ \pi \in {\sf P}_l} \prod_{b \in \pi} {\sf cum}((X_{i_u})_{u \in b}).
\end{align*}
Besides, two random variables $X$ and $Y$ are independent if and only if their mixed classical cumulants vanish. This means that for any integers $k$, $l \geq 1$, ${\sf cum}(X,...,X,Y,...,Y) = 0$ where we wrote $k$ times $X$ and $l$ times $Y$.

One can define the free cumulants and freeness by considering elements in a non-commutative algebra $A$ endowed with a tracial state $\phi$, and using the set of non-crossing partitions ${\sf NC}_l$ instead of the set ${\sf P}_l$. The set of non-crossing partitions ${\sf NC}_l$ is also endowed with an interesting involution, the Kreweras complement involution \cite{kreweras}. Actually, in this paper we prove the two following results. 

\begin{result}
For any integer $k$, there exists an order $\leq$ on ${\sf P}_{2k}$ such that the sets $({\sf P}_k, \trianglelefteq)$ and $({\sf NC}_k, \trianglelefteq)$ can be seen as subsets of $({\sf P}_{2k}, \leq)$.
\end{result}

\begin{result}
For any integer $k$, there exists a notion of Kreweras complement on ${\sf P}_{2k}$ which generalizes the notion of Kreweras complement on ${\sf NC}_k$. 
\end{result}

In order to prove this, we define for any set $X$ a distance on the set of partitions $\mathcal{P}(X)$ which allows us to define a family of geodesic orders that we thoroughly study in Section~\ref{sec:geometry}. The main results of this section are summarized below. 

\begin{result}
The function defined on $\mathcal{P}(X) \times \mathcal{P}(X)$: 
\begin{align*}
d(p,p')= \frac{1}{2} (\# p + \# p') - \# (p \vee p'),
\end{align*}
where $p \vee p'$ is the finest partition which is coarser than $p$ and $p'$, is a distance.
\end{result}

Let $b$ be a partition in $\mathcal{P}(X)$ and let say that $p' \leq_b p$ if $d(b,p')+d(p',p) = d(b,p)$.

\begin{result}
The order $\leq_b$ is fully characterized: 
\begin{enumerate}
\item there exists a ``decomposition'' of $\leq_b$ using two simpler well-understood orders, 
\item the Hasse diagram is described, 
\item the Möbius function of $\leq_b$ is computed. 
\end{enumerate}
\end{result}

In Section \ref{sec:sectionPk}, we apply these two last results to the set $\mathcal{P}(\{1,...,k,1',...,k'\}) = \mathcal{P}_{k}$ which is in bijection with ${\sf P}_{2k}.$ This allows us to prove in particular the Result $1$ which is partly a consequence of the following result.

\begin{result}
The order $\leq$ on $\mathcal{P}_k$ is a natural generalization of the Bruhat order on permutations. 
\end{result}

The order in this special case exhibits some other interesting properties, like the geodesic factorization, that we study further. In Section \ref{sec:Kreweras}, Result $2$ is proved as a consequence of the new Inequality (\ref{autreequation}) which links the distance and the multiplication operations on $\mathcal{P}_k$. The main result of this section is Theorem $2.4$ which links different notions of defect for partitions in $\mathcal{P}_k$ and which allows us to prove the following result. 

\begin{result}
The notion of Kreweras complement can be used in order to define a new order on partitions, denoted $\prec$ which satisfies a nice factorization property. 
\end{result}

In Section \ref{sec:structure}, we follow some ideas of \cite{mastnaknica}: using the previous results, we define some structures on the set of linear forms $(\oplus_{k=0}^{\infty}\mathbb{C}[\mathcal{P}_k])^{*}$.

\begin{result}
The set $(\oplus_{k=0}^{\infty}\mathbb{C}[\mathcal{P}_k])^{*}$ can be endowed with: 
\begin{enumerate}
\item two convolutions which allow us to define: 
\begin{itemize}
\item a structure of graded connected Hopf algebra,
\item a structure of associative, co-associative bi-algebra, 
\end{itemize}
\item notions of characters and infinitesimal characters which are compatible with one another through the two notions of convolutions, 
\item some triangular transformations which nicely interact with the notions of character and infinitesimal characters, 
\item three natural projections: the cumulant-projection, the moment-projection and the exclusive-projection.
\end{enumerate}
\end{result}

In Section \ref{sec:obsconv}, we explain how the structures defined on $(\oplus_{k=0}^{\infty}\mathbb{C}[\mathcal{P}_k])^{*}$ appear naturally when one considers a special notion of convergence for sequences of elements in $\mathbb{C}[\mathcal{P}_k]$. Actually, we emulate the theory of random matrices in a combinatorial framework: for any parameter $N$, we introduce a family of linear forms on the partition algebras which allows us to define a notion of weak convergence similar to the convergence in moments in random matrices theory. This notion of convergence is linked with a notion of moments, yet we can link it with the asymptotics of the coordinates. 

\begin{result}
A sequence $(E_N)_{N \in \mathbb{N}}$ converges if and only if the coordinates of $E_N$ satisfy a specific asymptotic behaviour as $N$ goes to infinity. 
\end{result}

We also study a notion of exclusive moments and show that the convergence of these exclusive moments is equivalent to the convergence of the moments. Besides, it is well known that for any integer $N$, there exists a natural multiplication on $\mathbb{C}[\mathcal{P}_k]$ which depends on $N$ \cite{Halv}. Let us denote it by $\times_N$: the convergence is ``compatible" with this family of multiplications.

\begin{result}
If $(E_N)_{N \in \mathbb{N}}$ and  $(F_N)_{N \in \mathbb{N}}$ converge for the notion of convergence defined in Section  \ref{sec:obsconv}, then  $E_N \times_N F_N$ converges and the limit of $E_N \times_N F_N$ is linked with the multiplicative convolution on  $(\oplus_{k=0}^{\infty}\mathbb{C}[\mathcal{P}_k])^{*}$. 

If for any integer $N$, $(E_N^{t})_{t \geq 0})$ is a semi-group for $\times_N$, if the sequence of generators of $(E_N^{t})_{t \geq 0})$ converges then for any $t \geq 0$, $E_N^{t}$ converges.
\end{result}

Gathering all the results in Section \ref{sec:obsconv}, we obtain the following result.
\begin{result}
There exists a natural notion of convergence for sequences in  $\mathbb{C}[\mathcal{P}_k]$ such that the structures defined on $(\oplus_{k=0}^{\infty}\mathbb{C}[\mathcal{P}_k])^{*}$ can be approximated by natural structures on $\mathbb{C}[\mathcal{P}_k]$.
\end{result}

In Section \ref{sec:Fluctuations}, we generalize the results obtained about the convergence of sequences in $\mathbb{C}[\mathcal{P}_k]$ in order to deal with algebraic fluctuations of these sequences.

\section{Geometry and orders on partitions}
\label{sec:geometry}
Let us consider a finite set $X$. The set of partitions of $X$ is the set: 
\begin{align*}
\mathcal{P}(X) = \{ \{b_1,...,b_k\} | \emptyset \neq b_1, ..., b_k \subset X ; \cup_{i=1}^{k} b_i = X ; \forall i\neq j\in \{1,...,k\}, b_i \cap b_j = \emptyset\}, 
\end{align*}
Let $p$ be an element of $\mathcal{P}(X)$. Let $b \in p$: it is called a block of $p$. We denote by ${\sf nc}(p)$ the number of blocks of $p$. The set $\mathcal{P}(X)$ can be endowed with a first order $\trianglelefteq$: $p \trianglelefteq p'$ if and only if $p$ is finer than $p'$: for any $b \in p$, there exists $b' \in p'$ such that $b \subset b'$. For the opposite order we will say that $p'$ is coarser than $p$. For any partitions $p$ and $p'$, we denote by $p \vee p'$ the smallest partition for $\trianglelefteq$ which is coarser than $p$ and $p'$. 

Any partition $p \in \mathcal{P}(X)$ can be represented by a graph. For this we consider some vertices which represent $X$: any edge between two vertices means that the labels of the two vertices are in the same block of the partition $p$. An example is given in Figure \ref{fig:exemplepartition}.  Using this graphical representation, one can recover a diagram representing $p \vee p'$ by putting a diagram representing $p'$ over one representing $p$.  

\begin{figure}[h!]
 \centering
  \includegraphics[width=80pt]{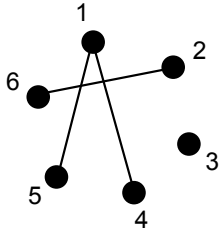}
 \caption{The partition $\{\{1,4,5\}, \{2,6\}, \{3\}\}$ in $\mathcal{P}_{\{1,2,3,4,5,6\}}$.}
 \label{fig:exemplepartition}
\end{figure}

\subsection{Cayley graph of $\mathcal{P}(X)$}

In this section we define a natural graph on $\mathcal{P}(X)$ which will allow us to define a family of distances on $\mathcal{P}(X)$. 

\begin{definition}
\label{Cayley}
The Cayley graph $\mathbb{G} = (\mathbb{V}, \mathbb{E})$ is given by: 
\begin{itemize}
\item the set of vertices $\mathbb{V}$ is $\mathcal{P}(X)$, 
\item there exists an edge in $\mathbb{E}$ between $p$ and $p'$, two elements of $\mathcal{P}(X)$, if and only if one can go from one to the other by gluing two blocks. 
\end{itemize} 
\end{definition}

Actually, it is almost the Hasse diagram of $(\mathcal{P}(X), \trianglelefteq)$: we only forget about the orientation of the diagram. Using this graph, we can define a geodesic distance on $\mathcal{P}(X)$.  

\begin{definition}
Let $p$ and $p'$ be two elements of $\mathcal{P}(X)$. Let $C_{\mathbb{G}}(p,p')$ be the set of paths $\pi$ in $\mathbb{G}$ which begin in $p$ and finish in $p'$: it is always non-empty and $C_{\mathbb{G}}(p,p)$ has only one element, the constant path of lenght $0$. The geodesic distance on $\mathcal{P}(X)$ between $p$ and $p'$ is: 
\begin{align*}
d(p,p') = \frac{1}{2} \min_{\pi\in C_{\mathbb{G}}(p,p')} \# \pi,
\end{align*}
where $\#\pi$ is the length of $\pi$. A path $\pi$ such that $\# \pi = d(p,p')$ is called a geodesic between $p$ and $p'$. 
\end{definition}

The geodesic distance can be computed easily using the following result. 

\begin{theorem}
\label{def:dist}
For any $p$ and $p'$ in $\mathcal{P}(X)$: 
\begin{align*}
d(p,p') = \frac{1}{2} \big({\sf nc}( p) + {\sf nc}(p')\big)-{\sf nc}(p\vee p'). 
\end{align*}
\end{theorem}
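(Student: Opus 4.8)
The plan is to show the two-sided estimate by exhibiting an explicit geodesic of the claimed length and then proving the lower bound. First I would set $r(p,p') = \frac12\big({\sf nc}(p)+{\sf nc}(p')\big)-{\sf nc}(p\vee p')$ and observe that each edge of $\mathbb{G}$ joins partitions whose block-counts differ by exactly one (gluing two blocks decreases ${\sf nc}$ by one; splitting increases it by one). Hence along any path $\pi$ from $p$ to $p'$ the number of ``gluing'' steps minus the number of ``splitting'' steps equals ${\sf nc}(p)-{\sf nc}(p')$, so $\#\pi$ has the same parity as ${\sf nc}(p)-{\sf nc}(p')$ and in particular $\frac12\#\pi$ need not a priori be an integer — but $r(p,p')$ will turn out to be a half-integer matching this parity, so this is consistent.

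For the upper bound I would construct a canonical path: starting from $p$, perform ${\sf nc}(p)-{\sf nc}(p\vee p')$ gluings to reach $p\vee p'$ (possible since $p\trianglelefteq p\vee p'$ and in the Hasse diagram any two comparable partitions are joined by a saturated chain realizing the difference of block-counts), then perform ${\sf nc}(p')-{\sf nc}(p\vee p')$ splittings to descend from $p\vee p'$ to $p'$ (again using $p'\trianglelefteq p\vee p'$). This path has length $\big({\sf nc}(p)-{\sf nc}(p\vee p')\big)+\big({\sf nc}(p')-{\sf nc}(p\vee p')\big) = 2r(p,p')$, giving $d(p,p')\le r(p,p')$.

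For the lower bound I would argue that $r$ satisfies the triangle inequality for single edges: if $p$ and $p''$ are joined by an edge of $\mathbb{G}$, then $|{\sf nc}(p)-{\sf nc}(p'')| = 1$, and one checks that ${\sf nc}(p\vee p') - {\sf nc}(p''\vee p') \in \{-1,0,1\}$ for any fixed $p'$ (gluing two blocks of $p$ either merges two blocks of $p\vee p'$ into one or leaves $p\vee p'$ unchanged); combining these, $|r(p,p') - r(p'',p')| \le \tfrac12$, i.e. $r(p'',p') \ge r(p,p') - \tfrac12$. Iterating along any path $\pi\in C_{\mathbb{G}}(p,p')$ of length $L$ and using $r(p',p')=0$ yields $0 = r(p',p') \ge r(p,p') - \tfrac{L}{2}$, hence $L \ge 2r(p,p')$ and so $d(p,p')\ge r(p,p')$. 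The two bounds give the equality, and since $d$ is a genuine geodesic distance this simultaneously shows $r$ is a metric.

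The main obstacle I expect is the lower-bound step, specifically the case analysis showing that a single gluing in $\mathbb{G}$ changes ${\sf nc}(\,\cdot\,\vee p')$ by at most one: when we glue blocks $b_1,b_2$ of $p$, the new $\vee$-join is obtained from $p\vee p'$ by possibly merging the (at most two) blocks of $p\vee p'$ that contained $b_1$ and $b_2$ — care is needed to verify it is never more than a single merge, and to handle the boundary case where $b_1,b_2$ already lie in the same block of $p\vee p'$, in which case $p\vee p'$ is unchanged. Once that local estimate is nailed down the rest is bookkeeping on path lengths.
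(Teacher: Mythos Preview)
Your proposal is correct and follows essentially the same approach as the paper: both construct the canonical path through $p\vee p'$ for the upper bound, and both obtain the lower bound by showing that a single edge of $\mathbb{G}$ changes the quantity $r(\,\cdot\,,p')$ by exactly $\pm\tfrac12$ (via the case analysis on how ${\sf nc}(\,\cdot\,\vee p')$ behaves under a single gluing or splitting), then iterate along an arbitrary path. The only cosmetic difference is that the paper fixes the first argument and varies the second, while you do the reverse; by symmetry of $r$ this is immaterial.
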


\begin{proof} 
For any partitions $p$ and $p'$, we set: 
\begin{align*}
d'(p,p') = \frac{1}{2} \big({\sf nc}( p) + {\sf nc}(p')\big)-{\sf nc}(p\vee p'). 
\end{align*}
It satisfies that $d'(p,p) = 0$ for any partition $p$. Let $p$ and $p'$ be two elements of $\mathcal{P}(X)$. Let us see what happens to $d'(p,p')$ when one moves from $p'$ to one neighborhood of $p'$ in $\mathbb{G}$. Suppose that we glue two blocks of $p'$, then ${\sf nc}(p )$ is constant, ${\sf nc}(p' )$ decreases by $1$ and ${\sf nc}(p \vee p')$ stays constant or decreases by $1$. In this case $d'(p,p')$ will increase or decrease by $0.5$. Suppose now that we cut one block of $p'$, then ${\sf nc}(p )$ is constant, ${\sf nc}(p' )$ increases by $1$ and ${\sf nc}(p \vee p')$ stays constant or increases by $1$. In this case $d'(p,p')$ will also increase or decrease by $0.5$. Thus a gluing/cutting can at most increase the value of $d(p,p')$ by $0.5$. It implies that $d'(p,p') \leq d(p,p')$. 

We have to show that $d(p,p') \leq d'(p,p')$. Let us remark that  $p\vee p'$ is coarser than $p$: we can go from $p$ to $p \vee p'$ by doing ${\sf nc}( p) - {\sf nc}(p \vee p')$ gluing of blocks. The same holds for $p'$: we can go from $p'$ to $p \vee p'$ by doing ${\sf nc}(p') - {\sf nc}(p \vee p')$ gluing of blocks. Thus one can go from $p$ to $p \vee p'$ and then from $p\vee p'$ to $p'$ in ${\sf nc}(p)+{\sf nc}(p') - 2 {\sf nc}(p \vee p')$ steps in $\mathbb{G}$. Thus $d(p,p') \leq \frac{1}{2}\left[{\sf nc}(p')+{\sf nc}(p') - 2 {\sf nc}(p \vee p')\right] = d'(p,p')$.
\end{proof}

Using this distance, we can define a notion of segments in $\mathcal{P}(X)$. 
\begin{definition}
Let $p_1$ and $p_2$ be in $\mathcal{P}(X)$. The segment $[p_1,p_2]$ is given by: 
\begin{align*}
[p_1,p_2] = \{p \in \mathcal{P}(X), d(p_1,p)+d(p,p_2)=d(p_1,p_2) \}.
\end{align*}
\end{definition}
An other geometric interpretation is to say that any element $p$ is in the segment $[p_1,p_2]$ if and only if $p$ is on a geodesic path between $p_1$ and $p_2$.

\subsection{Family of orders}
\subsubsection{Definition}
Using the geodesic distance $d$, we can define a family of geodesic orders. In order to understand this family, we introduce also two new families of orders which are sligth modifications of the coarser and finer orders. To define these orders, we consider a {\sf base partition} $b \in \mathcal{P}(X)$. 

\begin{definition}
Let $p$ and $p'$ be two elements of $\mathcal{P}({X})$. We define three new orders $ \leq_{b}, \dashv_b$ and $\sqsupset_b$: 
\begin{description}
\item[geodesic order] $p' \leq_b p$ if $p' \in [b,p]$, 
\item[coarser-compatible order] $p'\dashv_b p $ if $p'$ is coarser than $p$ and ${\sf nc}(p' \vee b) = {\sf nc}(p \vee b)$, 
\item[finer-compatible order] $p'\sqsupset_b p $ if $p'$ is finer than $p$ and ${\sf nc}(p')- {\sf nc}(p' \vee b) = {\sf nc}(p) - {\sf nc}(p \vee b)$.
\end{description}
\end{definition}
 
Using the fact that $d$ is a distance, it is easy to see that $\leq_b$ is an order on $\mathcal{P}(X)$  for which $b$ is the smallest partition: it is the geodesic order with base partition $b$. 

\begin{remarque}
The order $\trianglelefteq$ is the geodesic order with base partition ${\sf 0}_{X} = \{ \{ x\}, x \in X\}$.  Indeed, using Theorem \ref{def:dist}, we can see that $p' \leq_{{\sf 0}_{X}} p$ if and only if ${\sf nc}(p) = {\sf nc}(p \vee p')$ which is equivalent to the fact that $p'$ is finer than $p$. 

The geodesic order with base partition ${\sf 1}_{X} = \{ \{ x, x \in X\}\}$ is the order $\trianglerighteq$ on $\mathcal{P}(X)$.
\end{remarque}

\begin{remarque}
\label{remarque:finer}
When the partition $p$ gets finer, the quantities ${\sf nc}(p)$, ${\sf nc}(p\vee b)$ and ${\sf nc}(p)-{\sf nc}(p\vee b$) increase. In particular if one such quantity is the same at the beginning and at the end of a chain of finer and finer partitions, it must stay constant all along this chain. 
\end{remarque}

It will be useful to denote the defect of $p'$ from not being on $[b, p]$ by: 
\begin{align}
\label{definitiondefect}
{\sf df}_b{(p',p)} = d(b,p')+d(p',p)- d(b,p) = {\sf nc}(p') - {\sf nc}(p' \vee b) - {\sf nc}(p \vee p') + {\sf nc}(p \vee b).  
\end{align}

\subsubsection{Study of the coarser and finer-compatible orders}
\begin{lemme}
\label{lemme:carac1}
Let $p$ and $p'$ be two elements of $\mathcal{P}(X)$. We have the following characterization of the coarser-compatible and finer-compatible order: 
\begin{enumerate}
\item $p'\dashv_b p$ if and only if  $p'$ is coarser than $p$ and $p' \leq_b p$. 
\item $p'\sqsupset_b p$ if and only if $p'$ is finer than $p$ and $p' \leq_b p$.  
\end{enumerate}
\end{lemme}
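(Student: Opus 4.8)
The plan is to unwind all three quantities through Theorem~\ref{def:dist} and the definition of the defect \eqref{definitiondefect}, reducing each equivalence to a statement about how the counts ${\sf nc}(\cdot)$ behave. First I would prove item (1). For the forward direction, assume $p' \dashv_b p$, so $p'$ is coarser than $p$ and ${\sf nc}(p'\vee b) = {\sf nc}(p\vee b)$. Since $p'$ is coarser than $p$ we have $p \vee p' = p'$, so in \eqref{definitiondefect} the term $-{\sf nc}(p\vee p')$ becomes $-{\sf nc}(p')$, which cancels with $+{\sf nc}(p')$; the remaining two terms are $-{\sf nc}(p'\vee b) + {\sf nc}(p\vee b) = 0$ by hypothesis. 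Hence ${\sf df}_b(p',p) = 0$, i.e. $d(b,p') + d(p',p) = d(b,p)$, which is exactly $p' \leq_b p$. Conversely, if $p'$ is coarser than $p$ and $p' \leq_b p$, then running the same computation backwards: $p\vee p' = p'$ kills the first two terms of \eqref{definitiondefect}, so ${\sf df}_b(p',p) = 0$ forces ${\sf nc}(p'\vee b) = {\sf nc}(p\vee b)$, which together with coarseness is precisely $p' \dashv_b p$.

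Next I would prove item (2). For the forward direction, assume $p' \sqsupset_b p$: $p'$ is finer than $p$ and ${\sf nc}(p') - {\sf nc}(p'\vee b) = {\sf nc}(p) - {\sf nc}(p\vee b)$. Now $p'$ finer than $p$ gives $p \vee p' = p$, so the term $-{\sf nc}(p\vee p')$ in \eqref{definitiondefect} is $-{\sf nc}(p)$; grouping, ${\sf df}_b(p',p) = \big({\sf nc}(p') - {\sf nc}(p'\vee b)\big) - \big({\sf nc}(p) - {\sf nc}(p\vee b)\big) = 0$ by hypothesis, so $p' \leq_b p$. Conversely, if $p'$ is finer than $p$ and $p' \leq_b p$, then $p\vee p' = p$ again, and ${\sf df}_b(p',p) = 0$ reads $\big({\sf nc}(p') - {\sf nc}(p'\vee b)\big) = \big({\sf nc}(p) - {\sf nc}(p\vee b)\big)$, which is the defining relation for $\sqsupset_b$. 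So the whole lemma is just bookkeeping inside formula \eqref{definitiondefect}, using the trivial identities $p\vee p' = p'$ when $p' \trianglelefteq p$ is reversed (i.e. $p'$ coarser than $p$) and $p\vee p' = p$ when $p'$ is finer than $p$.

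The only mildly delicate point — really the one place where something must be checked rather than just substituted — is that ${\sf df}_b(p',p) = 0$ is genuinely equivalent to $p' \leq_b p$, i.e. that $p' \in [b,p]$ means precisely $d(b,p') + d(p',p) = d(b,p)$ and not merely $\leq$. But since $d$ is a distance (Theorem~\ref{def:dist} establishes this, via the preceding discussion), the triangle inequality gives $d(b,p') + d(p',p) \geq d(b,p)$ always, so ${\sf df}_b(p',p) \geq 0$ with equality exactly on the segment; this is exactly the content of the definition of $[b,p]$ and of $\leq_b$. I expect no real obstacle here; the main thing to be careful about is keeping straight which of $p\vee p'=p$ versus $p\vee p'=p'$ applies in each of the two items, since the roles of "finer" and "coarser" are swapped between them.
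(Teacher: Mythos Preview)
Your proof is correct and follows essentially the same approach as the paper: you compute ${\sf df}_b(p',p)$ explicitly using $p\vee p'=p'$ (when $p'$ is coarser) or $p\vee p'=p$ (when $p'$ is finer), obtaining exactly the simplified expressions the paper records, and then read off the equivalences. The only difference is that you spell out the triangle-inequality justification for ${\sf df}_b\geq 0$, which the paper leaves implicit.
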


\begin{proof}
This is a straightforward consequence of the fact that: 
\begin{enumerate}
\item if $p'$ is coarser than $p$, ${\sf df}_{b}(p',p) = -{\sf nc}(p' \vee b) + {\sf nc}(p \vee b)$,
\item if $p'$ is finer than $p$, ${\sf df}_{b}(p',p) = {\sf nc}(p')-{\sf nc}(p' \vee b) - {\sf nc}(p) + {\sf nc}(p \vee b)$.
\end{enumerate}
\end{proof}

It would be interesting to have a better understanding of the orders $\dashv_b$ and $\sqsupset_b$. In order to do so, we introduce the notion of pivotal blocks, admissible splits and admissible gluings for a partition $p \in \mathcal{P}(X)$.

\begin{definition}
\label{def:pivotal}
A {\em pivotal block} (for the base partition $b$) for $p$ is a block of $p$ such that there exists a way to cut it into two blocks in order to cut a block of $p\vee b$ into two blocks. We denote by ${\sf Piv}_b(p)$ the set of pivotal blocks for $p$.

We denote by $\Delta_b(p)$ the set of all partitions $p'$ which are obtained by cutting in $p$ a pivotal block for $p$ into two blocks in such way that $p'\vee b$ has one more block than $p\vee b$. This defines a function $\Delta_b$ from $\mathcal{P}(X)$ to the subsets of $\mathcal{P}(X)$. The {\em admissible splits} of $p$ are ${\sf Sp}_b(p) = \bigcup\limits_{k=0}^{\infty} \Delta_b^{k}(p).$ 
\end{definition}

\begin{definition}
\label{definitioncoarsercomp} 
Let ${\sf Gl}_{b}(p )$ be the set of partitions $p'$ in $\mathcal{P}(X)$ such that $p'$ is obtained by gluing blocks of $p$ in a way such that ${\sf nc}(p \vee b) = {\sf nc}(p' \vee b)$. It is the set of {\em admissible gluings} of $p$. 
\end{definition}

We can better understand the orders $\dashv_b$ and $\sqsupset_b$. 

\begin{lemme}\label{fin}
Let $p$ and $p'$ be two elements of $\mathcal{P}(X)$. We have the following equivalences: 
\begin{enumerate}
\item $p' \dashv_b p$ if and only if $p' \in {\sf Gl}_{b}( p)$,  
\item $p' \sqsupset_b p$ if and only if $p' \in {\sf Sp}_{b}( p)$. 
\end{enumerate}
\end{lemme}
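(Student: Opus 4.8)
I would prove the two equivalences separately, in each case showing both inclusions; the engine throughout is Lemma \ref{lemme:carac1}, which reduces everything to the ``coarser/finer'' case plus the condition $p' \leq_b p$, together with Remark \ref{remarque:finer} on monotonicity along chains of refinements.

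\emph{Proof of (1): $p' \dashv_b p \iff p' \in {\sf Gl}_b(p)$.} The direction $\Leftarrow$ is immediate from the definitions: if $p'$ is obtained from $p$ by gluing blocks with ${\sf nc}(p\vee b) = {\sf nc}(p'\vee b)$, then $p'$ is coarser than $p$ and the $\vee b$-block count is preserved, which is exactly $p'\dashv_b p$. For $\Rightarrow$, suppose $p'\dashv_b p$, so $p'$ is coarser than $p$ and ${\sf nc}(p\vee b) = {\sf nc}(p'\vee b)$. Since $p'$ is coarser than $p$, there is a chain $p = q_0 \vartriangleright q_1 \vartriangleright \cdots \vartriangleright q_m = p'$ in which each $q_{i+1}$ is obtained from $q_i$ by a single gluing of two blocks. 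Along this chain ${\sf nc}(q_i \vee b)$ is non-increasing (coarsening only merges $\vee b$-blocks), and by hypothesis its value at the two ends coincides; hence by Remark \ref{remarque:finer} (applied to the reversed chain of refinements) it is constant, so every $q_i$ lies in ${\sf Gl}_b(p)$ and in particular $p' = q_m \in {\sf Gl}_b(p)$.

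\emph{Proof of (2): $p' \sqsupset_b p \iff p' \in {\sf Sp}_b(p)$.} Here I expect the real work. The direction $\Leftarrow$: if $p' \in \Delta_b^k(p)$, then by induction on $k$ one has $p'$ finer than $p$, and at each step the quantity ${\sf nc}(q) - {\sf nc}(q\vee b)$ is unchanged (a pivotal cut raises both ${\sf nc}$ and ${\sf nc}(\cdot\vee b)$ by exactly $1$), so ${\sf nc}(p') - {\sf nc}(p'\vee b) = {\sf nc}(p) - {\sf nc}(p\vee b)$, i.e. $p'\sqsupset_b p$. The direction $\Rightarrow$ is the crux: given $p'\sqsupset_b p$ (so $p'$ finer than $p$ with ${\sf nc}(p')-{\sf nc}(p'\vee b) = {\sf nc}(p)-{\sf nc}(p\vee b)$), I want to exhibit a chain of admissible splits from $p$ down to $p'$. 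Take any refinement chain $p = q_0 \vartriangleleft q_1 \vartriangleleft \cdots \vartriangleleft q_m = p'$ by single block-cuts. By Remark \ref{remarque:finer}, since ${\sf nc}(q)-{\sf nc}(q\vee b)$ agrees at the two ends and is monotone along refinements, it is constant along the whole chain; hence at every step $i$, cutting the relevant block of $q_i$ raises ${\sf nc}$ by $1$ and leaves ${\sf nc}(\cdot \vee b)$ fixed — but that is the \emph{opposite} of what $\Delta_b$ does, so a naive chain need not consist of admissible splits. The fix is to choose the refinement chain in the right order: I would first perform all the splits that \emph{do} increase ${\sf nc}(\cdot\vee b)$ (the pivotal cuts contributing to reaching $p'\vee b$ from $p\vee b$), reaching an intermediate partition $\tilde p$ with $\tilde p \vee b = p' \vee b$ and $\tilde p$ between $p$ and $p'$; each of these is by construction a step of $\Delta_b$. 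Then one must check that these pivotal cuts can indeed be realized inside $p$ one at a time, i.e. that a block of $p$ whose cut splits a $\vee b$-block can always be chosen so that the $\vee b$-count goes up by exactly one — this follows by picking, among the edges of the graph of $p\vee b$ that are ``extra'' relative to $p'\vee b$, a single one to remove and cutting the corresponding block of $p$ accordingly. After reaching $\tilde p$, the remaining refinements from $\tilde p$ to $p'$ fix $\vee b$; but now note ${\sf nc}(\tilde p) - {\sf nc}(\tilde p \vee b) = {\sf nc}(p') - {\sf nc}(p'\vee b)$ forces $\tilde p = p'$, so there are no remaining refinements and $p' = \tilde p \in {\sf Sp}_b(p)$.

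\emph{Main obstacle.} The delicate point is the bookkeeping in the $\Rightarrow$ direction of (2): showing that the pivotal cuts needed to pass from $p\vee b$ to $p'\vee b$ can be lifted to a sequence of single admissible splits of $p$ (rather than a simultaneous operation), and that after performing them the defect identity forces the intermediate partition to already equal $p'$. I would handle this by an induction on ${\sf nc}(p'\vee b) - {\sf nc}(p\vee b)$, at each stage removing one edge from the graph representing $p\vee b$ that is not forced by $p'\vee b$, lifting it to a cut of the corresponding block of $p$ (which exists precisely because the block is pivotal), and invoking Remark \ref{remarque:finer} to keep all the relevant counts under control. Everything else is routine once Lemma \ref{lemme:carac1} and the monotonicity remark are in hand.
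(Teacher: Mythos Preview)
Your argument for (1) is fine (indeed more than is needed: the definitions of $\dashv_b$ and ${\sf Gl}_b(p)$ literally coincide). Your argument for the direction $\Leftarrow$ of (2) is also fine.

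The problem is in the direction $\Rightarrow$ of (2), and it is a slip rather than a missing idea. You take an arbitrary chain of single cuts $p=q_0,\dots,q_m=p'$ and correctly invoke Remark~\ref{remarque:finer} to conclude that ${\sf nc}(q_i)-{\sf nc}(q_i\vee b)$ is constant along the chain. But then you write that each cut ``raises ${\sf nc}$ by $1$ and leaves ${\sf nc}(\cdot\vee b)$ fixed''. That is backwards: since ${\sf nc}(q_i)$ increases by exactly $1$ at each step and the difference ${\sf nc}(q_i)-{\sf nc}(q_i\vee b)$ stays constant, it follows that ${\sf nc}(q_i\vee b)$ must also increase by exactly $1$ at each step. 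That is precisely the defining property of $\Delta_b$: each cut in your arbitrary chain is already an admissible split, so $q_{i+1}\in\Delta_b(q_i)$ for every $i$, and $p'\in{\sf Sp}_b(p)$ immediately.

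In other words, the ``naive chain'' already works, and this is exactly the paper's proof. Your subsequent ``fix'' (reordering the cuts, producing an intermediate $\tilde p$, lifting cuts from the $\vee b$ level, inducting on ${\sf nc}(p'\vee b)-{\sf nc}(p\vee b)$) is unnecessary machinery built on a misreading of your own correct computation. Once you fix the slip, the whole paragraph beginning ``The fix is to choose the refinement chain in the right order'' can be deleted.
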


\begin{proof} The first equivalence is straightforward. Let us prove that the second equivalence holds. We can suppose that $p'$ is finer than $p$ since it is implied by both conditions. We have to prove that ${\sf nc}(p' ) - {\sf nc}(p' \vee b)$ is equal to  ${\sf nc}(p ) - {\sf nc}(p \vee b)$ if and only if there exists a path $p_0,...,p_k$ in the Cayley graph of $\mathcal{P}(X)$ such that $p_0=p$, $p_k=p'$ and $p_{i+1} \in \Delta_{b}(p_{i})$ for any $i \in \{0,...,k-1\}$. 

Let us suppose that such a path exists: by definition of a pivotal block, we see that for any $i \in \{0,...,k-1\}$, ${\sf nc}(p_i ) -{\sf nc}(p_i \vee b) = {\sf nc}(p_{i+1} ) -{\sf nc}(p_{i+1} \vee b) $ and thus, ${\sf nc}(p ) - {\sf nc}(p \vee b) = {\sf nc}(p_0 ) - {\sf nc}(p_0 \vee b) = {\sf nc}(p_k ) - {\sf nc}(p_k \vee b)={\sf nc}(p' ) - {\sf nc}(p' \vee b)$. 

Let us suppose instead that ${\sf nc}(p ) - {\sf nc}(p \vee b)={\sf nc}(p' ) - {\sf nc}(p' \vee b)$. Since $p'$ is finer than $p$, there exists a path $\pi_0,...,\pi_l$ in the Cayley graph of $\mathcal{P}(X)$ such that $\pi_0=p$, $\pi_k=p'$ and $\pi_{i+1}$ is obtained by cutting a block of $\pi_i$ for any $i \in \{0,...,k-1\}$. At each step the number of blocks of $\pi_i$ goes up by one and the number of blocks of $\pi_i\vee b$ is either constant or goes up by one. Since ${\sf nc}(\pi_0 ) - {\sf nc}(\pi_0 \vee b)={\sf nc}(\pi_l ) - {\sf nc}(\pi_l \vee b)$, $\left({\sf nc}(\pi_i) - {\sf nc}(\pi_i \vee b)\right)_{i=0}^{l-1}$ must be constant. This means that at each step the number of blocks of $\pi_i\vee b$ must go up by one: $\pi_{i+1} \in \Delta_{b}(\pi_i)$ for any $i \in \{0,...,l-1\}$.
\end{proof}

\subsection{The matrices of the orders}
In the following, by matrice, we understand a triple $(I,J,M)$ where $I$, the departure set, and $J$, the arrival set, are finite sets and $M$ is an application from $I\times J$ in $\mathbb{R}$. Given $(I_1,I_2,M_1)$ and $(I_2,I_3,M_2)$ two matrices, we can multiply them and $(I_1,I_2,M_1) (I_2,I_3,M_2) = (I_1,I_3,M_1M_2)$ where: 
\begin{align*}
M_1M_2(i,k) = \sum_{j \in I_2} M_1(i,j)M_{2}(j,k).
\end{align*}

Let us consider a finite set $I$ endowed with an order $\prec$. A matrix $(I,I,M)$ is lower triangular if for any $i$ and $i'$ in $I$, if $M(i,i') \neq 0$ then $i' \prec i$. Let us remark that any matrix $(I,I,M)$ which is strictly lower triangular is nilpotent : there exists a positive integer $n$ such that $M^{n}=0$. Indeed, it is enough to consider $n=\#I$. The matrices that we will consider in the following have departure and arrival sets equal to $\mathcal{P}(X)$: we will omit to specify it in the following.

\begin{definition}
\label{matriceorder}
 The matrices of the partial orders $\leq_{b}, \dashv_b, \sqsupset_b$ are: 
 \begin{itemize}
\item for the geodesic order $\leq_b$:\ \ \ \ \ \ \  \ \ \ \ \ \  $(G_b)_{p,p'} = \delta_{p' \leq_b p}$,
\item for the coarser-compatible order $\dashv_b$: $(C_b)_{p,p'} = \delta_{p' \dashv_b p}$, 
\item for the finer-admissible order $\sqsupset_b$: \ \ \ $(S_b)_{p,p'} = \delta_{p' \sqsupset_b p}$. 
 \end{itemize}
\end{definition}

Let us remark that these matrices are lower triangular when their departure set is endowed with the corresponding order. These matrices satisfy the next important identity.  

\begin{theorem}
\label{th:lienmatriciel}
The partial orders $\leq_b, \dashv_b$, and $\sqsupset_b$ are linked by the following equality: 
\begin{align*}
G_b = C_bS_b. 
\end{align*} 
\end{theorem}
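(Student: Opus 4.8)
The plan is to show the matrix identity $G_b = C_b S_b$ by checking it entrywise: for fixed partitions $p$ and $p'$, I want to prove
\begin{align*}
(G_b)_{p,p'} = \sum_{q \in \mathcal{P}(X)} (C_b)_{p,q}(S_b)_{q,p'} = \#\{ q \in \mathcal{P}(X) : q \dashv_b p \text{ and } p' \sqsupset_b q \}.
\end{align*}
Since $(G_b)_{p,p'} \in \{0,1\}$, this amounts to two things: first, that the right-hand side is nonzero precisely when $p' \leq_b p$, and second, that when it is nonzero there is exactly one such intermediate partition $q$. Recalling Lemma \ref{fin}, the condition $q \dashv_b p$ means $q \in {\sf Gl}_b(p)$ (an admissible gluing: $q$ is coarser than $p$ with ${\sf nc}(q \vee b) = {\sf nc}(p \vee b)$), and $p' \sqsupset_b q$ means $p' \in {\sf Sp}_b(q)$ (an admissible split: $p'$ is finer than $q$ with ${\sf nc}(p') - {\sf nc}(p' \vee b) = {\sf nc}(q) - {\sf nc}(q \vee b)$).

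**The candidate for $q$.** The natural guess is $q = p \vee p'$. First I would verify that $q := p \vee p'$ is forced: if $q$ is coarser than $p$ and finer than... wait, $q$ must be coarser than $p$ (from $q \dashv_b p$) and $p'$ must be finer than $q$ (from $p' \sqsupset_b q$), so $q$ is a partition coarser than both $p$ and $p'$; moreover $q$ being an admissible gluing of $p$ keeps ${\sf nc}(q \vee b) = {\sf nc}(p \vee b)$, and $p'$ being an admissible split of $q$ keeps ${\sf nc}(p') - {\sf nc}(p' \vee b) = {\sf nc}(q) - {\sf nc}(q \vee b)$. I expect that these compatibility constraints, together with Remark \ref{remarque:finer} (monotonicity of the three quantities along refinement chains), pin down $q$ exactly to $p \vee p'$: any $q$ coarser than $p$ and $p'$ but properly coarser than $p \vee p'$ would, by minimality of the join, have to glue across without changing ${\sf nc}(\,\cdot \vee b)$ in a way incompatible with the refinement from $q$ to $p'$. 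So I would prove the uniqueness clause by showing both $p \vee p' \dashv_b p$ iff $p' \leq_b p$-type conditions hold and that no other $q$ works.

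**Matching the defect computation.** The cleanest route is via the defect function ${\sf df}_b$ of (\ref{definitiondefect}). By the geodesic order, $p' \leq_b p$ iff ${\sf df}_b(p',p) = 0$, i.e.
\begin{align*}
{\sf nc}(p') - {\sf nc}(p' \vee b) - {\sf nc}(p \vee p') + {\sf nc}(p \vee b) = 0.
\end{align*}
Now set $q = p \vee p'$. The condition $q \dashv_b p$ is ${\sf nc}(q \vee b) = {\sf nc}(p \vee b)$, and using $q \vee b = (p \vee p') \vee b$, by Lemma \ref{lemme:carac1} this is equivalent to $q \leq_b p$ given $q$ coarser than $p$. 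The condition $p' \sqsupset_b q$ is ${\sf nc}(p') - {\sf nc}(p' \vee b) = {\sf nc}(q) - {\sf nc}(q \vee b) = {\sf nc}(p \vee p') - {\sf nc}(p \vee p' \vee b)$. Adding the two displayed equalities should telescope exactly to ${\sf df}_b(p',p) = 0$: indeed $q \vee b = p \vee p' \vee b$ and $p' \vee b$ relate via the join, and one checks ${\sf nc}(p \vee b) = {\sf nc}(p \vee p' \vee b)$ precisely captures $q \dashv_b p$ while the split condition captures the remaining terms. So $p \vee p'$ being a valid intermediate $q$ is equivalent to $p' \leq_b p$, and I claim it is the only valid choice.

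**Main obstacle.** The routine direction — showing that if $p' \leq_b p$ then $q = p \vee p'$ satisfies $q \dashv_b p$ and $p' \sqsupset_b q$ — should follow directly by splitting ${\sf df}_b(p',p) = 0$ into the two nonnegative pieces (the gluing defect and the splitting defect, each nonnegative by the triangle inequality / Lemma \ref{lemme:carac1}), forcing both to vanish. The harder part will be \emph{uniqueness}: proving that $q = p \vee p'$ is the \emph{only} partition with $q \dashv_b p$ and $p' \sqsupset_b q$. For this I would argue that any such $q$ is coarser than $p$ and coarser than $p'$ (since $p'$ finer than $q$), hence coarser than $p \vee p'$; then I must rule out $q$ being strictly coarser than $p \vee p'$. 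The monotonicity of Remark \ref{remarque:finer} applied along a refinement chain from $q$ down to $p \vee p'$ (both sitting above $p$), combined with ${\sf nc}(q \vee b) = {\sf nc}(p \vee b) = {\sf nc}((p\vee p') \vee b)$, should force ${\sf nc}(q) = {\sf nc}(p \vee p')$, hence $q = p \vee p'$ — here I rely on the fact that two comparable partitions with the same number of blocks are equal. Assembling these pieces gives $(C_b S_b)_{p,p'} = \delta_{p' \leq_b p} = (G_b)_{p,p'}$, which is the claimed identity.
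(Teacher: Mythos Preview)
Your approach is essentially the paper's: reduce $(C_bS_b)_{p,p'}$ to counting intermediate partitions $q$ with $q\dashv_b p$ and $p'\sqsupset_b q$, show $q=p\vee p'$ is the (unique) candidate, and match this with $p'\leq_b p$ via the defect identity ${\sf df}_b(p',p)={\sf df}_b(p\vee p',p)+{\sf df}_b(p',p\vee p')$.

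There is one small gap in your uniqueness step. You write that monotonicity (Remark~\ref{remarque:finer}) along a chain from $q$ to $p\vee p'$, together with ${\sf nc}(q\vee b)={\sf nc}((p\vee p')\vee b)$, ``should force ${\sf nc}(q)={\sf nc}(p\vee p')$.'' It does not: knowing only that $q$ is coarser than $p\vee p'$ and that $\cdot\vee b$ has the same block count at both ends says nothing about ${\sf nc}(q)$ versus ${\sf nc}(p\vee p')$. You must also invoke the second condition $p'\sqsupset_b q$, which gives ${\sf nc}(q)-{\sf nc}(q\vee b)={\sf nc}(p')-{\sf nc}(p'\vee b)$; since the existence of $q$ already forces $p'\leq_b p$ (via $p'\leq_b q\leq_b p$ from Lemma~\ref{lemme:carac1}), the forward direction gives $p'\sqsupset_b p\vee p'$ and hence ${\sf nc}(p\vee p')-{\sf nc}((p\vee p')\vee b)={\sf nc}(p')-{\sf nc}(p'\vee b)$ as well. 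Combining these two with your equality ${\sf nc}(q\vee b)={\sf nc}((p\vee p')\vee b)$ then yields ${\sf nc}(q)={\sf nc}(p\vee p')$, and comparability finishes it. This is exactly the computation the paper carries out with its equations (\ref{eq:1})--(\ref{eq:3}); once you add this missing use of the $\sqsupset_b$ condition, your argument and the paper's coincide.
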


\begin{proof}
Let us consider $p$ and $p'$ in $\mathcal{P}(X)$. We have: 
\begin{align*}
(C_bS_b)_{p,p'} = \sum_{p'' \in \mathcal{P}_k} \delta_{p'' \dashv_b p} \delta_{p' \sqsupset_b p'' }. 
\end{align*}
Thus it is enough to show that $p' \leq_b p$ if and only if there exists $p'' \in \mathcal{P}(X)$ such that $p'' \dashv_b p$ and $p' \sqsupset_b p''$. Besides we need to show that such partition $p''$ is unique: we will show that it is equal to $p \vee p'$. 

We have the following equalities: 
\begin{align*}
{\sf df}_b(p',p) &= {\sf nc}(p \vee b) - {\sf nc}(p' \vee b) + {\sf nc}(p') - {\sf nc}(p \vee p')\\
&= [{\sf nc}(p \vee b) - {\sf nc} (p \vee p' \vee b)] + [{ \sf nc } ( p \vee p' \vee b )  - {\sf nc}(p' \vee b) + {\sf nc}(p') - {\sf nc}(p \vee p')]
\\&= {\sf df}_b (p \vee p' , p) + {\sf df}_b(p', p \vee p'). 
\end{align*}
Thus, $p' \leq_b p$ if and only if $p \vee p' \leq_b p$ and $p' \leq_b p \vee p'$. Yet, $p \vee p' $ is coarser than $p$ and $p'$ is finer than $p\vee p'$. Using Lemma \ref{lemme:carac1}, we see that $p' \leq_b p$ if and only $p \vee p'  \dashv_b p$ and $p' \sqsupset_b p\vee p'$. The proof of the theorem will be completed if we prove that if $p''$ satisfies $p'' \dashv_b p$ and $p' \sqsupset_b p''$ then $p' \leq_{b} p$ and $ p'' = p \vee p'$. 

Let us consider such a partition $p''$, we have:
\begin{align}
\label{eq:1}
{\sf nc}(p'' \vee b) &= {\sf nc}(p \vee b),\\
\label{eq:2}
{\sf nc}(p'') - {\sf nc}(p'' \vee b) &= {\sf nc}(p') - {\sf nc}(p' \vee b). 
\end{align}
Using Lemma \ref{lemme:carac1}, $p' \leq_b p'' \leq_b p$, thus $p' \leq_b p$: 
\begin{align}
\label{eq:3}
{\sf nc}(p \vee p')- {\sf nc}(p \vee b) = {\sf nc}(p') - {\sf nc}(p' \vee b). 
\end{align}
Thus, we have the following equalities: 
\begin{align*}
{\sf nc}(p \vee p') = {\sf nc}(p \vee b)+{\sf nc}(p') - {\sf nc}(p' \vee b) 
&= {\sf nc}(p \vee b)+{\sf nc}(p'') - {\sf nc}(p'' \vee b)\\
&={\sf nc}(p''),
\end{align*}
where we applied successively the Equations $(\ref{eq:3})$, $(\ref{eq:2})$ and $(\ref{eq:1})$.
Since $p''$ is coarser than $p$ and than $p'$, it is coarser than $p \vee p'$ and thus the last equation implies that $p''$ is equal to $p \vee p'$. 
\end{proof}

An other version of \ref{th:lienmatriciel} is given in the following theorem. 

\begin{theorem}
\label{th:geocharact}
The partition $p'$ is in $[b, p]$ if and only there exists $p'' \in \mathcal{P}(X)$ such that the two following conditions hold: 
\begin{enumerate}
\item $p'' \in {\sf Gl}_{b}(p)$, 
\item $p' \in {\sf Sp}_b(p'')$. 
\end{enumerate}
If so, then $p'' = p \vee p'$. 
\end{theorem}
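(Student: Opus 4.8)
The plan is to deduce this statement directly from Theorem~\ref{th:lienmatriciel} by translating it through Lemma~\ref{fin}; the substantive work has already been done, so the proof is essentially a change of vocabulary.

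First I would recall that, by definition, $p'$ lies in the segment $[b,p]$ precisely when $d(b,p')+d(p',p)=d(b,p)$, i.e.\ when ${\sf df}_b(p',p)=0$, which is exactly the relation $p' \leq_b p$. Theorem~\ref{th:lienmatriciel} (in its matrix form $G_b = C_b S_b$) asserts that $p' \leq_b p$ holds if and only if there exists $p'' \in \mathcal{P}(X)$ with $p'' \dashv_b p$ and $p' \sqsupset_b p''$, and that whenever such a $p''$ exists it is unique and equal to $p \vee p'$.

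It then remains only to rewrite the two compatibility relations in combinatorial terms. By Lemma~\ref{fin}(1), the condition $p'' \dashv_b p$ is equivalent to $p'' \in {\sf Gl}_b(p)$, and by Lemma~\ref{fin}(2), the condition $p' \sqsupset_b p''$ is equivalent to $p' \in {\sf Sp}_b(p'')$. Substituting these two equivalences into the statement of Theorem~\ref{th:lienmatriciel} gives both the claimed equivalence and the identification $p'' = p \vee p'$.

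I expect no genuine obstacle here beyond invoking the earlier results correctly; the one point deserving care is that the identification $p'' = p\vee p'$ must be used in both directions of the equivalence — existence of any valid $p''$ forces $p'' = p\vee p'$, while conversely $p\vee p'$ is always an admissible choice once $p' \leq_b p$ — and this is precisely what the proof of Theorem~\ref{th:lienmatriciel} supplies. Were one to want a self-contained argument, the real work would be re-deriving the additivity ${\sf df}_b(p',p) = {\sf df}_b(p\vee p',p) + {\sf df}_b(p',p\vee p')$ together with the descriptions of $\dashv_b$ and $\sqsupset_b$ coming from Lemmas~\ref{lemme:carac1} and~\ref{fin}, but given those this theorem is immediate.
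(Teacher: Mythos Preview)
Your proposal is correct and matches the paper's approach exactly: the paper presents Theorem~\ref{th:geocharact} without a separate proof, introducing it simply as ``an other version of~\ref{th:lienmatriciel}'', i.e.\ as the restatement of $G_b=C_bS_b$ in the language of ${\sf Gl}_b$ and ${\sf Sp}_b$ via Lemma~\ref{fin}. Your identification of the one subtle point --- that the proof of Theorem~\ref{th:lienmatriciel} already supplies both directions of the uniqueness $p''=p\vee p'$ --- is accurate and is precisely why no additional argument is needed.
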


\subsection{Hasse diagram of the geodesic order}

Let us consider $(T,\leq)$ a finite set endowed with a partial order. The Hasse diagram of $(T,\leq)$ is the oriented graph whose vertices represent the elements of $T$: there exists an oriented edge between the vertex which represents $x$ to the one which represents $y$ if and only if $x$ is directly smaller than $y$, which means that $x < y$ and there does not exist any $z$ such that $x < z < y$.

\begin{theorem}
The Hasse diagram of $\left(\mathcal{P}_{X},\leq_b\right)$ is characterized by the following property: there exists an oriented edge from $p'$ to $p$ if and only if
\begin{itemize}
\item either $p'$ is directly finer-admissible than $p$, 
\item or $p'$ is directly coarser-admissible than $p$. 
\end{itemize}
\end{theorem}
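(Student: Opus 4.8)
The aim is to show that the edges of the Hasse diagram of $(\mathcal{P}(X),\leq_b)$ are exactly the covering relations of the suborders $\dashv_b$ and $\sqsupset_b$. The two ingredients I would use are Lemma \ref{lemme:carac1} (so that $p'\dashv_b q\Rightarrow p'\leq_b q$ and $p'\sqsupset_b q\Rightarrow p'\leq_b q$, with the converse implications valid once $p'$ and $q$ are assumed $\trianglelefteq$-comparable) and the factorization $G_b=C_bS_b$ of Theorem \ref{th:lienmatriciel}, i.e. the fact that $p'\leq_b p$ if and only if $p'\sqsupset_b(p\vee p')\dashv_b p$.

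\textbf{A geometric fact.} First I would isolate the following: if $q_1,q_2$ are $\trianglelefteq$-comparable and $z\in[q_1,q_2]$, then $z$ lies between $q_1$ and $q_2$ for $\trianglelefteq$. Assuming, say, $q_1\trianglelefteq q_2$, Theorem \ref{def:dist} gives $d(q_1,q_2)=\frac{1}{2}({\sf nc}(q_1)-{\sf nc}(q_2))$; expanding $d(q_1,z)+d(z,q_2)=d(q_1,q_2)$ with the same formula and simplifying yields
\begin{align*}
({\sf nc}(z)-{\sf nc}(q_1\vee z))+({\sf nc}(q_2)-{\sf nc}(z\vee q_2))=0.
\end{align*}
Both summands are non-negative because a coarsening cannot increase the number of blocks, so each vanishes, i.e. $q_1\vee z=z$ and $z\vee q_2=q_2$, that is $q_1\trianglelefteq z\trianglelefteq q_2$.

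\textbf{Hasse edges are among the two types.} Suppose there is an edge from $p'$ to $p$ in the Hasse diagram of $\leq_b$, so $p'<_b p$ and no $z$ satisfies $p'<_b z<_b p$. Put $q=p\vee p'$. By Theorem \ref{th:lienmatriciel} and Lemma \ref{lemme:carac1} one has $p'\leq_b q\leq_b p$, so the absence of a strict intermediate forces $q=p'$ or $q=p$. If $q=p'$, then $p'=p\vee p'$ is (strictly) coarser than $p$ and $p'\dashv_b p$; if there were $z\neq p',p$ with $p'\dashv_b z\dashv_b p$, Lemma \ref{lemme:carac1} would produce a strict $\leq_b$-intermediate, so $p'$ is directly coarser-admissible than $p$. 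If $q=p$, then symmetrically $p'$ is finer than $p$, $p'\sqsupset_b p$, and $p'$ is directly finer-admissible than $p$.

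\textbf{The two types are Hasse edges.} Assume $p'$ is directly coarser-admissible than $p$ (the finer case is obtained by exchanging coarser/finer and $\dashv_b/\sqsupset_b$). Then $p'\dashv_b p$, hence $p'<_b p$. Suppose for contradiction there is $z$ with $p'<_b z<_b p$. From $p'\in[b,z]$, $z\in[b,p]$ and the triangle inequality one deduces $p'\in[b,p]$ and $z\in[p',p]$; since $p'$ is coarser than $p$, the geometric fact gives $p\trianglelefteq z\trianglelefteq p'$. Combined with $p'<_b z<_b p$ and Lemma \ref{lemme:carac1}, this yields $p'\dashv_b z\dashv_b p$ with both relations strict, contradicting that $p'$ is \emph{directly} coarser-admissible than $p$. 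Hence there is an edge from $p'$ to $p$. The only step with any content is the geometric fact, which is a short computation from Theorem \ref{def:dist} together with the monotonicity of ${\sf nc}$ under coarsening; everything else is bookkeeping with the suborder structure.
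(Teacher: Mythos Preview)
Your forward direction (Hasse edge $\Rightarrow$ directly finer- or coarser-admissible) is essentially the paper's argument: factor through $p\vee p'$ via Theorem \ref{th:lienmatriciel}, use the covering hypothesis to force $p\vee p'\in\{p,p'\}$, then invoke Lemma \ref{lemme:carac1} and rule out intermediate elements in the suborder.

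The difference is that you also prove the converse, which the paper's proof does not address. The key step is your geometric fact: any $z$ on a $d$-geodesic between two $\trianglelefteq$-comparable partitions is $\trianglelefteq$-between them. This is exactly what is needed to show that a putative $\leq_b$-intermediate $z$ between a directly $\dashv_b$-related (or $\sqsupset_b$-related) pair must already be a $\dashv_b$- (resp.\ $\sqsupset_b$-) intermediate, yielding the contradiction. The computation from Theorem \ref{def:dist} is correct, and the deduction $z\in[p',p]$ from $p'<_b z<_b p$ is the standard additivity-of-defect argument. So your proof is complete where the paper's is not, and the extra ingredient (the geometric fact) is short and self-contained.
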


\begin{proof}
Let $p$ and $p'$ in $\mathcal{P}_{X}$ such that $p'$ is directly smaller than $p$ for $\leq_{b}$. Using Theorem \ref{th:geocharact} and Lemmas \ref{lemme:carac1} and \ref{fin}, $p' \leq_b p \vee p'$ and $p\vee p' \leq_b p$. Since  $p'$ is directly smaller than $p$ for $\leq_{b}$, $p\vee p'$ is equal either to $p$ or $p'$. Using again  Lemma \ref{lemme:carac1}, either $p' \sqsupset_b p$ or $p' \dashv_b p$. If there was a partition $p'' \notin \{p,p'\}$ such that $p' \sqsupset_b p'' \sqsupset_b p$, by Lemma \ref{lemme:carac1} it would contradict the fact that $p'$ is directly smaller than $p$ for $\leq_b$. The same argument holds for $\dashv_{b}$. Thus $p'$ is either directly finer-admissible or directly coarser-admissible than $p$.
\end{proof}

\subsection{M\"obius function for the geodesic order}

It would be interesting to compute the Möbius function of $\leq_b$ which is roughly the inverse of the matrix $G_b$. 

\begin{definition}
\label{def:Mob}
Let $I$ be a finite set endowed with a partial order. Let $M$ be the matrix of the order defined as in Definition \ref{matriceorder}. The {\em Möbius function} is the function such that for any $a$ and $b$ in $I$, 
\begin{align*}
\mu(a,b) = (M^{-1})_{b,a}.
\end{align*}
\end{definition}

Let us consider $M$ the matrix used in the last definition. The matrix $M$ can be written as $\mathrm{Id}_I+N$ with $N$ a matrix which is strictly lower triangular and thus nilpotent. This allows us to compute the inverse of $M$: 
\begin{align}
M^{-1}= (\mathrm{Id}_I+N)^{-1} = \sum_{l=0}^{\infty} (-1)^{l} N^{l}, 
\end{align}
hence Rota-Hall's formula: 
\begin{align}
\label{eq:Rota}
M^{-1}\left(x,y\right) = \sum_{l=0}^{\infty} (-1)^{l}  \#C_{l}(x,y), 
\end{align}
where $C_{l}(x,y)$ is the set of sequences of length $l$ which are strictly decreasing between $x$ and $y$: 
\begin{align*}
C_{l}(x,y) = \{ (i_0,...,i_l), x = i_0 \neq ...\neq i_l = y,  i_0 \geq ...\geq i_l \}. 
\end{align*}
Let us remark that the matrix $M^{-1}$ is lower triangular since $N^{l}$ is lower triangular for any integer $l$. 

Our goal is to compute the Möbius function for $(\mathcal{P}(X), \leq_b)$: we need to compute the inverse of $G_b$. In order to do so, we need the following lemma.

\begin{lemme}\label{infimumversion}
Let $p$, $p'$ and $p''$ be three partitions in $\mathcal{P}(X)$. Let us suppose that $p' \dashv_b p''$ and $p'' \sqsupset_b p$, then $p'' = p \wedge p'$, where $p \wedge p'$ is the coarser partition which is finer than $p$ and than $p'$. 
\end{lemme}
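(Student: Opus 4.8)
The statement is the ``dual'' of Theorem~\ref{th:lienmatriciel}: there we decomposed $p'\leq_b p$ through the \emph{join} $p''=p\vee p'$ as a gluing followed by a split; here we want to recognize a split followed by a gluing as forcing $p''$ to be the \emph{meet} $p\wedge p'$. The plan is to mimic the uniqueness argument in the proof of Theorem~\ref{th:lienmatriciel} verbatim, replacing $\vee$ by $\wedge$ throughout. So the skeleton is: translate the two hypotheses into numerical identities via Lemma~\ref{lemme:carac1}, derive a formula for ${\sf nc}(p\wedge p')$, then use that $p''$ is both finer than $p$ and finer than $p'$ to conclude $p''\trianglelefteq p\wedge p'$, and finally use the computed block count to upgrade this inclusion to an equality.

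\textbf{Key steps.} First, from $p'\dashv_b p''$ (so $p'$ is coarser than $p''$ with ${\sf nc}(p'\vee b)={\sf nc}(p''\vee b)$) and $p''\sqsupset_b p$ (so $p''$ is finer than $p$ with ${\sf nc}(p'')-{\sf nc}(p''\vee b)={\sf nc}(p)-{\sf nc}(p\vee b)$), record
\begin{align}
\label{eq:mob1}
{\sf nc}(p'\vee b)&={\sf nc}(p''\vee b),\\
\label{eq:mob2}
{\sf nc}(p'')-{\sf nc}(p''\vee b)&={\sf nc}(p)-{\sf nc}(p\vee b).
\end{align}
Second, by Lemma~\ref{lemme:carac1}(1)--(2) applied along $p'\sqsupset_b p''$ is false in general, but $p'$ coarser than $p''$ and $p''$ finer than $p$ give $p'\leq_b p''\leq_b p$, hence $p'\leq_b p$; unwinding ${\sf df}_b(p',p)=0$ using the ``coarser'' formula of Lemma~\ref{lemme:carac1} for the pair $(p\wedge p',p')$ — note $p\wedge p'$ is finer than $p'$ — I expect an identity of the shape ${\sf nc}(p\wedge p')-{\sf nc}((p\wedge p')\vee b)={\sf nc}(p)-{\sf nc}(p\vee b)$, i.e. the defect of $p\wedge p'$ relative to $p$ also vanishes. (The cleanest route here is the ``anti-symmetry'' of the join--meet situation: just as Theorem~\ref{th:lienmatriciel} split ${\sf df}_b(p',p)$ through $p\vee p'$, one should be able to split ${\sf df}_b(p',p)$ through $p\wedge p'$ in the reversed roles, giving ${\sf df}_b(p,p\wedge p')+{\sf df}_b(p\wedge p',p')=0$ — though the bookkeeping needs care since $d$ is symmetric but the base point is fixed.) Third, combine \eqref{eq:mob1}, \eqref{eq:mob2} with this to compute ${\sf nc}(p'')$ explicitly and match it with ${\sf nc}(p\wedge p')$, exactly as in the closing lines of the proof of Theorem~\ref{th:lienmatriciel}. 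Fourth, since $p''$ is finer than both $p$ and $p'$ it is finer than $p\wedge p'$, so ${\sf nc}(p'')\geq{\sf nc}(p\wedge p')$ with equality iff $p''=p\wedge p'$; the count shows equality.

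\textbf{Main obstacle.} The genuine subtlety is that $\vee$ behaves well with the geodesic distance (Theorem~\ref{def:dist} is literally stated with $\vee$), whereas $\wedge$ does not appear in that formula, so I cannot directly ``split through $p\wedge p'$'' the way the earlier proof splits through $p\vee p'$. The trick I expect to need is to not use a distance decomposition at all, but to argue purely combinatorially: the chain $p''\sqsupset_b p$ is, by Lemma~\ref{fin}(2), a sequence of admissible splits; the relation $p'\dashv_b p''$ is, by Lemma~\ref{fin}(1), an admissible gluing of $p''$ yielding $p'$. Since $p\wedge p'$ is a gluing-refinement that is finer than $p$, I want to show $p''$ already \emph{is} $p\wedge p'$ by showing $p''$ is finer than $p'$ (given), finer than $p$ (given), and that no finer common refinement can exist without violating the constancy of ${\sf nc}(\cdot)-{\sf nc}(\cdot\vee b)$ forced by \eqref{eq:mob1}--\eqref{eq:mob2} — this is the same ``a quantity constant at the endpoints of a finer-chain is constant throughout'' mechanism used in Remark~\ref{remarque:finer} and in the proof of Lemma~\ref{fin}. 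Concretely: any $q$ with $p''\trianglelefteq q\trianglelefteq p\wedge p'$ strictly would have ${\sf nc}$ strictly between, and I must check the associated $\vee b$-counts cannot compensate; this monotonicity check is where the real work sits, but it is of exactly the type already carried out twice in the paper, so it should go through cleanly.
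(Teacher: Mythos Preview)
Your plan contains a genuine gap in the ``Key steps'' section: the attempt to split ${\sf df}_b(p',p)$ through $p\wedge p'$ as ${\sf df}_b(p',p\wedge p')+{\sf df}_b(p\wedge p',p)$ does \emph{not} hold in general. If you expand both sides, equality is equivalent to ${\sf nc}(p\wedge p')+{\sf nc}(p\vee p')={\sf nc}(p)+{\sf nc}(p')$, i.e.\ modularity of the partition lattice, which is false (partitions are only upper-semimodular). So the identity you ``expect'', namely $p\wedge p'\sqsupset_b p$, cannot be obtained this way.

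Your backup idea in the ``Main obstacle'' paragraph \emph{does} work, and in fact gives a proof different from the paper's. Here is the clean version: since $p''\trianglelefteq p\wedge p'\trianglelefteq p$ and ${\sf nc}(\cdot)-{\sf nc}(\cdot\vee b)$ agrees at the endpoints (your \eqref{eq:mob2}), Remark~\ref{remarque:finer} forces it to be constant along the chain, so ${\sf nc}(p'')-{\sf nc}(p''\vee b)={\sf nc}(p\wedge p')-{\sf nc}((p\wedge p')\vee b)$. Likewise, since $p''\trianglelefteq p\wedge p'\trianglelefteq p'$ and ${\sf nc}(\cdot\vee b)$ agrees at the endpoints (your \eqref{eq:mob1}), Remark~\ref{remarque:finer} gives ${\sf nc}(p''\vee b)={\sf nc}((p\wedge p')\vee b)$. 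Subtracting, ${\sf nc}(p'')={\sf nc}(p\wedge p')$, and since $p''\trianglelefteq p\wedge p'$ you are done. This is short and avoids any inequality.

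The paper takes a different route. It first derives (as you do) $p'\leq_b p$, but then writes out ${\sf df}_b(p',p)=0$ with the full formula involving $p\vee p'$, and combines this with \eqref{eq:mob1}--\eqref{eq:mob2} to obtain ${\sf nc}(p'')={\sf nc}(p)+{\sf nc}(p')-{\sf nc}(p\vee p')$. It then invokes the \emph{triangle inequality} $d(p,p\wedge p')+d(p\wedge p',p')\geq d(p,p')$ which, once expanded via Theorem~\ref{def:dist}, reads ${\sf nc}(p\wedge p')+{\sf nc}(p\vee p')\geq{\sf nc}(p)+{\sf nc}(p')$ --- precisely the semimodular inequality. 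Together with ${\sf nc}(p'')\geq{\sf nc}(p\wedge p')$ this squeezes the equality. So the paper exploits the distance to manufacture the needed lattice inequality, whereas your (correct) alternative sidesteps lattice inequalities entirely by sandwiching $p\wedge p'$ in two chains and using the monotonicity of Remark~\ref{remarque:finer}.
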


\begin{proof}[Proof of Lemma \ref{infimumversion}]
Let us consider $p$, $p'$ and $p''$, three partitions in $\mathcal{P}_k$ which satisfy the hypotheses. Using the definitions of $\dashv_b$ and $\sqsupset_b$: 
\begin{align}
\label{eq:4}
{\sf nc}(p' \vee b) &= {\sf nc}(p'' \vee b),\\
\label{eq:5}
{\sf nc}(p'') - {\sf nc}(p'' \vee b) &= {\sf nc}(p) - {\sf nc}(p \vee b).
\end{align} 
Besides, using Lemma \ref{lemme:carac1}, we know that $p' \leq_b p''$ and $p'' \leq_b p$. Thus $p' \leq_b p$: 
\begin{align}
\label{eq:6}
{\sf nc}(p') - {\sf nc}(p' \vee b) + {\sf nc}(p \vee b) - {\sf nc}(p \vee p') = 0.
\end{align}
This allows us to write the following equalities: 
\begin{align*}
{\sf nc}(p'') = {\sf nc}(p'' \vee b) + {\sf nc}(p) - {\sf nc}(p \vee b) &=  {\sf nc}(p' \vee b) + {\sf nc}(p) - {\sf nc}(p \vee b)\\
&={\sf nc}(p) + {\sf nc}(p') - {\sf nc}(p\vee p'),
\end{align*}
where we applied successively the Equations (\ref{eq:5}), (\ref{eq:4}) and (\ref{eq:6}). Thus: 
\begin{align}
\label{eq:7}
{\sf nc}(p'') + {\sf nc}(p\vee p') - {\sf nc}(p) - {\sf nc}(p')  = 0.
\end{align}

Using the triangle inequality, we know that $d(p,p\wedge p')+ d(p \wedge p', p') - d(p,p') \geq 0$, which is equivalent to:
\begin{align}
\label{eq:8}
{\sf nc}(p \wedge p') + {\sf nc}( p \vee p') - {\sf nc}(p) - {\sf nc}(p') \geq 0. 
\end{align}
Since $p''$ is finer than $p'$ and than $p$, $p''$ is finer than $p \wedge p'$:  ${\sf nc}(p \wedge p') \leq {\sf nc}(p'')$. Using Equations (\ref{eq:7}) and (\ref{eq:8}), we get that ${\sf nc}(p'') = {\sf nc}(p \wedge p')$: $p'' = p \wedge p'$. 
\end{proof}

We can now compute the inverse of $G_b$.
\begin{theorem}
\label{inversematrice}
Let $p$ and $p'$ in $\mathcal{P}(X)$. We have: 
\begin{align*}
(G_b^{-1})_{p,p'} = \delta_{ p\wedge p' \sqsupset_b p }\ \delta_{ p' \dashv_b p\wedge p' }\ \mu_f(p\wedge p',p)\ \mu_f(p\wedge p', p'), 
\end{align*}
where for any partition $p_1$ and $p_2$ such that $p_1$ is finer than $p_2$: 
\begin{align*}
\mu_f(p_1,p_2) = (-1)^{{\sf nc}(p_1)- {\sf nc}(p_2)} \prod_{i=3}^{{\sf nc}(p_1)} ((i-1)!)^{r_i},
\end{align*}
where $r_i$ is the number of blocks of $p_2$ which contains exactly $i$ blocks of $p_1$. 
\end{theorem}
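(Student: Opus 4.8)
The plan is to invert the factorization $G_b=C_bS_b$ of Theorem~\ref{th:lienmatriciel}. Since $\dashv_b$ and $\sqsupset_b$ are sub-orders of $\leq_b$ (Lemma~\ref{lemme:carac1}), a single linear extension of $\leq_b$ makes $C_b$, $S_b$ and $G_b$ simultaneously lower triangular; as they have unit diagonal they are invertible and $G_b^{-1}=S_b^{-1}C_b^{-1}$. So the task splits into: (i) computing $S_b^{-1}$ and $C_b^{-1}$, that is, the M\"obius functions of $(\mathcal{P}(X),\sqsupset_b)$ and $(\mathcal{P}(X),\dashv_b)$; (ii) showing that the product $S_b^{-1}C_b^{-1}$ collapses, entrywise, onto a single term.

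\emph{Step (i).} The key point is that each interval of $\sqsupset_b$, and each interval of $\dashv_b$, is a refinement interval of $\mathcal{P}(X)$, possibly with the order reversed. If $p'\sqsupset_b p$, then for any $q$ with $p'\trianglelefteq q\trianglelefteq p$ the monotone quantity ${\sf nc}(q)-{\sf nc}(q\vee b)$ (Remark~\ref{remarque:finer}) lies between the two equal endpoint values, hence is constant, so $[p',p]_{\sqsupset_b}$ is precisely the refinement interval $[p',p]_{\trianglelefteq}$, with the same order. If $p'\dashv_b p$, then $p'\vee b=p\vee b$ (they are $\trianglelefteq$-comparable with the same number of blocks), and for any $q$ with $p\trianglelefteq q\trianglelefteq p'$ the partition $q\vee b$ is squeezed between $p\vee b$ and $p'\vee b$, hence equals $p\vee b$; thus $[p',p]_{\dashv_b}$ coincides, as a set, with the refinement interval $[p,p']_{\trianglelefteq}$, the $\dashv_b$-order on it being the reverse of refinement. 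Now $\mu_f(p_1,p_2)$ is exactly the M\"obius function of $[p_1,p_2]_{\trianglelefteq}$ from its minimum $p_1$ to its maximum $p_2$: by the classical description this interval is isomorphic to $\prod_{B\in p_2}\Pi_{n_B}$, where $\Pi_m$ is the lattice of partitions of an $m$-element set, $n_B$ is the number of blocks of $p_1$ inside $B$, and $\mu_{\Pi_m}(\hat 0,\hat 1)=(-1)^{m-1}(m-1)!$; taking the product over $B$ and using $0!=1!=1$ reproduces $\mu_f(p_1,p_2)$. Using that the minimum-to-maximum M\"obius function of a finite bounded poset is unchanged by order reversal, together with the convention of Definition~\ref{def:Mob}, we obtain
\[
(S_b^{-1})_{p,q}=\delta_{q\sqsupset_b p}\,\mu_f(q,p),\qquad (C_b^{-1})_{q,p'}=\delta_{p'\dashv_b q}\,\mu_f(q,p'),
\]
the first argument of $\mu_f$ being in each case the finer partition.

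\emph{Step (ii).} Expanding the matrix product,
\[
(G_b^{-1})_{p,p'}=\sum_{q\in\mathcal{P}(X)}(S_b^{-1})_{p,q}(C_b^{-1})_{q,p'}=\sum_{q\ :\ q\sqsupset_b p,\ p'\dashv_b q}\mu_f(q,p)\,\mu_f(q,p').
\]
By Lemma~\ref{infimumversion} applied with $p''=q$, every $q$ appearing in this sum equals $p\wedge p'$. Hence the sum has at most one nonzero term: it is nonzero precisely when $p\wedge p'\sqsupset_b p$ and $p'\dashv_b p\wedge p'$, and then its value is $\mu_f(p\wedge p',p)\,\mu_f(p\wedge p',p')$. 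This is the announced formula.

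There is no serious obstacle: the interval identification in Step (i) is elementary, Step (ii) is immediate from Lemma~\ref{infimumversion}, and the value of $\mu_f$ is the classical partition-lattice M\"obius computation. The only delicate point is bookkeeping -- tracking which of $p$, $p'$, $p\wedge p'$ is the minimum of each interval, the resulting order of the two arguments of $\mu_f$, and the invariance of the M\"obius function under turning an interval upside down.
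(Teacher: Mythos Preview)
Your proof is correct and follows the same route as the paper: invert the factorization $G_b=C_bS_b$, use Lemma~\ref{infimumversion} to collapse the sum over $q$ to the single term $q=p\wedge p'$, and identify the M\"obius functions of $\sqsupset_b$ and $\dashv_b$ on intervals with the refinement M\"obius function $\mu_f$. The only cosmetic difference is that you phrase Step~(i) as an interval identification (the $\sqsupset_b$- and $\dashv_b$-intervals coincide, as posets, with refinement intervals), whereas the paper reaches the same conclusion by writing out Rota--Hall's formula and using Remark~\ref{remarque:finer} to equate the chain-counts; these are two ways of saying the same thing.
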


\begin{proof}
Using Theorem \ref{th:lienmatriciel}, we know that $G_b= C_bS_b$. Thus $G_b^{-1} = S_b^{-1}C_b^{-1}$. Let $p$ and $p'$ be two partitions in $\mathcal{P}(X)$: 
\begin{align*}
(G_b^{-1})_{p,p'} = \sum_{p'' \in \mathcal{P}_k} (S_b^{-1})_{p,p''} (C_b^{-1})_{p'',p'}. 
\end{align*}
Since $S_b$, respectively $C_b$, is the matrix of the order $\sqsupset_b$, respectively $\dashv_b$, $S^{-1}_b$, respectively $C^{-1}_b$, is lower triangular for $\sqsupset_b$, respectively $\dashv_b$: for any partitions $p_1$ and $p_2$ in $\mathcal{P}_k$: 
\begin{align*}
(S_b^{-1})_{p_1,p_2} &= \delta_{p_2 \sqsupset_b p_1}(S_b^{-1})_{p_1,p_2},\\
(C_b^{-1})_{p_1,p_2} &= \delta_{p_2 \dashv_b p_1} (C_b^{-1})_{p_1,p_2}. 
\end{align*}
Thus: 
\begin{align*}
(G_b^{-1})_{p,p'} = \sum_{p'' \in \mathcal{P}_k\ |\ p''\sqsupset_b p,\  p' \dashv_b p''} (S_b^{-1})_{p,p''} (C_b^{-1})_{p'',p'}. 
\end{align*}
Using Lemma \ref{infimumversion}, we get that: 
\begin{align*}
(G_b^{-1})_{p,p'}  = (S_b^{-1})_{p,p\wedge p'} (C_b^{-1})_{p \wedge p',p'}. 
\end{align*}
It remains to compute $(S_b^{-1})_{p, p\wedge p'}$ and $(C_b^{-1})_{p\wedge p', p'}$. For sake of clarity, untill the end of the proof, we will forget to specify the base partition $b$. 

Using Rota-Hall's formula, given by Equation (\ref{eq:Rota}), for any $p$ and $p'$ in $\mathcal{P}(X)$:
\begin{align*}
\left(S^{-1}\right)_{p, p \wedge p'} =\sum_{i=0}^{\infty} (-1)^{i} \sum_{(p_0,...,p_i) \in\mathcal{P}_k \mid p = p_0 \neq p_1 \neq ...\neq  p_i = p\wedge p'} \left[\prod_{l=0}^{i-1} \delta_{p_{l+1} \sqsupset p_{l}}\right]. 
\end{align*}
Yet, if $p \wedge p' \sqsupset p$, using Remark \ref{remarque:finer}, for any positive  integer $i$, for any $i+1$-tuple $(p_0,...,p_i)$: 
\begin{align*}
\prod_{l=0}^{i-1} \delta_{p_{l+1} \sqsupset p_{l}} = \prod_{l=0}^{i-1} \delta_{p_{l+1}\trianglelefteq p_{l}}, 
\end{align*} 
where $\trianglelefteq$ was defined at the beginning of Section \ref{sec:geometry}. Thus: 
\begin{align*}
(S^{-1})_{p, p \wedge p'} &= \delta_{p\wedge p' \sqsupset p }\ \sum_{i=0}^{\infty} (-1)^{i}  \sum_{(p_0,...,p_i) \in\mathcal{P}_k \mid p = p_0 \neq p_1 \neq ...\neq  p_i = p\wedge p'} \left[\prod_{l=0}^{i-1} \delta_{p_{l+1} \trianglelefteq p_{l}}\right]. 
\end{align*}
This implies that: 
\begin{align*}
\left(S^{-1}\right)_{p, p \wedge p'}  =  \delta_{p\wedge p' \sqsupset p } (F^{-1})_{p, p\wedge p'}, 
\end{align*}
where $F$ is the matrix such that for any $p_1, p_2 \in \mathcal{P}_k$, $F_{p_1,p_2} = \delta_{p_2 \trianglelefteq p_1}$. The inverse of this matrix is given by: 
\begin{align*}
(F^{-1})_{p_1,p_2} = \mu_f (p_2,p_1), 
\end{align*}
for any  $p_1, p_2 \in \mathcal{P}_k$ such that $p_2 \trianglelefteq p_1$ and where $\mu_{f}$ is the Möbius function for $\trianglelefteq$ and is given in the statement of Theorem \ref{inversematrice} (see Example $2.9$ in \cite{bjorner}). 

Similar arguments allow us to compute the inverse of $C_b$ and to obtain that: 
\begin{align*}
(C_b^{-1})_{p \wedge p', p'} = \delta_{p'\dashv p\wedge p' }\ \mu_{f}(p \wedge p', p'). 
\end{align*}
This allows us to obtain the desired formula for $G_b^{-1}$. 
\end{proof}

\begin{theorem}
The Möbius function for $(\mathcal{P}(X), \leq_b)$, denoted by $\mu_{\leq_b}$, is given by the fact that for any $p_1$ and $p_2$ in $\mathcal{P}(X)$: 
\begin{align*}
\mu_{\leq_b} (p_1,p_2)=\delta_{ p_1 \dashv_b p_1\wedge p_2 }\ \delta_{ p_1\wedge p_2 \sqsupset_b p_2 }\ \mu_f(p_1\wedge p_2,p_1)\ \mu_f(p_1\wedge p_2, p_2). 
\end{align*}
\end{theorem}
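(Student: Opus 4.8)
The plan is to observe that this statement is essentially a re-packaging of Theorem~\ref{inversematrice}, so the only work is bookkeeping of indices. By Definition~\ref{def:Mob}, applied to the finite set $\mathcal{P}(X)$ endowed with the order $\leq_b$ and to the matrix $M = G_b$ (which, as remarked after Definition~\ref{matriceorder}, is lower triangular for $\leq_b$), we have for any $p_1, p_2 \in \mathcal{P}(X)$
\begin{align*}
\mu_{\leq_b}(p_1,p_2) = (G_b^{-1})_{p_2,p_1}.
\end{align*}
So the first step is simply to record this identification.

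Next I would substitute into the formula of Theorem~\ref{inversematrice}. That theorem states, for all $p, p' \in \mathcal{P}(X)$,
\begin{align*}
(G_b^{-1})_{p,p'} = \delta_{p \wedge p' \sqsupset_b p}\ \delta_{p' \dashv_b p \wedge p'}\ \mu_f(p \wedge p', p)\ \mu_f(p \wedge p', p').
\end{align*}
Taking $p = p_2$ and $p' = p_1$ gives
\begin{align*}
(G_b^{-1})_{p_2,p_1} = \delta_{p_2 \wedge p_1 \sqsupset_b p_2}\ \delta_{p_1 \dashv_b p_2 \wedge p_1}\ \mu_f(p_2 \wedge p_1, p_2)\ \mu_f(p_2 \wedge p_1, p_1).
\end{align*}
The final step is to use that $\wedge$ is commutative, i.e. $p_2 \wedge p_1 = p_1 \wedge p_2$, and to reorder the two $\delta$'s and the two $\mu_f$ factors, which yields exactly the claimed expression for $\mu_{\leq_b}(p_1,p_2)$. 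One should also remark that $\mu_f$ is only defined when its first argument is finer than its second, and that the indicator $\delta_{p_1 \wedge p_2 \sqsupset_b p_2}$ (resp. $\delta_{p_1 \dashv_b p_1 \wedge p_2}$) forces, via Lemma~\ref{lemme:carac1} and the definitions of $\sqsupset_b$ and $\dashv_b$, that $p_1 \wedge p_2$ is indeed finer than $p_2$ (resp. than $p_1$); so the right-hand side is well defined on the support of the product of indicators, and vanishes outside it — consistent with the fact that $G_b^{-1}$ is lower triangular for $\leq_b$.

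There is no real obstacle here: all the analytic content (the infimum computation of Lemma~\ref{infimumversion}, the factorization $G_b = C_b S_b$ of Theorem~\ref{th:lienmatriciel}, and the inversion of the triangular matrices $C_b$, $S_b$ against the classical Möbius function $\mu_f$ of the refinement order) has already been carried out in the proof of Theorem~\ref{inversematrice}. The present statement just translates that matrix identity into the language of the Möbius function of the poset $(\mathcal{P}(X), \leq_b)$, so the ``hard part'' is merely making sure the transposition of indices $(p,p') \mapsto (p_2,p_1)$ and the commutativity of $\wedge$ are applied correctly.
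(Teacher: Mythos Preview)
Your proposal is correct and matches the paper's approach: the paper states this theorem immediately after Theorem~\ref{inversematrice} without a separate proof, precisely because it is just the transcription of that matrix identity via Definition~\ref{def:Mob} together with the index swap $(p,p')\mapsto(p_2,p_1)$ and the commutativity of $\wedge$, exactly as you wrote.
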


Let us remark that, as a by-product of the proof of Theorem \ref{inversematrice}, we computed the matrices $C_b^{-1}$ and $S_b^{-1}$, thus we know the Möbius functions for $\dashv_b$, $\sqsupset_b$ and $\leq_b$.

\section{The set $\mathcal{P}_k$ and the Kreweras complement}
\label{sec:sectionPk}
\subsection{Basic facts}
\subsubsection{Definitions}
Let $k$ be an integer, let us consider $2k$ elements which we denote by: $1, …, k$ and $1', …, k'$.
\begin{definition}
The set of partitions $\mathcal{P}_{k}$ is the set $\mathcal{P}(\{1,...,k,1',...,k'\})$.
\end{definition}
If $k=0$, then $\mathcal{P}_k = \{\emptyset\}$ and ${\sf nc}(\emptyset)= 0$. Let $p$ be an element of $\mathcal{P}_k$. When we represent graphically the partition $p$, we will consider two rows: $k$ vertices are in the top row, labeled by $1$ to $k$ from left to right and $k$ vertices are in the bottom row, labeled from $1'$ to $k'$ from left to right. An example is given in Figure \ref{fig:partition}. 

\begin{figure}[h!]
 \centering
  \includegraphics[width=150pt]{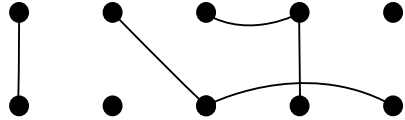}
 \caption{The partition $p=\{ \{1,1'\}, \{2'\}, \{2,3',5'\}, \{3,4,4'\}, \{5\}\}$.}
 \label{fig:partition}
\end{figure}

There exists a special partition, called the identity, in $\mathcal{P}_k$ given by: 
\begin{align*}
\mathrm{id}_{k} = \{ \{i,i'\}, i=1...k\}.
\end{align*}
The diagram of  $\mathrm{id}_{5}$  is drawn in Figure \ref{id5}.
\begin{figure}[h!]
 \centering
  \includegraphics[width=150pt]{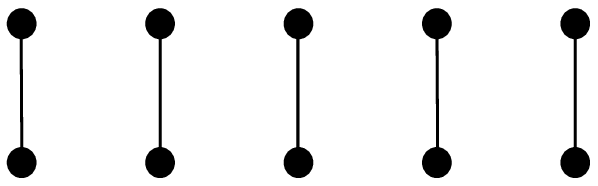}
 \caption{The partition $\mathrm{id}_{5}$.}
 \label{id5}
\end{figure}

From now on, we will always suppose that the base partition for the orders is $\mathrm{id}_{k}$. Besides, we will omit the index which was used to specify the chosen base partition. Thus, using Section \ref{sec:geometry}, we have three new orders on $\mathcal{P}_k$, $\leq$, $\dashv$ and $\sqsupset$, the notions of admissible splits and admissible gluings, three matrices of order $G$, $C$ and $S$, and we know the M\"obius function for each order.

\subsubsection{Irreducible partitions}
\label{sub:irredu}
We define the {\em cycles} of $p$ as the blocks of $p \vee \mathrm{id}_k$. A partition $p$ is {\sf irreducible} if ${\sf nc}(p \vee \mathrm{id}_k) =1$. If it is not the case, $p$ is {\em composed}. In particular, since ${\sf nc}(\emptyset \vee \mathrm{id}_0) = 0$, the empty partition is composed.  We will need a notion of weak irreducibility later: this is based on the notions of extraction. 

Let $J$ be a subset of $\{1,…,k\}\cup\{1',…,k'\}$. Let us denote by $J^{s}$ the symmetrization of $J$: $$J^{s} = J \cup \{ j \in \{1',…,k'\}, \exists i \in J \cap \{1,…,k\}, j=i' \} \cup \{ i \in \{1,…,k\}, i' \in J \}.$$ In order to get the {\sf extraction} of $p$ to $J$, denoted $p_{J}$, let us take the complete graph which represents $p$, let us erase all the vertices which are not in $J^{s}$ and all the edges which are not between two vertices in $J^{s}$ and at last let us label the remaining vertices from left to right. This is the graph of $p_{J}$.

\begin{definition}
\label{supportpart}
The {\em support} of $p$ is: $$ {\sf S}( p) = \{1,…,k\} \setminus \{i \in \{1,…,k\}, \{i, i'\} \in p\}.$$ 
The partition $p$ is {\em weakly-irreducible} if $p_{{\sf S}( p)}$ is either irreducible or equal to the empty partition. 
\end{definition}
In particular, the permutation $\mathrm{id}_k$ is weakly-irreducible. Let us define a notion of {\sf exclusive-irreducibility}. For any integer $k$, ${\sf 0}_k$ is the partition $\{1,...,k,1',...,k'\}$.  
\begin{definition}
\label{supportexclusive}
A partition $p$ is  {\em exclusive-irreducible} if there exists a cycle $c_0$ of $p$ such that any other cycle of $p$ is equal to a partition of the form ${\sf 0}_l$. If the cycle $c_0$ is unique, it is called the {\em exclusive-support} of $p$ and it is denoted by ${\sf Supp}^{c}(p)$. 
\end{definition}

\subsection{Special subsets of $\mathcal{P}_k$}
Since we have chosen a special set $X=\{1,...,k\}\cup\{1',...,k'\}$ and a special base partition, $\mathrm{id}_k$,  in order to define the order on $\mathcal{P}_k$, we can state further results on $\leq$ and ${\sf Sp}(p)$. 

\subsubsection{Definitions}
\label{sec:defA}
\begin{definition}
There exist some special subsets of $\mathcal{P}_k$: 
\begin{itemize}
\item $\mathcal{D}_k$, the set of partitions $p \in \mathcal{P}_{k}$ which are coarser than $\mathrm{id}_{k}$, 
\item $\mathcal{B}_{k}$, the set of Brauer partitions: these are the partitions $p \in \mathcal{P}_{k}$ such that for any block $s$ of $p$, $\#s = 2$. 
\item $\mathfrak{S}_{k}$, the set of permutations: these are the partitions $p \in \mathcal{P}_{k}$ such that for any block $s$ of $p$, $\#\left(s\cap \{1,...,k\}\right) =\#\left(s\cap \{1',...,k'\}\right) =1$. For any permutation $\sigma$, seen as a bijection from $\{1,...,k\}$ to itself, we can associate the partition $\sigma \in \mathfrak{S}_{k}$: 
\begin{align*}
\sigma = \big\{\{i, \sigma(i)'\} \mid  i \in \{1,...,k\}\big\}.  
\end{align*}
\end{itemize}
\end{definition}

\begin{remarque}
In this section, we focus on these special subsets, yet, in \cite{Gab2}, we will use also the two sets: 
\begin{itemize}
\item $\mathcal{H}_k$, the set of partitions $p$ such that for any block $s$ of $p$, $\#s \in 2\mathbb{N}$,
\item $\mathcal{B}s_k$, the set of partitions $p$ such that for any block $s$ of $p$, $\#s \leq 2$. 
\end{itemize}
\end{remarque}

In $\mathcal{B}_k$ and $\mathfrak{S}_k$ some elements are important: the transpositions and the Weyl contractions. Let $i$ and $j$ be two distinct integers in $\{1,...,k\}$. 
\begin{definition}
\label{contractionettranspo}
The transposition $(i,j)$ in $\mathfrak{S}_k$ is: 
\begin{align*}
(i,j) =  \{\{i',j\},\{i,j'\}\} \cup \{\{l,l'\}, l \notin \{i,j\} \}. 
\end{align*}
The Weyl contraction $[i,j]$ in $\mathcal{B}_k$ is: 
\begin{align*}
[i,j] = \{\{i,j\},\{i',j'\}\} \cup \{\{l,l'\}, l \notin \{i,j\} \}.
\end{align*}
\end{definition}

\begin{figure}[h!]
 \centering
  \includegraphics[width=130pt]{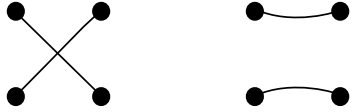}
 \caption{The transposition $(1,2)$ and the Weyl contraction $[1,2]$.}
 \label{Weyltransp}
\end{figure}

Let $(1, …, k)$ be the $k$-cycle in $\mathfrak{S}_k$ which sends $i$ on $i+1$ for any $i \in \{1,...,k-1\}$ and which sends $k$ on $1$. It is natural to consider the restriction of the geodesic order $\leq$ to the two sets $[\mathrm{id}_k, (1,...,k)] \cap \mathfrak{S}_k$ and $\mathcal{D}_k$: these can be identified as some well known ordered sets, repectively the non crossing partitions ${\sf NC}_k$ and the partitions ${\sf P}_k$ of $k$ elements. 

Using Theorem \ref{def:dist}, we see that the restriction of the distance $d$ on $\mathfrak{S}_k$ gives the usual Cayley distance on $\mathfrak{S}_k$ (Lemma $6.26$ of \cite{Levymaster}): the order is thus the usual geodesic order. The reader can understand why we used the name of Cayley graph in Definition \ref{Cayley}: if we forget about the partitions not in $\mathfrak{S}_k$ and replace the paths of length two which join two elements of $\mathfrak{S}_k$ by edges, we get the Cayley graph of $\mathfrak{S}_k$. P. Biane (\cite{Biane1}, \cite{Biane2}) showed that non-crossing partitions can be described using the geodesic condition in the Cayley graph of $\mathfrak{S}_k$: they are the elements in the geodesics between the identity and the $k$-cycle $(1,...,k)$. We recall that the order $\trianglelefteq$ was defined at the beginning of Section \ref{sec:geometry}. 

\begin{theorem}
\label{th:noncrossing}
The ordered set $([\mathrm{id}_k, (1,...,k)] \cap \mathfrak{S}_k, \leq)$ is isomorphic to $({\sf NC}_k, \trianglelefteq)$. 
\end{theorem}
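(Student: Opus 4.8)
The plan is to build an explicit order-isomorphism between $([\mathrm{id}_k,(1,\dots,k)]\cap\mathfrak{S}_k,\leq)$ and $({\sf NC}_k,\trianglelefteq)$ and to check it is compatible with the two orders. First I would recall Biane's description: a permutation $\sigma\in\mathfrak{S}_k$ lies on a geodesic from $\mathrm{id}_k$ to the $k$-cycle $\gamma=(1,\dots,k)$ in the Cayley graph of $\mathfrak{S}_k$ (with transpositions as generators) if and only if the cycles of $\sigma$, read in the cyclic order induced by $\gamma$, form a non-crossing partition of $\{1,\dots,k\}$; the map $\Phi$ sending such a $\sigma$ to the partition whose blocks are the cycles of $\sigma$ is then a bijection onto ${\sf NC}_k$. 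Since by the remark preceding the theorem the restriction of $d$ to $\mathfrak{S}_k$ is the Cayley distance, the segment $[\mathrm{id}_k,\gamma]$ computed with our distance $d$ coincides, inside $\mathfrak{S}_k$, with the set of geodesic permutations in Biane's sense; hence $\Phi$ is already known to be a bijection between the two underlying sets, and only the order statement needs proof.

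Next I would translate the order $\leq$ on the left-hand side into a statement about cycles. By definition $\sigma'\leq\sigma$ means $\sigma'\in[\mathrm{id}_k,\sigma]$, i.e. $d(\mathrm{id}_k,\sigma')+d(\sigma',\sigma)=d(\mathrm{id}_k,\sigma)$; restricted to $\mathfrak{S}_k$ this is the statement that $\sigma'$ lies on a geodesic from $\mathrm{id}_k$ to $\sigma$ in the Cayley graph, equivalently (the standard fact about the Cayley metric) that $\sigma'^{-1}\sigma$ can be written as a product of $|\sigma|-|\sigma'|$ transpositions with $|\cdot|$ the minimal transposition length, equivalently that in the lattice of non-crossing partitions refining the partition of $\sigma$ one has $\Phi(\sigma')\trianglelefteq\Phi(\sigma)$. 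Concretely, $\sigma'\leq\sigma$ iff every cycle of $\sigma'$ is contained in a cycle of $\sigma$ and within each cycle of $\sigma$ the cycles of $\sigma'$ refine it non-crossingly — but once we know both $\sigma$ and $\sigma'$ are themselves in $[\mathrm{id}_k,\gamma]$, the non-crossing condition is automatic, so $\sigma'\leq\sigma$ reduces exactly to ``$\Phi(\sigma')$ is finer than $\Phi(\sigma)$''. I would make this precise using Theorem \ref{def:dist}: writing $|\tau|=k-{\sf nc}(\Phi(\tau))$ for $\tau\in\mathfrak{S}_k$ and noting ${\sf nc}(\sigma\vee\sigma')$ equals $k$ minus the number of blocks of $\Phi(\sigma)\vee\Phi(\sigma')$, the defining equation $d(\mathrm{id}_k,\sigma')+d(\sigma',\sigma)=d(\mathrm{id}_k,\sigma)$ becomes, after substituting the formula of Theorem \ref{def:dist} for all three distances, a relation that forces $\Phi(\sigma)\vee\Phi(\sigma')=\Phi(\sigma)$ and $\Phi(\sigma')\wedge\Phi(\sigma)$-type count conditions equivalent to $\Phi(\sigma')\trianglelefteq\Phi(\sigma)$.

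Thus the proof structure is: (1) cite Biane to get the bijection $\Phi$ and the identification of the underlying set of $[\mathrm{id}_k,\gamma]\cap\mathfrak{S}_k$ with ${\sf NC}_k$; (2) using Theorem \ref{def:dist} and the Cayley-metric identification, show $\sigma'\leq\sigma$ (both in the segment $[\mathrm{id}_k,\gamma]$) is equivalent to $\Phi(\sigma')\trianglelefteq\Phi(\sigma)$ — the forward direction is a computation with the distance formula, the backward direction follows because if $\Phi(\sigma')$ refines $\Phi(\sigma)$ and both are non-crossing with respect to $\gamma$ then one can interpolate a geodesic $\mathrm{id}_k\to\sigma'\to\sigma$ by splitting cycles compatibly; (3) conclude $\Phi$ is an order-isomorphism. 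I expect the main obstacle to be step (2): one must be careful that $\leq$ here is genuinely the geodesic order inside all of $\mathcal{P}_k$ restricted to permutations, and show this restricted order does not see the non-permutation partitions — i.e. that if $\sigma,\sigma'\in\mathfrak{S}_k$ and $\sigma'\in[\mathrm{id}_k,\sigma]$ as partitions, then a geodesic can be chosen staying within $\mathfrak{S}_k$. This is exactly the content of the remark that the restriction of $d$ to $\mathfrak{S}_k$ is the Cayley distance, combined with the fact that $\gamma$ and $\mathrm{id}_k$ are permutations, so any $d$-geodesic between them passes only through permutations; I would spell this out, then the order comparison is the known lattice statement for ${\sf NC}_k$ proved by Biane, and the rest is bookkeeping.
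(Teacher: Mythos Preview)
Your overall strategy---cite Biane for the bijection $\Phi$ between $[\mathrm{id}_k,\gamma]\cap\mathfrak{S}_k$ and ${\sf NC}_k$, then check that $\leq$ corresponds to $\trianglelefteq$ under $\Phi$---is exactly what the paper does. In fact the paper gives no proof of its own: it simply records, just before the statement, that the restriction of $d$ to $\mathfrak{S}_k$ is the usual Cayley distance, and then attributes the identification (set \emph{and} order) to Biane. So your proposal is not merely aligned with the paper's approach; it is a fuller write-up of what the paper leaves implicit.

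Two remarks on the details. First, your ``main obstacle'' is a non-issue. The relation $\sigma'\leq\sigma$ is the \emph{metric} condition $d(\mathrm{id}_k,\sigma')+d(\sigma',\sigma)=d(\mathrm{id}_k,\sigma)$; all three terms involve only permutations, and once you know $d|_{\mathfrak{S}_k}$ equals the Cayley distance this is literally the Cayley-graph geodesic condition. You never need to argue that an actual $d$-geodesic path in $\mathcal{P}_k$ can be chosen inside $\mathfrak{S}_k$ (and indeed geodesics in $\mathcal{P}_k$ between permutations need not stay in $\mathfrak{S}_k$; the paper only proves later, in Lemma~\ref{equageo}, the weaker statement $[\mathrm{id}_k,\sigma]\cap\mathcal{B}_k=[\mathrm{id}_k,\sigma]\cap\mathfrak{S}_k$). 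Second, your formula ``${\sf nc}(\sigma\vee\sigma')$ equals $k$ minus the number of blocks of $\Phi(\sigma)\vee\Phi(\sigma')$'' is not correct: from $d(\sigma,\sigma')=k-{\sf nc}(\sigma\vee\sigma')$ and the Cayley-distance identity one gets ${\sf nc}(\sigma\vee\sigma')=c(\sigma'^{-1}\sigma)$, the number of cycles of $\sigma'^{-1}\sigma$, which has no simple expression in terms of $\Phi(\sigma)\vee\Phi(\sigma')$ in general. You do not need that formula anyway: once $\leq$ on $\mathfrak{S}_k$ is identified with the Cayley geodesic order, the equivalence $\sigma'\leq\sigma\Leftrightarrow\Phi(\sigma')\trianglelefteq\Phi(\sigma)$ for $\sigma,\sigma'\in[\mathrm{id}_k,\gamma]$ is precisely Biane's theorem, and you can cite it directly.
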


 Let us consider the restriction of $\leq$ to $\mathcal{D}_k$. 

\begin{theorem}
\label{th:ident}
The ordered set $(\mathcal{D}_k, \leq )$ is isomorphic to $({\sf P}_k, \trianglelefteq )$. 
\end{theorem}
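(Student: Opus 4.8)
The plan is to produce an explicit bijection $\Phi\colon\mathcal{D}_k\to{\sf P}_k$ and to show it carries the geodesic order $\leq$ (with base $\mathrm{id}_k$) to the refinement order $\trianglelefteq$. A partition $p\in\mathcal{D}_k$ is coarser than $\mathrm{id}_k$, so every block of $p$ is a union of pairs $\{i,i'\}$; hence there is a unique partition $\bar p$ of $\{1,\dots,k\}$ with $p=\{\bigcup_{i\in A}\{i,i'\}\mid A\in\bar p\}$, and we set $\Phi(p)=\bar p$. This is clearly a bijection, with the obvious inverse. Moreover, for $p,p'\in\mathcal{D}_k$ a block of $p'$ is contained in a block of $p$ if and only if the corresponding index set of $\bar p'$ is contained in one of $\bar p$; thus both $\Phi$ and $\Phi^{-1}$ preserve $\trianglelefteq$. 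It therefore suffices to prove that on $\mathcal{D}_k$ the geodesic order $\leq$ coincides with $\trianglelefteq$.

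The point is that the choice of base partition $\mathrm{id}_k$ collapses $\leq$ to ordinary refinement on $\mathcal{D}_k$, in parallel with the Remark that $\trianglelefteq$ is the geodesic order based at ${\sf 0}_X$. Indeed, if $p,p'\in\mathcal{D}_k$ then $\mathrm{id}_k\trianglelefteq p$ and $\mathrm{id}_k\trianglelefteq p'$, so $p\vee\mathrm{id}_k=p$ and $p'\vee\mathrm{id}_k=p'$. Substituting into the defect formula (\ref{definitiondefect}) with $b=\mathrm{id}_k$ gives
\[
{\sf df}(p',p)={\sf nc}(p')-{\sf nc}(p'\vee\mathrm{id}_k)-{\sf nc}(p\vee p')+{\sf nc}(p\vee\mathrm{id}_k)={\sf nc}(p)-{\sf nc}(p\vee p').
\]
Since $p\vee p'$ is coarser than $p$, this is $\geq 0$, with equality if and only if $p\vee p'=p$, i.e.\ if and only if $p'\trianglelefteq p$. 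As $p'\leq p\iff{\sf df}(p',p)=0$, we conclude $p'\leq p\iff p'\trianglelefteq p$ on $\mathcal{D}_k$, and combining with the first paragraph, $\Phi$ is an isomorphism of ordered sets.

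If one prefers to avoid the direct computation, the same conclusion follows from Theorem \ref{th:lienmatriciel}: $\mathcal{D}_k$ is closed under $\vee$ (an upper bound of partitions coarser than $\mathrm{id}_k$ is again coarser than $\mathrm{id}_k$), so for $p'\leq p$ one has $p\vee p'\in\mathcal{D}_k$ with $p\vee p'\dashv p$, and ${\sf nc}((p\vee p')\vee\mathrm{id}_k)={\sf nc}(p\vee\mathrm{id}_k)$ forces $p\vee p'=p$; conversely if $p'\trianglelefteq p$ then $p\vee p'=p$, so $p\vee p'\dashv p$ trivially and $p'\sqsupset p$ because ${\sf nc}(q)-{\sf nc}(q\vee\mathrm{id}_k)=0$ for every $q\in\mathcal{D}_k$, whence $p'\leq p$ by Theorem \ref{th:lienmatriciel}. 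I do not expect a genuine obstacle here: the only points needing a little care are checking that membership in $\mathcal{D}_k$ is exactly equivalent to the block structure defining $\Phi$ (so that $\Phi$ is well defined and bijective), and observing that the base $\mathrm{id}_k$ is precisely what trivializes the terms involving $\vee\,\mathrm{id}_k$.
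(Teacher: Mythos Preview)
Your proof is correct and essentially matches the paper's. The bijection $\Phi$ is the same as the paper's $\phi$ (restriction to $\{1,\dots,k\}$), and the core observation is identical: on $\mathcal{D}_k$ one has $p\vee\mathrm{id}_k=p$, so $p'\leq p$ reduces to $p\vee p'=p$, i.e.\ $p'\trianglelefteq p$. The only difference is cosmetic: your main argument reaches this via a direct computation of the defect ${\sf df}(p',p)$, while the paper invokes Theorem~\ref{th:geocharact} (the ${\sf Gl}/{\sf Sp}$ characterization), which is exactly the alternative you sketch in your third paragraph.
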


\begin{proof}
Let $p$ be a partition in $\mathcal{D}_k$. Let us consider $\phi(p)$, the partition in ${\sf P}_k$ which is obtained by considering the restriction of $p$ to $\{1,...,k\}$: if $p^1$, ..., $p^r$ are the blocks of $p$, 
\begin{align*}
\phi(p) = \{ p^i \cap \{1,...,k\} | i \in \{1,...,r\}\}. 
\end{align*}
It is straightforward to see that the application $\phi: \mathcal{D}_k \to {\sf P}_k$ is a bijection. Let $p$ and $p'$ two partitions in  $\mathcal{D}_k$. Let us prove that $p' \leq p$ if and only if $\phi(p') \trianglelefteq \phi(p)$. Using Theorem \ref{th:geocharact}, the condition $p' \leq p$ is equivalent to the fact that $p' \vee p \in {\sf Gl}(p)$ and $p' \in {\sf Sp}(p \vee p')$. But for any $p$ and $p'$ in $\mathcal{D}_k$, $ {\sf Gl}(p) = \{p\}$ and $p'$ is always in ${\sf Sp}(p \vee p')$. Thus, $p' \leq p$ is equivalent to $p' \vee p = p$ which is equivalent to $p' \trianglelefteq p$. It is easy to see that this condition is equivalent to $\phi(p') \trianglelefteq \phi(p)$. 
\end{proof}

Thus, both partitions and non-crossing partitions of $k$ elements can be seen as subset of an unique ordered set $(\mathcal{P}_k, \leq)$. 

\subsubsection{No Brauer element is smaller than a permutation}
In the following lemma, we show that the geodesics in the Cayley graph of $\mathcal{P}_k$ between two permutations either stay in the set of permutations or intersect $\mathcal{P}_k\setminus \mathcal{B}_k$. 

\begin{lemme}
\label{equageo}
Let $\sigma \in \mathfrak{S}_k$, then $[\mathrm{id}_k, \sigma] \cap \mathcal{B}_k = [\mathrm{id}_k, \sigma] \cap {\mathfrak{S}_k}$.
\end{lemme}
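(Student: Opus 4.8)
The plan is to show both inclusions; the inclusion $[\mathrm{id}_k,\sigma]\cap\mathfrak{S}_k\subseteq [\mathrm{id}_k,\sigma]\cap\mathcal{B}_k$ is immediate because $\mathfrak{S}_k\subseteq\mathcal{B}_k$ (every block of a permutation has exactly one element in the top row and one in the bottom row, hence has size $2$). So the real content is the reverse inclusion: if $p\in\mathcal{B}_k$ lies on a geodesic from $\mathrm{id}_k$ to $\sigma$, then $p$ is in fact a permutation. I would argue by a counting/parity obstruction tied to the quantity $\mathrm{df}(p,\sigma)=d(\mathrm{id}_k,p)+d(p,\sigma)-d(\mathrm{id}_k,\sigma)$, which by hypothesis equals $0$.

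First I would set up notation for a Brauer partition $p$: its blocks come in three types — ``through'' blocks $\{i,j'\}$ (one top, one bottom vertex), ``cup'' blocks $\{i,j\}$ (two top vertices), and ``cap'' blocks $\{i',j'\}$ (two bottom vertices). Let $c$ be the number of cup blocks; then the number of cap blocks is also $c$ (since the number of top and bottom vertices are equal and through-blocks use one of each), and $p\in\mathfrak{S}_k$ exactly when $c=0$. The strategy is to show that if $c\geq 1$ then $\mathrm{df}(p,\sigma)>0$, contradicting $p\in[\mathrm{id}_k,\sigma]$.

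To make this quantitative I would use Theorem~\ref{def:dist} to expand all three distances in terms of block counts and joins with $\mathrm{id}_k$. Recall $\mathrm{id}_k$ is a permutation, so $\mathrm{nc}(\sigma\vee\mathrm{id}_k)$ is the number of cycles of the permutation $\sigma\sigma_{\mathrm{id}}^{-1}$-type object; concretely for $\sigma\in\mathfrak{S}_k$ one gets $d(\mathrm{id}_k,\sigma)=k-\mathrm{nc}(\sigma\vee\mathrm{id}_k)$, the usual Cayley distance (as noted in the text before Theorem~\ref{th:noncrossing}). For a Brauer partition $p$ with $c$ cup blocks one has $\mathrm{nc}(p)=k-c$ (there are $k-2c$ through-blocks plus $c$ cups plus $c$ caps, totalling $k-c$), and a short combinatorial argument shows that gluing $p$ with the identity links each cup $\{i,j\}$ to the cap containing $i',j'$-images in a way that forces $\mathrm{nc}(p\vee\mathrm{id}_k)\leq k-2c + (\text{contribution of cycles through the cup/cap matching})$. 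The key inequality to extract is that passing through a cup–cap pair ``costs'' strictly more than it would along a path staying inside $\mathfrak{S}_k$; equivalently, using Theorem~\ref{th:geocharact}, $p\in[\mathrm{id}_k,\sigma]$ would require $p\vee\sigma\in{\sf Gl}(p)$ and $p\in{\sf Sp}(p\vee\sigma)$, and I would check that an admissible gluing followed by admissible splits starting from the permutation $\sigma$ can never produce a block of size $2$ of cup or cap type without increasing $\mathrm{nc}(\cdot\vee\mathrm{id}_k)$ illegally — because splitting inside $\mathrm{Sp}$ must raise $\mathrm{nc}(\cdot\vee\mathrm{id}_k)$ at each step (Definition~\ref{def:pivotal}), and a cup/cap block is never pivotal when the ambient join with $\mathrm{id}_k$ is that of a permutation configuration.

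The main obstacle I anticipate is the bookkeeping in the middle step: proving cleanly that creating a cup/cap forces a strictly positive defect. The cleanest route is probably to avoid raw distance computations and instead run the argument through Theorem~\ref{th:geocharact} and Lemma~\ref{fin}: write $p\vee\sigma = q$, note $q\in{\sf Gl}(\sigma)$ so $q$ is obtained from $\sigma$ by admissible gluings and hence $q$ still has all its ``top-to-bottom connectivity'' inherited from a permutation, then observe that any $p\in{\sf Sp}(q)$ obtained by iterated $\Delta$-splits keeps the property that the restriction of each block to the top row and to the bottom row has the same size — because a pivotal split of a block $b$ must cut a cycle of $p\vee\mathrm{id}_k$, which (by an induction on the number of splits, using that $\sigma$ is a permutation) can only happen by separating a through-strand, never by isolating two top vertices or two bottom vertices. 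This inductive invariant — ``every block meets the top row and the bottom row in sets of equal cardinality'' — is preserved under admissible gluings from $\sigma$ and under admissible splits, and it characterizes exactly $\mathfrak{S}_k\cap$ (Brauer if we also impose block size $2$); so $p\in\mathfrak{S}_k$, which is what we want. I would close by remarking that the same invariant argument in fact shows $[\mathrm{id}_k,\sigma]\subseteq\mathfrak{S}_k$ fails in general (there are composed partitions in the segment), so the Brauer hypothesis is genuinely used to pin down block size $2$, while the permutation structure of both endpoints is what propagates the equal-cardinality invariant.
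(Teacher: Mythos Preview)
Your second approach --- the invariant ``every block meets the top and bottom rows in sets of equal cardinality'' propagated through Theorem~\ref{th:geocharact} --- is correct and genuinely different from the paper's proof. The paper argues by minimal counterexample: take $\sigma$ of least distance with a non-permutation Brauer element $b$ on the geodesic, slide to the first step $b'$ along that geodesic, use minimality to force $b'$ to be a Weyl contraction $[i,j]$, reduce to $\sigma$ a single cycle, and then check directly that $[i,j]\notin{\sf Sp}({\sf 0}_k)$ because ${\sf nc}([i,j])-{\sf nc}([i,j]\vee\mathrm{id}_k)=1\neq 0$. Your invariant argument is more conceptual and in fact proves the stronger statement that \emph{every} $p\in[\mathrm{id}_k,\sigma]$ has top/bottom-balanced blocks, from which the Brauer case follows immediately.

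That said, you assert the key step --- preservation of the invariant under admissible splits --- without justification, and this is where the real content lies. Here is the missing argument. Suppose $q$ has balanced blocks and we split a pivotal block $b$ into $b_1,b_2$, so that the cycle $C$ of $q\vee\mathrm{id}_k$ containing $b$ breaks into two cycles $C_1\ni b_1$ and $C_2\ni b_2$. Since $\mathrm{id}_k$ has blocks $\{i,i'\}$, every cycle of the join is automatically top/bottom-balanced; in particular $|C_1\cap\{1,\dots,k\}|=|C_1\cap\{1',\dots,k'\}|$. Now $C_1$ is a disjoint union of $b_1$ together with some unchanged blocks of $q$, each of which is balanced by hypothesis. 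Subtracting, $b_1$ is balanced; symmetrically so is $b_2$. This closes the induction.

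Two minor corrections: in your first (abandoned) approach you write ${\sf nc}(p)=k-c$, but $(k-2c)+c+c=k$; every Brauer partition has exactly $k$ blocks. And your phrase ``can only happen by separating a through-strand'' is not quite the right picture --- intermediate partitions in the split chain need not have blocks of size $2$, so there is no through-strand to speak of; the balance invariant is what does the work.
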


\begin{proof}
We do a proof by contradiction. Let $S \subset \mathfrak{S}_k$ be the set of permutations such that $[\mathrm{id}_k, \sigma]\cap{\mathcal{B}_k} \ne [\mathrm{id}_k, \sigma]\cap{\mathfrak{S}_k}.$ Let $\sigma \in S$ be a permutation such that $d(\mathrm{id}_k, \sigma) = \min\limits_{\sigma' \in S} d(\mathrm{id}_k,\sigma')$. Let us consider $b$ an element of $\mathcal{B}_k\setminus \mathfrak{S}_k$ such that $b \in [\mathrm{id}_k,\sigma]\cap{\mathcal{B}_k}$. There exists a geodesic in $\mathcal{B}_k$ which goes through $b$ and goes from $\mathrm{id}_k$ to $\sigma$. Let $b' \in \mathcal{B}_k$ be the unique element on this geodesic such that $d(\mathrm{id}_k, b')=1$. Let us remark that $b \in [b', \sigma]\cap{\mathcal{B}_k}$: this implies that $b'$ can not be a permutation. Indeed, if $b'$ was a permutation, then $[b', \sigma]\cap{\mathcal{B}_k} \ne [b',\sigma]\cap{\mathfrak{S}_k}$ and thus, $[\mathrm{id}_k, b'^{-1}\sigma]\cap{\mathcal{B}_k}\ne [\mathrm{id}_k, b'^{-1}\sigma]\cap {\mathfrak{S}_k} $. Yet $d(\mathrm{id}_k, b'^{-1}\sigma) = d(b',\sigma) = d(\mathrm{id}_k, \sigma)-1$. This would contradict the fact that $d(\mathrm{id}_k, \sigma) = \min_{\sigma' \in S} d(\mathrm{id}_k,\sigma')$. Thus $b'$ must be an element of $\mathcal{B}_k\setminus \mathfrak{S}_k$. Since $d(\mathrm{id}_k,b') = 1$, there exist $i$ and $j$ in $\{1,…,k\}$ such that $b'$ is equal to the Weyl contraction $[i,j]$ in $\mathcal{B}_k$. Thus there exist $i$ and $j$ in $\{1,…,k\}$ such that $[i,j] \in [\mathrm{id}_k,\sigma]$. Using Theorem \ref{th:geocharact}, this means that $[i,j] \vee \sigma \in {\sf Gl}(\sigma)$: $i$ and $j$ must be in the same cycle of $\sigma$. We can suppose that $\sigma$ is a cycle of size $k$. It is not difficult to see graphically that $[i,j] \vee \sigma = {\sf 0}_{k}$ where we recall that ${\sf 0}_k = \{1,...,k,1',...,k'\}$. By Theorem \ref{th:geocharact}, $[i,j] \in {\sf Sp}({\sf 0}_k)$: this is not possible since ${\sf nc}({\sf 0}_{k}) - {\sf nc}({\sf 0}_k \vee \mathrm{id}_k) = 0$ and ${\sf nc}([i,j]) - {\sf nc}([i,j] \vee \mathrm{id}_k) = 1$.
\end{proof}

\begin{remarque}
The same result holds if one replaces $\mathcal{B}_k$ by $\mathcal{B}s_k$. 
\end{remarque}

\subsubsection{Admissible splittings}
We will consider the interaction between Brauer elements and the notion of admissible splitting. 
\begin{lemme}
\label{pcoarsersigma}
Let $p \in \mathcal{P}_k$, the set ${\sf Sp}( p ) \cap \mathcal{B}_k$ is either empty or has exactly one element. In particular, for any $p \in \mathcal{B}_k$, ${\sf Sp}(p) = \{p\}$. 
\end{lemme}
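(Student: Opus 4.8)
The claim has two parts: first that ${\sf Sp}(p) \cap \mathcal{B}_k$ has at most one element, and second that if $p$ is itself a Brauer partition then ${\sf Sp}(p) = \{p\}$. I would start with the second part since it is the cleaner one. A partition $q \in {\sf Sp}(p)$ is obtained from $p$ by a sequence of admissible splits, each of which strictly increases ${\sf nc}$; in particular $q$ is finer than $p$ and ${\sf nc}(q) - {\sf nc}(q \vee \mathrm{id}_k) = {\sf nc}(p) - {\sf nc}(p \vee \mathrm{id}_k)$ by Lemma \ref{fin} together with Remark \ref{remarque:finer}. If $p \in \mathcal{B}_k$, every block of $p$ has exactly $2$ elements, so any proper refinement of $p$ must split some $2$-element block $\{x,y\}$ into the two singletons $\{x\},\{y\}$. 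I would check (graphically, as the paper does elsewhere) that splitting such a block never cuts a block of $p \vee \mathrm{id}_k$ into two: if $\{x,y\} = \{i,i'\}$ then the edge was already absorbed in $p \vee \mathrm{id}_k$ by the identity pair, and if $\{x,y\}$ links two distinct cycles then removing it does not disconnect the resulting cycle of $p \vee \mathrm{id}_k$ since the identity edges $\{i,i'\}$ at $x$ and $y$ still tie everything together. Hence $\{x,y\}$ is never a pivotal block, so $\Delta_{\mathrm{id}_k}(p) = \emptyset$ and ${\sf Sp}(p) = \{p\}$.

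For the first part, suppose $q_1, q_2 \in {\sf Sp}(p) \cap \mathcal{B}_k$. By Lemma \ref{fin}, $q_1 \sqsupset p$ and $q_2 \sqsupset p$, i.e. $q_1, q_2 \in [\mathrm{id}_k, ?]$ — more precisely both are finer than $p$, both lie in the geodesic order above $p$ with base $\mathrm{id}_k$, and both satisfy ${\sf nc}(q_j) - {\sf nc}(q_j \vee \mathrm{id}_k) = {\sf nc}(p) - {\sf nc}(p \vee \mathrm{id}_k)$. The key observation is that a Brauer partition $q$ is determined, among refinements of $p$ preserving this invariant, by which $2$-blocks of $p$ it refines and how: since $q$ is a refinement of $p$ with all blocks of size $2$, on each block $b$ of $p$ the restriction $q|_b$ is a perfect matching of $b$. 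I would show that the admissibility constraint forces, inside each block $b$ of $p$, a unique such matching. Concretely, going from $p$ to $q$ by admissible splits, each split must increase ${\sf nc}(\cdot \vee \mathrm{id}_k)$; tracking this on a single block $b$, the only way to reach a Brauer partition while always cutting a cycle of the running partition is to peel off the identity pairs $\{i,i'\}$ that are "available" inside $b$, and the set of such pairs and the order-independent result are forced. This pins down $q|_b$ uniquely, hence $q$ uniquely, giving $q_1 = q_2$.

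The technical heart — and the step I expect to be the main obstacle — is the bookkeeping that shows, on each block $b$ of $p$, that there is at most one Brauer refinement reachable by admissible splits, and describing it. The subtlety is that an admissible split of $p$ need not act "within one block of $p$" in an independent way once we iterate: after the first split the running partition $p_1$ has $p_1 \vee \mathrm{id}_k$ possibly finer than $p \vee \mathrm{id}_k$, and which further cuts are admissible depends on the global cycle structure, not just on $b$. To handle this cleanly I would prove the following intermediate statement: if $q \sqsupset p$ (equivalently $q \in {\sf Sp}(p)$) and $q \in \mathcal{B}_k$, then for every block $b$ of $p$ the matching $q|_b$ consists exactly of: all identity pairs $\{i,i'\} \subset b$, together with a forced pairing of the remaining "unprimed-to-primed" slots dictated by the requirement that $q \vee \mathrm{id}_k$ refines $p \vee \mathrm{id}_k$ maximally. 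Establishing that this description is both necessary and consistent (i.e. the resulting $q$ really does satisfy ${\sf nc}(q) - {\sf nc}(q \vee \mathrm{id}_k) = {\sf nc}(p) - {\sf nc}(p\vee\mathrm{id}_k)$) is where a careful graphical argument, of the kind used in Lemma \ref{equageo}, will be needed. Once uniqueness-per-block is in hand, the global uniqueness $q_1 = q_2$ follows immediately, and the case $p \in \mathcal{B}_k$ is the special case where every block already has size $2$ and no split is admissible at all.
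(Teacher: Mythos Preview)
Your argument for the case $p\in\mathcal{B}_k$ is on the right track but the case analysis is incomplete: you only treat the blocks $\{i,i'\}$ and ``blocks linking two distinct cycles'', which does not exhaust the possibilities (e.g.\ a block $\{i,j'\}$ with $i\neq j$ inside a single cycle). The paper disposes of this cleanly by a parity argument: if a Brauer block were pivotal, after a suitable relabelling the resulting cut would have the form $p_1\otimes p_2$ with $p_1\in\mathcal{P}_i$ having all blocks of size $2$ except a single singleton, which is impossible since $p_1$ partitions an even number of elements.

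For the main uniqueness statement your per-block strategy has a genuine gap. The constraint $q\sqsupset p$, i.e.\ ${\sf nc}(q)-{\sf nc}(q\vee\mathrm{id}_k)={\sf nc}(p)-{\sf nc}(p\vee\mathrm{id}_k)$, is a condition on the cycle structure of $q$, which is global and does not decompose along the blocks of $p$; you acknowledge this yourself. Your proposed fix, that $q|_b$ must consist of the identity pairs inside $b$ together with ``a forced pairing of the remaining unprimed-to-primed slots'', is not a well-defined prescription: once identity pairs are removed, the residual set in $b$ need not be balanced between primed and unprimed elements, and even when it is there are several permutation-type matchings with nothing in your statement singling one out. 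Concretely, take $p=\{\{1,2,3',4'\},\{1',2',3,4\}\}$: no block contains an identity pair, each block admits three matchings, and none of the nine resulting Brauer refinements satisfies $q\sqsupset p$ --- so ``the forced pairing'' simply does not exist here, and your description gives no mechanism to detect this.

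The paper's route avoids this entirely. Having fixed some $b\in{\sf Sp}(p)\cap\mathcal{B}_k$, it reverses the chain of admissible splits to exhibit $p$ as $b$ together with a spanning-tree's worth of gluings between cycles of $b$. From this structural description one sees that every admissible split of $p$ (and of any intermediate partition) must undo one of these tree gluings, because the only alternative would be to cut a block of $b$, which is never pivotal by the parity argument. Hence every element of ${\sf Sp}(p)$ lies on a chain from $p$ down to $b$, and $b$ is the unique Brauer element reached. The key idea you are missing is this reconstruction of $p$ from $b$; it replaces your global-to-local reduction with a direct description of ${\sf Sp}(p)$.
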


In order to prove this lemma, an important remark is to see that no partition $p \in \mathcal{B}_k$ has a pivotal block. Indeed, let us suppose that a partition $p \in \mathcal{B}_k$ has a pivotal block that we will denote by $c$. We can always suppose that $p$ is irreducible. Since we can shuffle the columns of $p$ and take the transpose of $p$, we can always suppose that $c$ is of the form $\{i, (i + 1)'\}$ or $\{i, i + 1\}$. We can also suppose that when one cuts the block $c$, the new partition we get has the form $p_1 \otimes p_2$ where the notion of tensor product is defined in the next section and where $p_1 \in  \mathcal{P}_i$. The partition $p_1$ must be composed of blocks of size two except one block which is equal to $\{i\}$. This is not possible since $p_1$ must be a partition of $2i$ elements. 

\begin{proof}
Let us consider a partition $p \in \mathcal{P}_k$ and let $b \in \mathcal{B}_k$ such that $ b \in {\sf Sp}(p)$. We can suppose that $p$ is irreducible. Let us denote by ${\sf C}(b)$ the set of cycles of $b$.  By reversing the orientation of the paths of admissible splittings from $p$ to $b$, we get a path of gluings which goes from $b$ to $p$ and we see that there exists:
\begin{enumerate}
\item a covering tree of the graph $({\sf C}(b), {\sf C}(b)\times {\sf C}(b))$ which set of edges is denoted by $E$,
\item for any edge $(c,c') \in E$, a couple $(i_{c}, i_{c'})$ such that $i_{c} \in c$ and $i_{c'} \in c'$,
\end{enumerate} 
such that $p = b \vee (\{ \{i_c,i_{c'}\} | (c,c') \in E\} \cup \{\{ i\} | i \notin \cup_{(c,c') \in E} \{i_c, i_{c'}\} \})$. Using this equality, we see that the only admissible splittings that we can do is to cut the gluings between blocks of $b$: the only other possibility would be to cut a block of $b$ but $b$ does not have pivotal block, this splitting is not admissible. Thus, ${\sf Sp}(p) = \{b\}$. 
\end{proof}

This last lemma leads us to the following definition. 
\begin{definition}
\label{setbbarre}
For any positive integer $k$, we define: 
\begin{align*}
\overline{\mathfrak{S}_k} &= \{p \in \mathcal{P}_k \mid \#\left({\sf Sp}(p ) \cap \mathfrak{S}_k\right) = 1  \}, \\
\overline{\mathcal{B}_k} &=\{p \in \mathcal{P}_k \mid \#\left({\sf Sp}(p ) \cap \mathcal{B}_k\right) = 1  \}. 
\end{align*}
For any $p \in \overline{\mathcal{B}}_{k}$, we denote by ${\sf Mb}(p)$ the unique element in ${\sf Sp}(p ) \cap \mathcal{B}_k$. 
\end{definition} 

The Figure \ref{exemple2} gives an example of partition $p$ in $\overline{\mathfrak{S}_k}$. 
\begin{figure}[h!]
 \centering
  \includegraphics[width=180pt]{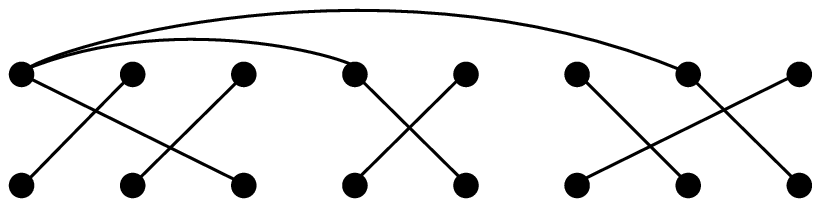}
 \caption{A partition $p$ such that $ {\sf Mb}(p)= (1,2,3)(4,5)(6,7,8) $.}
 \label{exemple2}
\end{figure}

\subsection{More structures on $\mathcal{P}_k$}
Using the fact that $\{1,...,k\} \cup \{1',...,k'\}$ is a disjoint union of two sets of equal cardinal, one can define operations that one could not on the general set $\mathcal{P}(X)$. The reference article for this section and the partition algebra is the article \cite{Halv} of T.Halverson and A.Ram. 

Let $k$ and $l$ be two non negative integers, and let $p \in \mathcal{P}_k$ and $p' \in \mathcal{P}_{l}$. Let us give some notions that one can define using a graphical construction: in each construction, the choice of the diagrams which represent the partitions does not matter.

\begin{description}
\item[Tensor product\!\!] Let us consider two diagrams: one associated with $p$, another with $p'$. Let $p\otimes p'$ be the partition in $\mathcal{P}_{k+l}$ associated with the diagram where one has put the diagram associated with $p$ on the left of the diagram associated with $p'$. 

\item[Transposition\!\!] The transposition of $p$, denoted by $\!\text{ }^{t}p$, is the partition obtained by permuting the role of $\{1,…,k\}$ and $\{1',…,k'\}$. For example if $k=3$, let $p=\big\{\{1,1',3'\}, \{2,3\}, \{2'\}\big\}$, then $\!\!\text{ }^{t}p =\big\{ \{1',1,3\}, \{2',3'\}, \{2\}\big\}$. For every diagram associated with $p$, the diagram obtained by flipping it according to a horizontal axis is a diagram associated with $\!\text{ }^{t}p$. 

\item[Multiplication $\circ$\!\!] Let us suppose that $k=l$. Let us put one diagram representing $p'$ above one diagram representing $p$. Let us identify the lower vertices of $p'$ with the upper vertices of $p$. We obtain a graph with vertices on three levels, then erase the vertices in the middle row, keeping the edges obtained by concatenation of edges passing through the deleted vertices. Any connected component entirely included in the middle row is then removed. Let us denote by $\kappa(p,p')$ the number of such connected components. We obtain an other diagram associated with a partition denoted by $p \circ p'$. An example is given in Figure \ref{fig:produit}.
\begin{figure}[h!]
 \centering
  \includegraphics[width=320pt]{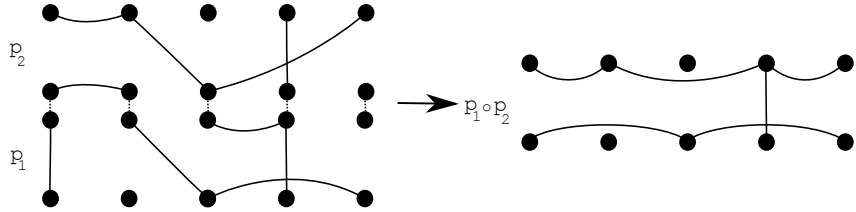}
 \caption{Partition $p_1\circ p_2$.}
 \label{fig:produit}
\end{figure}
\item[Number of erased loops\!\!] $\kappa(p,p')$ was defined when we defined the multiplication $p \circ p'$. It will be an important number in order to define other multiplications on partitions. 
\end{description}

We can extend these operations on $\mathbb{C}[\mathcal{P}_k]$ by linearity or bi-linearity. Before we define a modification of $\circ$, let us make some remarks on this product. The sets $\mathcal{B}_k$ and $\mathfrak{S}_k$ are stable by this concatenation operation. Recall that any permutation $\sigma \in \mathfrak{S}_k$ can be seen as a bijection from $\{1,…,k\}$ to itself. For any permutations $\sigma_1$ and $\sigma_2$, the bijection associated with $\sigma_1\circ \sigma_2$ is the composition of the two bijections associated with $\sigma_{1}$ and $\sigma_{2}$. Besides, the sub-algebra $\mathbb{C}[\mathfrak{S}_k]$ is not only stable for the $\circ$ operation: it also satisfies the following property which can be easily proved.
\begin{lemme}
\label{stablemieux}
Let $p$ and $p'$ be in $\mathcal{P}_k$, if $p \circ p' \in \mathfrak{S}_k$ then $p$ and $p'$ are in $\mathfrak{S}_k$. 
\end{lemme}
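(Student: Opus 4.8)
The plan is to argue via the block structure of the concatenation diagram. Recall that $p\circ p'$ is built by stacking a diagram of $p'$ on top of a diagram of $p$, identifying the bottom row of $p'$ with the top row of $p$ (call these the \emph{middle} vertices, indexed by $\{1,\dots,k\}$), tracing connected components, and then deleting the middle row. The statement that $p\circ p'\in\mathfrak{S}_k$ means: in the final diagram, every block contains exactly one top vertex (among $1,\dots,k$ of $p'$) and exactly one bottom vertex (among $1',\dots,k'$ of $p$). I want to deduce from this that already in $p$ every block meets the bottom row $\{1',\dots,k'\}$ in exactly one vertex and meets the middle row in exactly one vertex, and symmetrically for $p'$.

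First I would observe that in the stacked three-row diagram, the connected component containing a given bottom vertex $i'$ (a vertex of $p$) reaches the final partition's block; since $p\circ p'\in\mathfrak{S}_k$, this block hits the top row in exactly one vertex, so in particular the component is not purely contained in the bottom two rows, i.e.\ it must pass through at least one middle vertex. Next, suppose for contradiction that some block $b$ of $p$ contains two distinct middle vertices, say $s$ and $t$ (vertices of the top row of $p$, identified with the bottom row of $p'$). Then in the stacked diagram $s$ and $t$ lie in the same connected component, so the two blocks of $p'$ containing $s$ and $t$ (respectively) get merged in $p\circ p'$; hence in the final partition the vertices above $s$ and above $t$ in $p'$ end up in the same block. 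But a block of $p'$ either contains a top vertex of $p'$ or not; tracking this through, one shows the resulting block of $p\circ p'$ then contains either zero or $\geq 2$ top vertices of $p'$ unless each of the two $p'$-blocks contributes exactly the structure of a permutation — and a short case analysis on how many of the two merged $p'$-blocks touch the top row of $p'$ produces a block of $p\circ p'$ with a number of top vertices different from $1$, contradicting $p\circ p'\in\mathfrak{S}_k$. The symmetric argument (transpose everything, using that $\!\text{ }^{t}(p\circ p') = {}^{t}p'\circ {}^{t}p$ and that $\mathfrak{S}_k$ is stable under transposition) rules out a block of $p'$ containing two middle vertices. Finally, once every block of $p$ meets the middle row in at most one vertex and — by the same token applied to components not reaching the top — at least one vertex, and likewise every block of $p$ meets the bottom row in exactly one vertex (otherwise a final block would carry $\neq 1$ bottom vertex), we conclude $p\in\mathfrak{S}_k$, and symmetrically $p'\in\mathfrak{S}_k$.

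The main obstacle I anticipate is the bookkeeping in the middle step: one has to be careful that a block of $p$ (or $p'$) could in principle meet the middle row in zero vertices, or that several blocks of $p$ and several blocks of $p'$ chain together through many middle vertices into one large component, and one must check that \emph{any} such non-bijective behaviour forces some final block to have a wrong number of top or bottom vertices. The cleanest way to organise this is a counting/parity argument: if $p\circ p'\in\mathfrak{S}_k$ then the number of final blocks is exactly $k$, each contributing one top and one bottom vertex; comparing this with the total count of top vertices of $p'$ and bottom vertices of $p$ (both equal to $k$) forces the identification of middle vertices to set up a perfect matching between the $p'$-blocks touching the middle row and the $p$-blocks touching the middle row, with no merging, which is exactly the bijectivity condition. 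I would present the argument in this counting form rather than as an exhaustive case split, since it makes the ``easily proved'' claim genuinely short.
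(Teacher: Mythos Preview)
The paper does not actually supply a proof of this lemma; it simply asserts that it ``can be easily proved''. So there is nothing to compare your argument against, and the only question is whether your plan yields a correct proof.

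Your counting idea is the right one, and it can be made considerably shorter than what you sketch. The case analysis you attempt first is not needed and, as written, has a gap: in the step ``a short case analysis on how many of the two merged $p'$-blocks touch the top row'', if the two middle vertices $s,t$ of your $p$-block happen to lie in the \emph{same} block of $p'$, no merging occurs, and nothing you wrote forces the resulting block of $p\circ p'$ to carry the wrong number of top vertices. To resolve that sub-case you would need the symmetric statement about $p'$-blocks, and the argument threatens to become circular.

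Here is the streamlined version of the counting argument that your final paragraph is reaching for. Suppose $p\circ p'\in\mathfrak{S}_k$.
\begin{itemize}
\item[(i)] Every block of $p$ contains at most one element of $\{1',\dots,k'\}$: two such elements would lie in the same component of the stacked diagram, hence in the same block of $p\circ p'$, contradicting $p\circ p'\in\mathfrak{S}_k$.
\item[(ii)] Every block of $p$ contains at least one element of $\{1,\dots,k\}$: a block $b$ of $p$ contained in $\{1',\dots,k'\}$ meets no middle vertex, so it is an entire connected component of the stacked diagram; this gives a block of $p\circ p'$ with no top vertex, again a contradiction.
\end{itemize}
From (i) we get ${\sf nc}(p)\geq k$; from (ii) we get ${\sf nc}(p)\leq k$. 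Hence ${\sf nc}(p)=k$, and equality in both pigeonhole bounds forces each block of $p$ to contain exactly one element of $\{1,\dots,k\}$ and exactly one of $\{1',\dots,k'\}$, i.e.\ $p\in\mathfrak{S}_k$. The argument for $p'$ is symmetric (or apply the above to ${}^{t}(p\circ p')={}^{t}p'\circ{}^{t}p\in\mathfrak{S}_k$).

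So your plan is sound; the point to take away is that the two \emph{easy} observations (i) and (ii) --- the second of which is essentially your opening remark about components reaching the top row --- combined with a pigeonhole count on ${\sf nc}(p)$, replace the delicate component-chasing entirely.
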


Besides, for any partition $\sigma \in \mathfrak{S}_k$ and any $p \in \mathcal{P}_k$, $\kappa(\sigma, p) = \kappa(p, \sigma)=0$. Let us remark that $\mathrm{id}_k$ is a right and left neutral element for $\circ$. At last, as a consequence of Lemma \ref{stablemieux}, since $\mathrm{id}_k \in \mathfrak{S}_k$, the only invertible elements of $\mathcal{P}_k$ are the permutations. The inverse of a permutation $\sigma$ is $\sigma^{-1}= \text{ }^{t}\sigma$. We can now recall the definition of the partition algebra $\mathbb{C}\left[\mathcal{P}_k(N)\right]$. From now on, $N$ is a positive integer. 

\begin{definition}
\label{multiplication}
The partition algebra $\mathbb{C}\left[\mathcal{P}_k(N)\right]$ is the associative algebra over $\mathbb{C}$ with basis $\mathcal{P}_k$ endowed with the multiplication defined by: 
\begin{align*}
\forall\ p_1,\ p_2 \in \mathcal{P}_k,\ \ p_1p_2 = N^{\kappa(p_1,p_2)} (p_1 \circ p_2). 
\end{align*}
\end{definition}

\subsection{Partitions and representation}
\label{sec:rep}
In this section, we recall a natural action of the partition algebra on $\left(\mathbb{C}^{N}\right)^{\otimes k}$ (for more explanations, \cite{Halv} of T.Halverson and A.Ram). This action will be useful in order to translate combinatorial properties into linear algebraic properties. 

\begin{definition}
\label{delta}
For any $p\in \mathcal{P}_{k}$ and any $k$-uples $(i_1, …, i_k)$ and $(i_{1'}, …, i_{k'})$ of elements of $\{ 1, …, N\}$, we set: 
\begin{align*}
p_{i_{1'}, …, i_{k'}}^{i_1, …, i_k} =
\left\{
    \begin{array}{ll}
       1, & \mbox{if for any two elements } r \mbox{ and } s \in \{1,…, k\}\cup\{1',…,k'\} \mbox{ which} \\& \mbox{are in the same block of } p,\mbox{one has } i_r = i_s,\\
        0, & \mbox{otherwise.}
    \end{array}
\right.
\end{align*}
\end{definition}

We can now define the action of the partition algebra $\mathbb{C}[\mathcal{P}_{k}(N)]$ on $\left(\mathbb{C}^{N}\right)^{\otimes k}$. Let $(e_1, …, e_N)$ be the canonical basis of $\mathbb{C}^{N}$. 

\begin{definition}\label{rep}
The action of the partition algebra $\mathbb{C}[\mathcal{P}_{k}(N)]$ on $\left(\mathbb{C}^{N}\right)^{\otimes k}$ is defined by the fact that for any $p \in \mathcal{P}_k$, for any $(i_1, …, i_k) \in \{1,…,N\}^{k}$:
\begin{align*}
p.(e_{i_1}\otimes…\otimes e_{i_k}) = \sum\limits_{(i_{1'}, …, i_{k'}) \in \{1, …, N\}^{k}} p_{i_{1'}, …, i_{k'}}^{i_1, …, i_k}\ \ e_{i_{1'}}\otimes…\otimes e_{i_{k'}}.
\end{align*}
\end{definition}

This action defines a representation of the partition algebra $\mathbb{C}[\mathcal{P}_{k}(N)]$ on $\left(\mathbb{C}^{N}\right)^{\otimes k}$ which we denote by $\rho_{N}$: 
\begin{align*}
\rho_{N}: \mathbb{C}[\mathcal{P}_{k}(N)] \mapsto {\sf End}\left(\left(\mathbb{C}^{N}\right)^{\otimes k}\right). 
\end{align*} 
Let us define $E_{i}^{j}$ be the matrix which sends $e_j$ on $e_i$ and any other element of the canonical basis on $0$. Let $p$ be a partition in $\mathcal{P}_k$. We can write the matrix of $\rho_{N}(p )$ in the basis $(e_{i_1}\otimes …\otimes e_{i_k})_{(i_l)_{l=1}^{k} \in \{1,…,N\}^{k}}$: 
\begin{align}
\label{forme}
\rho_{N}(p ) = \sum_{ (i_1, …, i_k, i_{1'}, …, i_{k'}) / p_{i_{1'}, …, i_{k'}}^{i_1, …, i_k}=1 } E_{i_{1'}}^{i_{1}} \otimes … \otimes E_{i_{k'}}^{i_{k}}. 
\end{align}
For example, if $p$ is the transposition $(1,2)$, then: 
\begin{align*}
\rho_{N}(( 1,2)) = \sum_{a,b=1}^{N} E_{a}^{b}\otimes E_{b}^{a}.
\end{align*} 
We think that this presentation allows us to understand, in an easier way, the representation $\rho_{N}$. In Figure \ref{exempleretrou}, we illustrate how to find the partition which representation is given by a sum of the form (\ref{forme}). The partition $p_1$ used in Figure \ref{exempleretrou} is the partition drawn in Figure \ref{fig:partition}.

\begin{figure}[h!]
 \centering
  \includegraphics[width=160pt]{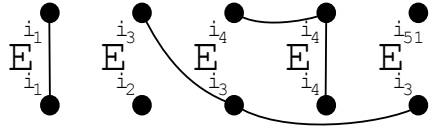}
 \caption{$\sum_{i_1, i_2, i_3, i_4, i_5} E_{i_1}^{i_1} \otimes E_{i_2}^{i_3} \otimes E_{i_3}^{i_4} \otimes E_{i_4}^{i_4} \otimes E_{i_3}^{i_5} = \rho_{N}(p)$.}
 \label{exempleretrou}
\end{figure}

Let us remark that the natural action of $\mathbb{C}[\mathcal{P}_k(N)]$ on $\left(\mathbb{C}^{N}\right)^{\otimes k}$ behaves well under the operation of tensor product: 
\begin{align*}
\rho_{N} (p\otimes p') = \rho_{N}(p )\otimes \rho_{N}(p'). 
\end{align*}

Let us suppose that $N \geq 2k$. Using Theorem $3.6$ in \cite{Halv}, the application $\rho_{N}$ is injective. Actually, if one considers only its restriction to the symmetric algebra or the Brauer algebra, it is enough to ask for $N\geq k$. For $N=k-1$ this result does not hold, this is a consequence of the Mandelstam's identity which asserts that: 
\begin{align*}
\sum_{\sigma \in \mathfrak{S}_{k}} {\epsilon(\sigma)} \rho_{k-1}(\sigma) = 0, 
\end{align*}
where $\epsilon(\sigma)$ is the signature of $\sigma$. In the following, we will often need that $\rho_N$ is injective, yet, for a sake of clarity, since we are concerned mainly with asymptotics when $N$ goes to infinity, we will ommit to specify each time that $N$ must be greater than $2k$ or $k$.

\subsection{The trace on $\mathcal{P}_k$}
Let $N$ be a positive integer. Depending on the context, we will consider a partition either as an element of $\mathcal{P}_k$ or as an element of ${\sf End}\left(\left(\mathbb{C}^{N}\right)^{\otimes k}\right)$ via the action defined in Definition \ref{rep}. We remind the reader that $(e_1, …, e_N)$ is the canonical base of $\mathbb{C}^{N}$. The family $\{ e_{i_1}\otimes…\otimes e_{i_k}, (i_1, …, i_k) \in \{1, …,N\}^{k}\}$ is a basis of $\left(\mathbb{C}^{N}\right)^{\otimes k}$: let ${\sf Tr}^{k}$ be the trace with respect to this canonical basis. We do not renormalize it, thus ${\sf Tr}^{k}\left(Id_{\left(\mathbb{C}^{N}\right)^{\otimes k}}\right) = N^{k}.$ We can define the trace of a partition. 

\begin{definition}
\label{trace}
Let $p$ be a partition in $\mathcal{P}_k$. We define: 
\begin{align*}
{\sf Tr}_N( p) = {\sf Tr}^{k}\left( \rho_{N}( p)\right). 
\end{align*}
For any integer $N$, we extend ${\sf Tr}_N$ by linearity to $\mathbb{C}[\mathcal{P}_k(N)]$. 
\end{definition}
When $N$ is explicit, we will only denote ${\sf Tr}_{N}$ by ${\sf Tr}$. The trace allows us to go from combinatorics arguments to linear algebra arguments since we have the following lemma. 

\begin{lemme}
Let $p$ be a partition in $\mathcal{P}_k$: 
\begin{align}
\label{lientracenc}
{\sf Tr}_N( p) = N^{{\sf nc}(p \vee \mathrm{id}_k)}. 
\end{align}
\end{lemme}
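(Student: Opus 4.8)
The plan is to compute the trace directly from the explicit matrix form~(\ref{forme}) of $\rho_N(p)$. Recall that
\begin{align*}
\rho_{N}(p) = \sum_{(i_1,\dots,i_k,i_{1'},\dots,i_{k'})\,:\, p_{i_{1'},\dots,i_{k'}}^{i_1,\dots,i_k}=1} E_{i_{1'}}^{i_1}\otimes\cdots\otimes E_{i_{k'}}^{i_k},
\end{align*}
and that ${\sf Tr}^{k}$ of a tensor product of elementary matrices $E_{i_{1'}}^{i_1}\otimes\cdots\otimes E_{i_{k'}}^{i_k}$ equals $\prod_{l=1}^{k}\delta_{i_l,i_{l'}}$, which is $1$ precisely when $i_l=i_{l'}$ for every $l$ and $0$ otherwise. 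So first I would apply ${\sf Tr}^{k}$ termwise and obtain
\begin{align*}
{\sf Tr}_N(p) = \#\Big\{(i_1,\dots,i_k,i_{1'},\dots,i_{k'})\in\{1,\dots,N\}^{2k}\ :\ p_{i_{1'},\dots,i_{k'}}^{i_1,\dots,i_k}=1\ \text{and}\ i_l=i_{l'}\ \forall l\Big\}.
\end{align*}

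Next I would reinterpret the counted set. A tuple $(i_1,\dots,i_k,i_{1'},\dots,i_{k'})$ can be viewed as a function $f$ from the $2k$-element ground set $X=\{1,\dots,k\}\cup\{1',\dots,k'\}$ to $\{1,\dots,N\}$. By Definition~\ref{delta}, the condition $p_{i_{1'},\dots,i_{k'}}^{i_1,\dots,i_k}=1$ says exactly that $f$ is constant on each block of $p$, i.e.\ $f$ factors through the quotient map $X\to p$. The extra constraints $i_l=i_{l'}$ say exactly that $f$ is constant on each block of $\mathrm{id}_k=\{\{l,l'\}\mid l=1,\dots,k\}$, i.e.\ $f$ factors through $X\to \mathrm{id}_k$. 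A function that is simultaneously constant on the blocks of $p$ and on the blocks of $\mathrm{id}_k$ is precisely a function that is constant on the blocks of the join $p\vee\mathrm{id}_k$ (since $p\vee\mathrm{id}_k$ is the finest partition coarser than both, two elements forced to have equal $f$-value by a chain of block-relations alternating between $p$ and $\mathrm{id}_k$ are exactly those in a common block of $p\vee\mathrm{id}_k$). Conversely any function constant on the blocks of $p\vee\mathrm{id}_k$ satisfies both constraints.

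Therefore the count equals the number of functions from the set of blocks of $p\vee\mathrm{id}_k$ to $\{1,\dots,N\}$, which is $N^{{\sf nc}(p\vee\mathrm{id}_k)}$. This gives ${\sf Tr}_N(p)=N^{{\sf nc}(p\vee\mathrm{id}_k)}$, as claimed. The only slightly delicate point—hardly an obstacle, but the step deserving a careful sentence—is the equivalence between ``constant on blocks of $p$ and constant on blocks of $\mathrm{id}_k$'' and ``constant on blocks of $p\vee\mathrm{id}_k$''; this is exactly the graphical description of $\vee$ recalled at the start of Section~\ref{sec:geometry} (superimposing the two diagrams), since the set of functions respecting a partition depends only on the equivalence relation generated, and the equivalence relation generated by the union of the two block structures is $p\vee\mathrm{id}_k$. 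Everything else is the bookkeeping of $\delta$-symbols and the elementary fact that $\#\{\text{functions from an }m\text{-set to an }N\text{-set}\}=N^m$.
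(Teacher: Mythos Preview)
Your proof is correct and is exactly the natural argument. The paper in fact states this lemma without proof, treating it as an immediate consequence of the definitions; your computation---reading off ${\sf Tr}^{k}$ termwise from~(\ref{forme}) and counting functions $X\to\{1,\dots,N\}$ constant on the blocks of both $p$ and $\mathrm{id}_k$, hence on those of $p\vee\mathrm{id}_k$---is precisely the intended justification.
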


A generalisation of Equation (\ref{lientracenc}) is that for any partitions $p$ and $p'$, seen as elements of $\mathbb{C}[\mathcal{P}_k(N)]$, 
\begin{align}
\label{eq:lientrace2}
{\sf Tr}_N( p\ ^{t}\!p') = N^{\sf nc(p \vee p')}.
\end{align}
This equation will be used quite intensively. It is a consequence of Equation (\ref{lientracenc}) and the combinatorial equality: 
\begin{align}
\label{eq:lientrace3}
{\sf nc}((p \circ\ \!^{t}\!p') \vee \mathrm{id}_k ) + \kappa(p,\ \!^{t}\!p') = {\sf nc}(p \vee p'),
\end{align}
which can be understood by flipping the diagram of $\!\!\text{ }^{t}p$ over the one of $p'$: the flip transposes $\!\!\text{ }^{t} p$ thus we get the two diagrams of $p$ and $p'$ one over the other. By definition, the diagram constructed by putting a diagram representing $p'$ over one representing $p$ is associated with $p \vee p'$. But any block of this diagram either comes from a cycle of $p \circ\ \!^{t}\!p'$ or from a loop that we erased while doing the product $p \circ\ \!\!^{t}\!p'$.

\subsection{The exclusive basis of $\mathbb{C}[\mathcal{P}_k]$}
\label{exclusivebasis}
The basis used to define the partition algebra $\mathbb{C}[\mathcal{P}_k(N)]$ is quite natural, yet, it is not always very easy to work with. Indeed, if we look at the representation $\rho_{N}^{\mathcal{P}_k}$ of a partition, we see that the condition we used to define the delta function is not exclusive. It means that we did not use the following exclusive delta function: 
\begin{align*}
(p_{i_{1'}, …, i_{k'}}^{i_1, …, i_k})^{ex} =\left\{
    \begin{array}{ll}
       1, & \mbox{if for any two elements } r \mbox{ and } s \in \{1,…, k\}\cup\{1',…,k'\}, \\& i_r = i_s \mbox{ if and only if } r \mbox{ and } s \mbox{ are in the same block of } p,\\
        0, & \mbox{otherwise.}
    \end{array}
\right.
\end{align*}

By changing, in Definition \ref{rep}, the delta function defined in Definition \ref{delta} by this new exclusive delta function, we define a new function: $$\tilde{\rho}_{N}: \mathbb{C}[\mathcal{P}_k(N)] \to {\sf End}\left(\left(\mathbb{C}^{N}\right)^{\otimes k}\right).$$

Does it exist, for any partition $p\in \mathcal{P}_k$ an element $p^{c} \in \mathbb{C}[\mathcal{P}_k]$ such that for any integer $N$, $\rho_{N}^{\mathcal{P}_k}( p^{c}) = \tilde{\rho}_{N}^{\mathcal{P}_{k}}( p)$ ? The answer is given by the following definition, as explained by Equation $(2.3)$ of \cite{Halv}. 

\begin{definition}
\label{cop}
The exclusive partition basis, denoted by $(p^{c})_{p \in \mathcal{P}_k}$, is the unique family of elements in $\mathbb{C}[\mathcal{P}_k]$ defined by the relation: 
\begin{align*}
p = \sum_{p \trianglelefteq p'} p'^{c}. 
\end{align*} 
\end{definition}
The notion of being coarser defines a partial order on $\mathcal{P}_k$: the relation can be inverted. The family $(p^{c})_{p \in \mathcal{P}_k}$ is well defined and it is a basis of the partition algebra $\mathbb{C}[\mathcal{P}_{k}]$. It satisfies that  for any partition $p \in \mathcal{P}_k$, 
$$\rho_{N}( p^{c}) = \tilde{\rho}_{N}( p).$$

\subsection{Geodesics and tensor product}

The following property, known for $\mathfrak{S}_k$ and $\mathcal{B}_k$, is still true for $\mathcal{P}_k$: a geodesic between $\mathrm{id}_k$ and $p_1\otimes p_2$ must be the tensor product of a geodesic between $\mathrm{id}_k$ and $p_1$ and a geodesic between $\mathrm{id}_k$ and $p_2$. 

\begin{proposition}
\label{multiplicativgeo}
Let $k$ and $l$ be two positive integers. Let $p_1 \in \mathcal{P}_k$ and $ p_2\in \mathcal{P}_l$. For any $p' \in \mathcal{P}_{k+l}$ such that $p' \leq p_1\otimes p_2$, there exist $p'_1 \leq p_1$ and $p'_2 \leq  p_2$ such that $p' = p'_1 \otimes p'_2$. 
\end{proposition}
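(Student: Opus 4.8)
The plan is to use the geodesic characterization from Theorem \ref{th:geocharact}: a partition $p'$ lies in $[b,p]$ (with $b = \mathrm{id}_{k+l}$) if and only if there exists $p''$ — necessarily $p'' = p \vee p'$ — with $p'' \in {\sf Gl}_b(p)$ and $p' \in {\sf Sp}_b(p'')$. So I would first reduce to proving the statement for the two pieces of the geodesic decomposition $G_b = C_b S_b$, namely: (1) if $q$ is coarser than $p_1\otimes p_2$ and $q \in {\sf Gl}(p_1 \otimes p_2)$, then $q = q_1 \otimes q_2$ with $q_i \in {\sf Gl}(p_i)$; and (2) if $p'$ is finer than $q_1 \otimes q_2$ and $p' \in {\sf Sp}(q_1 \otimes q_2)$, then $p' = p'_1 \otimes p'_2$ with $p'_i \in {\sf Sp}(q_i)$. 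Granting these, we set $q = p_1\otimes p_2 \vee p'$, apply (1) to get $q = q_1\otimes q_2$, then apply (2) to get $p' = p'_1 \otimes p'_2$; since $p'_i \leq q_i \leq p_i$ by Lemma \ref{lemme:carac1} and the geodesic decomposition, we conclude $p'_i \leq p_i$.

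The key structural input is that the base partition is $\mathrm{id}_{k+l} = \mathrm{id}_k \otimes \mathrm{id}_l$, and more importantly that \emph{no block of $\mathrm{id}_{k+l}$ connects a vertex among $\{1,\dots,k,1',\dots,k'\}$ to a vertex among the shifted copy for $\mathcal{P}_l$}. Thus for any $q \in \mathcal{P}_{k+l}$, the partition $q \vee \mathrm{id}_{k+l}$ splits as a disjoint union of blocks living entirely in the "left half" $\{1,\dots,k\} \cup \{1,\dots,k\}'$ and blocks living entirely in the "right half". I would prove the following base-case claim: if $q$ is obtained from $p_1 \otimes p_2$ by a single admissible gluing (i.e. $q \in \Delta$-type step for ${\sf Gl}$), then the two glued blocks lie in the same half, because gluing across the two halves strictly merges a left-block of $q\vee\mathrm{id}$ with a right-block of $q\vee\mathrm{id}$, decreasing ${\sf nc}(q\vee\mathrm{id}_{k+l})$, which is forbidden for an admissible gluing; hence $q = q_1\otimes (p_2)$ or $(p_1)\otimes q_2$. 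Iterating along a gluing path gives (1). The argument for (2) is dual and uses the analogous observation about pivotal blocks: a pivotal block of $q_1 \otimes q_2$ lies wholly in one half, and cutting it to split a block of the $\vee\,\mathrm{id}$-partition happens within that half, because the cycles of $q_1\otimes q_2$ respect the partition into halves; so each $\Delta$-step in an admissible split path is internal to one half, and iterating gives the tensor factorization of $p'$.

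The main obstacle I anticipate is handling the \emph{intermediate} partitions along a gluing or splitting path that need \emph{not} themselves be of tensor-product form a priori — e.g. in a gluing path $p_0 = p_1\otimes p_2, p_1^{\mathrm{path}}, \dots$, an early gluing could in principle connect a left-block to a right-block even though the \emph{final} $q$ does not. To handle this I would argue more carefully: rather than assuming each intermediate step stays factored, I would show that ${\sf Gl}(p_1\otimes p_2)$ is exactly $\{\,q_1 \otimes q_2 : q_i \in {\sf Gl}(p_i)\,\}$ by a direct count, using that $q \in {\sf Gl}(p_1\otimes p_2)$ means $q$ is coarser than $p_1\otimes p_2$ with ${\sf nc}(q\vee\mathrm{id}_{k+l}) = {\sf nc}((p_1\otimes p_2)\vee\mathrm{id}_{k+l}) = {\sf nc}(p_1\vee\mathrm{id}_k) + {\sf nc}(p_2\vee\mathrm{id}_l)$. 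Writing $q = q^L \sqcup q^{LR} \sqcup q^R$ according to how blocks meet the two halves, one sees $q^{LR} = \emptyset$ is forced: any block straddling the two halves would merge a left-cycle and a right-cycle upon taking $\vee\,\mathrm{id}_{k+l}$, and the coarsening constraint then forces ${\sf nc}(q\vee\mathrm{id}_{k+l})$ strictly below the required value. So $q = q_1 \otimes q_2$, and the equality of block-counts of $\vee\,\mathrm{id}$ distributes over the two halves, giving $q_i \in {\sf Gl}(p_i)$. The dual statement for ${\sf Sp}$ follows by the same half-splitting bookkeeping together with the identity ${\sf nc}(p') - {\sf nc}(p'\vee\mathrm{id}_{k+l})$ being additive over halves. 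Finally, combining with Theorem \ref{th:geocharact} as above yields the proposition, and uniqueness of the factorization is clear since $p'$ determines $p'_1$ and $p'_2$ as its restrictions to the two halves.
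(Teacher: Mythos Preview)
Your proof is correct and follows the same strategy as the paper: invoke Theorem~\ref{th:geocharact} and exploit the fact that $\mathrm{id}_{k+l}=\mathrm{id}_k\otimes\mathrm{id}_l$ respects the left/right split, so that the admissible-gluing condition forbids blocks straddling the two halves. The paper is slightly more economical: it uses only the ${\sf Gl}$-part to obtain $p'=p'_1\otimes p'_2$, and then concludes $p'_i\le p_i$ directly from the additivity ${\sf df}(p'_1\otimes p'_2,\,p_1\otimes p_2)={\sf df}(p'_1,p_1)+{\sf df}(p'_2,p_2)$, bypassing your separate factorization of ${\sf Sp}$.
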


\begin{proof}
Let us suppose that $p' \leq p_1\otimes p_2$. Using Theorem \ref{th:geocharact}, $p' \vee (p_1 \otimes p_2)$ is an admissible gluing of $p_1 \otimes p_2$. This proves that there exist $p'_1 \in \mathcal{P}_k$ and $p'_2 \in \mathcal{P}_l$ such that $p' = p'_1 \otimes p'_2$. Since: 
\begin{align*}
{\sf df}(p'_1 \otimes p'_2, p_1 \otimes p_2) = {\sf df}(p'_1, p_1) + {\sf df}(p'_2, p_2), 
\end{align*} 
we get that $p'_1 \leq p_1$ and $p'_2 \leq  p_2$. 
\end{proof}

We have seen the consequences that $p' \leq p_1 \otimes p_2$. What are the consequences that $p_1\otimes p_2 \leq p'$ ? We will answer this question only when $p_1 \otimes p_2$ is finer than $p'$. This case is important in the theory of random matrices which are invariant in law by conjugation by the symmetric group (\cite{Gab2}, \cite{Gab3}). In order to answer, we need to introducte the notions of left- and right-parts of a partition $p$. Let $k_1$ and $k_2$ be two positive integers and recall the notion of extraction that we defined before Definition \ref{supportpart}.

\begin{definition}
Let $p \in \mathcal{P}_{k_1+k_2}$, we denote by $p_{k_1}^{l}$ the extraction of $p$ to $\{1,...,k_1\}$ and $p_{k_1}^{r}$ the extraction of $p$ to $\{k_1+1,...,k_1+k_2\}$. The left-part of $p$, namely $p_{k_1}^{l}$, is in $\mathcal{P}_{k_1}$ and the right-part of $p$, namely $p_{k_2}^{r}$, is in $\mathcal{P}_{k_2}$.
 \end{definition}

\begin{proposition}
Let $k_1$ and $k_2$ be two positive integers and let $k = k_1+k_2$. Let $p$ be an element of $\mathcal{P}_{k}$. Let $p_1$ and $p_2$ be respectively in $\mathcal{P}_{k_1}$ and $\mathcal{P}_{k_2}$ such that $p_1 \otimes p_2$ is finer than $p$. Then: 
\begin{align}
\label{eq:deftensor}
{\sf df}( p_1 \otimes p_2, p) = {\sf df}( p_1, p_{k_1}^{l}) +{\sf df}( p_2, p_{k_1}^{r}) +{\sf df}( p_{k_1}^{l} \otimes p_{k_1}^{r} , p). 
\end{align}
In particular, we have equivalence between: 
 \begin{enumerate}
 \item $p_1\otimes p_2 \sqsupset p$, 
 \item $p_{1} \sqsupset p_{k_1}^{l}$, $p_{2} \sqsupset p^{r}_{k_1}$ and $p^{l}_{k_1} \otimes p^{r}_{k_1} \sqsupset p$. 
 \end{enumerate}
\end{proposition}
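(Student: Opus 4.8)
The strategy is to prove the additivity formula (\ref{eq:deftensor}) directly from the definition of the defect ${\sf df}$ (Equation (\ref{definitiondefect})), and then to deduce the equivalence of the two itemized conditions by noting that all three defects on the right-hand side are non-negative, so their sum vanishes if and only if each term vanishes, and then identifying the vanishing of each term with the corresponding instance of $\sqsupset$ via Lemma \ref{lemme:carac1}.

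\textbf{Step 1: the combinatorial bookkeeping.} Write $q = p_{k_1}^{l} \otimes p_{k_1}^{r}$. The key observation is that $q$ is the finest partition which is coarser than $p_1 \otimes p_2$ and compatible with the "product structure", and crucially that $p_1 \otimes p_2$ is finer than $q$ which is finer than $p$. Indeed, since $p_1 \otimes p_2$ is finer than $p$ and lives on the disjoint union $\{1,\dots,k_1,1',\dots,k_1'\} \sqcup \{k_1+1,\dots,k,(k_1+1)',\dots,k'\}$, no block of $p$ can be split by the extraction in a way that contradicts $p_1 \otimes p_2 \trianglelefteq p$; one checks that $p_{k_1}^{l} \trianglelefteq$ (block structure of $p$ restricted to the left indices) and similarly on the right, giving $p_1\otimes p_2 \trianglelefteq q \trianglelefteq p$. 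Now I would compute each of the three quantities
${\sf nc}(\cdot)$, ${\sf nc}(\cdot \vee \mathrm{id}_k)$ appearing in ${\sf df}$, using that for a tensor product $a \otimes b$ one has ${\sf nc}(a\otimes b) = {\sf nc}(a) + {\sf nc}(b)$ and $(a\otimes b) \vee \mathrm{id}_k = (a \vee \mathrm{id}_{k_1}) \otimes (b\vee \mathrm{id}_{k_2})$ — and for the "mixed" terms involving $p$ itself one uses $\mathrm{id}_k = \mathrm{id}_{k_1}\otimes \mathrm{id}_{k_2}$ and the fact that $q$ is a tensor product. Assembling the three expressions
\begin{align*}
{\sf df}(p_1\otimes p_2, p) &= {\sf nc}(p_1\otimes p_2) - {\sf nc}((p_1\otimes p_2)\vee\mathrm{id}_k) - {\sf nc}(p) + {\sf nc}(p\vee\mathrm{id}_k),\\
{\sf df}(p_1,p_{k_1}^l) + {\sf df}(p_2,p_{k_1}^r) &= {\sf nc}(p_1\otimes p_2) - {\sf nc}((p_1\otimes p_2)\vee\mathrm{id}_k) - {\sf nc}(q) + {\sf nc}(q\vee\mathrm{id}_k),\\
{\sf df}(q,p) &= {\sf nc}(q) - {\sf nc}(q\vee\mathrm{id}_k) - {\sf nc}(p) + {\sf nc}(p\vee\mathrm{id}_k),
\end{align*}
(where in the second line I used the additivity of ${\sf df}$ over tensor products, exactly as in the proof of Proposition \ref{multiplicativgeo}, together with $p_{k_1}^l \otimes p_{k_1}^r = q$) the equality (\ref{eq:deftensor}) follows by cancellation of the ${\sf nc}(q)$ and ${\sf nc}(q\vee\mathrm{id}_k)$ terms. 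The one genuinely non-formal input is that the extractions $p_{k_1}^l, p_{k_1}^r$ really do capture the restrictions of $p$ to the two index-blocks in a way compatible with $\vee \mathrm{id}$, which rests on $p_1\otimes p_2 \trianglelefteq p$ forcing every block of $p$ to be a union of a "left part" and a "right part".

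\textbf{Step 2: the equivalence.} Since $p_1\otimes p_2$ is finer than $p$, it is finer than each of $p_{k_1}^l\otimes p_{k_1}^r$ and $p$, and each of the three defects on the right of (\ref{eq:deftensor}) is $\geq 0$ by the triangle inequality (they are all genuine defects ${\sf df}_b(\cdot,\cdot)$ with $b = \mathrm{id}_k$). Hence ${\sf df}(p_1\otimes p_2,p) = 0$ if and only if all three vanish simultaneously. By Lemma \ref{lemme:carac1}(2), for finer partitions "defect zero" is the same as "$\sqsupset$", so ${\sf df}(p_1,p_{k_1}^l)=0 \iff p_1 \sqsupset p_{k_1}^l$, and likewise for the other two terms; this gives (1)$\iff$(2). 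I expect the main obstacle to be Step 1 — specifically, verifying cleanly that $p_1\otimes p_2 \trianglelefteq p$ implies $p_{k_1}^l \otimes p_{k_1}^r \trianglelefteq p$ and computing ${\sf nc}(q \vee \mathrm{id}_k)$ correctly — rather than Step 2, which is a routine non-negativity-plus-Lemma argument. The additivity of ${\sf df}$ over tensor products (already used for Proposition \ref{multiplicativgeo}) does most of the heavy lifting, so the proof should be short once the extraction identities are pinned down.
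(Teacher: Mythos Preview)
Your proposal is correct and follows essentially the same approach as the paper: expand the three defects using Equation~(\ref{definitiondefect}) (simplified via finer-than so that $p'\vee p = p$), use the additivity ${\sf nc}(u\otimes v) = {\sf nc}(u)+{\sf nc}(v)$ and ${\sf nc}((u\otimes v)\vee\mathrm{id}_k) = {\sf nc}(u\vee\mathrm{id}_{k_1})+{\sf nc}(v\vee\mathrm{id}_{k_2})$, and observe the telescoping through $q=p_{k_1}^l\otimes p_{k_1}^r$; then invoke non-negativity of defects and Lemma~\ref{lemme:carac1}(2) for the equivalence. One small clarification: the fact that $q\trianglelefteq p$ holds \emph{always} by construction of the extraction (each block of $p$ splits into its left and right parts), independently of the hypothesis $p_1\otimes p_2\trianglelefteq p$; that hypothesis is what guarantees $p_1\trianglelefteq p_{k_1}^l$ and $p_2\trianglelefteq p_{k_1}^r$, which you need both for the simplified defect formula and for applying Lemma~\ref{lemme:carac1}(2) in Step~2.
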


\begin{proof}
The Equation (\ref{eq:deftensor}) can be proved by a simple calculation, using the fact that $p_1\otimes p_2$ is finer than $p$ and the fact that for any partition $u$ and $v$, respectively in $\mathcal{P}_{k_1}$ and $\mathcal{P}_{k_2}$, 
\begin{align*}
 {\sf nc}( u\otimes v) &=  {\sf nc}( u) +  {\sf nc}( v), \\
 {\sf nc}( (u \otimes v )\vee \mathrm{id}_k) &= {\sf nc}(u \vee \mathrm{id}_{k_1}) + {\sf nc}( v \vee \mathrm{id}_{k_2}).
\end{align*}
The equivalences are consequences of  Equation (\ref{eq:deftensor}) and Lemma \ref{lemme:carac1}.
\end{proof}

\subsection{The Kreweras complement for $\mathcal{P}_k$}
\label{sec:Kreweras}
Let us consider $\sigma$ and $\sigma'$ two permutations in $\mathfrak{S}_k$. Since $d$ is a distance: 
\begin{align*}
d(\mathrm{id}_k, \sigma) \leq d(\mathrm{id}_k, \sigma') + d(\sigma', \sigma). 
\end{align*}
Any permutation has an inverse and the restriction of $d$ to $\mathfrak{S}_k$ is invariant by left or right multiplication by a permutation. The last inequality is then equivalent to: 
\begin{align}
\label{ineq:trian}
d(\mathrm{id}_k, \sigma) \leq d(\mathrm{id}_k, \sigma') + d(\mathrm{id}_k, \sigma'^{-1}\sigma).
\end{align}
Thus $\sigma'\leq \sigma$ if and only if there exists $\tilde{\sigma}$ such that $\sigma= \sigma' \tilde{\sigma}$ and $d(\mathrm{id}_k, \sigma) = d(\mathrm{id}_k, \sigma') + d(\mathrm{id}_k, \tilde{\sigma}).$ We recall that $\tilde{\sigma}$ is then unique and it is called the {\em Kreweras complement} of $\sigma'$ in $\sigma$, denoted in this article ${\sf K}_{\sigma}(\sigma')$ (\cite{kreweras}, \cite{simionullman}). In general, any partition does not have any inverse. It is natural to wonder if an inequality of the form (\ref{ineq:trian}) holds and if so, it is natural to wonder if we can use it to define a Kreweras complement. The answer to the first question is given by the following theorem. 

\subsubsection{A new inequality}
Let $p$ and $p'$ be two partitions in $\mathcal{P}_k$,
\begin{theorem}
We have 
\begin{align}
\label{autreequation}
d(\mathrm{id}_k, p\circ p') \leq d(\mathrm{id}_k, p) + d(\mathrm{id}_k, p') - \frac{k+{\sf nc}(p\circ p') - {\sf nc}( p) - {\sf nc}(p')}{2}-\kappa(p,p'). 
\end{align} 
\end{theorem}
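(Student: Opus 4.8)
The plan is to translate the claimed inequality into the language already set up in the excerpt by using Theorem~\ref{def:dist}, which expresses the geodesic distance in terms of block counts. Writing $d(\mathrm{id}_k,q) = \tfrac12({\sf nc}(q)+k) - {\sf nc}(q\vee\mathrm{id}_k)$ for any $q\in\mathcal{P}_k$ (recall ${\sf nc}(\mathrm{id}_k)=k$), the inequality~(\ref{autreequation}) becomes, after multiplying by $2$ and cancelling the common term $k$ and the terms $\pm\tfrac12{\sf nc}(p\circ p')$, $\pm\tfrac12{\sf nc}(p)$, $\pm\tfrac12{\sf nc}(p')$ that appear on both sides, an inequality that only involves the quantities ${\sf nc}((p\circ p')\vee\mathrm{id}_k)$, ${\sf nc}(p\vee\mathrm{id}_k)$, ${\sf nc}(p'\vee\mathrm{id}_k)$ and $\kappa(p,p')$. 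Concretely I expect it to reduce to something of the shape
\begin{align*}
{\sf nc}((p\circ p')\vee \mathrm{id}_k) + \kappa(p,p') \;\geq\; {\sf nc}(p\vee \mathrm{id}_k) + {\sf nc}(p'\vee \mathrm{id}_k) - k,
\end{align*}
so the whole problem is to prove this clean combinatorial statement about the number of cycles of a concatenation.

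First I would set up the diagrammatic picture of the product $p\circ p'$ exactly as in the definition of the multiplication $\circ$: stack a diagram of $p'$ on top of a diagram of $p$, with $k$ middle vertices identified, $k$ top vertices and $k$ bottom vertices. The partition $p\vee\mathrm{id}_k$ is read off by gluing top-to-bottom in the $p$-diagram; similarly for $p'$; and the cycles of $p\circ p'$ together with the $\kappa(p,p')$ erased loops account for all connected components of the three-row diagram once we also identify top vertex $i$ with bottom vertex $i$ (i.e. impose $\mathrm{id}_k$ on the outer rows). The key identity I would establish — and it is essentially the same bookkeeping already used for Equation~(\ref{eq:lientrace3}) in the excerpt — is that ${\sf nc}((p\circ p')\vee\mathrm{id}_k) + \kappa(p,p')$ equals the number of connected components of the three-row graph obtained from $p$ (bottom edges) $\cup\ p'$ (top edges) $\cup\ \mathrm{id}_k$ on the outer two rows, identified with the middle row. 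Call this graph $\Gamma$; it has $k$ vertices (after all identifications) and its components count is exactly the left-hand side of the reduced inequality.

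Then I would bound the number of components of $\Gamma$ from below by a ``building-up'' argument: start from the graph on $k$ vertices with only the edges coming from $p\vee\mathrm{id}_k$ (which has ${\sf nc}(p\vee\mathrm{id}_k)$ components), then add the edges coming from $p'\vee\mathrm{id}_k$ one connected group at a time. Adding the edges within one cycle of $p'$ (there are ${\sf nc}(p'\vee\mathrm{id}_k)$ such cycles, and the edges of $\mathrm{id}_k$ are already present in both pictures) can decrease the number of components by at most the number of new edge-groups minus one after the first, and counting carefully one gets that the total drop is at most $k - {\sf nc}(p'\vee\mathrm{id}_k)$; equivalently the final component count is at least ${\sf nc}(p\vee\mathrm{id}_k) + {\sf nc}(p'\vee\mathrm{id}_k) - k$. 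This is the standard ``rank'' submodularity fact $\#(u\vee v)\geq \#u + \#v - \#(u\wedge v)$ on the partition lattice applied with $u = p\vee\mathrm{id}_k$, $v = p'\vee\mathrm{id}_k$, together with $\#(u\wedge v)\leq k$ since both partitions are coarser than $\mathrm{id}_k$ — it is really just the triangle inequality for $d$ restricted appropriately, or a direct Euler-characteristic count on $\Gamma$.

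The main obstacle I anticipate is not any deep idea but getting the diagrammatic identity ${\sf nc}((p\circ p')\vee\mathrm{id}_k) + \kappa(p,p') = (\text{components of }\Gamma)$ stated and proved cleanly, since one has to track which connected components of the stacked three-row diagram become cycles of $p\circ p'$, which become erased loops, and how identifying the outer rows via $\mathrm{id}_k$ merges exactly the right things — this is the same subtlety that underlies Equation~(\ref{eq:lientrace3}) but now with the extra identification $\mathrm{id}_k$ imposed. Once that is pinned down, the remaining inequality is the elementary submodularity of the block-count function on $\mathcal{P}_k$, which can be quoted or proved in two lines from Theorem~\ref{def:dist} and the fact that $d\geq 0$. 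I would therefore organize the proof as: (1) reduce~(\ref{autreequation}) via Theorem~\ref{def:dist} to the cycle inequality above; (2) prove the diagrammatic component identity; (3) finish with submodularity of ${\sf nc}(\cdot)$.
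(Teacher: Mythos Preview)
Your reduction in step (1) contains an algebra error, and the ``clean combinatorial statement'' you arrive at is strictly weaker than the theorem. When you substitute $d(\mathrm{id}_k,q)=\tfrac12({\sf nc}(q)+k)-{\sf nc}(q\vee\mathrm{id}_k)$ into both sides of~(\ref{autreequation}), the terms $\tfrac12{\sf nc}(p)$, $\tfrac12{\sf nc}(p')$, $\tfrac12{\sf nc}(p\circ p')$ do \emph{not} cancel: ${\sf nc}(p\circ p')$ appears with sign $+\tfrac12$ on the left and $-\tfrac12$ on the right, while ${\sf nc}(p)$ and ${\sf nc}(p')$ appear only on the right (each with total coefficient $+1$). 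The correct reduction is exactly the statement $\eta(p,p')\geq 0$ in the form of Equation~(\ref{definitiondefect2}), namely
\[
{\sf nc}((p\circ p')\vee\mathrm{id}_k)\;\geq\; {\sf nc}(p\vee\mathrm{id}_k)+{\sf nc}(p'\vee\mathrm{id}_k)-{\sf nc}(p)-{\sf nc}(p')+{\sf nc}(p\circ p')+\kappa(p,p').
\]
Your version replaces the right-hand side by ${\sf nc}(p\vee\mathrm{id}_k)+{\sf nc}(p'\vee\mathrm{id}_k)-k-\kappa(p,p')$. Since in general ${\sf nc}(p)+{\sf nc}(p')-{\sf nc}(p\circ p')-\kappa(p,p')$ can be strictly smaller than $k$, your inequality is weaker. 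Concretely, take $k=2$ and $p=p'=\{\{1,2,1',2'\}\}$: then ${\sf nc}(p)={\sf nc}(p')={\sf nc}(p\circ p')=1$, all the $\vee\,\mathrm{id}_2$ counts are $1$, and $\kappa=0$. The true inequality reads $1\geq 1$ (tight, $\eta=0$), whereas yours reads $1\geq 0$. So proving your version by submodularity, which you do correctly, does not establish~(\ref{autreequation}).

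The paper's route is different: it defines the defect $\eta(p,p')$ and then, in Theorem~\ref{th:dautresvaleurs}, proves via trace manipulations (Equation~(\ref{eq:lientrace2})) the identity $\eta(p_1,p_2)={\sf df}(p_1,p_1\circ p_2)+{\sf df}(p_2,{}^{t}p_1\circ p_1\circ p_2)$, a sum of two ordinary geodesic defects, each non-negative. Your diagrammatic identity for ${\sf nc}((p\circ p')\vee\mathrm{id}_k)+\kappa(p,p')$ is correct and could conceivably be the start of a direct combinatorial proof, but the submodularity bound you apply afterwards is too crude to recover the sharp inequality; you would need to keep track of the extra ${\sf nc}(p)+{\sf nc}(p')-{\sf nc}(p\circ p')-\kappa(p,p')$ term, which measures exactly how many of the $k$ middle identifications are redundant.
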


As we did for the triangular inequality, we define a new defect.

\begin{definition}
\label{defect}
The $\prec$-defect $\eta(p,p')$ is: 
\begin{align*}
\eta(p,p') = d(\mathrm{id}_k,p) + d(\mathrm{id}_k,p') - d(\mathrm{id}_k, p\circ p') -\frac{k+{\sf nc}(p\circ p') - {\sf nc}( p)- {\sf nc}(p')}{2} - \kappa(p,p').
\end{align*}
\end{definition}

Using a simple calculation, we can give an other form to the $\prec$-defect. 
\begin{lemme}
For any $p$ and $p'$ in $\mathcal{P}_k$, $\eta(p,p')$ is equal to: 
\begin{align}
\label{definitiondefect2}
 {\sf nc}( p)  - {\sf nc}(p \vee \mathrm{id}_k)  + {\sf nc}(p') - {\sf nc}(p' \vee  \mathrm{id}_k) - {\sf nc}( p \circ p') + {\sf nc}( p \circ p' \vee  \mathrm{id}_k ) - \kappa(p,p'). 
\end{align}
\end{lemme}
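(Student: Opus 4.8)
The plan is to reduce everything to Theorem \ref{def:dist} and then collect terms; no combinatorics is involved beyond that. Since $\mathcal{P}_k = \mathcal{P}(\{1,\dots,k,1',\dots,k'\})$ and ${\sf nc}(\mathrm{id}_k) = k$, Theorem \ref{def:dist} specializes to
\begin{align*}
d(\mathrm{id}_k, q) = \frac{1}{2}\big(k + {\sf nc}(q)\big) - {\sf nc}(q \vee \mathrm{id}_k)
\end{align*}
for every $q \in \mathcal{P}_k$, and I would apply this identity with $q = p$, $q = p'$ and $q = p \circ p'$.

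Substituting these three expressions into Definition \ref{defect}, one gets four contributions equal to $\pm \tfrac{k}{2}$: two with a plus sign (from $d(\mathrm{id}_k, p)$ and $d(\mathrm{id}_k, p')$) and two with a minus sign (from $-d(\mathrm{id}_k, p \circ p')$ and from the fraction $-\tfrac{k + {\sf nc}(p\circ p') - {\sf nc}(p) - {\sf nc}(p')}{2}$), so they cancel. The two $\tfrac{1}{2}{\sf nc}(p)$ contributions add up to ${\sf nc}(p)$, likewise the two $\tfrac{1}{2}{\sf nc}(p')$ contributions give ${\sf nc}(p')$, and the two $-\tfrac{1}{2}{\sf nc}(p\circ p')$ contributions give $-{\sf nc}(p\circ p')$. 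The terms ${\sf nc}(p\vee \mathrm{id}_k)$, ${\sf nc}(p'\vee \mathrm{id}_k)$, ${\sf nc}(p\circ p'\vee \mathrm{id}_k)$ and $-\kappa(p,p')$ carry over unchanged. Collecting everything yields
\begin{align*}
\eta(p,p') = {\sf nc}(p) - {\sf nc}(p \vee \mathrm{id}_k) + {\sf nc}(p') - {\sf nc}(p' \vee \mathrm{id}_k) - {\sf nc}(p\circ p') + {\sf nc}(p\circ p' \vee \mathrm{id}_k) - \kappa(p,p'),
\end{align*}
which is exactly Equation (\ref{definitiondefect2}).

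The only thing that needs care is the bookkeeping of signs and of the halved coefficients, so that the four $\tfrac{k}{2}$-terms cancel and the coefficients of ${\sf nc}(p)$, ${\sf nc}(p')$ and ${\sf nc}(p\circ p')$ become $\pm 1$; apart from that the argument is immediate and uses nothing beyond Theorem \ref{def:dist} and the definition of $\eta$. In particular there is no real obstacle, consistently with the statement that this is a simple calculation.
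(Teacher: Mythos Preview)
Your proof is correct and is exactly the ``simple calculation'' the paper alludes to: substitute the distance formula from Theorem~\ref{def:dist} into Definition~\ref{defect} and collect terms. The paper provides no further details, so your write-up is in fact more explicit than the original.
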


For now, we do not know if $\eta(p,p') \geq 0$ but we can express $\eta(p,p')$ using the defect for $\leq$. 

\begin{theorem}
\label{th:dautresvaleurs}
Let $p_0$, $p_1$ and $p_2$ be three partitions in $\mathcal{P}_k$, the following quantities are equal: 
\begin{enumerate}
\item ${\sf df}(p_1 \circ p_2, p_0) + \eta(p_1, p_2), $
\item$ {\sf df}(p_1,p_0) + {\sf df}(p_2, {}^{t}p_1 \circ p_0),$ 
\item ${\sf df}(p_1, p_0\circ {}^{t} p_2) + {\sf df}(p_2, p_0), $
\item ${\sf df}(p_1 \otimes p_2, (p_0 \otimes \mathrm{id}_k) \tau),$ 
\end{enumerate}
where $\tau$ is the permutation in $\mathfrak{S}_{2k}$ equal to $(1,k+1)(2,k+2)...(k,2k)$. In particular, taking $p_0 = p_1 \otimes p_2$: 
\begin{align*}
\eta(p_1,p_2) &= {\sf df}(p_1,p_1\circ p_2) + {\sf df}(p_2, { }^{t}p_1 \circ p_1 \circ p_2) 
\\&= {\sf df}(p_1, p_1\circ p_2 \circ { }^{t} p_2) + {\sf df}(p_2, p_1 \circ p_2) 
\\&= {\sf df}(p_1 \otimes p_2, ((p_1 \circ p_2) \otimes  \mathrm{id}_k) \tau).
\end{align*}
\end{theorem}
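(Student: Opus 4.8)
The plan is to funnel the whole four-fold equality through one combinatorial identity and to establish that identity by a trace computation. The first step is pure bookkeeping: using Equation (\ref{definitiondefect}) (with base partition $\mathrm{id}_k$, resp.\ $\mathrm{id}_{2k}$ for quantity $(4)$) together with Equation (\ref{definitiondefect2}), each of the four expressions becomes an explicit $\mathbb{Z}$-linear combination of numbers of the form ${\sf nc}(q)$, ${\sf nc}(q\vee\mathrm{id})$, ${\sf nc}(q\vee q')$ and $\kappa(q,q')$. Cancelling the terms common to $(1)$ and $(2)$ (the various ${\sf nc}(p_i)$, ${\sf nc}(p_i\vee\mathrm{id}_k)$ and ${\sf nc}(p_0\vee\mathrm{id}_k)$) and using Equation (\ref{eq:lientrace3}) to rewrite ${\sf nc}(p_0\vee p_1)$ as ${\sf nc}\big(({}^{t}p_1\circ p_0)\vee\mathrm{id}_k\big)+\kappa({}^{t}p_1,p_0)$, one sees that $(1)=(2)$ is equivalent, modulo the elementary ``transposition rules'' ${}^{t}(a\circ b)={}^{t}b\circ{}^{t}a$, $\kappa(a,b)=\kappa({}^{t}b,{}^{t}a)$, ${\sf nc}({}^{t}q)={\sf nc}(q)$ and ${\sf nc}({}^{t}q\vee{}^{t}q')={\sf nc}(q\vee q')$ (all immediate from the diagrammatic definitions), to the single identity
\begin{align*}
\kappa(p_1,p_2)+{\sf nc}\big((p_1\circ p_2)\vee p_0\big)=\kappa(p_0,{}^{t}p_2)+{\sf nc}\big((p_0\circ{}^{t}p_2)\vee p_1\big),
\end{align*}
which I will refer to as $(\star)$.

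To prove $(\star)$ I would evaluate ${\sf Tr}_N\big(p_1\,p_2\,{}^{t}p_0\big)$ inside the partition algebra $\mathbb{C}[\mathcal{P}_k(N)]$ in two ways. Writing $p_1p_2=N^{\kappa(p_1,p_2)}(p_1\circ p_2)$ and applying Equation (\ref{eq:lientrace2}) gives ${\sf Tr}_N\big(p_1\,p_2\,{}^{t}p_0\big)=N^{\kappa(p_1,p_2)+{\sf nc}((p_1\circ p_2)\vee p_0)}$. On the other hand, the ordinary matrix transpose of $\rho_N(q)$ equals $\rho_N({}^{t}q)$ (visible from Equation (\ref{forme})), so cyclicity and transpose-invariance of the trace give ${\sf Tr}_N\big(p_1\,p_2\,{}^{t}p_0\big)={\sf Tr}_N\big(p_0\,{}^{t}p_2\,{}^{t}p_1\big)$; expanding this product the same way and using Equation (\ref{eq:lientrace2}) once more yields $N^{\kappa(p_0,{}^{t}p_2)+{\sf nc}((p_0\circ{}^{t}p_2)\vee p_1)}$ after one application of the transposition rules. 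Comparing powers of $N$ gives $(\star)$. (Alternatively, $(\star)$ can be obtained purely combinatorially from Equation (\ref{eq:lientrace3}) and the cocycle identity $\kappa(a,b)+\kappa(a\circ b,c)=\kappa(b,c)+\kappa(a,b\circ c)$, which expresses associativity of $\mathbb{C}[\mathcal{P}_k(N)]$.)

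The other equalities follow formally. For $(1)=(4)$, I would expand $(4)$, use the additivities ${\sf nc}(p_1\otimes p_2)={\sf nc}(p_1)+{\sf nc}(p_2)$ and ${\sf nc}\big((p_1\otimes p_2)\vee\mathrm{id}_{2k}\big)={\sf nc}(p_1\vee\mathrm{id}_k)+{\sf nc}(p_2\vee\mathrm{id}_k)$, and compute the two remaining terms by observing that $\rho_N(\tau)$ is the flip of the two tensor factors of $(\mathbb{C}^{N})^{\otimes 2k}$, viewed as $(\mathbb{C}^{N})^{\otimes k}\otimes(\mathbb{C}^{N})^{\otimes k}$, while $\rho_N(p_0\otimes\mathrm{id}_k)=\rho_N(p_0)\otimes\mathrm{Id}$; the elementary identity ${\sf Tr}\big((X\otimes Y)\,\rho_N(\tau)\big)={\sf Tr}(XY)$ then gives ${\sf nc}\big((p_0\otimes\mathrm{id}_k)\tau\vee\mathrm{id}_{2k}\big)={\sf nc}(p_0\vee\mathrm{id}_k)$ and ${\sf nc}\big((p_0\otimes\mathrm{id}_k)\tau\vee(p_1\otimes p_2)\big)=\kappa(p_1,p_2)+{\sf nc}\big((p_1\circ p_2)\vee p_0\big)$ (the latter because both sides, evaluated through ${\sf Tr}_N$ via Equation (\ref{eq:lientrace2}) and via the flip identity, return $N^{\kappa(p_1,p_2)+{\sf nc}((p_1\circ p_2)\vee p_0)}$). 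These substitutions make $(4)$ coincide with $(1)$ term by term, with no recourse to $(\star)$. For $(1)=(3)$, I would apply the already-proven $(1)=(2)$ with $(p_0,p_1,p_2)$ replaced by $({}^{t}p_0,{}^{t}p_2,{}^{t}p_1)$: quantity $(1)$ is unchanged by this substitution — since $\eta({}^{t}p_2,{}^{t}p_1)=\eta(p_1,p_2)$, ${}^{t}(p_1\circ p_2)={}^{t}p_2\circ{}^{t}p_1$ and ${\sf df}({}^{t}a,{}^{t}b)={\sf df}(a,b)$ — whereas quantity $(2)$ becomes quantity $(3)$ after one more use of the transposition rules. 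Finally, the displayed identities are the specialization $p_0=p_1\circ p_2$, for which ${\sf df}(p_1\circ p_2,p_0)=0$.

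The main obstacle is the transposition bookkeeping: one has to insert Equation (\ref{eq:lientrace3}) and the transposition rules into the reductions of $(1)=(2)$ and $(1)=(4)$ at precisely the right spots for everything to collapse onto $(\star)$, and one has to correctly identify $\tau$ with the tensor flip in order to treat $(4)$. No individual step is deep; the content of the theorem is organizing the algebra so that it all flows through $(\star)$.
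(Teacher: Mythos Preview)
Your approach is correct and essentially identical to the paper's: expand each of the four quantities via Equations (\ref{definitiondefect}) and (\ref{definitiondefect2}), cancel the common ${\sf nc}(p_i)$, ${\sf nc}(p_i\vee\mathrm{id}_k)$, ${\sf nc}(p_0\vee\mathrm{id}_k)$ terms, and establish the residual combinatorial identities by evaluating ${\sf Tr}_N(p_1\,p_2\,{}^{t}p_0)$ in several ways using Equation (\ref{eq:lientrace2}) together with the flip identity for $\tau$. One small bookkeeping slip: the reduction you describe for $(1)=(2)$ --- rewriting ${\sf nc}(p_0\vee p_1)$ via Equation (\ref{eq:lientrace3}) --- actually yields $\kappa({}^{t}p_1,p_0)+{\sf nc}\big(({}^{t}p_1\circ p_0)\vee p_2\big)$ on the right, whereas your stated $(\star)$ is in fact the reduction of $(1)=(3)$; this is harmless, since your trace argument proves both versions and your substitution $(p_0,p_1,p_2)\mapsto({}^{t}p_0,{}^{t}p_2,{}^{t}p_1)$ interchanges $(2)$ and $(3)$, but the labels should be aligned.
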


\begin{proof}
The proof is done by doing calculations and using intensively the Equation (\ref{eq:lientrace2}). From now on, we will take the following convention: the $\circ$ and $\otimes$ operations are done first before any $\vee$ operation, thus $p_1 \circ p_2 \vee p_3 \otimes p_4$ stands for $(p_1 \circ p_2) \vee (p_3 \otimes p_4).$

Let us expand ${\sf df}(p_1 \circ p_2, p_0) + \eta(p_1, p_2)$. Using Equations (\ref{definitiondefect}) and (\ref{definitiondefect2}), ${\sf df}(p_1 \circ p_2, p_0) + \eta(p_1, p_2)$ is equal to: 
\begin{align*}
\!\!\!\!\!\!\!\!\!\!\!\!\!\!\!\!\!\!\!\!\!\!\!\!\!\!\!\!\!\!\!\!\!\!\!\! {\sf nc}(p_1) - {\sf nc}(p_1 \vee \mathrm{id}_k) +  {\sf nc}(p_2) &- {\sf nc}(p_2 \vee \mathrm{id}_k)+ {\sf nc}(p_0 \vee \mathrm{id}_k) \\
& - {\sf nc}(p_1 \circ p_2 \vee p_0) - \kappa (p_1, p_2).
\end{align*}
 
In a similar way $ {\sf df}(p_1,p_0) + {\sf df}(p_2, {}^{t}p_1 \circ p_0)$ is equal to: 
\begin{align*}
{\sf nc}(p_1) - {\sf nc}(p_1 \vee \mathrm{id}_k) &+  {\sf nc}(p_2)- {\sf nc}(p_2 \vee \mathrm{id}_k)+ {\sf nc}(p_0 \vee \mathrm{id}_k) \\
&-{\sf nc}(p_1 \vee p_0 ) -{\sf nc}(p_2 \vee {}^{t}p_1 \circ p_0 ) +{\sf nc}({}^{t}p_1 \circ p_0 \vee  \mathrm{id}_k ). 
\end{align*}

For ${\sf df}(p_1, p_0\circ {}^{t} p_2) + {\sf df}(p_2, p_0),$ we get: 
\begin{align*}
{\sf nc}(p_1) - {\sf nc}(p_1 \vee \mathrm{id}_k) &+  {\sf nc}(p_2)- {\sf nc}(p_2 \vee \mathrm{id}_k)+ {\sf nc}(p_0 \vee \mathrm{id}_k) \\
&- {\sf nc}(p_1 \vee p_0 \circ { }^{t}p_2) + {\sf nc}(p_0 \circ { }^{t}p_2 \vee  \mathrm{id}_k) - {\sf nc}(p_2 \vee p_0).
\end{align*}

For  ${\sf df}(p_1 \otimes p_2, (p_0 \otimes \mathrm{id}_k) \tau),$  we get: 
\begin{align*}
{\sf nc}(p_1\otimes p_2) - {\sf nc}(p_1\otimes p_2 \vee \mathrm{id}_{2k} ) - {\sf nc}(p_1\otimes p_2 \vee (p_0 \otimes \mathrm{id}_k) \tau ) + {\sf nc}((p_0 \otimes \mathrm{id}_k) \tau \vee  \mathrm{id}_{2k}), 
\end{align*}
but ${\sf nc}(p_1 \otimes p_2) = {\sf nc}(p_1) +  {\sf nc}(p_2)$, ${\sf nc}(p_1\otimes p_2 \vee \mathrm{id}_{2k} ) = {\sf nc}(p_1 \vee \mathrm{id}_{k} ) +  {\sf nc}(p_2 \vee \mathrm{id}_{k} )$ and $ {\sf nc}((p_0 \otimes \mathrm{id}_k) \tau \vee  \mathrm{id}_{2k}) = {\sf nc}(p_0 \vee  \mathrm{id}_k)$. Thus, ${\sf df}(p_1 \otimes p_2, (p_0 \otimes \mathrm{id}_k) \tau),$ is equal to: 

\begin{align*}
{\sf nc}(p_1) - {\sf nc}(p_1 \vee \mathrm{id}_k) +  {\sf nc}(p_2)- {\sf nc}(p_2 \vee \mathrm{id}_k)&+ {\sf nc}(p_0 \vee \mathrm{id}_k) \\
&-{\sf nc}(p_1 \otimes p_2 \vee (p_0 \otimes \mathrm{id}_k) \tau). 
\end{align*}

The first lines are all equal, thus it remains to prove that the following numbers are equal: 
\begin{enumerate}
\item $ - {\sf nc}(p_1 \circ p_2 \vee p_0) - \kappa (p_1, p_2)$,
\item$-{\sf nc}(p_1 \vee p_0 ) -{\sf nc}(p_2 \vee {}^{t}p_1 \circ p_0 ) +{\sf nc}({}^{t}p_1 \circ p_0 \vee  \mathrm{id}_k ) $,
\item$- {\sf nc}(p_1 \vee p_0 \circ { }^{t}p_2) + {\sf nc}(p_0 \circ { }^{t}p_2 \vee  \mathrm{id}_k) - {\sf nc}(p_2 \vee p_0)$,
\item $-{\sf nc}(p_1 \otimes p_2 \vee (p_0 \otimes \mathrm{id}_k) \tau).$
\end{enumerate}

Let us prove that the first one and the second one are equal. The idea is to use Equation (\ref{eq:lientrace2}) and the fact that we need to prove: 
\begin{align*}
N^{ {\sf nc}(p_1 \vee p_0 )+{\sf nc}(p_2 \vee {}^{t}p_1 \circ p_0 ) } = N^{ {\sf nc}(p_1 \circ p_2 \vee p_0) + \kappa (p_1, p_2) +{\sf nc}({}^{t}p_1 \circ p_0 \vee  \mathrm{id}_k )}.
\end{align*}

We have: 
\begin{align*}
N^{{\sf nc}(p_1 \vee p_0 )+{\sf nc}(p_2 \vee {}^{t}p_1 \circ p_0 ) } &= {\sf Tr}(p_1 \ { }^{t}p_0) {\sf Tr}(p_2\ ({ }^{t}p_0 \circ p_1))\\
&= {\sf Tr}(p_1 \ { }^{t}p_0) {\sf Tr}(({ }^{t}p_0 \circ p_1) \circ p_2) N^{\kappa(p_1,p_2)}\\
&= {\sf Tr}({ }^{t}p_0\ p_1 ) {\sf Tr}({ }^{t}p_0 \circ (p_1 \circ p_2)) N^{\kappa(p_1,p_2)}\\
&={\sf Tr}({ }^{t}p_0 \circ  p_1 ) N^{ \kappa({ }^{t}p_0, p_1)} {\sf Tr}({ }^{t}p_0 \circ (p_1 \circ p_2))N^{\kappa(p_1,p_2)} \\
&={\sf Tr}({ }^{t}p_0 \circ  p_1 )  {\sf Tr}({ }^{t}p_0  (p_1 \circ p_2))N^{\kappa(p_1,p_2)}\\
&= N^{{\sf nc}({}^{t}p_1 \circ p_0 \vee  \mathrm{id}_k )} N^{ {\sf nc}(p_1 \circ p_2 \vee p_0)} N^{\kappa(p_1,p_2)}, 
\end{align*}
which allows us to conclude. In order to prove that the third element is equal to the first one, the proof is similar. Let us show that the first and the forth are equal: 
\begin{align*}
N^{{\sf nc}(p_1 \otimes p_2 \vee (p_0 \otimes \mathrm{id}_k) \tau)} &= {\sf Tr}((p_1 \otimes p_2)\ { }^{t} ((p_0 \otimes \mathrm{id}_k)\tau))\\&= {\sf Tr}(p_1p_2\text{ }^{t}p_0) \\
&=N^{\kappa(p_1,p_2)} {\sf Tr}((p_1\circ p_2)\text{ }^{t}p_0)\\
&=N^{\kappa(p_1,p_2)+ {\sf nc}(p_1 \circ p_2 \vee p_0)}. 
\end{align*}
This concludes the proof. 
\end{proof}

This theorem allows us to prove Inequality (\ref{autreequation}): since the defect ${\sf df}$ is always non negative, $\eta$ which is a sum of two defects for $\leq$ is also non negative. In the first place, Inequality (\ref{autreequation}) was obtained as a consequence of the triangle inequality for $d$ and an inequality between $d(p,p\circ p')$ and $d(\mathrm{id}_k, p')$ given by the following proposition. This inequality generalize the invariance of the restiction of $d$ to $\mathfrak{S}_k$ by left or right multiplication by a permutation. 
\begin{proposition}
\label{presqueinv}
Let $p$ and $p'$ in $\mathcal{P}_k$, we have the following inequality: 
\begin{align*}
d(p, p\circ p') \leq d(\mathrm{id}_k, p') - \frac{k+{\sf nc}(p\circ p') - {\sf nc}( p) - {\sf nc}(p')}{2}-\kappa(p,p').
\end{align*}
\end{proposition}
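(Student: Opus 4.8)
The plan is to reduce this inequality, by a purely bookkeeping step, to the already–established non-negativity of a defect, exactly as packaged in Theorem~\ref{th:dautresvaleurs}.

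First I would compare the two relevant definitions without doing any real computation. By Equation~(\ref{definitiondefect}) (with base partition $\mathrm{id}_k$),
\begin{align*}
{\sf df}(p, p\circ p') = d(\mathrm{id}_k, p) + d(p, p\circ p') - d(\mathrm{id}_k, p\circ p'),
\end{align*}
whereas Definition~\ref{defect} reads
\begin{align*}
\eta(p,p') = d(\mathrm{id}_k,p) + d(\mathrm{id}_k,p') - d(\mathrm{id}_k, p\circ p') - \frac{k+{\sf nc}(p\circ p') - {\sf nc}(p) - {\sf nc}(p')}{2} - \kappa(p,p').
\end{align*}
Subtracting, the terms $d(\mathrm{id}_k,p)$ and $d(\mathrm{id}_k, p\circ p')$ cancel and one reads off
\begin{align*}
\eta(p,p') - {\sf df}(p, p\circ p') = d(\mathrm{id}_k, p') - d(p, p\circ p') - \frac{k+{\sf nc}(p\circ p') - {\sf nc}(p) - {\sf nc}(p')}{2} - \kappa(p,p').
\end{align*}
Hence the inequality to be proved is \emph{exactly equivalent} to the statement ${\sf df}(p, p\circ p') \leq \eta(p,p')$; nothing more than the definitions is needed for this reduction.

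It then remains to prove ${\sf df}(p, p\circ p') \leq \eta(p,p')$. For this I would invoke the ``In particular'' case of Theorem~\ref{th:dautresvaleurs}, taking $p_0 = p_1 \otimes p_2$ with $p_1 = p$ and $p_2 = p'$, which gives the decomposition
\begin{align*}
\eta(p,p') = {\sf df}(p, p\circ p') + {\sf df}(p', {}^{t}p \circ p \circ p').
\end{align*}
The second term on the right is non-negative: by~(\ref{definitiondefect}) it equals $d(\mathrm{id}_k, p') + d(p', {}^{t}p\circ p\circ p') - d(\mathrm{id}_k, {}^{t}p\circ p\circ p')$, which is $\geq 0$ by the triangle inequality for the distance $d$ of Theorem~\ref{def:dist}. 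Therefore $\eta(p,p') \geq {\sf df}(p, p\circ p')$, which is what we wanted.

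There is no genuine obstacle once Theorem~\ref{th:dautresvaleurs} is in hand: the entire content of the proposition is the identity $\eta(p,p') - {\sf df}(p,p\circ p') = {\sf df}(p', {}^{t}p\circ p\circ p')$ together with the positivity of a single defect. The only point requiring mild care is the term-by-term matching in the first step — keeping the constant $k$, the factor $\tfrac12$, and the sign of $\kappa(p,p')$ straight — but that is mechanical. (Should one wish to avoid citing Theorem~\ref{th:dautresvaleurs}, one could instead expand every distance via Theorem~\ref{def:dist} into an inequality between ${\sf nc}$ of various joins and establish it through the trace identity~(\ref{eq:lientrace2}); this merely re-does the computation underlying Theorem~\ref{th:dautresvaleurs}, so invoking the theorem directly is the economical route.)
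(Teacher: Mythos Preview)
Your proof is correct. The reduction of the proposition to ${\sf df}(p,p\circ p')\le\eta(p,p')$ is exact, and the identity $\eta(p,p')={\sf df}(p,p\circ p')+{\sf df}(p',{}^{t}p\circ p\circ p')$ from Theorem~\ref{th:dautresvaleurs} (with $p_0=p_1\circ p_2$, not $p_1\otimes p_2$; that is a typo in the paper which you have copied but correctly interpreted) immediately gives the conclusion via the triangle inequality.

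The paper takes a different, more self-contained route: it applies the triangle inequality directly in $\mathcal{P}_{2k}$,
\[
d\bigl(p\otimes\mathrm{id}_k,\,((p\circ p')\otimes\mathrm{id}_k)\tau\bigr)\le d(p\otimes\mathrm{id}_k,\,p\otimes p')+d\bigl(p\otimes p',\,((p\circ p')\otimes\mathrm{id}_k)\tau\bigr),
\]
and then evaluates each of the three distances by trace computations of the type used inside the proof of Theorem~\ref{th:dautresvaleurs}. Your approach is the cleaner one once Theorem~\ref{th:dautresvaleurs} is available, since it simply cites the finished identity rather than redoing the underlying computation; the paper's version is advantageous only if one wants an argument logically independent of Theorem~\ref{th:dautresvaleurs} (historically, as the text explains, this proposition was proved first and then combined with the triangle inequality $d(\mathrm{id}_k,p\circ p')\le d(\mathrm{id}_k,p)+d(p,p\circ p')$ to obtain Inequality~(\ref{autreequation})). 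The two arguments are close cousins: your use of items $1=2$ of Theorem~\ref{th:dautresvaleurs} is the $\mathcal{P}_k$ side of the same coin whose $\mathcal{P}_{2k}$ side (item~4) the paper exploits.
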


\begin{proof}
It is a consequence of the triangle inequality: 
\begin{align}
\label{equ3}
d\left(p \otimes \mathrm{id}_k, \left((p \circ p') \otimes \mathrm{id}_k\right) \tau\right)\leq d(p \otimes \mathrm{id}_k, p \otimes p') + d\left(p \otimes p', \left((p \circ p') \otimes \mathrm{id}_k\right) \tau\right).
\end{align}
and computations similar to what we did in order to prove Theorem \ref{th:dautresvaleurs}. 
\end{proof}

\subsubsection{The Kreweras complement}

\begin{definition}
\label{geo2}
Let $p$ and $p'$ be two elements of $\mathcal{P}_{k}$. We will say that $p'$ is an {\em admissible prefixe} of $p$ if and only if: 
\begin{enumerate}
\item there exists $p''$ such that $p = p' \circ p''$, 
\item we have the equality: 
\begin{align*}
d(\mathrm{id}_k,p) = d(\mathrm{id}_k,p') + d(\mathrm{id}_k,p'') - \frac{k+{\sf nc}( p)-{\sf nc}( p') - {\sf nc}(p'')}{2} - \kappa(p',p''). 
\end{align*}
\end{enumerate}
It $p'$ is an admissible prefixe of $p$, we write $p' \prec p$ and the set of $p''$ which satisfy $1.$ and $2.$ is called the Kreweras complement of $p'$ in $p$. We denote it ${\sf K}_{p}( p')$. 
\end{definition}

If $p' \prec p$ does not hold, the Kreweras complement of $p'$ in $p$ is set to be equal to $\emptyset$. Now, let us suppose that $p' \prec p $. The set ${\sf K}_{p}( p')$ is not empty but in general it is not reduced to a unique partition. For example, one can show that if $p' = \{\{1,2,1',2'\}\}$ and $p = \{\{1',2'\},\{1\}, \{2\}\}$ then: 
\begin{align*}
{\sf K}_{p}( p') = \left\{\big\{\{1\},\{2\},\{1'\},\{2'\}\big\}, \big\{\{1\},\{2\},\{1',2'\}\big\}\right\}. 
\end{align*}

Let us remark that for any $\sigma \in \mathfrak{S}_k$, $\{\sigma' \in \mathfrak{S}_k, \sigma' \prec \sigma\} = [\mathrm{id}_k,\sigma]\cap{\mathfrak{S}_k}$. This is due to the fact that $\kappa(\sigma,\sigma') = 0$ for any couple of permutations, the fact that ${\sf nc}$ is constant on the set of permutations and the fact that any permutation is invertible. Using similar arguments and Lemma \ref{stablemieux}, one can have the better result. 
\begin{lemme}
\label{geoamperm}
Let $\sigma \in \mathfrak{S}_k$: 
\begin{align*}
\{p \in \mathcal{P}_k, p \prec \sigma\} = [\mathrm{id}_k, \sigma]\cap{\mathfrak{S}_k}. 
\end{align*}
Besides, for any $p \in \mathcal{P}_k$, $\{\sigma\in \mathfrak{S}_k, \sigma \prec p\} = [\mathrm{id}_k, p] \cap \mathfrak{S}_k$. 
\end{lemme}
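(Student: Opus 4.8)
For $\sigma \in \mathfrak{S}_k$:
- $\{p \in \mathcal{P}_k, p \prec \sigma\} = [\mathrm{id}_k, \sigma]\cap{\mathfrak{S}_k}$.
- For any $p \in \mathcal{P}_k$, $\{\sigma\in \mathfrak{S}_k, \sigma \prec p\} = [\mathrm{id}_k, p] \cap \mathfrak{S}_k$.

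The plan is to reduce both equalities to the formula of Theorem~\ref{th:dautresvaleurs} together with the non-negativity of the defect ${\sf df}$. First I would unwind the definition: by Definition~\ref{geo2}, $p'\prec p$ holds precisely when there is a partition $p''$ with $p=p'\circ p''$ and $\eta(p',p'')=0$; indeed, after substituting $p=p'\circ p''$, condition $(2)$ of Definition~\ref{geo2} is exactly the vanishing of the $\prec$-defect of Definition~\ref{defect}. Next, specialising Theorem~\ref{th:dautresvaleurs} to $p_0=p_1\circ p_2$ gives
\begin{align*}
\eta(p_1,p_2)={\sf df}(p_1,p_1\circ p_2)+{\sf df}(p_2,{}^{t}p_1\circ p_1\circ p_2),
\end{align*}
and since both summands are non-negative, $\eta(p_1,p_2)=0$ if and only if $p_1\leq p_1\circ p_2$ and $p_2\leq {}^{t}p_1\circ p_1\circ p_2$ (recall that $q\leq r$ is the same as ${\sf df}(q,r)=0$). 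These two remarks are the only ingredients.

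To prove $\{p\in\mathcal{P}_k,\ p\prec\sigma\}=[\mathrm{id}_k,\sigma]\cap\mathfrak{S}_k$ for $\sigma\in\mathfrak{S}_k$: if $p\prec\sigma$, choose a witness $p''$ with $\sigma=p\circ p''$; since $\sigma\in\mathfrak{S}_k$, Lemma~\ref{stablemieux} forces $p,p''\in\mathfrak{S}_k$, and then $\eta(p,p'')=0$ yields $p\leq p\circ p''=\sigma$, so $p\in[\mathrm{id}_k,\sigma]\cap\mathfrak{S}_k$. Conversely, if $p\in[\mathrm{id}_k,\sigma]\cap\mathfrak{S}_k$, then $p$ is a permutation, hence invertible with $\circ$-inverse ${}^{t}p$, and $p\leq\sigma$; put $p''={}^{t}p\circ\sigma$. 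Using that ${}^{t}p\circ p=\mathrm{id}_k$ is neutral for $\circ$, one gets $p\circ p''=\sigma$ and ${}^{t}p\circ p\circ p''=p''$, hence
\begin{align*}
\eta(p,p'')={\sf df}(p,\sigma)+{\sf df}(p'',p'')=0,
\end{align*}
so $p\prec\sigma$.

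To prove $\{\sigma\in\mathfrak{S}_k,\ \sigma\prec p\}=[\mathrm{id}_k,p]\cap\mathfrak{S}_k$ for arbitrary $p\in\mathcal{P}_k$: if $\sigma\prec p$, choose $p''$ with $p=\sigma\circ p''$; then $\eta(\sigma,p'')=0$ gives $\sigma\leq\sigma\circ p''=p$, so $\sigma\in[\mathrm{id}_k,p]$. Conversely, if $\sigma\in\mathfrak{S}_k$ and $\sigma\leq p$, put $p''={}^{t}\sigma\circ p$; exactly as above $\sigma\circ p''=p$ and ${}^{t}\sigma\circ\sigma\circ p''=p''$, so $\eta(\sigma,p'')={\sf df}(\sigma,p)+{\sf df}(p'',p'')=0$ and $\sigma\prec p$.

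The argument is essentially routine once Theorem~\ref{th:dautresvaleurs} is in hand. The one step that is not purely formal is the implication ``$p\prec\sigma\Rightarrow p\in\mathfrak{S}_k$'' used in the first equality, which is supplied by Lemma~\ref{stablemieux}: a priori an admissible prefix of a permutation could be any partition in $\mathcal{P}_k$. The only other point requiring a little care is that the Kreweras complement ${\sf K}_\sigma(p')$ (resp. ${\sf K}_p(\sigma)$) is in general a whole set of partitions, so in the two ``$\supseteq$'' directions one must display a concrete witness $p''$; this is possible precisely because one of the two $\circ$-factors is an invertible permutation, which makes ${}^{t}p\circ\sigma$ (resp. ${}^{t}\sigma\circ p$) the natural choice.
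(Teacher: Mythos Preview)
Your proof is correct and follows essentially the same route as the paper's sketch: the paper notes that the result follows from Lemma~\ref{stablemieux} together with the elementary facts that $\kappa(\sigma,\cdot)=0$, ${\sf nc}(\sigma)=k$, and permutations are $\circ$-invertible, while you package these same facts via the identity $\eta(p_1,p_2)={\sf df}(p_1,p_1\circ p_2)+{\sf df}(p_2,{}^{t}p_1\circ p_1\circ p_2)$ from Theorem~\ref{th:dautresvaleurs}. The two presentations are equivalent, and your explicit choice of witnesses $p''={}^{t}p\circ\sigma$ and $p''={}^{t}\sigma\circ p$ in the $\supseteq$ directions is exactly what the paper's appeal to invertibility amounts to.
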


We have seen, in Theorem \ref{th:noncrossing}, that the poset of non-crossing partitions over $\{1, …, k\}$ is isomorphic to $(\left[\mathrm{id}_k, (1,…,k)\right]\cap \mathfrak{S}_k, \leq)$. From now on, we will consider any non-crossing partition over $\{1, …, k\}$ as an element of $\left[\mathrm{id}_k, (1,..,k)\right]\cap \mathfrak{S}_k$. Our notion generalizes the usual notion of Kreweras complement since for any $\sigma \in [\mathrm{id},(1,...,k)]\cap{\mathfrak{S}_k}$, ${\sf K}_{(1,...,k)}( \sigma)$ is the Kreweras complement of the non-crossing partition corresponding to $\sigma$. In the general case, as a direct consequence of the definitions and Theorem \ref{th:dautresvaleurs}, we get the following theorem. 

\begin{theorem}
\label{th:caractK}
Let $p_0$, $p_1$ and $p_2$ be three partitions in $\mathcal{P}_k$. The following assertions are equivalent: 
\begin{enumerate}
\item $p_1\circ p_2 \leq p_0$ and $p_2 \in {\sf K}_{p_1\circ p_2}(p_1)$, 
\item $p_1 \leq p_0 $ and $p_2 \leq ^{t}\!p_1 \circ p_0$, 
\item $p_1 \leq p_0 \circ ^{t}\! p_2$ and $p_2 \leq p_0$,
\item $p_1 \otimes p_2 \leq (p_0 \otimes \mathrm{id}_k) \tau$.
\end{enumerate}
Thus, by specifying $p_0= p_1\circ p_2$, we have the equivalences: 
\begin{enumerate}
\item $p_2 \in {\sf K}_{p_1\circ p_2}(p_1)$, 
\item $p_1 \leq p_1 \circ p_2$ and $p_2 \leq ^{t}\!p_1 \circ p_1 \circ p_2$, 
\item $p_1 \leq p_1 \circ p_2 \circ ^{t}\! p_2$ and $p_2 \leq p_1 \circ p_2$. 
\item $p_1\otimes p_2 \leq (p_1\circ p_2 \otimes \mathrm{id}_k) \tau$.
\end{enumerate}
\end{theorem}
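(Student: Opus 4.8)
The plan is to derive Theorem~\ref{th:caractK} directly from Theorem~\ref{th:dautresvaleurs}, using the simple but crucial observation that a defect of the form ${\sf df}_b(\cdot,\cdot)$ is always nonnegative (since $d$ is a distance and ${\sf df}_b(p',p) = d(b,p')+d(p',p)-d(b,p)$) and that ${\sf df}_b(p',p)=0$ is exactly the statement $p'\leq_b p$. So an \emph{equality} between a defect and a \emph{sum} of two defects automatically forces the ``$\Leftarrow$'' and ``$\Rightarrow$'' of each equivalence, because a sum of nonnegative terms vanishes iff each term vanishes.

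Concretely, Theorem~\ref{th:dautresvaleurs} tells us that the following four quantities are all equal:
\begin{align*}
&(a)\ {\sf df}(p_1 \circ p_2, p_0) + \eta(p_1, p_2), \\
&(b)\ {\sf df}(p_1,p_0) + {\sf df}(p_2, {}^{t}p_1 \circ p_0),\\
&(c)\ {\sf df}(p_1, p_0\circ {}^{t} p_2) + {\sf df}(p_2, p_0), \\
&(d)\ {\sf df}(p_1 \otimes p_2, (p_0 \otimes \mathrm{id}_k) \tau).
\end{align*}
Now $\eta(p_1,p_2)$ is itself, by Theorem~\ref{th:dautresvaleurs} with $p_0$ replaced by $p_1\otimes p_2$, a sum of two defects, hence $\eta(p_1,p_2)\geq 0$; moreover, unwinding Definition~\ref{geo2} one sees that ``$p_1\circ p_2 \leq p_0$ and $p_2\in {\sf K}_{p_1\circ p_2}(p_1)$'' holds precisely when ${\sf df}(p_1\circ p_2,p_0)=0$ \emph{and} $\eta(p_1,p_2)=0$, i.e.\ precisely when quantity $(a)$ is zero. (Here one must check that condition $2$ of Definition~\ref{geo2}, written with $p'=p_1$, $p''=p_2$, $p=p_1\circ p_2$, is exactly $\eta(p_1,p_2)=0$ after substituting $d(\mathrm{id}_k,p_1\circ p_2 \circ \cdots)$; this is a direct comparison of Definition~\ref{defect} with Definition~\ref{geo2} and is essentially bookkeeping.) Likewise $(b)=0$ iff $p_1\leq p_0$ and $p_2\leq {}^tp_1\circ p_0$; $(c)=0$ iff $p_1\leq p_0\circ {}^tp_2$ and $p_2\leq p_0$; and $(d)=0$ iff $p_1\otimes p_2\leq (p_0\otimes\mathrm{id}_k)\tau$. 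Since $(a)=(b)=(c)=(d)$, all four conditions are equivalent, which is the first list of equivalences. The second list is then the special case $p_0=p_1\circ p_2$, for which ${\sf df}(p_1\circ p_2,p_0)=0$ automatically and the displayed formulas at the end of Theorem~\ref{th:dautresvaleurs} give exactly the four stated conditions.

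I expect the only genuinely delicate point to be the translation in the first paragraph: showing that ``$p_2\in{\sf K}_{p_1\circ p_2}(p_1)$'' (an assertion about membership in a set of Kreweras complements, defined via the distance identity in Definition~\ref{geo2}) is literally equivalent to the vanishing of $\eta(p_1,p_2)$ when $p_1\circ p_2\leq p_0$ forces $p=p'\circ p''$ with $p'=p_1$, $p''=p_2$. One must be careful that Definition~\ref{geo2} requires \emph{both} the existence of a factorization $p=p'\circ p''$ \emph{and} the distance equality; when $p_0=p_1\circ p_2$ the factorization is given, so only the distance equality remains, and comparing it termwise with Definition~\ref{defect} shows it is $\eta(p_1,p_2)=0$. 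Everything else is the ``nonnegative sum vanishes iff each summand vanishes'' principle applied four times, plus invoking Definition~\ref{geo2} once more to recognize $\eta(p_1,p_2)\geq 0$. No new computation is needed beyond what Theorem~\ref{th:dautresvaleurs} already provides.
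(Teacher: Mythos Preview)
Your proposal is correct and is exactly the approach the paper takes: the paper states that Theorem~\ref{th:caractK} is ``a direct consequence of the definitions and Theorem~\ref{th:dautresvaleurs}'', and you have simply spelled out that consequence --- the four quantities in Theorem~\ref{th:dautresvaleurs} are equal sums of nonnegative terms, so they vanish simultaneously, and each vanishing translates via the definitions into one of the four assertions. One small slip: when you justify $\eta(p_1,p_2)\geq 0$ you write ``$p_0$ replaced by $p_1\otimes p_2$'', but you mean $p_0=p_1\circ p_2$ (this makes ${\sf df}(p_1\circ p_2,p_0)=0$ and leaves $\eta$ as the sum of two defects); the paper itself contains the same $\otimes$/$\circ$ typo at that spot.
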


\subsubsection{The order $\prec$}

In the previous section, we have defined a relation $\prec$ that we are going to study a little in this section. As explained in the beginning of Section \ref{sec:Kreweras}, $\prec$ and $\leq$ are equivalent when we restrict them to $\mathfrak{S}_k$. In general, we have the following result. 

\begin{proposition}
\label{dansgeo}
For any partitions $p$ and $p'$ in  $\mathcal{P}_k$,  $p' \prec p$ implies that $p' \leq p$.
\end{proposition}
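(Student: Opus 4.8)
The plan is to unwind the definition of $p' \prec p$ directly and feed it into Theorem \ref{th:dautresvaleurs}, which expresses $\eta$ as a sum of two defects for $\leq$. Suppose $p' \prec p$. By Definition \ref{geo2}, there is a partition $p''$ with $p = p' \circ p''$ and
\begin{align*}
d(\mathrm{id}_k,p) = d(\mathrm{id}_k,p') + d(\mathrm{id}_k,p'') - \frac{k+{\sf nc}(p) - {\sf nc}(p') - {\sf nc}(p'')}{2} - \kappa(p',p'').
\end{align*}
Comparing this with Definition \ref{defect} of the $\prec$-defect $\eta(p',p'')$ (and using $p = p' \circ p''$), this equality says exactly that $\eta(p',p'') = 0$.

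Now I would apply Theorem \ref{th:dautresvaleurs} with $p_0 = p$, $p_1 = p'$, $p_2 = p''$. The theorem tells us that the quantity ${\sf df}(p_1 \circ p_2, p_0) + \eta(p_1,p_2)$ equals ${\sf df}(p_1,p_0) + {\sf df}(p_2, {}^t p_1 \circ p_0)$. Since $p_1 \circ p_2 = p' \circ p'' = p = p_0$, we have ${\sf df}(p_1 \circ p_2, p_0) = {\sf df}(p,p) = 0$ (the defect of a partition from being on the segment to itself is zero). Combined with $\eta(p',p'') = 0$, the left-hand side vanishes, hence
\begin{align*}
{\sf df}(p', p) + {\sf df}(p'', {}^t p' \circ p) = 0.
\end{align*}
Both defects are non-negative (by the triangle inequality for $d$, which is the content of $d$ being a distance, Theorem \ref{def:dist}), so each term vanishes; in particular ${\sf df}(p',p) = 0$, which by definition of ${\sf df}$ (Equation (\ref{definitiondefect})) means $p' \in [b,p] = [\mathrm{id}_k, p]$, i.e. $p' \leq p$.

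There is essentially no obstacle here: the statement is a direct corollary of Theorem \ref{th:dautresvaleurs} once one recognizes that condition $2$ of Definition \ref{geo2} is precisely the vanishing of $\eta$. The only point requiring a small check is the bookkeeping identification $\eta(p',p'')=0 \iff$ the displayed equality in Definition \ref{geo2}, which is immediate from Definition \ref{defect} after substituting $p \circ p'$ by $p$; and the observation that non-negativity of ${\sf df}$ follows from the triangle inequality, already established. One could alternatively cite the final displayed formula of Theorem \ref{th:dautresvaleurs} with $p_0 = p_1 \otimes p_2$, but the $p_0 = p_1 \circ p_2$ specialization at the end of that theorem's statement is the cleanest route.
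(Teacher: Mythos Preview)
Your proof is correct and takes essentially the same approach as the paper. The paper's proof simply cites Theorem \ref{th:caractK} (specifically the implication $p''\in {\sf K}_{p'\circ p''}(p') \Rightarrow p'\leq p'\circ p''$), whereas you unpack that implication one level deeper by invoking Theorem \ref{th:dautresvaleurs} directly --- which is exactly how Theorem \ref{th:caractK} itself is proved --- so the underlying argument is identical.
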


\begin{proof}
Indeed, if $p' \prec p$, there exists $p''$ such that $p=p' \circ p''$ and $p'' \in {\sf K}_{p' \circ p''}(p')$. The result is then a consequence of Theorem \ref{th:caractK}.
\end{proof}

This will allow us to prove the following result. 

\begin{theorem}
The relation $\prec$ defines a new order on $\mathcal{P}_k$. 
\end{theorem}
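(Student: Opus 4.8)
The plan is to verify the three axioms of a partial order for $\prec$: reflexivity, antisymmetry, and transitivity. Reflexivity is immediate: taking $p'' = \mathrm{id}_k$ we have $p = p \circ \mathrm{id}_k$, and since $\kappa(p,\mathrm{id}_k)=0$, ${\sf nc}(\mathrm{id}_k) = k$ and $d(\mathrm{id}_k,\mathrm{id}_k)=0$, the defect condition in Definition \ref{geo2} reduces to $d(\mathrm{id}_k,p) = d(\mathrm{id}_k,p)$, so $p \prec p$. Antisymmetry is where Proposition \ref{dansgeo} does the work: if $p' \prec p$ and $p \prec p'$, then by Proposition \ref{dansgeo} we get $p' \leq p$ and $p \leq p'$, and since $\leq$ is a genuine partial order (it is the geodesic order with base $\mathrm{id}_k$, shown to be an order right after the definition of the family of orders), this forces $p = p'$.

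The remaining and main point is transitivity: assuming $p'' \prec p'$ and $p' \prec p$, I want $p'' \prec p$. First unwind the hypotheses: there exist $q$ with $p' = p'' \circ q$ and $q \in {\sf K}_{p''\circ q}(p'')$, and $r$ with $p = p' \circ r$ and $r \in {\sf K}_{p'\circ r}(p')$. Then $p = p'' \circ q \circ r$, so condition (1) of Definition \ref{geo2} holds with the candidate complement $q \circ r$ (using associativity of $\circ$). For condition (2), I would translate everything into the defect language via Theorem \ref{th:dautresvaleurs}: the equation in Definition \ref{geo2} for $p' \prec p$ with the specific complement $r$ says $\eta(p', r) = 0$ with the relevant $\leq$-defects vanishing; likewise $\eta(p'', q) = 0$. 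Using the characterization in Theorem \ref{th:caractK} (with $p_0 = p$, $p_1 = p''\circ q = p'$, $p_2 = r$, giving $p' \leq p$ and $r \leq {}^t p' \circ p$; and with $p_0 = p'$, $p_1 = p''$, $p_2 = q$, giving $p'' \leq p'$ and $q \leq {}^t p'' \circ p'$), I want to deduce the instance of Theorem \ref{th:caractK} that certifies $q\circ r \in {\sf K}_{p''\circ q\circ r}(p'') = {\sf K}_p(p'')$, namely $p'' \leq p$ and $q \circ r \leq {}^t p'' \circ p$. The first, $p'' \leq p$, follows from $p'' \leq p' \leq p$ (transitivity of $\leq$, already available). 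For the second I would chain the two relations $q \leq {}^t p'' \circ p'$ and $r \leq {}^t p' \circ p$ through the composition-compatibility built into Theorem \ref{th:caractK}, applying the equivalence (1) $\Leftrightarrow$ (2) of that theorem with $p_1 = p''$, $p_2 = q\circ r$, $p_0 = p$: the content that $p''\circ(q\circ r) \leq p$ together with $q\circ r \in {\sf K}_{p''\circ(q\circ r)}(p'')$ is equivalent to $p'' \leq p$ and $q\circ r \leq {}^t p'' \circ p$, and the latter I would obtain by feeding $r \leq {}^t p' \circ p = {}^t({}^t p'' \circ p'') \circ \cdots$ — more cleanly, by viewing $q \circ r$ as an admissible composition and invoking the associativity of the "admissible prefix" structure encoded in Theorem \ref{th:dautresvaleurs}(1)–(2), which additively splits $\eta(p'', q\circ r)$ into contributions that are exactly $\eta(p'',q)$, $\eta(p'\,(={}^tp''\circ p'\cdot\text{etc.}), r)$-type terms plus nonnegative $\leq$-defects, all of which vanish by hypothesis.

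The hard part will be organizing this last additivity of $\eta$ cleanly: one needs an identity of the shape
\begin{align*}
\eta(p'', q\circ r) + {\sf df}(p''\circ q\circ r, p_0) = \eta(p'', q) + \eta(p''\circ q, r) + {\sf df}(\cdots)
\end{align*}
valid for all $p_0$, derived by the same trace manipulations (Equation (\ref{eq:lientrace2}) and the cocycle-type relation (\ref{eq:lientrace3})) used in the proof of Theorem \ref{th:dautresvaleurs}. Since every term on the right is a nonnegative $\leq$-defect or an $\eta$ that vanishes by hypothesis (choosing $p_0 = p = p''\circ q\circ r$ to kill the first ${\sf df}$), one concludes $\eta(p'', q\circ r) = 0$ and simultaneously reads off that $q\circ r \in {\sf K}_p(p'')$, i.e. $p'' \prec p$. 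I expect the bookkeeping of which $\vee$'s and $\circ$'s appear — and checking the intermediate partition ${}^t p'' \circ p'$ plays the role it should — to be the only genuinely delicate point; everything else is formal use of the already-established Theorems \ref{th:caractK} and \ref{th:dautresvaleurs} and transitivity of $\leq$.
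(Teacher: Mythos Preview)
Your treatment of reflexivity and antisymmetry matches the paper exactly. The interesting divergence is in transitivity.

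The paper does \emph{not} prove transitivity by a direct manipulation of $\eta$ and ${\sf df}$ via Theorems~\ref{th:caractK} and~\ref{th:dautresvaleurs}. Instead it defers to Proposition~\ref{prop:transprec}, which proceeds through the deformed algebra $\mathbb{C}[\mathcal{P}_k(N,N)]$: one computes $p_1._{\!{}_N}p_2 = N^{-\eta(p_1,p_2)}(p_1\circ p_2)$, observes that associativity of $._{\!{}_N}$ (automatic, since it is a pullback of an associative product) forces
\[
\eta(p_1,p_2)+\eta(p_1\circ p_2,p_3)=\eta(p_2,p_3)+\eta(p_1,p_2\circ p_3),
\]
and then sends $N\to\infty$ to get the $\delta$-identity (\ref{eq:deltaassociatif}). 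Transitivity drops out immediately: if $\eta(p'',q)=0$ and $\eta(p''\circ q,r)=0$, the identity gives $\eta(q,r)+\eta(p'',q\circ r)=0$, and since both summands are nonnegative, $\eta(p'',q\circ r)=0$, i.e.\ $q\circ r\in{\sf K}_p(p'')$.

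Your direct approach is viable, but the identity you wrote down is not quite the right one: the ${\sf df}(\cdots)$ terms you inserted are red herrings. What you actually need is the pure $\eta$-additivity displayed above, and after the telescoping in formula~(\ref{definitiondefect2}) it reduces to the cocycle identity $\kappa(p_1,p_2)+\kappa(p_1\circ p_2,p_3)=\kappa(p_2,p_3)+\kappa(p_1,p_2\circ p_3)$, which is exactly associativity of the multiplication in $\mathbb{C}[\mathcal{P}_k(N)]$. So both routes rest on the same associativity fact; the paper's packaging via the algebra limit is cleaner because it avoids the bookkeeping you flagged as ``delicate'' and delivers the $\delta$-identity in one line, while your route would require carrying out that telescoping by hand. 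Your attempt to go purely through Theorem~\ref{th:caractK} (showing $q\circ r\le {}^tp''\circ p$ from $q\le {}^tp''\circ p'$ and $r\le {}^tp'\circ p$) does not close without first knowing $\eta(q,r)=0$, which is precisely the extra piece the additivity identity supplies.
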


\begin{proof}
The fact that $\prec$ is reflexive is due to the fact that for any $p$, $p$ can be decomposed as $p \circ \mathrm{id}_k$: using the fact that $\kappa(p,\mathrm{id}_k)=0$, we see that $p'$ is a admissible prefixe of $p$. The antisymmetry is a consequence of Proposition \ref{dansgeo} and the antisymmetry of $\leq$. It remains to prove the transitivity which is proved in the upcoming Proposition \ref{prop:transprec}. At last, we can see that $\prec$ is not equal to $\leq$ since it is easy to find some $p$ and $p'$ in $\mathcal{P}_k$ such that $p' \leq p$ but $p$ can not be decomposed as $p \circ p''$. 
\end{proof}

Let us state a consequence of Proposition \ref{dansgeo}: the factorization property for $\prec$. 
\begin{lemme}
\label{factocanard}
Let $k$ and $l$ be two positive integers. Let $p_1 \in \mathcal{P}_k$ and $ p_2\in \mathcal{P}_l$. For any $p' \in \mathcal{P}_{k+l}$ such that $p' \prec p_1\otimes p_2$, there exist $p'_1 \prec p_1$ and $p'_2 \prec p_2$ such that $p' = p'_1 \otimes p'_2$. 
\end{lemme}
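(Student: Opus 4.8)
The plan is to bootstrap from Proposition~\ref{multiplicativgeo}, which is the corresponding factorization statement for $\leq$, together with the translation of $\prec$ into $\leq$ afforded by Theorem~\ref{th:caractK}. Unwinding the hypothesis, $p' \prec p_1 \otimes p_2$ means that there is $p'' \in \mathcal{P}_{k+l}$ with $p' \circ p'' = p_1 \otimes p_2$ and $\eta(p',p'') = 0$ (condition~(2) of Definition~\ref{geo2} is precisely the vanishing of the $\prec$-defect of Definition~\ref{defect}), and moreover $p'' \in {\sf K}_{p_1\otimes p_2}(p')$. By Proposition~\ref{dansgeo} we have $p' \leq p_1\otimes p_2$, so Proposition~\ref{multiplicativgeo} yields a factorization $p' = p'_1 \otimes p'_2$ with $p'_1\leq p_1$ and $p'_2\leq p_2$. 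What remains is to promote these inequalities to $p'_1 \prec p_1$ and $p'_2 \prec p_2$, the natural candidate Kreweras complements being the two halves of $p''$.

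The main obstacle is that, a priori, $p''$ need not be a tensor product. To show that it is one, I would feed the data above into the equivalence of Theorem~\ref{th:caractK} between $p'' \in {\sf K}_{p'\circ p''}(p')$ and $p'\otimes p'' \leq (p'\circ p'' \otimes \mathrm{id}_{k+l})\tau'$, where $\tau' = (1,k+l+1)(2,k+l+2)\cdots(k+l,2(k+l)) \in \mathfrak{S}_{2(k+l)}$; since $p' \circ p'' = p_1\otimes p_2$ this reads $p'\otimes p'' \leq \big((p_1\otimes p_2)\otimes \mathrm{id}_{k+l}\big)\tau'$ in $\mathcal{P}_{2(k+l)}$. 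Now conjugate by the column-reshuffling permutation $\pi$ that collects together the columns carrying $p_1$ and their $\mathrm{id}_k$-partners under $\tau'$, and separately the columns carrying $p_2$ and their $\mathrm{id}_l$-partners. Conjugation by a permutation of columns is an automorphism of $(\mathcal{P}_{2(k+l)}, d, \vee, \mathrm{id})$ and hence preserves $\leq$, and a look at the diagrams shows that $\pi$ carries $\big((p_1\otimes p_2)\otimes \mathrm{id}_{k+l}\big)\tau'$ to the tensor product $\big((p_1\otimes \mathrm{id}_k)\tau_k\big)\otimes\big((p_2\otimes\mathrm{id}_l)\tau_l\big)$, with $\tau_k\in\mathfrak{S}_{2k}$, $\tau_l\in\mathfrak{S}_{2l}$ the analogous permutations.

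Applying Proposition~\ref{multiplicativgeo} once more, $\pi$ must also carry $p'\otimes p'' = (p'_1\otimes p'_2)\otimes p''$ to a tensor product $R_A\otimes R_B$ with $R_A\in\mathcal{P}_{2k}$ and $R_B\in\mathcal{P}_{2l}$. Undoing $\pi$, this says that in $p''$ no block meets both the first $k$ columns and the last $l$ columns; equivalently $p'' = p''_1\otimes p''_2$ with $p''_1\in\mathcal{P}_k$ and $p''_2\in\mathcal{P}_l$. Because $\otimes$ is compatible with $\circ$, we then have $p_1\otimes p_2 = p'\circ p'' = (p'_1\circ p''_1)\otimes(p'_2\circ p''_2)$, and the uniqueness of the splitting of a tensor product at position $k$ forces $p'_1\circ p''_1 = p_1$ and $p'_2\circ p''_2 = p_2$.

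Finally, using that ${\sf nc}$, ${\sf nc}(\,\cdot\vee \mathrm{id})$ and $\kappa$ are all additive under tensor products, the formula~(\ref{definitiondefect2}) for the $\prec$-defect splits as $\eta(p',p'') = \eta(p'_1,p''_1) + \eta(p'_2,p''_2)$. Since each term is non-negative by Inequality~(\ref{autreequation}) and the sum vanishes, both $\eta(p'_1,p''_1)$ and $\eta(p'_2,p''_2)$ vanish. Combined with $p'_1\circ p''_1 = p_1$ and $p'_2\circ p''_2 = p_2$, this is exactly the statement that $p'_1\prec p_1$ (with $p''_1\in{\sf K}_{p_1}(p'_1)$) and $p'_2\prec p_2$, which is what we wanted.
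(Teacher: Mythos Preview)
Your proof is correct and in fact supplies the details that the paper's one-line argument (``It is a consequence of Proposition~\ref{dansgeo} and the factorization property for the geodesics stated in Proposition~\ref{multiplicativgeo}'') leaves to the reader. Taken literally, those two citations only produce the splitting $p' = p'_1\otimes p'_2$ with $p'_i \leq p_i$; the upgrade to $p'_i \prec p_i$ requires precisely the extra work you carry out---factoring $p''$ as well and then splitting the $\prec$-defect.

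One simplification is worth noting. Your detour through item~(4) of Theorem~\ref{th:caractK} together with the column-reshuffling conjugation is valid, but heavier than necessary. Item~(3) of the same theorem (with $p_0 = p'\circ p'' = p_1\otimes p_2$) already gives $p'' \leq p_1\otimes p_2$ directly, so a second invocation of Proposition~\ref{multiplicativgeo} immediately yields $p'' = p''_1\otimes p''_2$ without any permutation argument. The rest of your proof---the identities $p'_i\circ p''_i = p_i$, the additivity $\eta(p',p'') = \eta(p'_1,p''_1) + \eta(p'_2,p''_2)$, and the non-negativity forcing both summands to vanish---is exactly right and is presumably what the paper has in mind.
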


\begin{proof}
It is a consequence of  Proposition \ref{dansgeo} and the factorization property for the geodesics stated in Proposition \ref{multiplicativgeo}. 
\end{proof}
 
\section{Structures on $\left(\bigoplus_{k=0}^{\infty} \mathbb{C}[\mathcal{P}_k]\right)^{*}$}
\label{sec:structure}

Using the definitions of $\leq$, $\prec$ and the Kreweras complement, we can define some new structure on $\left(\bigoplus_{k}^{\infty} \mathbb{C}[\mathcal{P}_k]\right)^{*}$ which have a similar flavor than the structures constructed in \cite{mastnaknica} for linear forms on permutations. Let us remark that we have the canonical isomorphisms: 
\begin{align*}
\left(\bigoplus_{k=0}^{\infty} \mathbb{C}[\mathcal{P}_k]\right)^{*} \simeq \prod_{k=0}^{\infty}\mathbb{C}[\mathcal{P}_k]\simeq \bigcup\limits_{k =0}^{\infty} \mathbb{C}^{\mathcal{P}_k}. 
\end{align*}
Thus, the structures we are going to define on $\left(\bigoplus_{k=0}^{\infty} \mathbb{C}[\mathcal{P}_k]\right)^{*}$ can also be seen on the two other spaces. 

\begin{remarque}
\label{rq:restrictionak}
It is important to notice, when reading this section, that most of the definitions or notions and results (in fact all except the one about characters, infinitesimal characters and $\boxplus$-convolutions) can be restricted to the space of linear forms $(\mathbb{C}[\mathcal{P}_k])^{*}$: this will be useful in Section \ref{sec:obsconv}. 
\end{remarque}

\subsection{Some transformations} 

\begin{definition}
\label{def:moment}
The $\mathcal{M}$-transform is the application $\mathcal{M}: \left(\bigoplus_{k=0}^{\infty} \mathbb{C}[\mathcal{P}_k]\right)^{*} \to \left(\bigoplus_{k=0}^{\infty} \mathbb{C}[\mathcal{P}_k]\right)^{*}$ such that for any $\phi \in \left(\bigoplus_{k=0}^{\infty} \mathbb{C}[\mathcal{P}_k]\right)^{*}$, any $k$ and any $p \in \mathcal{P}_k$: 
\begin{align}
\label{eq:mtransfo}
(\mathcal{M}(\phi))(p) = \sum_{p' \leq p} \phi(p').
\end{align}
\end{definition}

Let us consider a positive integer $k$. The application $\mathcal{M}$ can be restricted to the space $(\mathbb{C}[\mathcal{P}_k])^{*}$ seen as a subspace of $\left(\bigoplus_{k=0}^{\infty} \mathbb{C}[\mathcal{P}_k]\right)^{*}$. But the space $(\mathbb{C}[\mathcal{P}_k])^{*}$ is isomorphic to $\mathbb{C}[\mathcal{P}_k]$ or $\mathbb{C}^{\mathcal{P}_k}$. The $\mathcal{M}$-transform induces an application on $\mathbb{C}^{\mathcal{P}_k}$ which is only the multiplication by the matrix $G$ of the geodesic order $\leq$. Since $G = CS$ (Theorem \ref{th:lienmatriciel}), it is natural to consider the $\mathcal{M}^{\to c}$-transform (resp. the $\mathcal{M}^{c\to}$-transform) which is defined in the same way as $\mathcal{M}$ but using the order $\sqsupset$ (resp. $\dashv$) in Equation (\ref{eq:mtransfo}). The equation $G=CS$ then implies the following equality: 
\begin{align}
\label{eq:decompo}
\mathcal{M} = \mathcal{M}^{c \to}\mathcal{M}^{\to c}. 
\end{align}

Since $G$ is invertible, the $\mathcal{M}$-transform is a bijection: we can consider its inverse.

\begin{definition}
The $\mathcal{R}$-transform is the inverse of the $\mathcal{M}$-transform: $\mathcal{R} = \mathcal{M}^{-1}.$
\end{definition}

\subsection{Convolutions}
In this section, we will only consider the subspace of linear forms which are invariant by the action of the permutations, namely $\left(\left(\bigoplus_{k=0}^{\infty} \mathbb{C}[\mathcal{P}_k]\right)^{*}\right)^{\mathfrak{S}}$, which is equal to: 
\begin{align*}
 \left\{ \phi \in\left(\bigoplus_{k=0}^{\infty} \mathbb{C}[\mathcal{P}_k]\right)^{*} |\ \forall k \in \mathbb{N}, \forall \sigma \in \mathfrak{S}_k, \forall p \in \mathcal{P}_{k}, \phi(\sigma \circ p \circ \sigma^{-1}) = \phi(p) \right\},
\end{align*}
or 
\begin{align*}
 \left( \bigoplus_{k=0}^{\infty} \mathbb{C}[\mathcal{P}_k/ \mathfrak{S}_k]\right)^{*}, 
\end{align*}
where $\mathcal{P}_k/\mathfrak{S}_k = \{ \{ \sigma\circ p \circ \sigma^{-1}, \sigma \in \mathfrak{S}_k\}, p \in \mathcal{P}_k\}$.  We do so since it allows us to clarify the explanations, but the same results hold for the general space $\left(\bigoplus_{k=0}^{\infty} \mathbb{C}[\mathcal{P}_k]\right)^{*}$ by slightly changing the definitions. From now on, $p$ is a partition and ${\bold p}$ or $[p]$ are the orbit of $p$ in $\mathcal{P}_k/ \mathfrak{S}_k$. 

Let us remark that $\bigoplus_{k=0}^{\infty}\mathbb{C}[\mathcal{P}_k/\mathfrak{S}_k]$ can be endowed with the multiplication given by $([p],[p']) \to [p \otimes p']$. This multiplication, that we denote also by $\otimes$, does not depend of the choice or the representant of the orbits and its neutral element is given by $\{\emptyset\}$. Besides, the $\mathcal{M}$, $\mathcal{R}$, $\mathcal{M}^{\to c}$ and $\mathcal{M}^{c \to }$ are invariant by the action of the symmetric group: they are well defined on $\left( \bigoplus_{k=0}^{\infty} \mathbb{C}[\mathcal{P}_k/ \mathfrak{S}_k]\right)^{*}$. 

\subsubsection{Additive structure}
\begin{definition}
Let $\bold{p}$ be in $\mathcal{P}_k/ \mathfrak{S}_k$. We can suppose that $p$ has the form $p_1 \otimes ...\otimes p_r$ with $p_i$ an irreducible partition for any $i \in \{1,...,r\}$. We define: 
\begin{align*}
\Delta_{\boxplus}({\bold p}) = \sum_{I \subset \{1,...,r\}} [(\otimes_{i\in I} p_i)] \otimes [\otimes_{i\notin I} p_i] \in \left(\bigoplus_{k=0}^{\infty}\mathbb{C}[\mathcal{P}_k / \mathfrak{S}_k]\right) \otimes  \left(\bigoplus_{k=0}^{\infty} \mathbb{C}[\mathcal{P}_k / \mathfrak{S}_k]\right).
\end{align*}
We extend by linearity $\Delta_{\boxplus}$ to $ \bigoplus_{k=0}^{\infty}\mathbb{C}[\mathcal{P}_k / \mathfrak{S}_k]$: 
\begin{align*}
\Delta_{\boxplus} : \bigoplus_{k=0}^{\infty}\mathbb{C}[\mathcal{P}_k / \mathfrak{S}_k] \to \left(\bigoplus_{k=0}^{\infty}\mathbb{C}[\mathcal{P}_k / \mathfrak{S}_k]\right) \otimes  \left(\bigoplus_{k=0}^{\infty} \mathbb{C}[\mathcal{P}_k / \mathfrak{S}_k]\right).
\end{align*}
\end{definition}

\begin{proposition}
\label{prop:hoft}
Let $\epsilon_{\boxplus}$ be the linear form on $\bigoplus_{k=0}^{\infty}\mathbb{C}[\mathcal{P}_k / \mathfrak{S}_k]$ wich sends ${\bold p}$ on $\delta_{{\bold p} = \{\emptyset\}}$: $\left(\bigoplus_{k=0}^{\infty}\mathbb{C}[\mathcal{P}_k / \mathfrak{S}_k], \otimes, \emptyset, \Delta_{\boxplus}, \epsilon_{\boxplus}\right)$ is a graded connected Hopf algebra. 
\end{proposition}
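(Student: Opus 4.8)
The plan is to verify directly that the structure $\left(\bigoplus_{k=0}^{\infty}\mathbb{C}[\mathcal{P}_k/\mathfrak{S}_k], \otimes, \emptyset, \Delta_{\boxplus}, \epsilon_{\boxplus}\right)$ satisfies the axioms of a graded connected bialgebra, and then invoke the standard fact that a graded connected bialgebra is automatically a Hopf algebra (the antipode being constructed recursively from the grading). The grading is by $k$ (the number of top vertices), and $\otimes$ is visibly graded of degree $0$ in the sense that $[\mathcal{P}_k/\mathfrak{S}_k]\otimes[\mathcal{P}_l/\mathfrak{S}_l]\subseteq \mathbb{C}[\mathcal{P}_{k+l}/\mathfrak{S}_{k+l}]$; connectedness holds because $\mathcal{P}_0 = \{\emptyset\}$ is one-dimensional and $\emptyset$ is the unit for $\otimes$. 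The coproduct $\Delta_{\boxplus}$ is graded because each summand $[\otimes_{i\in I}p_i]\otimes[\otimes_{i\notin I}p_i]$ has total degree equal to $\sum_i k_i = k$.

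The first genuine point to check is that $\Delta_{\boxplus}$ is \emph{well defined}, i.e.\ independent of the chosen decomposition $p = p_1\otimes\cdots\otimes p_r$ into irreducibles. First I would observe that, for a fixed representative $p$, the multiset of irreducible tensor factors is unique up to reordering — this follows from the definition of irreducibility (${\sf nc}(p\vee\mathrm{id}_k)=1$) together with the fact that the connected components of the diagram (blocks of $p\vee\mathrm{id}_k$) partition the columns $\{1,\dots,k\}$ into contiguous-after-reindexing groups, each carrying an irreducible factor; summing over subsets $I$ of the index set is manifestly insensitive to the order of the factors and to relabeling within orbits, since we pass to orbits $[\cdot]$ throughout. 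Changing the representative $p$ to $\sigma\circ p\circ\sigma^{-1}$ permutes the factors and conjugates each, which again leaves the symmetrized sum unchanged.

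Next I would verify coassociativity and counitality. For coassociativity, both $(\Delta_{\boxplus}\otimes\mathrm{id})\circ\Delta_{\boxplus}$ and $(\mathrm{id}\otimes\Delta_{\boxplus})\circ\Delta_{\boxplus}$ applied to $[p_1\otimes\cdots\otimes p_r]$ yield $\sum_{I\sqcup J\sqcup K = \{1,\dots,r\}}[\otimes_{i\in I}p_i]\otimes[\otimes_{i\in J}p_i]\otimes[\otimes_{i\in K}p_i]$ — this is the standard ``splitting into ordered triples of disjoint subsets'' computation and is routine. Counitality is immediate: $\epsilon_{\boxplus}$ kills every term except the one where one side of the split is the empty subset. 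Then I would check the bialgebra compatibility $\Delta_{\boxplus}([p]\otimes[q]) = \Delta_{\boxplus}([p])\cdot\Delta_{\boxplus}([q])$ (product on the right taken componentwise in the tensor-square algebra with multiplication $\otimes$): if $p = p_1\otimes\cdots\otimes p_r$ and $q = q_1\otimes\cdots\otimes q_s$ are irreducible decompositions, then $p\otimes q = p_1\otimes\cdots\otimes p_r\otimes q_1\otimes\cdots\otimes q_s$ is an irreducible decomposition of $p\otimes q$, and summing over subsets of $\{1,\dots,r+s\}$ factors as summing over a subset of $\{1,\dots,r\}$ times a subset of $\{r+1,\dots,r+s\}$; compatibility of units ($\epsilon_{\boxplus}$ is an algebra map, and $\emptyset$ is grouplike) is trivial.

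The step I expect to be the main obstacle — or at least the one requiring the most care — is the well-definedness of $\Delta_{\boxplus}$, specifically the uniqueness of the decomposition into irreducibles; everything downstream is bookkeeping once that is secured. I would prove uniqueness by noting that the irreducible factors of $p$ are in canonical bijection with the blocks of $p\vee\mathrm{id}_k$: each such block $c$ determines the sub-partition of $p$ supported on the columns whose top-or-bottom vertices lie in $c$, and this sub-partition is irreducible by construction; conversely any tensor-decomposition of $p$ must refine this partition of columns, and a strictly finer decomposition would split some block of $p\vee\mathrm{id}_k$, contradicting that the factor on that part was irreducible. With that lemma in hand, the remaining verifications are the routine computations sketched above, and the antipode then exists by the standard argument for graded connected bialgebras.
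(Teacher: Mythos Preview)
Your proposal is correct and follows essentially the same route as the paper: verify that the structure is a graded connected bialgebra, then invoke the standard fact that any graded connected bialgebra is automatically a Hopf algebra. The paper's own proof is a three-line sketch that simply asserts the bialgebra axioms are ``easy to see'' and defines the grading ${\sf dg}({\bold p})=k$; you have supplied the details the paper omits, in particular the well-definedness of $\Delta_{\boxplus}$ (independence from the choice of irreducible decomposition and of orbit representative), which is indeed the only point requiring any care.
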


\begin{proof}
It is easy to see that $\left(\bigoplus_{k=0}^{\infty}\mathbb{C}[\mathcal{P}_k / \mathfrak{S}_k], \otimes, \{\emptyset\}, \Delta_{\boxplus}, \epsilon_{\boxplus}\right)$ is an associative and co-associative bi-algebra. Besides, if $p$ is a partition in $\mathcal{P}_k$, we define ${\sf dg}({\bold p}) = k$: $\bigoplus_{k=0}^{\infty}\mathbb{C}[\mathcal{P}_k / \mathfrak{S}_k]$ becomes a graded bi-algebra which is connected. Thus it is a graded connected Hopf algebra. 
\end{proof}

\begin{definition}
The convolution of $\phi_1$ and $\phi_2$ in $ \left( \bigoplus_{k=0}^{\infty} \mathbb{C}[\mathcal{P}_k/ \mathfrak{S}_k]\right)^{*}$ is given by: 
\begin{align*}
\phi_1 \boxplus \phi_2 = (\phi_1 \otimes \phi_2)  \Delta_{\boxplus}.
\end{align*}
\end{definition}

Let us remark that $\epsilon_{\boxplus}$ is the neutral element for $\boxplus$. 

\subsubsection{Multiplicative structure}
\begin{definition}
Let ${\bold p}$ be in $\mathcal{P}_k/ \mathfrak{S}_k$. We define: 
\begin{align*}
\Delta_{\boxtimes}({\bold p}) = \sum_{p_1\circ p_2 = p, p_2 \in {\sf K}_{p_1}(p_1 \circ p_2)} [p_1] \otimes [p_2] \in \left(\bigoplus_{k=0}^{\infty}\mathbb{C}[\mathcal{P}_k / \mathfrak{S}_k]\right) \otimes  \left(\bigoplus_{k=0}^{\infty} \mathbb{C}[\mathcal{P}_k / \mathfrak{S}_k]\right).
\end{align*}
We extend by linearity $\Delta_{\boxtimes}$ to $\bigoplus_{k=0}^{\infty}\mathbb{C}[\mathcal{P}_k / \mathfrak{S}_k]$: 
\begin{align*}
\Delta_{\boxtimes} :  \mathbb{C}[\mathcal{P}_k / \mathfrak{S}_k] \to \left(\bigoplus_{k=0}^{\infty}\mathbb{C}[\mathcal{P}_k / \mathfrak{S}_k]\right) \otimes  \left(\bigoplus_{k=0}^{\infty} \mathbb{C}[\mathcal{P}_k / \mathfrak{S}_k]\right).
\end{align*}
\end{definition}

In the following proposition, we use the notation that $\mathrm{id}_0 = \emptyset$

\begin{proposition}
Let $\epsilon_{\boxtimes}$ be the linear form on $\bigoplus_{k=0}^{\infty}\mathbb{C}[\mathcal{P}_k / \mathfrak{S}_k]$ wich sends, for any $k \in \mathbb{N}$, $\mathrm{id}_k$ on $1$, and any other partition $p$ on $0$: $\left(\bigoplus_{k=0}^{\infty}\mathbb{C}[\mathcal{P}_k / \mathfrak{S}_k], \otimes, \emptyset, \Delta_{\boxtimes}, \epsilon_{\boxtimes}\right)$ is an (associative, co-associative) bi-algebra. 
\end{proposition}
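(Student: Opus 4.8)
The plan is to verify the bi-algebra axioms directly from the definition of $\Delta_{\boxtimes}$, using Theorem \ref{th:caractK} as the main technical input. First I would check co-associativity. Applying $(\Delta_{\boxtimes} \otimes \mathrm{id})\Delta_{\boxtimes}$ to $[\mathbf{p}]$ produces a sum over triples $[p_1]\otimes[p_2]\otimes[p_3]$ with $p_1\circ p_2\circ p_3 = p$ together with two Kreweras-type conditions: $p_2 \in {\sf K}_{p_1\circ p_2}(p_1)$ and then a second condition governing the further splitting of $p_1\circ p_2$. Applying $(\mathrm{id}\otimes\Delta_{\boxtimes})\Delta_{\boxtimes}$ produces the same index set of triples but with the conditions $p_2\circ p_3 \in {\sf K}_{p_1\circ p_2\circ p_3}(p_1)$ and $p_3 \in {\sf K}_{p_2\circ p_3}(p_2)$. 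The key step is to show these two systems of conditions are equivalent; I expect this to reduce, via the characterizations in Theorem \ref{th:caractK}(2) or (3) (which translate $p_2 \in {\sf K}_{p_1\circ p_2}(p_1)$ into inequalities like $p_1 \leq p_1\circ p_2\circ {}^t p_2$ and $p_2 \leq p_1\circ p_2$), to a symmetric statement about geodesic order inequalities that is manifestly associative. This mirrors the classical fact that the Kreweras complement behaves well under composition; the analogue here is that $p' \prec p \prec p''$ chains compose, which is exactly the transitivity of $\prec$ (Proposition \ref{prop:transprec}, assumed available) together with the defect additivity in Theorem \ref{th:dautresvaleurs}.

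Next I would check that $\Delta_{\boxtimes}$ is an algebra morphism for $\otimes$, i.e. $\Delta_{\boxtimes}([\mathbf{p}]\otimes[\mathbf{q}]) = \Delta_{\boxtimes}([\mathbf{p}])\,\Delta_{\boxtimes}([\mathbf{q}])$ where the product on the right is the componentwise $\otimes$ on the tensor square. Concretely, given $p\otimes q$, any decomposition $p\otimes q = r_1 \circ r_2$ with $r_2 \in {\sf K}_{r_1}(r_1\circ r_2)$ must split as $r_1 = p_1 \otimes q_1$, $r_2 = p_2\otimes q_2$ with $p = p_1\circ p_2$, $q = q_1\circ q_2$ and the Kreweras conditions holding separately on each factor. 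This is where I would invoke the factorization property: $\prec$ is compatible with $\otimes$ by Lemma \ref{factocanard}, and the defect splits additively over tensor products (the displayed identity ${\sf df}(p'_1\otimes p'_2, p_1\otimes p_2) = {\sf df}(p'_1,p_1)+{\sf df}(p'_2,p_2)$ used in the proof of Proposition \ref{multiplicativgeo}, together with the analogous additivity of $\eta$ visible in Theorem \ref{th:dautresvaleurs}(4)). Since $\circ$ on $\mathcal{P}_k\otimes\mathcal{P}_l$ respects the tensor decomposition and $\kappa$ is additive, the constraint decouples, giving the morphism property. Counit compatibility is routine: $(\epsilon_{\boxtimes}\otimes\mathrm{id})\Delta_{\boxtimes}([\mathbf{p}])$ keeps only the term with $p_1 = \mathrm{id}_k$, forcing $p_2 = p$, and symmetrically $\mathrm{id}_k$ is a two-sided unit for $\circ$ with $\kappa(p,\mathrm{id}_k)=0$, so both counit axioms hold; that $\epsilon_{\boxtimes}$ is an algebra map for $\otimes$ follows from $\mathrm{id}_k\otimes\mathrm{id}_l = \mathrm{id}_{k+l}$.

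The main obstacle is the co-associativity verification: one must handle the bookkeeping of Kreweras complements under a double decomposition $p = p_1\circ p_2\circ p_3$ and confirm that "first split off $p_1$, then split the remainder" and "split off $p_1\circ p_2$ as a block, then refine" yield identical constraint sets. I would organize this by translating every Kreweras condition into the geodesic-order language of Theorem \ref{th:caractK} (so that all conditions become of the form $u \leq v$ for explicit partitions $u,v$ built from the $p_i$ and their transposes and compositions) and then showing the two resulting conjunctions of inequalities coincide, using the additivity of ${\sf df}$ along geodesics and the fact — encoded in Theorem \ref{th:dautresvaleurs} — that $\eta$ is itself a sum of ${\sf df}$'s. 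Once the conditions are seen to be symmetric in the natural bracketing, co-associativity is immediate. Associativity and co-associativity being the only non-formal points, the rest of the bi-algebra structure then follows by the routine checks sketched above.
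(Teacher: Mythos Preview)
Your plan is essentially correct and identifies the same two non-trivial points the paper isolates: compatibility of $\Delta_{\boxtimes}$ with $\otimes$ (via Lemma~\ref{factocanard}, exactly as you say) and co-associativity. The routine counit checks are indeed routine.

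For co-associativity, however, your route and the paper's diverge. The paper does not translate the Kreweras conditions into $\leq$-inequalities via Theorem~\ref{th:caractK}. Instead it forward-references Equation~(\ref{eq:deltaassociatif}),
\[
\delta_{p_2 \in {\sf K}_{p_1\circ p_2}(p_1)}\,\delta_{p_3 \in {\sf K}_{p_1\circ p_2 \circ p_3}(p_1\circ p_2)} \;=\; \delta_{p_3 \in {\sf K}_{p_2\circ p_3}(p_2)}\,\delta_{p_2 \circ p_3 \in {\sf K}_{p_1\circ p_2 \circ p_3}(p_1)},
\]
which is established later (in the proof of Proposition~\ref{prop:transprec}) by computing $p_1._{\!{}_N} p_2._{\!{}_N} p_3$ with the two bracketings, using that $\mathbb{C}[\mathcal{P}_k(N,N)]$ is an associative algebra, and taking $N\to\infty$. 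Your direct approach also works: using the explicit formula~(\ref{definitiondefect2}) one checks that $\eta(p_1,p_2)+\eta(p_1\circ p_2,p_3)=\eta(p_1,p_2\circ p_3)+\eta(p_2,p_3)$, since all terms involving ${\sf nc}$ telescope and the remaining identity $\kappa(p_1,p_2)+\kappa(p_1\circ p_2,p_3)=\kappa(p_1,p_2\circ p_3)+\kappa(p_2,p_3)$ is exactly the associativity of the multiplication in $\mathbb{C}[\mathcal{P}_k(N)]$. Combined with non-negativity of $\eta$ (Theorem~\ref{th:dautresvaleurs}), this gives the equivalence of the two index sets. So your method is a clean finite-level unpacking of what the paper obtains via a limit.

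One caution: you invoke Proposition~\ref{prop:transprec} as ``assumed available'', but in the paper that proposition is \emph{itself} proved from Equation~(\ref{eq:deltaassociatif}), i.e.\ from the very identity underlying co-associativity. So citing it here is circular (or, more precisely, it is the same forward reference the paper makes). Your argument should rest on Theorem~\ref{th:dautresvaleurs} and the $\kappa$-cocycle identity directly, not on transitivity of~$\prec$.
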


\begin{proof}
The two main problems are to prove the fact that $\Delta_{\boxtimes}$ is a morphism and that it is co-associative. The other facts can be verified easily. 

Let $p_1$ and $p_2$ be two partitions. We need to show that: 
\begin{align*}
\Delta_{\boxtimes}([p_1] \otimes [p_2]) = \Delta_{\boxtimes}([p_1]) \otimes \Delta_{\boxtimes}([p_2]).
\end{align*}
This is a direct consequence of Lemma \ref{factocanard}. 

For the co-associativity, we need to prove that: 
\begin{align}
\label{eq:coasso}
(\Delta_{\boxtimes}\otimes \mathrm{id}) \Delta_{\boxtimes} = ( \mathrm{id} \otimes \Delta_{\boxtimes}) \Delta_{\boxtimes}.
\end{align}
Using the definitions, if $p \in \mathcal{P}_k$: 
\begin{align*}
(\Delta_{\boxtimes}\otimes \mathrm{id}) \Delta_{\boxtimes} ([p])= \sum_{p_1,p_2,p_3 \in \mathcal{P}_k| p_1\circ p_2 \circ p_3 = p, p_3 \in {\sf K}_{p_1 \circ p_2\circ p_3}(p_1\circ p_2), p_2 \in {\sf K}_{p_1 \circ p_2}(p_1)} [p_1] \otimes [p_2] \otimes [p_3].
\end{align*}
Using the upcoming Equation (\ref{eq:deltaassociatif}), 
\begin{align*}
(\Delta_{\boxtimes}\otimes \mathrm{id}) \Delta_{\boxtimes} ([p]) = \sum_{p_1,p_2,p_3 \in \mathcal{P}_k| p_1\circ p_2 \circ p_3 = p, p_{2} \circ p_3 \in {\sf K}_{p_1 \otimes p_2 \otimes p_3}(p_1), p_{3} \in {\sf K}_{p_2 \circ p_3}(p_2)} [p_1] \otimes [p_2] \otimes [p_3], 
\end{align*}
hence Equation (\ref{eq:coasso}).
\end{proof}

It is easy to see that $\left(\bigoplus_{k=0}^{\infty}\mathbb{C}[\mathcal{P}_k / \mathfrak{S}_k], \otimes, \emptyset, \Delta_{\boxtimes}, \epsilon_{\boxtimes}\right)$ is not a Hopf algebra: the antipode does not exist.

\begin{definition}
The convolution of $\phi_1$ and $\phi_2$ in $ \left( \bigoplus_{k=0}^{\infty} \mathbb{C}[\mathcal{P}_k/ \mathfrak{S}_k]\right)^{*}$ is given by: 
\begin{align*}
\phi_1 \boxtimes \phi_2 = (\phi_1 \otimes \phi_2)  \Delta_{\boxtimes}.
\end{align*}
\end{definition}

Let us remark that $\epsilon_{\boxtimes}$ is the neutral element for $\boxtimes$. We will need to understand the action of the $\mathcal{M}$-transform on the $\boxtimes$-convolution. This is given by the following proposition.

\begin{proposition}
Let $\phi_1$ and $\phi_2$ in $ \left( \bigoplus_{k=0}^{\infty} \mathbb{C}[\mathcal{P}_k/ \mathfrak{S}_k]\right)^{*}$: 
\begin{align}
\label{eq:metconvol}
\mathcal{M}(\phi_1 \boxtimes \phi_2 ) = \mathcal{M}(\phi_1) \boxtimes^{m}_{g}  \phi_2 = \phi_1 \boxtimes^{m}_{d} \mathcal{M} (\phi_2), 
\end{align}
where $ \boxtimes^{m}_{g}$ and $ \boxtimes^{m}_{d}$ are the convolutions associated with $\Delta_{\boxtimes^{m}_{g}}$ and $\Delta_{\boxtimes^{m}_{d}}$ given by:
\begin{align*}
\Delta_{\boxtimes^{m}_{g}} (p) &= \sum_{p_1 \leq p} [p \circ \ \!\!^{t}p_1] \otimes [p_1], \ \ \ \ \ \ \ \ 
\Delta_{\boxtimes^{m}_{d}} (p) =  \sum_{p_1 \leq p} [p_1] \otimes [^{t}p_1 \circ p].
\end{align*}
\end{proposition}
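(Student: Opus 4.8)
The plan is to prove the two equalities in (\ref{eq:metconvol}) separately, by unfolding each side into a sum over pairs of partitions and then matching the two index sets term by term; all the genuine combinatorial content will be supplied by the equivalences of Theorem \ref{th:caractK}. First I would fix $k$ and a partition $p \in \mathcal{P}_k$, and rewrite the common left-hand side: using the definition of $\Delta_{\boxtimes}$ and the definition of $\mathcal{M}$ (Definition \ref{def:moment}),
\begin{align*}
\mathcal{M}(\phi_1 \boxtimes \phi_2)(p)
&= \sum_{p' \leq p}\ \sum_{\substack{p_1 \circ p_2 = p'\\ p_2 \in {\sf K}_{p_1 \circ p_2}(p_1)}} \phi_1(p_1)\,\phi_2(p_2)\\
&= \sum_{\substack{p_1 \circ p_2 \leq p\\ p_2 \in {\sf K}_{p_1 \circ p_2}(p_1)}} \phi_1(p_1)\,\phi_2(p_2),
\end{align*}
the last step being legitimate because $p' = p_1 \circ p_2$ is determined by the pair $(p_1,p_2)$.

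For the first equality I would unfold the right-hand side in the same manner, from the definition of $\Delta_{\boxtimes^{m}_{g}}$:
\begin{align*}
\bigl(\mathcal{M}(\phi_1) \boxtimes^{m}_{g} \phi_2\bigr)(p)
&= \sum_{p_1 \leq p} \mathcal{M}(\phi_1)\bigl(p \circ {}^{t}p_1\bigr)\,\phi_2(p_1)\\
&= \sum_{\substack{p_1 \leq p\\ q \leq p \circ {}^{t}p_1}} \phi_1(q)\,\phi_2(p_1).
\end{align*}
After renaming dummy variables so that the $\phi_1$- and $\phi_2$-arguments occupy the same slots as on the left-hand side, the two sums coincide once one knows that, for fixed $p$, a pair consisting of a ``first-slot'' partition $q$ and a ``second-slot'' partition $p_1$ satisfies ``$p_1 \leq p$ and $q \leq p \circ {}^{t}p_1$'' exactly when it satisfies ``$q \circ p_1 \leq p$ and $p_1 \in {\sf K}_{q \circ p_1}(q)$''. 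This is precisely the equivalence between assertions $(1)$ and $(3)$ of Theorem \ref{th:caractK} (the list with a free base partition $p_0$), read with $p_0 = p$ and with $q$ and $p_1$ as the first and second factors.

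The second equality, $\mathcal{M}(\phi_1 \boxtimes \phi_2) = \phi_1 \boxtimes^{m}_{d} \mathcal{M}(\phi_2)$, is treated in exactly the same way. Unfolding from the definition of $\Delta_{\boxtimes^{m}_{d}}$,
\begin{align*}
\bigl(\phi_1 \boxtimes^{m}_{d} \mathcal{M}(\phi_2)\bigr)(p)
&= \sum_{p_1 \leq p} \phi_1(p_1)\,\mathcal{M}(\phi_2)\bigl({}^{t}p_1 \circ p\bigr)\\
&= \sum_{\substack{p_1 \leq p\\ q \leq {}^{t}p_1 \circ p}} \phi_1(p_1)\,\phi_2(q),
\end{align*}
and comparison with the unfolded left-hand side is then the equivalence between assertions $(1)$ and $(2)$ of Theorem \ref{th:caractK}, read with $p_0 = p$ and with $p_1$ and $q$ as the first and second factors.

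Finally I would dispatch two routine points. First, that $\Delta_{\boxtimes^{m}_{g}}$ and $\Delta_{\boxtimes^{m}_{d}}$ are well defined on $\bigoplus_{k=0}^{\infty}\mathbb{C}[\mathcal{P}_k/\mathfrak{S}_k]$: conjugation by a permutation commutes with $\circ$ and with $p \mapsto {}^{t}p$ and preserves $\leq$ (because $\mathrm{id}_k$ is conjugation-invariant), so $\boxtimes^{m}_{g}$ and $\boxtimes^{m}_{d}$ make sense on the $\mathfrak{S}$-invariant forms. Second, that every sum above is finite, each $\mathcal{P}_k$ being finite. I do not expect a real obstacle here: all the weight of the argument is carried by Theorem \ref{th:caractK} (which in turn rests on Theorem \ref{th:dautresvaleurs}); given that, the present proposition is pure bookkeeping, and the only place that calls for care is threading the transpose, the direction of the composition, and the Kreweras-complement condition so that they land on the right item --- $(2)$ rather than $(3)$, or vice versa --- of that theorem.
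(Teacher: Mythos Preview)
Your proof is correct and follows essentially the same approach as the paper: both arguments reduce the identity to the equivalences $(1)\!\Leftrightarrow\!(2)$ and $(1)\!\Leftrightarrow\!(3)$ of Theorem~\ref{th:caractK}. The paper phrases the reduction via bilinearity (testing against the dual basis $p_1^{*},p_2^{*}$ and obtaining a delta-function identity), whereas you unfold both sides as double sums and match the index sets directly; this is only a cosmetic difference, and your bookkeeping of transposes and of which item of Theorem~\ref{th:caractK} is invoked is accurate.
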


\begin{proof}
Actually, it is easier to prove the same equations in the setting of $ \left( \bigoplus_{k=0}^{\infty} \mathbb{C}[\mathcal{P}_k]\right)^{*}$, where $\boxtimes$, $\boxtimes_{g}^{m}$ and $\boxtimes_{d}^{m}$ are defined similarly except that we do not use the brakets $[\ ]$. The special case of $ \left( \bigoplus_{k=0}^{\infty} \mathbb{C}[\mathcal{P}_k/ \mathfrak{S}_k]\right)^{*}$ is then a consequence. 

Let $k$ be an integer. For any $p \in \mathcal{P}_k$, we denote by $p^{*}$ the linear form which gives the coordinate on $p$. By bi-linearity, we only need to prove Equation (\ref{eq:metconvol}) for $\phi_1 = p_1^{*}$ and $\phi_2 = p_2^{*}$ where $p_1$ and $p_2$ are two partitions in $\mathcal{P}_k$. In this case,  Equation (\ref{eq:metconvol}) boils down to the fact that for any $p_0 \in \mathcal{P}_k$:
\begin{align*}
\delta_{ p_1\circ p_2 \leq p_0 } \delta_{p_2 \in {\sf K}_{p_1 \circ p_2}(p_1)} = \delta_{p_1 \leq p_0} \delta_{p_2 \leq \text{ }^{t}p_1 \circ p_0 }
\end{align*}
and 
\begin{align*}
\delta_{ p_1\circ p_2 \leq p_0 } \delta_{p_2 \in {\sf K}_{p_1 \circ p_2}(p_1)} = \delta_{p_1 \leq p_0 \circ \text{ }^{t}p_2} \delta_{p_2 \leq p_0}. 
\end{align*}
These Equalities are consequences of Theorem \ref{th:caractK}. 
\end{proof}

\subsection{Characters, infinitesimal characters and moment maps}
\label{sec:carac}
\subsubsection{Character and infinitesimal characters}

\begin{definition}
A character of $\bigoplus_{k=0}^{\infty}\mathbb{C}[\mathcal{P}_k/\mathfrak{S}_k]$ is a element of  $\left(\bigoplus_{k=0}^{\infty}\mathbb{C}[\mathcal{P}_k/\mathfrak{S}_k]\right)^{*}$ such that:
\begin{enumerate}
\item $\phi(\emptyset) =1$,
\item for any ${\bold p_1}$ and ${ \bold p_2}$ in $\mathcal{P}_k/\mathfrak{S}_k$, $\phi({\bold p_1} \otimes {\bold p_2}) =\phi({\bold p_1}) \phi({\bold p_2})$.
\end{enumerate}
The set of characters is denoted by $\mathcal{X}[\mathcal{P}]$. 
\end{definition}

\begin{notation}
In the following, the results hold either for $\boxplus$ or $\boxtimes$. This leads us to use the following notation: by $\boxdot$, we denote either $\boxplus$ or $\boxtimes$. 
\end{notation}

\begin{proposition}
The set of characters is stable by $\boxdot$. 
\end{proposition}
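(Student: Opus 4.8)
The plan is to show that if $\phi_1$ and $\phi_2$ are characters, then $\phi_1 \boxdot \phi_2$ satisfies the two defining axioms of a character, treating the two cases $\boxdot = \boxplus$ and $\boxdot = \boxtimes$ in parallel as much as possible. For the first axiom, I would simply evaluate $(\phi_1\boxdot\phi_2)(\emptyset) = (\phi_1\otimes\phi_2)(\Delta_{\boxdot}(\emptyset))$; since $\emptyset = \mathrm{id}_0$ is the only irreducible-free partition of size $0$ and also satisfies $\mathrm{id}_0\circ\mathrm{id}_0=\mathrm{id}_0$ with trivial Kreweras complement, both coproducts give $\Delta_{\boxdot}(\emptyset) = \emptyset\otimes\emptyset$, hence the value is $\phi_1(\emptyset)\phi_2(\emptyset) = 1\cdot 1 = 1$.

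The substantive point is the multiplicativity axiom: for $\mathbf{p}$, $\mathbf{q}$ in $\mathcal{P}/\mathfrak{S}$ we must check $(\phi_1\boxdot\phi_2)(\mathbf{p}\otimes\mathbf{q}) = (\phi_1\boxdot\phi_2)(\mathbf{p})\,(\phi_1\boxdot\phi_2)(\mathbf{q})$. The key structural input is that $\Delta_{\boxdot}$ is an algebra morphism for the product $\otimes$ on $\bigoplus_k \mathbb{C}[\mathcal{P}_k/\mathfrak{S}_k]$ — this is exactly part of what the two preceding propositions establish ($\Delta_{\boxplus}$ and $\Delta_{\boxtimes}$ are bialgebra coproducts, hence morphisms, the latter via Lemma~\ref{factocanard}). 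So $\Delta_{\boxdot}(\mathbf{p}\otimes\mathbf{q}) = \Delta_{\boxdot}(\mathbf{p})\cdot\Delta_{\boxdot}(\mathbf{q})$ where the product on the right is the componentwise $\otimes$ on the tensor square. Writing $\Delta_{\boxdot}(\mathbf p) = \sum \mathbf p'\otimes\mathbf p''$ and $\Delta_{\boxdot}(\mathbf q) = \sum \mathbf q'\otimes\mathbf q''$, we then get
\begin{align*}
(\phi_1\boxdot\phi_2)(\mathbf p\otimes\mathbf q) &= \sum \phi_1(\mathbf p'\otimes\mathbf q')\,\phi_2(\mathbf p''\otimes\mathbf q'') \\
&= \sum \phi_1(\mathbf p')\phi_1(\mathbf q')\phi_2(\mathbf p'')\phi_2(\mathbf q''),
\end{align*}
using that each $\phi_i$ is multiplicative for $\otimes$. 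Refactoring the double sum as a product of two single sums gives $\big(\sum\phi_1(\mathbf p')\phi_2(\mathbf p'')\big)\big(\sum\phi_1(\mathbf q')\phi_2(\mathbf q'')\big) = (\phi_1\boxdot\phi_2)(\mathbf p)(\phi_1\boxdot\phi_2)(\mathbf q)$, which is the claim.

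The only mild obstacle is bookkeeping: one must be careful that the product on the tensor square $\big(\bigoplus\mathbb{C}[\mathcal{P}/\mathfrak S]\big)^{\otimes 2}$ used in ``$\Delta_{\boxdot}$ is a morphism'' is indeed the componentwise $\otimes$, so that applying $\phi_1\otimes\phi_2$ to it factors as above; this is standard and follows from $\phi_1\otimes\phi_2$ being an algebra morphism when $\phi_1,\phi_2$ are. I would phrase the whole argument once, uniformly in $\boxdot$, remarking that both $\Delta_{\boxplus}$ and $\Delta_{\boxtimes}$ are $\otimes$-algebra morphisms with grouplike behaviour on $\emptyset$, so no case distinction is actually needed beyond citing the two bialgebra propositions. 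In short: a character is exactly a grouplike element, grouplike elements are closed under convolution in any bialgebra, and that is all that is being asserted.
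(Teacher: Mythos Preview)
Your proof is correct and follows essentially the same route as the paper's: use that $\Delta_{\boxdot}$ is an $\otimes$-algebra morphism, expand in Sweedler notation, apply the multiplicativity of $\phi_1$ and $\phi_2$, and refactor. Your version is slightly more complete in that you also verify the unit axiom $(\phi_1\boxdot\phi_2)(\emptyset)=1$, which the paper leaves implicit, and your closing remark that this is just the closure of grouplike elements under convolution in a bialgebra is a helpful conceptual summary.
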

\begin{proof}
Let $\phi_1$ and $\phi_2$ be two characters. Let $k_1$ and $k_2$ be two integers, let us consider two elements $p_1$ and $ p_2$, respectively in $\mathcal{P}_{k_1}$ and $\mathcal{P}_{k_2}$. Let us remark that $\otimes$ is used for two different notations: as an inner law for $\bigoplus_{k=0}^{\infty}\mathbb{C}[\mathcal{P}_{k} / \mathfrak{S}_k]$ and as the tensor product of two elements of $\bigoplus_{k=0}^{\infty}\mathbb{C}[\mathcal{P}_{k} / \mathfrak{S}_k]$ seen as an element of $\left(\bigoplus_{k=0}^{\infty}\mathbb{C}[\mathcal{P}_{k} / \mathfrak{S}_k]\right) \otimes \left(\bigoplus_{k=0}^{\infty}\mathbb{C}[\mathcal{P}_{k} / \mathfrak{S}_k]\right)$. For sake of clarity, the tensor product in the latter sense will be denoted by $\bigotimes$.

Using the definitions and the fact that $\Delta_{\boxdot}$ is a morphism for $\otimes$: 
\begin{align*}
\phi_1 \boxdot \phi_2 ({\bold p_1} \otimes {\bold p_2}) = (\phi_1 \bigotimes \phi_2)  \Delta_{\boxdot} ({\bold p_1} \otimes {\bold p_2}) &= (\phi_1 \bigotimes \phi_2) \left[ \Delta_{\boxdot} ({\bold p_1}) \otimes \Delta_{\boxdot}({\bold p_2}) \right].
\end{align*}

Let us use the Sweedler notations: $\Delta_{\boxdot} ({\bold p_1}) = \sum p_1^{(1)} \bigotimes p_1^{(2)}$ and 
$\Delta_{\boxdot} ({\bold p_2}) = \sum p_2^{(1)} \bigotimes p_2^{(2)}$. Then: 

\begin{align*}
(\phi_1 \bigotimes \phi_2) \left[ \Delta_{\boxdot} ({\bold p_1}) \otimes \Delta_{\boxdot}({\bold p_2}) \right] &= (\phi_1 \bigotimes \phi_2 ) \left[ \sum (p_1^{(1)} \otimes p_2^{(1)}) \bigotimes (p_1^{(2)} \otimes p_2^{(2)}) \right] \\
&= \sum \phi_1  (p_1^{(1)} \otimes p_2^{(1)}) \phi_2 (p_1^{(2)} \otimes p_2^{(2)}) \\
&=  \sum \phi_1  (p_1^{(1)}) \phi_1( p_2^{(1)}) \phi_2 (p_1^{(2)} ) \phi_2( p_2^{(2)}) \\
&=( \sum  \phi_1  (p_1^{(1)}) \phi_2 (p_1^{(2)} ))( \sum  \phi_1( p_2^{(1)}) \phi_2( p_2^{(2)}))\\
&= (\phi_1 \boxdot \phi_2 ({\bold p_1})) ( \phi_1 \boxdot \phi_2 ({\bold p_2})),
\end{align*}
where, in the third equality, we used the multiplicativity property of $\phi_1$ and $\phi_2$. 
\end{proof}

After this proposition, it is natural to study semi-groups of elements in $\mathcal{X}[\mathcal{P}]$ and to study the set of generators of these semi-groups. 

\begin{definition}
The set of {\em $\boxdot$-infinitesimal characters} is the subset of $\left(\bigoplus_{k=0}^{\infty}\mathbb{C}[\mathcal{P}_k/\mathfrak{S}_k]\right)^{*}$ such that for any partitions $p_1$ and $p_2$, 
\begin{align*}
\phi({\bold p_1} \otimes {\bold p_2}) = \phi({\bold p_1}) \epsilon_{\boxdot}({\bold p_2}) + \epsilon_{\boxdot}({\bold p_1}) \phi({\bold p_2}). 
\end{align*}
The set of ${\boxdot}$-infinitesimal characters is denoted by $\mathfrak{X}_{\boxdot}[\mathcal{P}]$
\end{definition}

Using the definitions of $\epsilon_{\boxtimes}$ and $\epsilon_{\boxplus}$, the set of $\boxplus$-infinitesimal characters is the set of linear forms $\phi$ such that $\phi({\bold p}) = 0$ for any composed partition $p$, and the set of  $\boxtimes$-infinitesimal characters is the set of linear forms $\phi$ such that $\phi({\bold p}) = 0$ for any non weakly-irreducible partition $p$ and $\phi({\bold {id}_k}) = k \phi({\bold {id}_1})$ for any integer $k \in \mathbb{N}$. In particular, since we took the convention that $\emptyset$ was composed, any $\boxdot$-infinitesimal character is equal to $0$ on $[\emptyset]$.

In order to study semi-groups of characters and infinitesimal characters, we need a new order and a notion of height. Recall that ${\sf d}({\bold p})$ is the unique integer such that $p \in \mathcal{P}_k$.

\begin{definition}
Let us endow $\cup_{k=0}^{\infty} \mathcal{P}_{k} / \mathfrak{S}_k$ with two orders: 
\begin{itemize}
\item if $\boxdot=\boxplus$, $ {\bold p} \leq_{\boxplus} {\bold p'}$ if ${\sf dg}(\bold{p}) < {\sf dg}(\bold{p'})$ or ${\bold p} = {\bold p'}$, 
\item if $\boxdot = \boxtimes$, $ {\bold p} \leq_{\boxtimes} {\bold p'}$ if there exist an integer $k$, $p_1$ and $p_2$ in $\mathcal{P}_k$ such that $\bold{p} = [p_1]$, $\bold{p'} = [p_2]$ and $p_1 \leq p_2$.
\end{itemize}
\end{definition}

\begin{definition}
The {\em height} of $({\bold p_1}, {\bold p_2})$: 
\begin{itemize}
\item  if $\boxdot=\boxplus$, ${\sf h}_{\boxplus}({\bold p_1}, {\bold p_2}) = {\sf dg}(\bold{p_1}) + {\sf dg}(\bold{p_2})$, 
\item  if $\boxdot=\boxtimes$, ${\sf h}_{\boxtimes}({\bold p_1}, {\bold p_2}) = d(\mathrm{id}_{{\sf dg}(\bold{p_1})}, \bold{p_1} ) + d(\mathrm{id}_{{\sf dg}(\bold{p_2})}, \bold{p_2}). $
\end{itemize}
\end{definition}

We have now gathered all the notions in order to prove the following result. 

\begin{theorem}
\label{th:infini}
Let $\phi \in \left(\bigoplus_{k=0}^{\infty}\mathbb{C}[\mathcal{P}_k/\mathfrak{S}_k]\right)^{*}$. There exists, for any $t \geq 0$, a unique element $e^{\boxdot t \phi}$ of  $\left(\bigoplus_{k=0}^{\infty}\mathbb{C}[\mathcal{P}_k/\mathfrak{S}_k]\right)^{*}$ such that:
\begin{align*}
\frac{d}{dt}_{| t = t _0} e^{\boxdot t \phi} = \phi \boxdot e^{\boxdot t_0 \phi}, 
\end{align*} 
and $e^{\boxdot 0 \phi} = \epsilon_{\boxdot}$. Besides, the following assertions are equivalent: 
\begin{enumerate}
\item $\phi$ is a $\boxdot$-infinitesimal character,
\item for any $t \geq 0$, $e^{\boxdot t \phi} \in \mathcal{X}[\mathcal{P}]$.
\end{enumerate}
\end{theorem}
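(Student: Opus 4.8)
The plan is to combine elementary linear‑ODE theory with the bialgebra structure on $\bigoplus_{k=0}^{\infty}\mathbb{C}[\mathcal{P}_k/\mathfrak{S}_k]$ and with a triangularity of $\boxdot$ with respect to a complexity function on $\bigcup_k\mathcal{P}_k/\mathfrak{S}_k$: the degree ${\sf dg}([p])$ when $\boxdot=\boxplus$, and $d(\mathrm{id}_k,p)$ (independent of the chosen representative $p$, since $d(\mathrm{id}_k,\cdot)$ is invariant under conjugation) when $\boxdot=\boxtimes$. First I would record the triangularity: for a fixed class $[p]\in\mathcal{P}_k/\mathfrak{S}_k$ and any linear form $\psi$, the value $(\phi\boxdot\psi)([p])$ is an affine‑linear expression in the coordinates of $\psi$ of the form $\lambda_{[p]}\,\psi([p])+\sum_{[q]}c_{[q]}\,\psi([q])$, where the sum runs over classes $[q]$ of strictly smaller complexity and the diagonal coefficient $\lambda_{[p]}$ is the scalar $\phi(\emptyset)$ if $\boxdot=\boxplus$, resp. $\sum\{\phi([q])\mid q\circ p=p,\ p\in{\sf K}_{p}(q)\}$ if $\boxdot=\boxtimes$. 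For $\boxplus$ this is immediate from the definition of $\Delta_{\boxplus}$: writing $p=p_1\otimes\cdots\otimes p_r$ with $p_i$ irreducible, the right leg of a summand of $\Delta_{\boxplus}([p])$ equals $[p]$ only when the left leg is empty, and otherwise has fewer irreducible factors, hence strictly smaller degree. For $\boxtimes$ it rests on Theorem~\ref{th:caractK}: every summand $[q_1]\otimes[q_2]$ of $\Delta_{\boxtimes}([p])$ satisfies $q_2\le p$, so $d(\mathrm{id}_k,q_2)\le d(\mathrm{id}_k,p)$, with equality forcing $q_2=p$ by antisymmetry of $\le$ and conjugation‑invariance of $d(\mathrm{id}_k,\cdot)$; thus the right legs of $\Delta_{\boxtimes}([p])$ are either $[p]$ itself (collected into $\lambda_{[p]}$) or of strictly smaller complexity. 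Given this, the Cauchy problem $\frac{d}{dt}E(t)=\phi\boxdot E(t)$, $E(0)=\epsilon_{\boxdot}$, is solved coordinate by coordinate by induction on complexity: for $[p]$ of minimal complexity the $[p]$‑component solves $\dot x=\lambda_{[p]}x$, $x(0)=\epsilon_{\boxdot}([p])$, and for general $[p]$, once all strictly‑smaller‑complexity coordinates are known, it solves $\dot x=\lambda_{[p]}x+f_{[p]}(t)$ with $f_{[p]}$ a known entire function and $x(0)=\epsilon_{\boxdot}([p])$. Each scalar linear ODE has a unique (entire) solution, producing the unique $e^{\boxdot t\phi}\in\prod_{k}\mathbb{C}[\mathcal{P}_k/\mathfrak{S}_k]\simeq\bigl(\bigoplus_{k}\mathbb{C}[\mathcal{P}_k/\mathfrak{S}_k]\bigr)^{*}$ with the stated property.

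For the implication $(2)\Rightarrow(1)$: if $e^{\boxdot t\phi}\in\mathcal{X}[\mathcal{P}]$ for every $t\ge0$, I would differentiate the identities $e^{\boxdot t\phi}([p_1]\otimes[p_2])=e^{\boxdot t\phi}([p_1])\,e^{\boxdot t\phi}([p_2])$ at $t=0$. Since $e^{\boxdot 0\phi}=\epsilon_{\boxdot}$ and, by the defining equation at $t_0=0$, $\frac{d}{dt}\big|_{t=0}e^{\boxdot t\phi}=\phi\boxdot\epsilon_{\boxdot}=\phi$, this yields $\phi([p_1]\otimes[p_2])=\phi([p_1])\epsilon_{\boxdot}([p_2])+\epsilon_{\boxdot}([p_1])\phi([p_2])$ for all $p_1,p_2$, i.e. $\phi\in\mathfrak{X}_{\boxdot}[\mathcal{P}]$.

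For the implication $(1)\Rightarrow(2)$, write $E(t)=e^{\boxdot t\phi}$ and assume $\phi\in\mathfrak{X}_{\boxdot}[\mathcal{P}]$; recall that then $\phi(\emptyset)=0$ and, when $\boxdot=\boxtimes$, $\phi([\mathrm{id}_m])=m\,\phi([\mathrm{id}_1])$. The axiom $E(t)(\emptyset)=1$ follows from the scalar equation $\dot x=\phi(\emptyset)x=0$ at $\emptyset$ with $x(0)=\epsilon_{\boxdot}(\emptyset)=1$. For multiplicativity I would induct on the complexity of $[p_1]\otimes[p_2]$. In the base case both factors have minimal complexity, i.e. are $\emptyset$ (for $\boxplus$) or identities (for $\boxtimes$); when $\boxdot=\boxtimes$ one solves the scalar equation at $\mathrm{id}_m$ explicitly — using Lemma~\ref{stablemieux} ($q_1\circ q_2=\mathrm{id}_m$ forces $q_1,q_2\in\mathfrak{S}_m$, whence $q_1=q_2=\mathrm{id}_m$) and $\phi([\mathrm{id}_m])=m\phi([\mathrm{id}_1])$ — to get $E(t)(\mathrm{id}_m)=e^{tm\phi([\mathrm{id}_1])}$, so $E(t)(\mathrm{id}_{k_1+k_2})=E(t)(\mathrm{id}_{k_1})E(t)(\mathrm{id}_{k_2})$. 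For the inductive step set $a(t)=E(t)([p_1]\otimes[p_2])$ and $b(t)=E(t)([p_1])E(t)([p_2])$; as $\epsilon_{\boxdot}$ is readily seen to be a character, $a(0)=b(0)$. Expanding $\dot a=(\phi\boxdot E(t))([p_1]\otimes[p_2])$ using that $\Delta_{\boxdot}$ is an $\otimes$‑morphism (Proposition~\ref{prop:hoft} for $\boxplus$, Lemma~\ref{factocanard} for $\boxtimes$), then splitting each coefficient $\phi([q_1^{(1)}]\otimes[q_2^{(1)}])$ by the infinitesimal‑character property of $\phi$, collapsing the $\epsilon_{\boxdot}$‑weighted Sweedler sums by the counit axiom, and applying the induction hypothesis to every term in which $E(t)$ is evaluated at a tensor product of strictly smaller complexity, the remaining diagonal terms recombine — using the defining equations of $E(t)([p_1])$ and $E(t)([p_2])$ — into $\dot a=\dot b+(\lambda_{[p_1]}+\lambda_{[p_2]})(a-b)$. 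Hence $w=a-b$ solves $\dot w=(\lambda_{[p_1]}+\lambda_{[p_2]})w$ with $w(0)=0$, so $a\equiv b$, and $E(t)\in\mathcal{X}[\mathcal{P}]$ for all $t\ge0$.

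The routine parts are the scalar ODE arguments and, when $\boxdot=\boxplus$, the coproduct manipulations, which merely re‑derive the classical fact that the exponential of an infinitesimal character in a graded connected Hopf algebra is a character. I expect the main obstacle to be the $\boxtimes$‑bookkeeping: one must be sure that every non‑diagonal right leg of $\Delta_{\boxtimes}([p])$ lies strictly closer to the identity and that $[p]$ is the only class at distance $d(\mathrm{id}_k,p)$ occurring there. This is precisely where the equivalences of Theorem~\ref{th:caractK} (yielding $q_2\le p$), the antisymmetry of $\le$, and the conjugation‑invariance of the distance do the essential work; the rest is careful index‑chasing in $\Delta_{\boxtimes}$.
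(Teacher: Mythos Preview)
Your proposal is correct and follows essentially the same route as the paper: both use the triangularity of $\boxdot$ with respect to a complexity function (degree for $\boxplus$, distance to the identity for $\boxtimes$, the latter justified via Theorem~\ref{th:caractK}) to solve the Cauchy problem coordinate by coordinate, and both prove the equivalence by computing $\frac{d}{dt}\,e^{\boxdot t\phi}([p_1]\otimes[p_2])$ via the fact that $\Delta_{\boxdot}$ is an $\otimes$-morphism, collapsing with the counit axiom, and applying the inductive hypothesis on pairs of smaller height.

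The one organizational difference worth noting: for $(2)\Rightarrow(1)$ you differentiate the multiplicativity identity directly at $t=0$ and read off the infinitesimal-character relation, whereas the paper treats both directions at once by deriving, under the inductive hypothesis, a single inhomogeneous ODE
\[
\frac{d}{dt}\bigl(a(t)-b(t)\bigr)=\phi(e_1\otimes e_2)\,\bigl(a(t)-b(t)\bigr)+e^{\boxdot t\phi}(e_1\otimes e_2)\,\bigl(\phi([p_1]\otimes[p_2])-\phi([p_1])\epsilon_{\boxdot}([p_2])-\epsilon_{\boxdot}([p_1])\phi([p_2])\bigr),
\]
from which $(1)\Rightarrow(2)$ follows by vanishing of the forcing term and $(2)\Rightarrow(1)$ by observing that $a\equiv b$ forces the forcing term to vanish. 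Your separation is slightly cleaner for $(2)\Rightarrow(1)$; the paper's joint computation has the advantage of making the role of the infinitesimal-character defect explicit. One small slip: in your base case for $\boxtimes$, Lemma~\ref{stablemieux} alone only gives $q_1,q_2\in\mathfrak{S}_m$ with $q_2=q_1^{-1}$; you still need the Kreweras condition $q_2\in{\sf K}_{\mathrm{id}_m}(q_1)$, which for permutations reads $d(\mathrm{id}_m,q_1)+d(\mathrm{id}_m,q_2)=0$, to conclude $q_1=q_2=\mathrm{id}_m$.
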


\begin{proof}
Let us consider $\phi \in \left(\bigoplus_{k=0}^{\infty}\mathbb{C}[\mathcal{P}_k/\mathfrak{S}_k]\right)^{*}$. The system of differential equations: 
\begin{align*}
\frac{d}{dt}_{| t = t _0} e^{\boxdot t \phi} (p) = \left(\phi \boxdot e^{\boxdot t_0 \phi}\right) (p)
\end{align*} 
is triangular when we consider the order $\leq_{\boxdot}$ on $\cup_{k} (\mathcal{P}_k/ \mathfrak{S}_k)$: it implies that the system admits a unique solution. For $\boxdot = \boxtimes$, this is a consequence of Theorem \ref{th:caractK} where we proved that $p_2 \in {\sf K}_{p}(p_1)$ implies that $p_2 \leq p$.

Let us prove the equivalence between the condition $1$ and the condition $2$. Let $p_1$ and $p_2$ be two partitions such that ${\sf h}({\bold p_1}, {\bold p_2}) > 0$. Let us suppose that for any couple of partitions $(p'_1,p'_2)$ such that ${\sf h}_{\boxdot} ({\bold p_1'},{\bold  p_2'}) < {\sf h}_{\boxdot} ({\bold p_1},{\bold  p_2})$, $\phi({\bold p'_1}\otimes {\bold p'_2}) = \epsilon_{\boxdot}({\bold p'_1}) \phi({\bold p'_2}) + \phi({\bold p'_1}) \epsilon_{\boxdot}({\bold p'_2})$ and for any $t \geq 0$, $e^{\boxdot t_0 \phi} ({\bold p_1'}\otimes {\bold p_2'}) = e^{\boxdot t_0\phi} ({\bold p'_1})e^{\boxdot t_0\phi} ({\bold p'_2})$. 

Let $t_0 \geq 0$: 
\begin{align*}
\frac{d}{dt}_{| t = t_0} e^{t \phi}({\bold p_1} \otimes {\bold p_2}) &= \sum \phi(p_1^{(1)} \otimes p_2^{(1)}) e^{t_0 \phi}(p_1^{(2)} \otimes p_2^{(2)}) \\
&= \phi(e_1 \otimes e_2) e^{t_0 \phi}({\bold p_1} \otimes {\bold p_2}) + \phi({\bold p_1} \otimes {\bold p_2}) e^{t_0 \phi}(e_1\otimes e_2) \\ & \ \ \ \ +\sum_{p_1^{(1)} \otimes p_2^{(1)} \notin \{e_1\otimes e_2, p_1 \otimes p_2\}} \phi(p_1^{(1)} \otimes p_2^{(1)}) e^{t_0 \phi}(p_1^{(2)} \otimes p_2^{(2)})
\end{align*}
where $(e_1,e_2) = ([\emptyset], [\emptyset])$ if $\boxdot = \boxplus$ and $(e_1,e_2) = ([\mathrm{id}_{{\sf dg}(\bold{p_1})}], [\mathrm{id}_{{\sf dg}(\bold{p_2})}])$ if $\boxdot = \boxtimes$. We can apply the hypothese about $\phi$ and $e^{\boxdot t_0 \phi}$, $\frac{d}{dt}_{| t = t_0} e^{t \phi}({\bold p_1} \otimes {\bold p_2}) $ is equal to: 

\begin{align*}
 \phi(e_1 \otimes e_2) &e^{t_0 \phi}({\bold p_1} \otimes {\bold p_2}) + \phi({\bold p_1} \otimes {\bold p_2}) e^{t_0 \phi}(e_1\otimes e_2) \\
 & + \sum \phi(p_1^{(1)}) \epsilon_{\boxdot}(p_2^{(1)}) e^{t_0\phi}(p_1^{(2)}) e^{t_0 \phi}(p_2^{(2)}) + \sum \epsilon_{\boxdot}(p_1^{(1)}) \phi(p_2^{(1)}) e^{t_0\phi}(p_1^{(2)}) e^{t_0 \phi}(p_2^{(2)})
  \\& - \phi({\bold p_1})\epsilon_{\boxdot} ({\bold p_2}) e^{t_0 \phi}(e_1) e^{t_0\phi}(e_2) - \phi(e_1) \epsilon_{\boxdot}(e_2) e^{t_0 \phi}({\bold p_1}) e^{t_0 \phi}({\bold p_2})\\
  & - \epsilon_{\boxdot}({\bold p_1}) \phi({\bold p_2}) e^{t_0 \phi}(e_1) e^{t_0 \phi}(e_2) - \epsilon_{\boxdot}(e_1) \phi(e_2)  e^{t_0 \phi}({\bold p_1}) e^{t_0 \phi}({\bold p_2}). 
\end{align*}

But,
\begin{align*}
\sum \phi(p_1^{(1)}) \epsilon_{\boxdot}(p_2^{(1)}) e^{t_0\phi}(p_1^{(2)}) e^{t_0 \phi}(p_2^{(2)}) &= (\phi \boxdot e^{t_0 \phi}({\bold p_1})) (\epsilon_{\boxdot} \boxdot e^{t_0 \phi} ({\bold p_2}))\\& = e^{t_0 \phi}({\bold p_2}) \frac{d}{dt}_{| t = t_0} e^{t \phi}({\bold p_1}),
\end{align*}

Using the fact that $\phi(e_1\otimes e_2) = \phi(e_1)\epsilon_{\boxdot}(e_2) + \epsilon_{\boxdot}(e_1)\phi(e_2)$ and $e^{t_0 \phi}(e_1 \otimes e_{2}) = e^{t_0 \phi}(e_1)e^{t_0 \phi}(e_2)$, we get that $\frac{d}{dt}_{|t = t_0}\left(e^{t \phi}({\bold p_1} \otimes {\bold p_2}) - e^{t \phi}({\bold p_1}) e^{t \phi}({\bold p_2})\right) $ is equal to: 
\begin{align*}
\phi(e_1\otimes e_2)&\left( e^{t_0 \phi}({\bold p_1} \otimes {\bold p_2}) - e^{t_0 \phi}({\bold p_1}) e^{t_0 \phi}({\bold p_2})\right) \\& +  e^{t_0 \phi}(e_1\otimes e_2)  \left( \phi({\bold p_1} \otimes {\bold p_2})  - \phi({\bold p_1}) \epsilon_{\boxdot}({\bold p_2}) - \phi({\bold p_2}) \epsilon_{\boxdot}({\bold p_1})\right).
\end{align*}

It is easy to see that this result allows us to prove, by recurrence on the height of the couple $({\bold p_1},{\bold p_2})$, the equivalence between conditions $1$ and $2$. 
 \end{proof}

\begin{remarque}
Let us remark that when $\boxdot=\boxplus$ and if we suppose that $\phi(\empty) = 0$ these results are already known (\cite{manchon}). But when $\boxdot=\boxtimes$, the usual theory can not be applied since $\left(\bigoplus_{k=0}^{\infty}\mathbb{C}[\mathcal{P}_k / \mathfrak{S}_k], \otimes, \emptyset, \Delta_{\boxtimes}, \epsilon_{\boxtimes}\right)$ is not a filtered (connected Hopf) algebra. 
\end{remarque}

\subsubsection{Characters, infinitesimal characters and transformations}
We will study characters and infinitesimal characters using the different transformations that we defined on $\left(\bigoplus_{k=0}^{\infty} \mathbb{C}[\mathcal{P}_k/\mathfrak{S}_k]\right)^{*}$. 

\begin{proposition}
\label{prop:bijectionM}
The $\mathcal{M}$, $\mathcal{R}$,  $\mathcal{M}^{\to c}$ and  $\mathcal{M}^{c \to}$-transforms are bijections from $\mathcal{X}[\mathcal{P}]$ to itself. 
\end{proposition}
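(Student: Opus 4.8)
**Proof plan for Proposition 3.3 (the $\mathcal M$, $\mathcal R$, $\mathcal M^{\to c}$, $\mathcal M^{c\to}$-transforms are bijections from $\mathcal X[\mathcal P]$ to itself).**

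The plan is to show first that $\mathcal M$ sends a character to a character, and then deduce the same for the other three transforms. Since $\mathcal M$ is already known to be a bijection of the whole space $\left(\bigoplus_k\mathbb C[\mathcal P_k/\mathfrak S_k]\right)^*$ with inverse $\mathcal R$, the only thing to prove is that $\mathcal M(\mathcal X[\mathcal P])\subset\mathcal X[\mathcal P]$ and $\mathcal R(\mathcal X[\mathcal P])\subset\mathcal X[\mathcal P]$; together these give that $\mathcal M$ and $\mathcal R$ restrict to mutually inverse bijections of $\mathcal X[\mathcal P]$.

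First I would check that $\mathcal M(\phi)$ is multiplicative when $\phi$ is. Let $\mathbf p_1\in\mathcal P_{k_1}/\mathfrak S_{k_1}$ and $\mathbf p_2\in\mathcal P_{k_2}/\mathfrak S_{k_2}$. Then
\begin{align*}
(\mathcal M(\phi))(\mathbf p_1\otimes\mathbf p_2)=\sum_{p'\leq p_1\otimes p_2}\phi(p').
\end{align*}
By Proposition \ref{multiplicativgeo}, every $p'\leq p_1\otimes p_2$ factors uniquely as $p'=p_1'\otimes p_2'$ with $p_1'\leq p_1$ and $p_2'\leq p_2$, and conversely any such tensor product lies below $p_1\otimes p_2$; moreover the factorization is compatible with the $\mathfrak S$-action, so it descends to orbits. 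Hence the sum splits as a product:
\begin{align*}
(\mathcal M(\phi))(\mathbf p_1\otimes\mathbf p_2)=\Big(\sum_{p_1'\leq p_1}\phi(p_1')\Big)\Big(\sum_{p_2'\leq p_2}\phi(p_2')\Big)=(\mathcal M(\phi))(\mathbf p_1)\,(\mathcal M(\phi))(\mathbf p_2),
\end{align*}
where I used the multiplicativity of $\phi$ in the middle step. Also $(\mathcal M(\phi))(\emptyset)=\sum_{p'\leq\emptyset}\phi(p')=\phi(\emptyset)=1$ since $\emptyset$ is the unique partition of $0$ elements. So $\mathcal M(\phi)\in\mathcal X[\mathcal P]$. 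The same argument works verbatim for $\mathcal M^{\to c}$ and $\mathcal M^{c\to}$: one only needs the tensor-factorization property for the orders $\sqsupset$ and $\dashv$, which follows from Proposition \ref{multiplicativgeo} together with Lemma \ref{lemme:carac1} (a tensor product $p_1'\otimes p_2'$ is finer, resp. coarser, than $p_1\otimes p_2$ iff each factor is, and the defect is additive, by the displayed identity ${\sf df}(p_1'\otimes p_2',p_1\otimes p_2)={\sf df}(p_1',p_1)+{\sf df}(p_2',p_2)$ used in the proof of Proposition \ref{multiplicativgeo}).

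It remains to show the four maps are surjective onto $\mathcal X[\mathcal P]$, equivalently that their inverses preserve $\mathcal X[\mathcal P]$. For $\mathcal M$ the inverse is $\mathcal R$; since $\mathcal M=\mathcal M^{c\to}\mathcal M^{\to c}$ by Equation (\ref{eq:decompo}), it suffices to treat $\mathcal M^{\to c}$ and $\mathcal M^{c\to}$, whose inverses are given by the Möbius functions of $\sqsupset$ and $\dashv$ computed in the proof of Theorem \ref{inversematrice}. The cleanest route, which I expect to be the main obstacle and the part requiring care, is not to invert the triangular systems by hand but to argue structurally: the inverse transform is again "summation against a multiplicative kernel" because the Möbius function of $\trianglelefteq$ (hence of $\sqsupset$ and $\dashv$, by the reduction in the proof of Theorem \ref{inversematrice}) is multiplicative under $\otimes$ — the formula $\mu_f(p_1,p_2)=(-1)^{{\sf nc}(p_1)-{\sf nc}(p_2)}\prod_i((i-1)!)^{r_i}$ is manifestly multiplicative under disjoint union of block structures, and the relevant $\wedge$ and the conditions $p_1\wedge p_2\sqsupset p$, $p\dashv p_1\wedge p_2$ all split along a tensor decomposition. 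Feeding this into the same sum-splitting computation as above shows $(\mathcal M^{\to c})^{-1}(\phi)$ and $(\mathcal M^{c\to})^{-1}(\phi)$ are multiplicative and take value $1$ on $\emptyset$. Composing, $\mathcal R=\mathcal R^{\to c}\circ$ (the analogous inverse) also preserves $\mathcal X[\mathcal P]$, wait — more directly, $\mathcal R=(\mathcal M^{\to c})^{-1}(\mathcal M^{c\to})^{-1}$ preserves $\mathcal X[\mathcal P]$. Hence all four transforms are bijections of $\mathcal X[\mathcal P]$ onto itself.
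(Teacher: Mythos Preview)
Your argument is correct, and for the forward direction it is actually more direct than the paper's: once you invoke Proposition~\ref{multiplicativgeo} to get the bijection
\[
\{p'\leq p_1\otimes p_2\}\;\longleftrightarrow\;\{p_1'\leq p_1\}\times\{p_2'\leq p_2\},
\]
the multiplicativity of $\mathcal M(\phi)$ follows immediately from that of $\phi$, with no induction needed. Your treatment of the inverse via the explicit M\"obius function also works: the formula of Theorem~\ref{inversematrice} is visibly multiplicative under $\otimes$ (the meet $p\wedge p'$, the $\sqsupset/\dashv$ conditions, and $\mu_f$ all split along a tensor decomposition), so $\mathcal R(\phi)$ is again a character.

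The paper takes a different, more uniform route. Rather than computing the M\"obius function, it proves a single inductive identity: assuming $\phi$ is multiplicative on all pairs of strictly smaller height ${\sf h}_{\boxtimes}$, one has
\[
(\mathcal M(\phi))(\mathbf p_1\otimes\mathbf p_2)-(\mathcal M(\phi))(\mathbf p_1)(\mathcal M(\phi))(\mathbf p_2)
=\phi(\mathbf p_1\otimes\mathbf p_2)-\phi(\mathbf p_1)\phi(\mathbf p_2).
\]
This equation yields both implications at once by induction on the height: if $\phi$ is a character then so is $\mathcal M(\phi)$, and conversely if $\mathcal M(\phi)$ is a character then so is $\phi$. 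The advantage of the paper's method is that it never touches the M\"obius function and the same argument template is reused verbatim for Theorem~\ref{critcondens} and the $\mathcal M^{\to c}$ statement, where the target classes ($\mathcal X_+[\mathcal P]$, $\mathfrak X_\boxtimes^c[\mathcal P]$) are not obviously reachable by a ``multiplicative kernel'' argument. Your approach, on the other hand, is more explicit and makes the structural reason (multiplicativity of the incidence kernel and of its M\"obius inverse) completely transparent.
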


\begin{proof}
Let us prove that the $\mathcal{M}$-transform is a bijection from $\mathcal{X}[\mathcal{P}]$ to itself: it will implies that the $\mathcal{R}$-transform is a bijection, and using the same arguments, we can prove that the   $\mathcal{M}^{\to c}$ and  $\mathcal{M}^{c \to}$-transforms are bijections.

Let $\phi$ be an element of $\left(\bigoplus_{k=0}^{\infty}\mathbb{C}[\mathcal{P}_k/\mathfrak{S}_k]\right)^{*}$, let $p_1$ and $p_2$ be two partitions, respectively in $\mathcal{P}_{k_1}$ and $\mathcal{P}_{k_2}$. Let us suppose that for any $(p'_1,p'_2) \in \mathcal{P}_{k_1} \times \mathcal{P}_{k_2}$ such that ${\sf h}_{\boxtimes}({\bold p'_1}, {\bold p'_2})< {\sf h}_{\boxtimes}({\bold p_1}, {\bold p_2})$, we know that $\phi({\bold p'_1 }\otimes {\bold p'_2}) = \phi({\bold p_1'}) \phi({\bold p'_2})$. We have the equality: 
\begin{align}
\label{eq:equacaractere}
(\mathcal{M}(\phi))({\bold p_1}\otimes {\bold p_2}) - (\mathcal{M}(\phi))({\bold p_1})(\mathcal{M}(\phi))({\bold p_2}) = \phi({\bold p_1}\otimes {\bold p_2}) -  \phi({\bold p_1})\phi({\bold p_2}).
\end{align}
This is a simple consequence of Proposition \ref{multiplicativgeo}, the definition of $\mathcal{M}$ and the hypothesis on $\phi$.

This allows us to prove by recurrence on the height of $({\bold p_1}, {\bold p_2})$ that $\mathcal{M}$ is a bijection on the set of characters. 
\end{proof}

In order to state a similar result for the infinitesimal characters, we need to define the set of additive characters and the set $\mathfrak{X}_{\boxtimes}^{c}[\mathcal{P}]$. Recall Definition \ref{supportexclusive} and the notation ${\sf 0}_k$. 

\begin{definition}
An element $\phi \in \left(\bigoplus_{k=0}^{\infty}\mathbb{C}[\mathcal{P}_k/\mathfrak{S}_k]\right)^{*}$ is an additive character if for any partitions $p_1$ and $p_2$: 
\begin{align*}
\phi({\bold p_1} \otimes {\bold p_2} ) = \phi({\bold p_1}) + \phi( {\bold p_2}).
\end{align*}
The set of additive characters is denoted by $\mathcal{X}_{+}[\mathcal{P}]$. 

Let $\epsilon_{\boxtimes}^{c}$ be the linear form in $\left(\bigoplus_{k=0}^{\infty}\mathbb{C}[\mathcal{P}_k/\mathfrak{S}_k]\right)^{*}$ which sends, for any $k \in \mathbb{N}$, ${\sf 0}_k$ on $1$, and any other partition $p$ on $0$. We recall that ${\sf 0}_{k} = \{\{1,...,k,1',...,k'\}\}$. The set $\mathfrak{X}_{\boxtimes}^{c}[\mathcal{P}]$ is the set of linear forms in $\left(\bigoplus_{k=0}^{\infty}\mathbb{C}[\mathcal{P}_k/\mathfrak{S}_k]\right)^{*}$ such that $\phi({\bold p_1} \otimes {\bold p_2}) = \epsilon_{\boxtimes}^{c} ({\bold p_1}) \phi({\bold p_2}) + \phi({\bold p_1})\epsilon_{\boxtimes}^{c}({\bold p_2})$ for any partitions $p_1$ and $p_2$. 
\end{definition}
Using the definition of $\epsilon_{\boxtimes}^{c}$ and Definition \ref{supportexclusive}, the set $\mathfrak{X}_{\boxtimes}^{c}[\mathcal{P}]$ is the set of linear forms $\phi$ such that $\phi({\bold p})= 0$ for any non exclusive-irreducible partition $p$.

\begin{theorem}
\label{critcondens}
The $\mathcal{M}$-transform is a bijection from : 
\begin{enumerate}
\item $\mathfrak{X}_{\boxplus}[\mathcal{P}]$ to itself, 
\item $\mathfrak{X}_{\boxtimes}[\mathcal{P}]$ to $\mathcal{X}_{+}[\mathcal{P}]$. 
\end{enumerate}
In particular, the $\mathcal{R}$-transform is a bijection from $\mathfrak{X}_{\boxplus}[\mathcal{P}]$ to itself and from $\mathcal{X}_{+}[\mathcal{P}]$ to $\mathfrak{X}_{\boxtimes}[\mathcal{P}]$. 
\end{theorem}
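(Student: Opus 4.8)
The plan is to extract a single equivalence about the $\mathcal{M}$-transform that settles both assertions at once. First, exactly as in the proof of the formula (\ref{eq:metconvol}), I would argue in $\left(\bigoplus_{k=0}^{\infty}\mathbb{C}[\mathcal{P}_k]\right)^{*}$ rather than in its $\mathfrak{S}$-invariant subspace: the sets $\mathfrak{X}_{\boxplus}[\mathcal{P}]$, $\mathfrak{X}_{\boxtimes}[\mathcal{P}]$, $\mathcal{X}_{+}[\mathcal{P}]$ are the invariant parts of the obvious analogues defined there, the forms $\epsilon_{\boxplus}$ and $\epsilon_{\boxtimes}$ are $\mathfrak{S}$-invariant, and $\mathcal{M}$ commutes with the $\mathfrak{S}$-action, so the stated invariant version follows from the non-invariant one. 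For a linear form $\epsilon$ on $\bigoplus_k\mathbb{C}[\mathcal{P}_k]$, call a linear form $\phi$ an \emph{$\epsilon$-derivation} if $\phi(p_1\otimes p_2)=\phi(p_1)\,\epsilon(p_2)+\epsilon(p_1)\,\phi(p_2)$ for all partitions $p_1$, $p_2$. The claim I would establish is: $\phi$ is an $\epsilon$-derivation if and only if $\mathcal{M}(\phi)$ is an $\mathcal{M}(\epsilon)$-derivation.

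The key input is that, by Proposition~\ref{multiplicativgeo} (together with the additivity ${\sf df}(p'_1\otimes p'_2,p_1\otimes p_2)={\sf df}(p'_1,p_1)+{\sf df}(p'_2,p_2)$ used in its proof), the map $(p'_1,p'_2)\mapsto p'_1\otimes p'_2$ is a bijection from $\{p'_1\leq p_1\}\times\{p'_2\leq p_2\}$ onto $\{p'\leq p_1\otimes p_2\}$; hence $\mathcal{M}(\phi)(p_1\otimes p_2)=\sum_{p'_1\leq p_1}\sum_{p'_2\leq p_2}\phi(p'_1\otimes p'_2)$. For the ``only if'' direction one substitutes the $\epsilon$-derivation identity into this double sum, factors it, and recognises $\sum_{p'\leq p}\epsilon(p')=\mathcal{M}(\epsilon)(p)$; this yields the $\mathcal{M}(\epsilon)$-derivation identity for $\mathcal{M}(\phi)$ with no induction. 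For the ``if'' direction I would induct on $d(\mathrm{id}_{k_1},p_1)+d(\mathrm{id}_{k_2},p_2)$, where $p_i\in\mathcal{P}_{k_i}$: separating the term $(p_1,p_2)$ from the double sum, applying the induction hypothesis to every other pair --- each has strictly smaller value, since $p'_i\leq p_i$ with $p'_i\neq p_i$ forces $d(\mathrm{id}_{k_i},p'_i)<d(\mathrm{id}_{k_i},p_i)$ --- and using that $\mathcal{M}(\phi)$ is an $\mathcal{M}(\epsilon)$-derivation, one recovers exactly the $\epsilon$-derivation identity for $\phi$ at $(p_1,p_2)$. The base case, $p_1$ and $p_2$ both identities, is immediate because $\mathrm{id}_k$ is the minimum of $\leq$, so the double sum has a single term and $\mathcal{M}$ acts as the identity on minima.

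It then remains to identify $\mathcal{M}(\epsilon_{\boxplus})$ and $\mathcal{M}(\epsilon_{\boxtimes})$: a one-line computation gives $\mathcal{M}(\epsilon_{\boxplus})=\epsilon_{\boxplus}$ (only $\emptyset$ is $\leq\emptyset$) and $\mathcal{M}(\epsilon_{\boxtimes})=\mathbf{1}$, where $\mathbf{1}$ is the linear form taking the value $1$ on every partition (for $p\in\mathcal{P}_k$ one has $\mathrm{id}_k\leq p$, and $\mathrm{id}_k$ is the only partition $\leq p$ on which $\epsilon_{\boxtimes}$ does not vanish). By definition $\mathfrak{X}_{\boxdot}[\mathcal{P}]$ is the set of $\epsilon_{\boxdot}$-derivations, and $\mathcal{X}_{+}[\mathcal{P}]$ --- the set of additive characters $\psi$, $\psi(p_1\otimes p_2)=\psi(p_1)+\psi(p_2)$ --- is exactly the set of $\mathbf{1}$-derivations. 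So the equivalence reads: $\phi\in\mathfrak{X}_{\boxplus}[\mathcal{P}]$ iff $\mathcal{M}(\phi)\in\mathfrak{X}_{\boxplus}[\mathcal{P}]$, and $\phi\in\mathfrak{X}_{\boxtimes}[\mathcal{P}]$ iff $\mathcal{M}(\phi)\in\mathcal{X}_{+}[\mathcal{P}]$. Since $\mathcal{M}$ is already a bijection of the whole space (its matrix $G$ is invertible), it restricts to the two announced bijections, and $\mathcal{R}=\mathcal{M}^{-1}$ immediately gives the corresponding statements for the $\mathcal{R}$-transform.

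The one genuinely delicate step is the ``if'' half of the equivalence: the induction must be carried out on the pair of geodesic distances to the identities and not on the degrees (which stay constant along the factorisations $p'_i\leq p_i$), and the extremal term of the double sum has to be peeled off and then correctly reabsorbed using the derivation identity already available for $\mathcal{M}(\phi)$. The remaining ingredients --- the bijection $\{p'_1\leq p_1\}\times\{p'_2\leq p_2\}\cong\{p'\leq p_1\otimes p_2\}$, the two computations of $\mathcal{M}(\epsilon_{\boxdot})$, and the $\mathfrak{S}$-equivariance reduction --- are routine.
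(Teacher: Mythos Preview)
Your proof is correct and follows essentially the same approach as the paper: both rely on Proposition~\ref{multiplicativgeo} (geodesic factorisation under $\otimes$) together with the identities $\sum_{p'\leq p}\epsilon_{\boxplus}(p')=\epsilon_{\boxplus}(p)$ and $\sum_{p'\leq p}\epsilon_{\boxtimes}(p')=1$, combined with an induction on the pair of geodesic distances. Your abstraction to ``$\epsilon$-derivations'' is a tidy unification of the two cases --- and you correctly note that the forward direction needs no induction --- but the underlying argument is the paper's.
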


\begin{proof}
The proof uses the same arguments as the proof of Proposition \ref{prop:bijectionM}. We only explains which equation is used instead of Equation (\ref{eq:equacaractere}).
\begin{enumerate}
\item In order to study the assertion on $\mathfrak{X}_{\boxplus}[\mathcal{P}]$, it is enough to see that if for any $(p'_1,p'_2)$ such that ${\sf h}_{\boxtimes}({\bold p'_1}, {\bold p'_2})< {\sf h}_{\boxtimes}({\bold p_1}, {\bold p_2})$, we know that $\phi({\bold p'_1} \otimes{\bold  p'_2}) - \epsilon_{\boxplus}({\bold p'_1}) \phi({\bold p'_2}) - \phi({\bold p'_1}) \epsilon_{\boxplus}({\bold p'_2}) = 0$, then:  
\begin{align*}
\ \ \ (\mathcal{M}(\phi))({\bold p_1}\otimes {\bold p_2}) - \epsilon_{\boxplus}({\bold p_1}) (\mathcal{M}(\phi))&({\bold p_2}) - (\mathcal{M}(\phi))({\bold p_1}) \epsilon_{\boxplus}({\bold p_2}) \\&= \phi({\bold p_1 }\otimes {\bold p_2}) - \epsilon_{\boxplus}({\bold p_1}) \phi({\bold p_2}) - \phi({\bold p_1}) \epsilon_{\boxplus}({\bold p_2}).
\end{align*}
This is a simple consequence of Proposition \ref{multiplicativgeo}, the definition of $\mathcal{M}$ and the fact that $\sum_{p'\leq p} \epsilon_{\boxplus}(p') = \epsilon_{\boxplus}(p)$ for any partition $p$.

\item In order to study the assertion on $\mathfrak{X}_{\boxtimes}[\mathcal{P}]$, it is enough to see that if for any $(p'_1,p'_2)$ such that ${\sf h}_{\boxtimes}({\bold p'_1}, {\bold p'_2})< {\sf h}_{\boxtimes}({\bold p_1}, {\bold p_2})$, we know that $\phi({\bold p'_1} \otimes {\bold p'_2}) - \epsilon_{\boxtimes}({\bold p'_1}) \phi({\bold p'_2}) - \phi({\bold p'_1}) \epsilon_{\boxtimes}({\bold p'_2}) = 0$, then:  
\begin{align*}
\ \ \ \ \ \mathcal{M}(\phi)({\bold p_1}\otimes {\bold p_2}) - \mathcal{M}(\phi)&({\bold p_1})- \mathcal{M}(\phi)({\bold p_2}) \\&= \phi({\bold p_1} \otimes {\bold p_2}) - \epsilon_{\boxtimes}({\bold p_1}) \phi({\bold p_2}) - \phi({\bold p_1}) \epsilon_{\boxtimes}({\bold p_2}). 
\end{align*}
This is a simple consequence of Proposition \ref{multiplicativgeo}, the definition of $\mathcal{M}$ and the fact that $\sum_{p' \leq p} \epsilon_{\boxtimes} (p') = 1$ for any partition $p$.
\end{enumerate}
\end{proof}

\begin{theorem}
The $\mathcal{M}^{\to c}$-transform is a bijection from: 
\begin{enumerate}
\item $\mathfrak{X}_{\boxplus}[\mathcal{P}]$ to itself, 
\item $\mathfrak{X}_{\boxtimes}[\mathcal{P}]$ to $\mathfrak{X}_{\boxtimes}^{c}[\mathcal{P}]$. 
\end{enumerate}
\end{theorem}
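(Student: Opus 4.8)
The plan is to run the proof in lockstep with that of Theorem~\ref{critcondens} (which itself follows the template of Proposition~\ref{prop:bijectionM}): the whole mechanism is a recursion on a height function — the degree for $\boxplus$, the height ${\sf h}_\boxtimes$ for $\boxtimes$ — that reduces everything to a single ``transfer identity'' comparing, at a pair $(\mathbf{p_1},\mathbf{p_2})$, the defect of $\mathcal{M}^{\to c}(\phi)$ from the relation defining the target space with the defect of $\phi$ from the relation defining the source space. The geometric input that makes such an identity available is the multiplicativity of admissible splits under the tensor product, ${\sf Sp}(p_1\otimes p_2)=\{q_1\otimes q_2:\ q_1\in{\sf Sp}(p_1),\ q_2\in{\sf Sp}(p_2)\}$; I would deduce this from Proposition~\ref{multiplicativgeo} (a partition $\le p_1\otimes p_2$ tensor-factorizes) together with the additivity ${\sf df}(q_1\otimes q_2,p_1\otimes p_2)={\sf df}(q_1,p_1)+{\sf df}(q_2,p_2)$ and Lemma~\ref{lemme:carac1}, so that the defect-zero finer partitions factorize as tensor products as well. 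I would also record that $\mathcal{M}^{\to c}$ is globally invertible (its matrix is $S$, unitriangular for $\sqsupset$), so injectivity on any subspace is free and only the image needs to be identified.

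For part~(1) I would argue directly. If $q\in{\sf Sp}(p)$ then $q$ is finer than $p$, hence $q\vee\mathrm{id}_k$ is finer than $p\vee\mathrm{id}_k$ and ${\sf nc}(q\vee\mathrm{id}_k)\ge{\sf nc}(p\vee\mathrm{id}_k)$; so $p$ composed implies every element of ${\sf Sp}(p)$ composed. Since $\mathfrak{X}_\boxplus[\mathcal{P}]$ is precisely the space of linear forms vanishing on composed partitions, and since both $(\mathcal{M}^{\to c}\phi)(p)=\sum_{q\in{\sf Sp}(p)}\phi(q)$ and $((\mathcal{M}^{\to c})^{-1}\phi)(p)=\sum_{q\in{\sf Sp}(p)}(S^{-1})_{p,q}\phi(q)$ only use the values of $\phi$ on ${\sf Sp}(p)$, both $\mathcal{M}^{\to c}$ and its inverse preserve $\mathfrak{X}_\boxplus[\mathcal{P}]$, hence $\mathcal{M}^{\to c}$ restricts to a bijection of $\mathfrak{X}_\boxplus[\mathcal{P}]$ onto itself. (Equivalently one runs the recursion, the transfer identity being available because admissible splits preserve the number $k$ of columns, so $\sum_{q\in{\sf Sp}(p)}\epsilon_\boxplus(q)=\epsilon_\boxplus(p)$.)

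For part~(2) I would set up the recursion on ${\sf h}_\boxtimes$ exactly as in Proposition~\ref{prop:bijectionM}: assume $\phi$ satisfies the relation defining $\mathfrak{X}_\boxtimes[\mathcal{P}]$ at every pair of strictly smaller height, expand $(\mathcal{M}^{\to c}\phi)(\mathbf{p_1}\otimes\mathbf{p_2})=\sum_{q_1\in{\sf Sp}(p_1),\,q_2\in{\sf Sp}(p_2)}\phi(q_1\otimes q_2)$ using the split-factorization, peel off the extremal term $(p_1,p_2)$ and apply the inductive relation to the rest (they have smaller height since $q_i\in{\sf Sp}(p_i)$ forces $q_i\le p_i$, with equality of heights only when $q_i=p_i$); regrouping the $\epsilon_\boxtimes$-terms into $(\mathcal{M}^{\to c}\phi)(\mathbf{p_1})$ and $(\mathcal{M}^{\to c}\phi)(\mathbf{p_2})$ yields
\begin{align*}
&(\mathcal{M}^{\to c}\phi)(\mathbf{p_1}\otimes\mathbf{p_2})-\epsilon^{c}_{\boxtimes}(\mathbf{p_1})(\mathcal{M}^{\to c}\phi)(\mathbf{p_2})-(\mathcal{M}^{\to c}\phi)(\mathbf{p_1})\epsilon^{c}_{\boxtimes}(\mathbf{p_2})\\
&\qquad=\phi(\mathbf{p_1}\otimes\mathbf{p_2})-\epsilon_{\boxtimes}(\mathbf{p_1})\phi(\mathbf{p_2})-\phi(\mathbf{p_1})\epsilon_{\boxtimes}(\mathbf{p_2}),
\end{align*}
\emph{provided} one has the combinatorial fact — playing here the role of ``$\sum_{p'\le p}\epsilon_\boxtimes(p')=1$'' from Theorem~\ref{critcondens} — that identifies the sum $\sum_{q\in{\sf Sp}(p)}\epsilon_\boxtimes(q)$ (the admissible splits of $p$ that reduce to an identity permutation) with $\epsilon^{c}_{\boxtimes}(p)$. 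Granting this, the recursion closes in both directions: for $\phi\in\mathfrak{X}_\boxtimes[\mathcal{P}]$ the right-hand side vanishes identically, so $\mathcal{M}^{\to c}(\phi)\in\mathfrak{X}^{c}_{\boxtimes}[\mathcal{P}]$; and for $\psi\in\mathfrak{X}^{c}_{\boxtimes}[\mathcal{P}]$, putting $\phi=(\mathcal{M}^{\to c})^{-1}(\psi)$ and inducting on ${\sf h}_\boxtimes$, the left-hand side vanishes, which forces $\phi\in\mathfrak{X}_\boxtimes[\mathcal{P}]$.

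The hard part is exactly that combinatorial identity, together with the careful bookkeeping of the normalization $\phi(\mathrm{id}_k)=k\,\phi(\mathrm{id}_1)$ in the extremal term: one has to pin down which partitions of $\mathcal{P}_k$ lie in ${\sf Sp}(p)$ and reduce to an identity, and match this data against exclusive-irreducibility of $p$ — this is where Definition~\ref{supportexclusive} and the structure of $\mathcal{D}_k$ (recall ${\sf Sp}({\sf 0}_k)=\mathcal{D}_k$) have to be used. An alternative, possibly shorter route, if the companion statement for $\mathcal{M}^{c\to}$ is available, is to factor $\mathcal{M}^{\to c}=(\mathcal{M}^{c\to})^{-1}\circ\mathcal{M}$ through $\mathcal{X}_+[\mathcal{P}]$ and invoke Theorem~\ref{critcondens} together with the $\mathcal{M}^{c\to}$-analogue; but the self-contained recursion above reuses the pattern the paper has already established and is the safer default.
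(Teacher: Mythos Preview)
Your approach is essentially the paper's own: the same recursion on ${\sf h}_{\boxtimes}$ modeled on Proposition~\ref{prop:bijectionM}, driven by the tensor-factorization of admissible splits (which the paper cites as Proposition~\ref{multiplicativgeo}), and reducing each case to the transfer identity you wrote down. The paper states the two needed combinatorial facts exactly as you isolate them: $\sum_{p'\sqsupset p}\epsilon_{\boxplus}(p')=\epsilon_{\boxplus}(p)$ for part~(1) and $\sum_{p'\sqsupset p}\epsilon_{\boxtimes}(p')=\epsilon^{c}_{\boxtimes}(p)$ for part~(2). Your direct argument for part~(1) (composed partitions have only composed admissible splits, so both $\mathcal{M}^{\to c}$ and its inverse preserve $\mathfrak{X}_{\boxplus}[\mathcal{P}]$) is a slight shortcut over the paper's uniform recursion, but the content is the same; for part~(2) your recursion and the paper's coincide, and you are right to flag the identity $\sum_{q\sqsupset p}\epsilon_{\boxtimes}(q)=\epsilon^{c}_{\boxtimes}(p)$ as the crux --- the paper simply asserts it.
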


\begin{proof}
The proof uses the same arguments as the proof of Proposition \ref{prop:bijectionM}. Again we only explain which equation is used instead of Equation (\ref{eq:equacaractere}). 
\begin{enumerate}
\item In order to study the assertion on $\mathfrak{X}_{\boxplus}[\mathcal{P}]$, it is enough to see that if for any $(p'_1,p'_2)$ such that ${\sf h}_{\boxtimes}({\bold p'_1}, {\bold p'_2})< {\sf h}_{\boxtimes}({\bold p_1}, {\bold p_2})$, we know that $\phi({\bold p'_1} \otimes{\bold  p'_2}) - \epsilon_{\boxplus}({\bold p'_1}) \phi({\bold p'_2}) - \phi({\bold p'_1}) \epsilon_{\boxplus}({\bold p'_2}) = 0$, then:  
\begin{align*}
(\mathcal{M}^{\to c}(\phi))({\bold p_1}\otimes{\bold  p_2}) - \epsilon_{\boxplus}({\bold p_1}) (\mathcal{M}^{\to c}&(\phi))({\bold p_2}) - (\mathcal{M}^{\to c}(\phi))({\bold p_1}) \epsilon_{\boxplus}({\bold p_2}) \\&= \phi({\bold p_1} \otimes{\bold  p_2}) - \epsilon_{\boxplus}({\bold p_1}) \phi({\bold p_2}) - \phi({\bold p_1}) \epsilon_{\boxplus}({\bold p_2}).
\end{align*}
This is a simple consequence of Proposition \ref{multiplicativgeo}, the definition of $\mathcal{M}^{\to c}$ and the fact that $\sum_{p'\sqsupset p} \epsilon_{\boxplus}(p') = \epsilon_{\boxplus}(p)$ for any partition $p$.
\item  In order to study the assertion on $\mathfrak{X}_{\boxtimes}[\mathcal{P}]$,  it is enough to see that if for any $(p'_1,p'_2)$ such that ${\sf h}_{\boxtimes}({\bold p'_1}, {\bold p'_2})< {\sf h}_{\boxtimes}({\bold p_1}, {\bold p_2})$, we know that $\phi({\bold p'_1 }\otimes {\bold p'_2}) - \epsilon_{\boxtimes}({\bold p'_1}) \phi({\bold p'_2}) - \phi({\bold p'_1}) \epsilon_{\boxtimes}({\bold p'_2}) = 0$, then:  
\begin{align*}
(\mathcal{M}^{\to c}(\phi))({\bold p_1}\otimes{\bold  p_2}) - \epsilon^{c}_{\boxtimes}({\bold p_1}) (\mathcal{M}^{\to c}&(\phi))({\bold p_2}) - (\mathcal{M}^{\to c}(\phi))({\bold p_1}) \epsilon^{c}_{\boxtimes}({\bold p_2}) \\&= \phi({\bold p_1} \otimes {\bold p_2}) - \epsilon_{\boxtimes}({\bold p_1}) \phi({\bold p_2}) - \phi({\bold p_1}) \epsilon_{\boxtimes}({\bold p_2}).
\end{align*}
This is a simple consequence of Proposition \ref{multiplicativgeo}, the definition of $\mathcal{M}^{\to c}$ and the fact that $\sum_{p'\sqsupset p} \epsilon_{\boxtimes}(p') = \epsilon^{c}_{\boxtimes}(p)$ for any partition $p$.
\end{enumerate}
\end{proof}

\subsection{Study of $(\bigoplus_{k=0}^{\infty}\mathbb{C}[{\mathcal{A}}_k])^{*}$}

\subsubsection{Projections}
\label{sec:projection}
Recall the definitions in Section \ref{sec:defA}. Let us consider $\mathcal{A}$ in $\{\mathfrak{S}, \mathcal{B}, \mathcal{B}s, \mathcal{H}\}$. Since for any integer $k$, $\mathcal{A}_k$ is a subset of  $\mathcal{P}_k$, any element of $(\bigoplus_{k=0}^{\infty} \mathbb{C}[\mathcal{P}_k])^{*}$ can be restricted to $\bigoplus_{k=0}^{\infty} \mathbb{C}[\mathcal{A}_k]$.

\begin{definition}
Let $\phi$ be an element of $(\bigoplus_{k=0}^{\infty} \mathbb{C}[\mathcal{P}_k])^{*}$, its restriction to $ \bigoplus_{k=0}^{\infty}\mathbb{C}[\mathcal{A}_k]$ is denoted by ${\sf R}_{\mathcal{A}}(\phi)$. We can extend canonically an element of $(\bigoplus_{k=0}^{\infty} \mathbb{C}[\mathcal{A}_k])^{*}$ by defining for any $\phi \in (\bigoplus_{k=0}^{\infty} \mathbb{C}[\mathcal{A}_k])^{*}$, ${\sf E}_{\mathcal{A}}(\phi)$ as the unique element of $(\bigoplus_{k=0}^{\infty} \mathbb{C}[\mathcal{P}_k])^{*}$ such that for any $p \in \cup_{k=0}^{\infty} \mathcal{P}_k$, $\left({\sf E}_{\mathcal{A}}(\phi)\right)(p) = \delta_{p \in \cup_{k=0}^{\infty} \mathcal{A}_k} \phi(p). $
\end{definition}

This definition allows us to define a $\mathcal{M}_{\mathcal{A}}$ and a $\mathcal{R}_{\mathcal{A}}$-transforms. 

\begin{definition}
The $\mathcal{M}_{\mathcal{A}}$-transform is given by: $\mathcal{M}_{\mathcal{A}} = {\sf R}_{\mathcal{A}} \circ \mathcal{M} \circ {\sf E}_{\mathcal{A}}$. It is a bijection from $(\bigoplus_{k=0}^{\infty} \mathbb{C}[\mathcal{A}_k])^{*}$ to itself, whose inverse is the $\mathcal{R}_{\mathcal{A}}$-transform. 
\end{definition}

In order to be more pedagogical, let us explain the equality $\mathcal{M}_{\mathcal{A}} = {\sf R}_{\mathcal{A}} \circ \mathcal{M} \circ {\sf E}_{\mathcal{A}}$: for any $\phi$ in $(\bigoplus_{k=0}^{\infty} \mathbb{C}[\mathcal{A}_k])^{*}$, for any $k$ and any $b \in \mathcal{A}_k$: 
\begin{align*}
(\mathcal{M}_{\mathcal{A}} (\phi))(b) = \sum_{b' \in \mathcal{A}_k | b \leq b} \phi(b'). 
\end{align*}

\begin{remarque}
The definitions of characters, $\boxdot$-convolutions, $\boxdot$-infinitesimal characters and the results which concern these notions and the $\mathcal{M}_{\mathcal{A}}$, $\mathcal{R}_{\mathcal{A}}$-transforms can be extended to $(\bigoplus_{k=0}^{\infty} \mathbb{C}[\mathcal{A}_k])^{*}$.
\end{remarque}

The application ${\sf E}_{\mathcal{A}} \circ {\sf R}_{\mathcal{A}}$ is a projection which ``erases" the values for $p \notin \cup_{k} \mathcal{A}_k$. Let us define three new interesting projections. 

\begin{definition}
Let $\mathcal{C}^{\kappa}_{\mathcal{A}}$,  $\mathcal{C}^{m}_{\mathcal{A}}$ and $\mathcal{C}^{m}_{\mathcal{A}}$ be the three applications on $(\bigoplus_{k=0}^{\infty} \mathbb{C}[\mathcal{P}_k])^{*}$ given by:
\begin{align*}
\mathcal{C}^{\kappa}_{\mathcal{A}} &= {\sf E}_{A}  \circ \mathcal{R}_{\mathcal{A}} \circ {\sf R}_{ \mathcal{A}} \circ \mathcal{M}, \\
\mathcal{C}^{m}_{\mathcal{A}} &= \mathcal{M} \circ \mathcal{C}^{\kappa}_{\mathcal{A}}  \circ \mathcal{R}, \\
\mathcal{C}^{m^{c}}_{\mathcal{A}} &= \mathcal{M}^{\to c} \circ \mathcal{C}^{\kappa}_{\mathcal{A}}  \circ  (\mathcal{M}^{\to c})^{-1}. 
\end{align*}
The application $\mathcal{C}^{\kappa}_{\mathcal{A}}$ is called the {\em cumulant-projection} on $\mathcal{A}$, the application $\mathcal{C}^{m}_{\mathcal{A}}$ is called the {\em moment-projection} on $\mathcal{A}$ and the application $\mathcal{C}^{m^{c}}_{\mathcal{A}}$ is called the {\em exclusive-projection} on $\mathcal{A}$.  
\end{definition}

These applications are projections and ${\sf Im}( \mathcal{C}^{\kappa}_{\mathcal{A}}) = {\sf E}_{\mathcal{A}}((\bigoplus_{k=0}^{\infty} \mathbb{C}[\mathcal{A}_k])^{*})$, ${\sf Im}( \mathcal{C}^{m}_{\mathcal{A}}) = \mathcal{M} \circ {\sf E}_{\mathcal{A}}((\bigoplus_{k=0}^{\infty} \mathbb{C}[\mathcal{A}_k])^{*})$ and ${\sf Im}( \mathcal{C}^{m^{c}}_{\mathcal{A}}) = \mathcal{M}^{\to c} \circ {\sf E}_{\mathcal{A}}((\bigoplus_{k=0}^{\infty} \mathbb{C}[\mathcal{A}_k])^{*})$. These remarks are direct consequences of the following straightforward equality: 
\begin{align*} 
{\sf R}_{ \mathcal{A}} \circ \mathcal{M} \circ  {\sf E}_{A}  \circ \mathcal{R}_{\mathcal{A}} = {\sf Id}_{(\bigoplus_{k=0}^{\infty} \mathbb{C}[\mathcal{A}_k])^{*}}. 
\end{align*}

\begin{notation}
In the following, for any $\mathcal{A}$,  $\mathcal{G}(\mathcal{A})$ will denote the letter given in the following table. 
\begin{center}

\begin{tabular}{|c||c|c|c|c|c|}
\hline 
& & & & & \\
$\mathcal{A}$ & $\ \ \ \mathfrak{S}\ \ \ $ & $\ \ \ \mathcal{B}\ \ \ $ &$\ \ \  \mathcal{B}s \ \ \ $&$ \ \ \ \mathcal{H}\ \ \ $ &$\ \ \  \mathcal{P}\ \ \ $ \\ & & & & &\\
\hline 
& & & & & \\
$\ \ \ \mathcal{G}(\mathcal{A})\ \ \ $ & $U $&$ O $& $B $& $H$ &$ \mathfrak{S}$\\ 
& & & & &\\
\hline 
\end{tabular} \\
\vspace{+10pt}
Table $1$. Notation $\mathcal{G}(\mathcal{A})$.
\end{center}
\end{notation}

 Let us consider an element $\phi$ of $(\bigoplus_{k=0}^{\infty}\mathbb{C}[\mathcal{P}_k])^{*}$.

\begin{definition}
\label{Uinvariant}
We say that $\phi$ is $\mathcal{G}(\mathcal{A})$-invariant if $\phi $ is a fixed point of $\mathcal{C}_{\mathcal{A}}^{m}$.
\end{definition}

Since $\mathcal{C}_{\mathcal{A}}^{m}$ is a projection and using Equation (\ref{eq:decompo}), we get the following lemma. 
\begin{lemme}
The linear form $\phi$ is  $\mathcal{G}(\mathcal{A})$-invariant if and only if one of the following conditions is satisfied: 
\begin{enumerate}
\item $\phi \in \mathcal{M} \circ {\sf E}_{\mathcal{A}}((\bigoplus_{k=0}^{\infty} \mathbb{C}[\mathcal{A}_k])^{*})$,
\item $\mathcal{R}(\phi) \in {\sf E}_{\mathcal{A}}((\bigoplus_{k=0}^{\infty} \mathbb{C}[\mathcal{A}_k])^{*}),$
\item $(\mathcal{M}^{c \to})^{-1} (\phi) \in \mathcal{M}^{\to c} \circ {\sf E}_{\mathcal{A}}((\bigoplus_{k=0}^{\infty} \mathbb{C}[\mathcal{A}_k])^{*})$.
\end{enumerate}
\end{lemme}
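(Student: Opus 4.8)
The plan is to unravel the definitions and use the decomposition $\mathcal{M} = \mathcal{M}^{c\to}\mathcal{M}^{\to c}$ together with the fact that $\mathcal{C}_{\mathcal{A}}^m$ is a projection. First I would recall that by definition $\phi$ is $\mathcal{G}(\mathcal{A})$-invariant precisely when $\mathcal{C}_{\mathcal{A}}^m(\phi) = \phi$, where $\mathcal{C}_{\mathcal{A}}^m = \mathcal{M}\circ \mathcal{C}_{\mathcal{A}}^\kappa \circ \mathcal{R}$ and $\mathcal{C}_{\mathcal{A}}^\kappa = {\sf E}_{\mathcal{A}}\circ \mathcal{R}_{\mathcal{A}}\circ {\sf R}_{\mathcal{A}}\circ \mathcal{M}$. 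The image of the projection $\mathcal{C}_{\mathcal{A}}^m$ was already identified in the text as $\mathcal{M}\circ{\sf E}_{\mathcal{A}}\big((\bigoplus_k \mathbb{C}[\mathcal{A}_k])^*\big)$; since a projection fixes exactly the points of its image, this immediately gives the equivalence of $\mathcal{G}(\mathcal{A})$-invariance with condition (1).

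Next I would derive condition (2) from condition (1). If $\phi = \mathcal{M}({\sf E}_{\mathcal{A}}(\psi))$ for some $\psi$, then applying $\mathcal{R} = \mathcal{M}^{-1}$ gives $\mathcal{R}(\phi) = {\sf E}_{\mathcal{A}}(\psi) \in {\sf E}_{\mathcal{A}}\big((\bigoplus_k \mathbb{C}[\mathcal{A}_k])^*\big)$. Conversely, if $\mathcal{R}(\phi)$ lies in this image, applying $\mathcal{M}$ gives back condition (1). So (1) $\Leftrightarrow$ (2) is just the bijectivity of $\mathcal{M}$ on the relevant spaces. For condition (3), I would use Equation~(\ref{eq:decompo}), namely $\mathcal{M} = \mathcal{M}^{c\to}\mathcal{M}^{\to c}$, so that $\mathcal{M} = \mathcal{M}^{c\to}\circ \mathcal{M}^{\to c}$ and hence $\phi \in \mathcal{M}\circ{\sf E}_{\mathcal{A}}\big(\cdots\big)$ if and only if $(\mathcal{M}^{c\to})^{-1}(\phi) \in \mathcal{M}^{\to c}\circ{\sf E}_{\mathcal{A}}\big(\cdots\big)$; this is precisely the translation of (1) into (3) by applying the bijection $(\mathcal{M}^{c\to})^{-1}$ to both sides.

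The only point requiring a little care — and the one I would call the main (though modest) obstacle — is justifying that $\mathcal{C}_{\mathcal{A}}^m$ is indeed a projection with the stated image, but the excerpt already states both facts, deriving them from the straightforward identity ${\sf R}_{\mathcal{A}}\circ\mathcal{M}\circ{\sf E}_{\mathcal{A}}\circ\mathcal{R}_{\mathcal{A}} = {\sf Id}$. Given that, the proof is essentially a chain of applications of invertible maps, and I would simply write it as: "$\phi$ is $\mathcal{G}(\mathcal{A})$-invariant $\iff$ $\phi\in{\sf Im}(\mathcal{C}_{\mathcal{A}}^m)$ (projection) $\iff$ (1); then (1) $\iff$ (2) by applying $\mathcal{R}$, and (1) $\iff$ (3) by applying $(\mathcal{M}^{c\to})^{-1}$ and using $\mathcal{M} = \mathcal{M}^{c\to}\mathcal{M}^{\to c}$." No genuinely hard step is involved; the content of the lemma is bookkeeping about which transform sends which subspace to which.
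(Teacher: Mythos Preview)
Your proposal is correct and follows precisely the same approach as the paper: the paper does not give a separate proof environment but simply remarks before the lemma that it follows ``since $\mathcal{C}_{\mathcal{A}}^{m}$ is a projection and using Equation (\ref{eq:decompo}).'' Your write-up is a faithful (and more detailed) unpacking of that one-line justification.
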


The sets ${\sf E}_{\mathcal{A}}((\bigoplus_{k=0}^{\infty} \mathbb{C}[\mathcal{A}_k])^{*})$ and $ \mathcal{M}^{\to c} \circ {\sf E}_{\mathcal{A}}((\bigoplus_{k=0}^{\infty} \mathbb{C}[\mathcal{A}_k])^{*})$ are easy to understand. Recall Definition \ref{setbbarre} where we defined ${\sf Mb}(p)$.

\begin{lemme}
\label{explication}
We have the following characterizations: 
\begin{enumerate}
\item The set ${\sf E}_{\mathcal{A}}((\bigoplus_{k=0}^{\infty} \mathbb{C}[\mathcal{A}_k])^{*})$ is the set of linear forms $\phi$ such that for any $p \notin \cup_{k}\mathcal{A}_k$, $\phi(p) = 0$. 
\item When $\mathcal{A} \in \{ \mathfrak{S}, \mathcal{B} \}$, the set $\mathcal{M}^{\to c} \circ {\sf E}_{\mathcal{A}}((\bigoplus_{k=0}^{\infty} \mathbb{C}[\mathcal{A}_k])^{*})$ is the set of  linear forms $\phi$ such that for any $p \in \cup_{k}\mathcal{P}_k$, 
$\phi(p) = \delta_{p \in \cup_k\overline{\mathcal{A}_k}} \phi({\sf Mb}(p)).$
\end{enumerate}

\end{lemme}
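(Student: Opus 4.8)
The plan is to handle the two characterizations separately: the first is a direct unwinding of the definitions of ${\sf R}_{\mathcal{A}}$ and ${\sf E}_{\mathcal{A}}$, while the second rests on the singleton property of admissible splits proved in Lemma~\ref{pcoarsersigma}.

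For the first assertion, observe that by construction ${\sf E}_{\mathcal{A}}(\psi)$ vanishes on every $p\notin\bigcup_k\mathcal{A}_k$, so the image of ${\sf E}_{\mathcal{A}}$ is contained in the set described. Conversely, if $\phi$ vanishes off $\bigcup_k\mathcal{A}_k$, then the linear form ${\sf E}_{\mathcal{A}}({\sf R}_{\mathcal{A}}(\phi))$ agrees with $\phi$ on $\bigcup_k\mathcal{A}_k$ (both equal $\phi$ there) and off it (both vanish), hence $\phi={\sf E}_{\mathcal{A}}({\sf R}_{\mathcal{A}}(\phi))$ lies in the image. This settles the first point.

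For the second point, fix $\mathcal{A}\in\{\mathfrak{S},\mathcal{B}\}$. The key input is that ${\sf Sp}(p)\cap\mathcal{A}_k$ is empty or a single partition: for $\mathcal{A}=\mathcal{B}$ this is Lemma~\ref{pcoarsersigma}, and for $\mathcal{A}=\mathfrak{S}$ it follows from the inclusion $\mathfrak{S}_k\subset\mathcal{B}_k$, which also shows that the unique element of ${\sf Sp}(p)\cap\mathcal{B}_k$ is automatically a permutation whenever ${\sf Sp}(p)$ contains one; thus the notation ${\sf Mb}(p)$ of Definition~\ref{setbbarre} makes sense in both cases and ${\sf Sp}(p)\cap\mathcal{A}_k=\{{\sf Mb}(p)\}$ exactly when $p\in\overline{\mathcal{A}_k}$. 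Moreover ${\sf Sp}(p)=\{p\}$ whenever $p\in\mathcal{A}_k$, again by Lemma~\ref{pcoarsersigma}. Using the definition of $\mathcal{M}^{\to c}$ together with Lemma~\ref{fin}, I would then compute, for $\psi\in(\bigoplus_l\mathbb{C}[\mathcal{A}_l])^{*}$ and $p\in\mathcal{P}_k$,
\[
(\mathcal{M}^{\to c}({\sf E}_{\mathcal{A}}(\psi)))(p)=\sum_{p'\in{\sf Sp}(p)}({\sf E}_{\mathcal{A}}(\psi))(p')=\sum_{p'\in{\sf Sp}(p)\cap\mathcal{A}_k}\psi(p')=\delta_{p\in\bigcup_l\overline{\mathcal{A}_l}}\,\psi({\sf Mb}(p)).
\]
Specializing to $p\in\mathcal{A}_k$ gives $(\mathcal{M}^{\to c}({\sf E}_{\mathcal{A}}(\psi)))(p)=\psi(p)$, so ${\sf R}_{\mathcal{A}}\circ\mathcal{M}^{\to c}\circ{\sf E}_{\mathcal{A}}$ is the identity on $(\bigoplus_l\mathbb{C}[\mathcal{A}_l])^{*}$.

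The two inclusions then follow from the displayed formula. If $\phi=\mathcal{M}^{\to c}({\sf E}_{\mathcal{A}}(\psi))$, then $\phi(p)=0$ for $p\notin\bigcup_l\overline{\mathcal{A}_l}$, and for $p\in\overline{\mathcal{A}_k}$ the partition $q:={\sf Mb}(p)$ lies in $\mathcal{A}_k$, whence ${\sf Sp}(q)=\{q\}$ and $\phi(q)=\psi(q)$, so $\phi(p)=\psi(q)=\phi(q)=\phi({\sf Mb}(p))$; hence $\phi(p)=\delta_{p\in\bigcup_l\overline{\mathcal{A}_l}}\,\phi({\sf Mb}(p))$. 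Conversely, if $\phi$ satisfies this identity, set $\psi={\sf R}_{\mathcal{A}}(\phi)$; since ${\sf Mb}(p)\in\mathcal{A}_k$ for $p\in\overline{\mathcal{A}_k}$ we get $\psi({\sf Mb}(p))=\phi({\sf Mb}(p))=\phi(p)$, while both sides vanish for $p\notin\bigcup_l\overline{\mathcal{A}_l}$, so comparison with the displayed formula yields $\phi=\mathcal{M}^{\to c}({\sf E}_{\mathcal{A}}(\psi))$. The one point requiring care — and the only genuine obstacle — is the transfer of Lemma~\ref{pcoarsersigma} and of the notation ${\sf Mb}$ from $\mathcal{B}$ to $\mathfrak{S}$ through $\mathfrak{S}_k\subset\mathcal{B}_k$; the rest is bookkeeping with the definitions of $\mathcal{M}^{\to c}$, ${\sf R}_{\mathcal{A}}$ and ${\sf E}_{\mathcal{A}}$.
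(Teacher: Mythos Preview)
Your proof is correct and follows essentially the same approach as the paper: both reduce the second assertion to Lemma~\ref{pcoarsersigma}, compute $(\mathcal{M}^{\to c}\circ{\sf E}_{\mathcal{A}}(\psi))(p)$ as a sum over ${\sf Sp}(p)\cap\mathcal{A}_k$, and observe that this sum collapses to $\delta_{p\in\overline{\mathcal{A}_k}}\psi({\sf Mb}(p))$. Your version is more explicit about why the $\mathcal{B}$-case transfers to the $\mathfrak{S}$-case via $\mathfrak{S}_k\subset\mathcal{B}_k$, a point the paper leaves implicit.
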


\begin{proof}
The first assertion is straightforward. The second is a direct consequence of Lemma \ref{pcoarsersigma}. Indeed, if $\phi \in\mathcal{M}^{\to c} \circ {\sf E}_{\mathcal{A}}((\bigoplus_{k=0}^{\infty} \mathbb{C}[\mathcal{A}_k])^{*})$, there exists $\phi'$ in $(\bigoplus_{k=0}^{\infty} \mathbb{C}[\mathcal{A}_k])^{*}$ such that for any $p \in \mathcal{P}_k$: 
$$\phi(p) = \sum_{p' \in \mathcal{A}_k \mid p' \sqsupset p} \phi'(p').$$
Using Lemma \ref{pcoarsersigma}, for any $p \in \mathcal{P}_k$, $ \phi(p)= \delta_{p \in \cup_{k} \overline{\mathcal{A}_k}}\phi'({\sf Mb}(p)) =  \delta_{p \in \cup_{k} \overline{\mathcal{A}_k}}\phi({\sf Mb}(p))$. Using similar arguments, the other implication is straitforward. 
\end{proof}

\subsubsection{The moment map and $(\bigoplus_{k=0}^{\infty}\mathbb{C}[{\mathfrak{S}}_k])^{*}$}

We will explain in this section that the $\mathcal{R}_{\mathfrak{S}}$-transform is in fact the usual $\mathcal{R}$-transform in free probabilies (\cite{Voicu1}, \cite{Voicu2}, \cite{Speicher}). Let us begin with a straightfoward lemma. 

\begin{lemme}
\label{identification}
The affine space $\mathcal{X}[\mathfrak{S}]$ can ben identified with the affine space $\mathbb{C}_{1}[[z]]$ of formal power series which constant term is equal to $1$ by the following isomorphism : 
\begin{align*}
\Psi : \mathcal{X}[\mathfrak{S}]& \to \mathbb{C}_{1}[[z]]\\
\phi & \mapsto \sum_{k \in \mathbb{N}} \phi((1,...,k)) z^{k}, 
\end{align*} 
where we recall that $(1,...,k)$ is the $k$-cycle in $\mathfrak{S}_k$. 
\end{lemme}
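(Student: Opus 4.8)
The plan is to check directly that the map $\Psi$ is a well-defined bijection which is affine, by exhibiting its inverse and using the structure of characters on $\bigoplus_{k=0}^\infty \mathbb{C}[\mathfrak{S}_k/\mathfrak{S}_k]$. First I would recall that a character $\phi \in \mathcal{X}[\mathfrak{S}]$ is, by definition, multiplicative for $\otimes$ with $\phi(\emptyset)=1$; since any permutation $\sigma \in \mathfrak{S}_k$ decomposes, up to conjugation (i.e. in $\mathfrak{S}_k/\mathfrak{S}_k$), as a tensor product of cycles, the value $\phi(\sigma)$ is the product $\prod_i \phi((1,\dots,\ell_i))$ over the cycle lengths $\ell_i$ of $\sigma$. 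Hence $\phi$ is \emph{completely determined} by the sequence $(\phi((1,\dots,k)))_{k\ge 1}$, which is exactly the sequence of coefficients of $\Psi(\phi)$; and since $(1,\dots,0)=\emptyset$ corresponds to the constant term, $\Psi(\phi) \in \mathbb{C}_1[[z]]$. This shows $\Psi$ is well-defined with image in $\mathbb{C}_1[[z]]$.

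Next I would prove $\Psi$ is injective and surjective. Injectivity is immediate from the previous paragraph: two characters agreeing on all $k$-cycles agree on all permutations (in the quotient), hence are equal. For surjectivity, given any $f = 1 + \sum_{k\ge 1} c_k z^k \in \mathbb{C}_1[[z]]$, I would define $\phi$ on $\mathfrak{S}_k/\mathfrak{S}_k$ by setting $\phi(\sigma) = \prod_i c_{\ell_i}$ where $\ell_1,\dots,\ell_r$ are the cycle lengths of $\sigma$ (with the convention that the empty partition gives the empty product $1$), and extending by linearity; one checks this is well-defined on conjugacy classes (cycle type is a conjugacy invariant) and multiplicative for $\otimes$ (cycle types concatenate under $\otimes$), so $\phi \in \mathcal{X}[\mathfrak{S}]$ and $\Psi(\phi) = f$. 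Finally, affineness: $\mathcal{X}[\mathfrak{S}]$ is an affine subspace of $\left(\bigoplus_k \mathbb{C}[\mathfrak{S}_k/\mathfrak{S}_k]\right)^*$ (it is cut out by the affine conditions $\phi(\emptyset)=1$ and multiplicativity, the latter being affine once one fixes that a convex/affine combination of multiplicative functionals need not be multiplicative — so more precisely one observes that $\phi \mapsto \phi((1,\dots,k))$ is a linear functional, hence $\Psi$ is the restriction of a linear map to the affine set $\mathcal{X}[\mathfrak{S}]$, landing affinely in $\mathbb{C}_1[[z]]$), and the inverse constructed above is visibly affine in the coefficients $c_k$.

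The only real subtlety — and the step I would be most careful about — is the claim that in $\mathfrak{S}_k/\mathfrak{S}_k$ every permutation is a tensor product of cycles. This requires noting that the orbit $[\sigma]$ under conjugation by $\mathfrak{S}_k$ depends only on the cycle type of $\sigma$, and that one can choose within this orbit the representative in ``block form'' $(1,\dots,\ell_1)\otimes(1,\dots,\ell_2)\otimes\cdots$; this uses the identification of $\mathfrak{S}_k$ inside $\mathcal{P}_k$ from Section~\ref{sec:defA} and the compatibility $\rho_N(p\otimes p')=\rho_N(p)\otimes\rho_N(p')$ (equivalently, the graphical description of $\otimes$). Everything else is a routine bookkeeping of generating-function coefficients, which I would not spell out in detail. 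Since the statement is labelled ``straightforward,'' I expect the author's proof to be a single sentence invoking exactly this multiplicativity-plus-cycle-type argument.
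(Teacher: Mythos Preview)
Your proposal is correct and matches the paper's treatment exactly: the paper gives no proof at all, introducing the lemma only as ``a straightforward lemma,'' and your argument---that a character is determined by its values on cycles via multiplicativity over $\otimes$ and the cycle-type decomposition in $\mathfrak{S}_k/\mathfrak{S}_k$---is precisely the intended one-line justification. Your anticipation in the final sentence is spot on; the only cosmetic point is that your discussion of affineness is more elaborate than needed (the paper does not dwell on this, and indeed ``affine space'' here is used loosely to mean ``set parametrized by a sequence of free complex parameters with one normalization'').
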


Let us recall the notion of $\mathcal{R}$-transform in free probabilities, defined on $\mathbb{C}_1[[z]]$, which we will call the $\mathcal{R}_u$-transform. 

\begin{definition}
Let $M(z)$ be a formal power serie in $\mathbb{C}_{1}[[z]]$, that is a formal power serie of the form: 
\begin{align*}
M(z) = 1+\sum_{n=1}^{\infty} a_n z^{n}. 
\end{align*}
Let $C(z)$ be the formal power serie $C(z) = 1+\sum_{n=1}^{\infty}k_{n}z^{n}$ such that $C[zM(z)] = M(z)$.  
The $\mathcal{R}_{u}$-transform of $M$ is $C$. 
\end{definition}

\begin{theorem}
Using the identification $\mathcal{X}[\mathfrak{S}] \simeq \mathbb{C}_{1}[[z]]$ via the application $\Psi$ explained in Lemma \ref{identification}, the following diagram is commutative: 
\begin{align*}
 \xymatrix{\mathcal{X}[\mathfrak{S}]  \ar[rr]^{\mathcal{R}_{\mathfrak{S}}} \ar[d]_{\Psi} &&\mathcal{X}[\mathfrak{S}] \ar[d]^{\Psi}\\
  \mathbb{C}_{1}[[z]] \ar[rr]^{\mathcal{R}_u}  && \mathbb{C}_{1}[[z]] }
 \end{align*}
\end{theorem}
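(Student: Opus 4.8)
The strategy is to unwind both maps on the level of coefficients and reduce the commutativity of the diagram to the defining functional equation of the $\mathcal{R}_u$-transform. The key dictionary is the following: under $\Psi$, a character $\phi \in \mathcal{X}[\mathfrak{S}]$ corresponds to the series $M(z) = \sum_k \phi((1,\dots,k))z^k$, and the claim is that $\Psi(\mathcal{R}_{\mathfrak{S}}(\phi))$ is exactly the $\mathcal{R}_u$-transform of $M$. Since $\mathcal{R}_{\mathfrak{S}} = \mathcal{M}_{\mathfrak{S}}^{-1}$, it is equivalent — and cleaner — to prove the inverse statement: if $\psi = \mathcal{R}_{\mathfrak{S}}(\phi)$, so that $\phi = \mathcal{M}_{\mathfrak{S}}(\psi)$, and if $C(z) = \Psi(\psi) = \sum_k \psi((1,\dots,k))z^k$, then $C[zM(z)] = M(z)$. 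So the first step is to compute $\phi((1,\dots,k)) = (\mathcal{M}_{\mathfrak{S}}(\psi))((1,\dots,k))$ explicitly.

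By definition of $\mathcal{M}_{\mathfrak{S}}$ (the restriction of $\mathcal{M}$ to $\mathfrak{S}_k$), we have
\begin{align*}
\phi((1,\dots,k)) = \sum_{\sigma \in \mathfrak{S}_k,\ \sigma \leq (1,\dots,k)} \psi(\sigma) = \sum_{\sigma \in [\mathrm{id}_k,(1,\dots,k)]\cap \mathfrak{S}_k} \psi(\sigma).
\end{align*}
By Theorem~\ref{th:noncrossing}, this index set is isomorphic to ${\sf NC}_k$, so the sum runs over non-crossing partitions $\sigma$ of $\{1,\dots,k\}$. Because $\psi$ is a character, $\psi(\sigma)$ factorizes over the blocks of $\sigma$ (each cycle of a non-crossing permutation being, up to conjugation, a full cycle on its support, and distinct cycles sitting in a tensor-product position after the appropriate relabelling — this is exactly where the multiplicativity $\psi(\mathbf{p_1}\otimes\mathbf{p_2}) = \psi(\mathbf{p_1})\psi(\mathbf{p_2})$ enters). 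Hence
\begin{align*}
\phi((1,\dots,k)) = \sum_{\sigma \in {\sf NC}_k} \prod_{b \in \sigma} \psi\big((1,\dots,\#b)\big) = \sum_{\sigma \in {\sf NC}_k} \prod_{b \in \sigma} c_{\#b},
\end{align*}
where $c_j = \psi((1,\dots,j))$ are the coefficients of $C(z)$. This is precisely the classical moment–free-cumulant formula: $M(z) = \sum_k \big(\sum_{\sigma \in {\sf NC}_k}\prod_{b}c_{\#b}\big) z^k$, which is well known to be equivalent to the functional relation $C[zM(z)] = M(z)$ (see Speicher's lectures, or \cite{Speicher}). The second step is therefore to quote or briefly reprove this equivalence, and the third is to read off that $\Psi \circ \mathcal{R}_{\mathfrak{S}} = \mathcal{R}_u \circ \Psi$, i.e. the diagram commutes; injectivity/surjectivity of all arrows (already guaranteed since $\mathcal{R}_{\mathfrak{S}}$ and $\mathcal{R}_u$ are bijections and $\Psi$ is an isomorphism by Lemma~\ref{identification}) then finishes the argument.

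The main obstacle is the block-factorization step: one must verify carefully that for a non-crossing permutation $\sigma \in [\mathrm{id}_k,(1,\dots,k)]\cap\mathfrak{S}_k$ the value $\psi(\sigma)$ genuinely equals $\prod_{b\in\sigma}\psi((1,\dots,\#b))$. This requires knowing that a non-crossing permutation is conjugate (by a permutation in $\mathfrak{S}_k$, under which $\psi$ is invariant since it lies in $\mathcal{X}[\mathfrak{S}]\subset(\bigoplus\mathbb{C}[\mathcal{P}_k/\mathfrak{S}_k])^*$) to a tensor product $c_{j_1}\otimes\cdots\otimes c_{j_r}$ of cycles whose lengths are the block sizes — which follows from the isomorphism of Theorem~\ref{th:noncrossing} together with the description of cycles of $\sigma$ as the blocks of $\sigma \vee \mathrm{id}_k$. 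Once this identification is in place the rest is the standard combinatorial translation between $\sum_{{\sf NC}}$-sums and compositional functional equations, and I would not belabour it beyond citing \cite{Speicher} and \cite{Biane1}. I would also remark that the same computation, carried out with $\dashv$ or $\sqsupset$ in place of $\leq$, would relate $\mathcal{M}^{\to c}_{\mathfrak{S}}$ and $\mathcal{M}^{c\to}_{\mathfrak{S}}$ to the corresponding "boolean" and "monotone"-type transforms, but that is outside the scope of this theorem.
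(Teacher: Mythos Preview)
Your proposal is correct and follows essentially the same route as the paper: both reduce the commutativity to the moment--free-cumulant relation $\phi((1,\dots,k)) = \sum_{\sigma \in {\sf NC}_k} \prod_{b \in \sigma} c_{\#b}$ via the identification of $[\mathrm{id}_k,(1,\dots,k)]\cap\mathfrak{S}_k$ with ${\sf NC}_k$ (Theorem~\ref{th:noncrossing}), use the multiplicativity of the character to factor $\psi(\sigma)$ over cycles, and then cite Speicher for the equivalence with the functional equation $C[zM(z)]=M(z)$. The paper additionally phrases the claim for a general $\sigma\in\mathfrak{S}_k$ and invokes the geodesic factorization (Proposition~\ref{multiplicativgeo}) to reduce to the full-cycle case, whereas you work directly with $\Psi$ and so only need the full-cycle case---both are fine.
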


\begin{proof}
The assertion is equivalent to the fact that for any $\phi \in \mathcal{X}[\mathfrak{S}]$, any integer $k$, any $\sigma \in \mathfrak{S}_k$, 
\begin{align*}
\phi(\sigma) = \sum_{\sigma' \in \mathfrak{S}_k | \sigma' \leq \sigma} \prod_{c \text{ cycle of } \sigma} \mathcal{R}_u ( \Psi(\phi))_{\# c}.
\end{align*}

When $\sigma = (1,...,k)$, this is a consequence of the bijection between non-crossing partitions of $k$ elements and the set $[\mathrm{id}_k, (1,…,k)]\cap \mathfrak{S}_k$ and Theorem $2.7$ of \cite{Speic}. In the general case is a consequence of this special case by using the factorization of the geodesics (Lemma \ref{multiplicativgeo}). 
\end{proof}

\section{Observables and convergences of partitions}
\label{sec:obsconv}
In this section, we motivate the definition of the structures in Section \ref{sec:structure}: these structures appear when one studies limits of elements in $\prod\limits_{N=1}^{\infty} \mathbb{C}[\mathcal{P}_k(N)]$. The notion of convergence we are going to use is the good notion to consider when one wants to apply the results to the study of random matrices which are invariant in law by conjugation by the symmetric group (\cite{Gab2}, \cite{Gab3}). 

\subsection{Definitions}
In order to define the notion of convergence on $\prod\limits_{N=1}^{\infty} \mathbb{C}[\mathcal{P}_k(N)]$, we define observables. 

\begin{definition}
Let $N \in \mathbb{N}$, let $p$ be a partition in $\mathcal{P}_k$ and $E \in \mathbb{C}[\mathcal{P}_k(N)]$. The $p$-moment of $E$ is:  
\begin{align*}
m_{p} (E) = \frac{1}{{\sf Tr}_N( p)}{\sf Tr}_N(E\ ^tp),
\end{align*}
where the product is seen in $\mathbb{C}[\mathcal{P}_k(N)]$. 
\end{definition}

Let $p$ be a partition in $\mathcal{P}_k$ and $p'$ be a partition seen in $\mathbb{C}[\mathcal{P}_k(N)]$, using the Equalities $(\ref{lientracenc})$ and $(\ref{eq:lientrace2})$, we have: 
\begin{align}
\label{eq:combimom}
m_{p} (p') = N^{{\sf nc}(p \vee p') - {\sf nc}(p \vee \mathrm{id}_k)}.
\end{align}

From now on, $(E_{N})_{N \in \mathbb{N}}$ is an element of $\prod\limits_{N=1}^{\infty} \mathbb{C}[\mathcal{P}_k(N)]$.

\begin{definition}
\label{weakconv}
The sequence $(E_N)_{N \in \mathbb{N}}$ converges in moments if for any $p \in \mathcal{P}_k$, $m_p(E_N)$ converges when $N$ goes to infinity. If so, we denote by $m_p(E)$ the limit of $m_p(E_N)$ and we denote by $\phi^{m}_{E}$ the linear form in $(\mathbb{C}[\mathcal{P}_k])^{*}$ such that $\phi_{E}^{m}(p) = m_{p}(E)$ for any $p \in \mathcal{P}_{k}$.
\end{definition}
 
In the next section, we give a condition on the coordinates of $(E_N)_{N \in \mathbb{N}}$ which is equivalent to the convergence in moments of $(E_N)_{N \in \mathbb{N}}$.

\subsection{Cumulants and the key result}
For any $p \in \mathcal{P}_k$ and any $F \in \mathbb{C}[\mathcal{P}_k(N)]$, we denote by $F_p$ the coodinate of $F_p$ on the partition $p$: $F = \sum_{p \in \mathcal{P}_k}F_p p. $

\begin{definition}
Let $p$ be a partition in $\mathcal{P}_k$ and let $F \in \mathbb{C}[\mathcal{P}_k(N)]$. The cumulant of $F$ on $p$ is: 
\begin{align*}
\kappa_{p}(F) = N^{{\sf nc}(p) - {\sf nc}(p \vee \mathrm{id}_k)} F_{p}. 
\end{align*}
\end{definition}

\begin{theorem}
\label{equivalence}
 The sequence  $(E_N)_{N \in \mathbb{N}}$ converges in moments if and only if for any $p \in \mathcal{P}_k$, $\kappa_p(E_N)$ converges as $N$ goes to infinity to a number that we denote by $\kappa_{p}(E)$.
 
Let us suppose that $(E_N)_{N \in \mathbb{N}}$ converges in moments, for any $p\in \mathcal{P}_k$: 
\begin{align}
\label{kappamom}
m_p(E) = \sum_{p' \leq p} \kappa_{p'}(E),
\end{align}
\end{theorem}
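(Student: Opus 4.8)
The plan is to combine the defining formula for the $p$-moment on a partition with the combinatorial identity $(\ref{eq:combimom})$, and then reorganize the resulting sum over coordinates by grouping together those partitions $p'$ that contribute to a common join with $p$. First I would write, for a fixed $N$ and a fixed $p \in \mathcal{P}_k$,
\begin{align*}
m_p(E_N) = \sum_{p' \in \mathcal{P}_k} (E_N)_{p'}\ m_p(p') = \sum_{p' \in \mathcal{P}_k} (E_N)_{p'}\ N^{{\sf nc}(p \vee p') - {\sf nc}(p \vee \mathrm{id}_k)},
\end{align*}
using linearity of $m_p$ and Equation $(\ref{eq:combimom})$. I would then substitute $(E_N)_{p'} = N^{{\sf nc}(p' \vee \mathrm{id}_k) - {\sf nc}(p')}\kappa_{p'}(E_N)$, so that the exponent of $N$ multiplying $\kappa_{p'}(E_N)$ becomes
\begin{align*}
{\sf nc}(p \vee p') - {\sf nc}(p \vee \mathrm{id}_k) + {\sf nc}(p' \vee \mathrm{id}_k) - {\sf nc}(p').
\end{align*}
Recognizing the base partition is $\mathrm{id}_k$, this exponent is exactly $-{\sf df}(p',p)$, the defect of $p'$ from the segment $[\mathrm{id}_k,p]$ as in $(\ref{definitiondefect})$. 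Hence
\begin{align*}
m_p(E_N) = \sum_{p' \in \mathcal{P}_k} N^{-{\sf df}(p',p)}\ \kappa_{p'}(E_N).
\end{align*}

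Next I would invoke the fact that ${\sf df}(p',p) \geq 0$ always (it is a sum/difference controlled by the triangle inequality for $d$, or directly nonnegative by definition of the segment), with equality precisely when $p' \in [\mathrm{id}_k,p]$, i.e. when $p' \leq p$. Therefore $N^{-{\sf df}(p',p)} \to \delta_{p' \leq p}$ as $N \to \infty$, and for finite $N$ the coefficients are bounded by $1$. This gives the asymptotic splitting: the terms with $p' \leq p$ contribute $\kappa_{p'}(E_N)$ with coefficient exactly $1$ for every $N$, while the terms with $p' \not\leq p$ are damped by at least a factor $N^{-1/2}$ (the defect takes values in $\frac12\mathbb{Z}_{>0}$ there). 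Since $\mathcal{P}_k$ is finite, the sum is finite and there are no convergence-of-series issues.

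For the equivalence itself I would argue as follows. The linear relation above, written for all $p \in \mathcal{P}_k$ simultaneously, is an invertible triangular change of variables at each finite $N$: indeed the matrix $\big(N^{-{\sf df}(p',p)}\big)_{p,p'}$ equals $G$ plus a matrix all of whose nonzero entries carry a strictly positive power of $N^{-1/2}$, and $G$ is invertible (Section 2), so the full matrix is invertible for all large $N$ with inverse entries that are Laurent-type expressions in $N^{1/2}$ bounded as $N \to \infty$; more simply, one can induct on ${\sf df}$, solving for $\kappa_p(E_N)$ in terms of $m_p(E_N)$ and lower-defect cumulants. Hence $(m_p(E_N))_p$ converges for all $p$ if and only if $(\kappa_p(E_N))_p$ does. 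Passing to the limit $N\to\infty$ in the displayed identity and using $N^{-{\sf df}(p',p)} \to \delta_{p'\le p}$ yields exactly Equation $(\ref{kappamom})$, $m_p(E) = \sum_{p'\le p}\kappa_{p'}(E)$.

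\textbf{Main obstacle.} The only genuinely delicate point is the "if'' direction: one must be careful that convergence of all cumulants really does force convergence of all moments, and vice versa, uniformly in the sense that the error terms $N^{-{\sf df}(p',p)}\kappa_{p'}(E_N)$ with ${\sf df}>0$ vanish in the limit. This requires knowing that the $\kappa_{p'}(E_N)$ stay bounded — which, in the forward direction, follows from the triangular inversion, and in the reverse direction is exactly the hypothesis. Making the induction on the value of ${\sf df}$ (equivalently, an induction ordering $\mathcal{P}_k$ compatibly with $\leq$) clean is the main bookkeeping task; the identification of the $N$-exponent with $-{\sf df}(p',p)$ is the conceptual heart and is a one-line computation once $(\ref{definitiondefect})$ and $(\ref{eq:combimom})$ are in hand.
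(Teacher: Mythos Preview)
Your proof is correct and follows essentially the same route as the paper: both derive the identity $m_p(E_N)=\sum_{p'\in\mathcal{P}_k}N^{-{\sf df}(p',p)}\kappa_{p'}(E_N)$ from $(\ref{eq:combimom})$ and $(\ref{definitiondefect})$, then use that the coefficient matrix $G_N=(N^{-{\sf df}(p',p)})_{p,p'}$ converges to the invertible matrix $G$ of the order $\leq$ to get the equivalence and the limiting relation $(\ref{kappamom})$. One small imprecision: by the second expression in $(\ref{definitiondefect})$ the defect ${\sf df}(p',p)$ is actually a nonnegative \emph{integer}, not a half-integer, so the damping for $p'\not\leq p$ is at least $N^{-1}$; this only strengthens your argument.
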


\begin{remarque}
Let us suppose that $(E_N)_{N \in \mathbb{N}}$ converges in moments, let $\phi_{E}^{\kappa}$ be the linear form in $(\mathbb{C}[\mathcal{P}_k])^{*}$ such that $\phi_{E}^{\kappa}(p) = \kappa_{p}(E)$ for any $p \in \mathcal{P}_k$: the theorem asserts that $\phi_{E}^{m} = \mathcal{M}(\phi_{E}^{\kappa})$, where we recall that $\mathcal{M}$ was defined in Definition \ref{def:moment}. 
\end{remarque}

\begin{proof}
Let $(E_N)_{N \in \mathbb{N}}$ be an element of $\prod\limits_{N \in \mathbb{N}} \mathbb{C}[\mathcal{P}_{k}(N)]$, let $p \in \mathcal{P}_k$ and let $N$ be a positive integer. Using the cumulants of $E_N$, we can calculate the $p$-normalized moment of $E_N$: 
\begin{align*}
m_p(E_N) 
&= \sum_{p' \in \mathcal{P}_k}  \kappa_{p'}(E_N) \frac{1}{N^{{\sf nc}(p')- {\sf nc}(p'\vee \mathrm{id}_k)}} m_{p}\left(  p'\right)\\
&= \sum_{p' \in \mathcal{P}_k} \kappa_{p'}(E_N) N^{{\sf nc}(p \vee p') - {\sf nc}(p \vee \mathrm{id}_k) - {\sf nc}(p') + {\sf nc}(p' \vee \mathrm{id}_k)},
\end{align*}
where we used the Equality (\ref{eq:combimom}). Hence, using Definition \ref{definitiondefect}: 
\begin{align}
\label{egal1}
m_p(E_N) = \sum_{p' \in \mathcal{P}_k} \kappa_{p'}(E_N)  N^{-{\sf df}(p',p)}. 
\end{align}
Let us suppose that for any $p' \in \mathcal{P}_k$,  $\kappa_{p'}(E_N)$ converges to a limit $\kappa_{p'}(E)$. The triangle inequality for $d$ shows that for any $p\in \mathcal{P}_k$, $m_{p}(E_N)$ converges when $N$ goes to infinity and: 
\begin{align*}
\lim_{N \to \infty} m_p(E_N) = \sum_{p' \leq p} \kappa_{p'}(E_N). 
\end{align*}
Now, let us suppose that it converges in moments. We can write the Equation $(\ref{egal1})$ as: 
$$m^{N} = G_{N} \kappa_{N}, $$
where $m^{N}\! =\! \left(m_p(E_N)\right)_{p \in \mathcal{P}_k(N)}, \kappa_{N}\! =\! \left(\kappa_{p}(E_N)\right)_{p \in \mathcal{P}_k(N)},$ and $G_N\! = \left( N^{-{\sf df}(p',p)}\right)_{p,p' \in \mathcal{P}_k(N)}. $
The sequence $\left(G_N\right)_{N \in \mathbb{N}}$ converges to the matrix $G$ of the order $\leq$: since $G$ is invertible, $\kappa_N = G_N^{-1} m^N$ converges to $G^{-1}m$ where $m=\big(\lim\limits_{N \to\infty}m_p(E_N)\big)_{p \in \mathcal{P}_k}$.
\end{proof}

From now on, we will say that $(E_N)_{N \in \mathbb{N}}$ converges if it converges in moments. Besides, for any set of partitions $P \subset  \mathcal{P}_k$, we will use the notation:
\begin{align}
\label{notationsurlim}
\kappa_{P}(E) = \sum_{p \in P}\kappa_{p}(E). 
\end{align}

Using Theorem \ref{th:caractK}, we can generalize easily Equation (\ref{kappamom}): we give one of these generalizations in the following proposition.

\begin{proposition}
\label{gene}
Let us suppose that $(E_N)_{N \in \mathbb{N}}$ converges. For any partitions $p_0$ and $p_1$ in $\mathcal{P}_k$ such that $p_1 \leq p_0$: 
\begin{align}
\label{eq:gene}
m_{\!\!\text{ }^{t}p_1\circ p_0} (E) &= \sum_{p'\leq p_0}  \kappa_{K_{p'}( p_1)}(E).
\end{align}
\end{proposition}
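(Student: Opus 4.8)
The plan is to rewrite the left-hand side using the cumulant expansion of $E_N$ exactly as in the proof of Theorem \ref{equivalence}, and then to identify the combinatorial sum that appears with the right-hand side using the characterization of Kreweras complements given in Theorem \ref{th:caractK}. First I would write $E_N = \sum_{p' \in \mathcal{P}_k} E_{N,p'}\, p'$ and expand, using the definition of $m$ and the cumulant normalization,
\begin{align*}
m_{{}^{t}p_1\circ p_0}(E_N) = \sum_{p' \in \mathcal{P}_k} \kappa_{p'}(E_N)\, N^{-{\sf df}(p', {}^{t}p_1 \circ p_0)},
\end{align*}
which is just Equation (\ref{egal1}) applied with base partition ${}^{t}p_1\circ p_0$ in place of $p$. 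Passing to the limit (the convergence of $\kappa_{p'}(E_N)$ is granted by Theorem \ref{equivalence}, and the triangle inequality for $d$ controls the tails exactly as in that proof), one gets
\begin{align*}
m_{{}^{t}p_1\circ p_0}(E) = \sum_{p' \leq {}^{t}p_1 \circ p_0} \kappa_{p'}(E).
\end{align*}

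The heart of the matter is then a purely set-theoretic identity: one must show that
\begin{align*}
\{ p' \in \mathcal{P}_k \mid p' \leq {}^{t}p_1 \circ p_0 \} = \bigsqcup_{p'' \leq p_0} {\sf K}_{p''}(p_1),
\end{align*}
that is, that the set of partitions below ${}^{t}p_1\circ p_0$ for $\leq$ is partitioned by the Kreweras complements ${\sf K}_{p''}(p_1)$ as $p''$ ranges over the partitions below $p_0$. This is precisely what the equivalence $(1)\Leftrightarrow(2)$ of Theorem \ref{th:caractK} provides: taking $p_2 = p'$ there, the condition ``$p_1 \leq p_0$ and $p' \leq {}^{t}p_1 \circ p_0$'' is equivalent to ``$p_1 \circ p' \leq p_0$ and $p' \in {\sf K}_{p_1 \circ p'}(p_1)$''. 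Since $p_1 \leq p_0$ is assumed, membership of $p'$ in the left-hand set is equivalent to: there exists $p''$ (namely $p'' = p_1 \circ p'$) with $p'' \leq p_0$ and $p' \in {\sf K}_{p''}(p_1)$. The union is disjoint because if $p' \in {\sf K}_{p''}(p_1)$ then necessarily $p'' = p_1 \circ p'$ by the definition of the Kreweras complement (condition $1$ of Definition \ref{geo2}), so the index $p''$ is determined by $p'$. Reindexing the sum over $p'$ by the pair $(p'', p')$ with $p'' \leq p_0$ and $p' \in {\sf K}_{p''}(p_1)$, and using the notation (\ref{notationsurlim}), yields
\begin{align*}
m_{{}^{t}p_1 \circ p_0}(E) = \sum_{p'' \leq p_0} \sum_{p' \in {\sf K}_{p''}(p_1)} \kappa_{p'}(E) = \sum_{p' \leq p_0} \kappa_{{\sf K}_{p'}(p_1)}(E),
\end{align*}
which is Equation (\ref{eq:gene}) (after renaming $p''$ back to $p'$).

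The only genuine obstacle is making sure the reindexing is a bijection and not merely a surjection — i.e. checking the disjointness of the union — and this, as noted, follows from the fact that a Kreweras complement ${\sf K}_{p''}(p_1)$ records its ``target'' $p''$ through the factorization $p'' = p_1 \circ p'$ built into Definition \ref{geo2}. Everything else is the verbatim limiting argument of Theorem \ref{equivalence} together with a direct appeal to Theorem \ref{th:caractK}, so I do not expect any analytic or combinatorial difficulty beyond that bookkeeping.
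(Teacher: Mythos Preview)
Your proof is correct and follows essentially the same approach as the paper: both arguments reduce the claim to the equivalence $(1)\Leftrightarrow(2)$ of Theorem \ref{th:caractK}. The only cosmetic difference is packaging: the paper reduces by linearity to the case $E_N = N^{-({\sf nc}(p_2)-{\sf nc}(p_2\vee\mathrm{id}_k))}p_2$, for which $m_p(E)=\delta_{p_2\leq p}$ and $\kappa_p(E)=\delta_{p=p_2}$, so that Equation (\ref{eq:gene}) becomes literally the delta-function identity $\delta_{p_1\leq p_0}\delta_{p_2\leq{}^{t}p_1\circ p_0}=\delta_{p_1\circ p_2\leq p_0}\delta_{p_2\in{\sf K}_{p_1\circ p_2}(p_1)}$; you instead expand via (\ref{kappamom}) and reindex the sum, making the disjoint-union structure explicit. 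These are two presentations of the same bijection.
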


\begin{proof}
By linearity, it is enough to prove that Equation (\ref{eq:gene}) holds for $E_N = \frac{1}{N^{{\sf nc}(p_2) - {\sf nc}(p_2\vee \mathrm{id}_k)}} p_2$ for any integer $N$ and for a given partition $p_2\in \mathcal{P}_k$. For this choice of $(E_N)_{N \in \mathbb{N}}$, for any $p \in \mathcal{P}_k$, 
\begin{align*}
m_{p}(E) &= \delta_{p_2 \leq p}\\
\kappa_{p}(E)&= \delta_{p=p_2}. 
\end{align*}
Thus Equation (\ref{eq:gene}) is equivalent to: 
\begin{align*}
\delta_{p_1\leq p_0} \delta_{p_2 \leq \text{ }^{t}p_1\circ p_0} = \delta_{p_1\circ p_2 \leq p_0} \delta_{p_2 \in {\sf K}_{p_1 \circ p_2}(p_1)}
\end{align*}
which was proved in Theorem \ref{th:caractK}. 
\end{proof}

\subsection{The exclusive world}
In Section \ref{sec:rep}, we defined an other basis of $\mathbb{C}[\mathcal{P}_k]$, namely the exclusive basis. We can define the exclusive moments and the exclusive cumulants in the same way that we defined the moments and the cumulants but using $p^{c}$ instead of $p$. Let $N \in \mathbb{N}$, let $p$ be a partition in $\mathcal{P}_k$ and $E \in \mathbb{C}[\mathcal{P}_k(N)]$. 

\begin{definition}
The $p$-exclusive moment of $E$ is:  
\begin{align*}
m_{p^{c}} (E) = \frac{1}{{\sf Tr}_N( p)}{\sf Tr}_N(E \text{ }\!^tp^{c}). 
\end{align*}
\end{definition}
Let $p$ be a partition in $\mathcal{P}_k$ and $p'$ be a partition seen in $\mathbb{C}[\mathcal{P}_k(N)]$, 
\begin{align}
\label{eq:fondaexclu}
 Tr_{N}(p \text{ }^{t}(p'^{c})) =  \delta_{p\trianglelefteq p'} \frac{N!}{(N-{\sf nc}(p'))!}.
\end{align}

\begin{definition}
The exclusive cumulant of $E$ on $p$ is: 
\begin{align*}
\kappa_{p^{c}}(E) = N^{{\sf nc}(p) - {\sf nc}(p \vee \mathrm{id}_k))} (E)_{p^{c}}, 
\end{align*}
where $(E)_{p^{c}}$ is the coordinate of $E$ on $p^{c}$ in the exclusive basis $(p^{c})_{p \in \mathcal{P}_k}$. 
\end{definition}

The Theorem \ref{equivalence} can be extended to the following theorem. Let $(E_N)_{N \in \mathbb{N}}$  be in $\prod_{N=1}^{\infty} \mathbb{C}[\mathcal{P}_k(N)]$.

\begin{theorem}
\label{th:convexclu}
The sequence $(E_N)_{N \in \mathbb{N}}$ converges if and only if one of the two conditions holds:
\begin{enumerate}
\item for any $p \in \mathcal{P}_k$, $m_{p^{c}}(E_N)$ converges as $N$ goes to infinity to a number that we denote by $m_{p^{c}}(E)$. 
\item for any $p \in \mathcal{P}_k$, $\kappa_{p^{c}}(E_N)$ converges as $N$ goes to infinity to a number that we denote by $\kappa_{p^{c}}(E)$. 
\end{enumerate} 
 
Let us suppose that $(E_N)_{N \in \mathbb{N}}$ converges, for any $p\in \mathcal{P}_k$: 
\begin{align}
\label{kappamom2}
m_{p}(E) &= \sum_{p' \dashv p} m_{p'^{c}}(E), \\
\label{kappamom3}
\kappa_{p^{c}}(E) &= \sum_{p' \sqsupset p} \kappa_{p'}(E).
\end{align}
\end{theorem}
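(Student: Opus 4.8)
The plan is to push everything onto the coordinates of $E_N$ in the exclusive basis and to compare them with the ordinary cumulants $\kappa_{p}(E_N)$, reusing the matricial argument from the proof of Theorem~\ref{equivalence}. First I would record two finite-$N$ identities. Expanding $E_N$ in the standard basis, using the relation~(\ref{eq:fondaexclu}) together with ${\sf Tr}_N(p)=N^{{\sf nc}(p\vee\mathrm{id}_k)}$, and recalling that the exclusive coordinate is $(E_N)_{p^{c}}=\sum_{p'\trianglelefteq p}(E_N)_{p'}$ (Definition~\ref{cop}), one obtains, for every $p\in\mathcal{P}_k$,
\begin{align*}
m_{p^{c}}(E_N)=\frac{N!}{(N-{\sf nc}(p))!\,N^{{\sf nc}(p)}}\,\kappa_{p^{c}}(E_N),\qquad
\kappa_{p^{c}}(E_N)=\sum_{p'\trianglelefteq p}N^{\,f(p)-f(p')}\,\kappa_{p'}(E_N),
\end{align*}
where $f(q)={\sf nc}(q)-{\sf nc}(q\vee\mathrm{id}_k)$. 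The scalar prefactor in the first identity is $\prod_{j=0}^{{\sf nc}(p)-1}(1-j/N)$, which tends to $1$; hence condition~(1) of the statement is equivalent to condition~(2), and then $m_{p^{c}}(E)=\kappa_{p^{c}}(E)$. In the second identity, Remark~\ref{remarque:finer} gives $f(p')\geq f(p)$ whenever $p'\trianglelefteq p$, so every exponent $f(p)-f(p')$ is $\leq 0$ and it vanishes exactly when $p'\sqsupset p$.

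Writing $\kappa_N=(\kappa_p(E_N))_{p\in\mathcal{P}_k}$ and $\kappa^{c}_N=(\kappa_{p^{c}}(E_N))_{p\in\mathcal{P}_k}$, the second identity reads $\kappa^{c}_N=S_N\kappa_N$, where the entries of $S_N$ converge to those of the matrix $S$ of the order $\sqsupset$ (Definition~\ref{matriceorder}). Since $S$ is the matrix of an order it is invertible (unitriangular for $\sqsupset$), so $S_N$ is invertible for $N$ large and $S_N^{-1}\to S^{-1}$; therefore $\kappa^{c}_N$ converges if and only if $\kappa_N$ does, which by Theorem~\ref{equivalence} is equivalent to the convergence in moments of $(E_N)_N$. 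This proves all the claimed equivalences, and letting $N\to\infty$ in $\kappa^{c}_N=S_N\kappa_N$ yields $\kappa_{p^{c}}(E)=\sum_{p'\sqsupset p}\kappa_{p'}(E)$, i.e. Equation~(\ref{kappamom3}).

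It remains to prove Equation~(\ref{kappamom2}), assuming $(E_N)_N$ converges. I would combine Equation~(\ref{kappamom}) of Theorem~\ref{equivalence}, the factorisation $G=CS$ of Theorem~\ref{th:lienmatriciel}, the identity $m_{p^{c}}(E)=\kappa_{p^{c}}(E)$ just obtained, and Equation~(\ref{kappamom3}):
\begin{align*}
m_p(E)=\sum_{p''\leq p}\kappa_{p''}(E)=\sum_{p'\dashv p}\ \sum_{p''\sqsupset p'}\kappa_{p''}(E)=\sum_{p'\dashv p}\kappa_{p'^{c}}(E)=\sum_{p'\dashv p}m_{p'^{c}}(E).
\end{align*}
Alternatively, one may get~(\ref{kappamom2}) more directly: expanding $p=\sum_{p\trianglelefteq q}q^{c}$ inside ${\sf Tr}_N(E_N\,{}^{t}p)$ gives $m_p(E_N)=\sum_{p\trianglelefteq q}N^{{\sf nc}(q\vee\mathrm{id}_k)-{\sf nc}(p\vee\mathrm{id}_k)}m_{q^{c}}(E_N)$, where again every exponent is $\leq 0$ and only the terms with $q\dashv p$ survive as $N\to\infty$.

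The only delicate point is the same as in Theorem~\ref{equivalence}: one must know that the finite matrix $S_N$ converges entrywise to an \emph{invertible} limit, so that the convergences of $\kappa_N$ and of $\kappa^{c}_N$ can be deduced from one another; here invertibility is free because $S$ is the matrix of the order $\sqsupset$. Everything else is routine bookkeeping with~(\ref{eq:fondaexclu}), the definition of the exclusive basis, and Remark~\ref{remarque:finer}.
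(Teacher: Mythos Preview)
Your proof is correct and follows essentially the paper's approach: both reduce to the finite-$N$ change-of-basis identity $\kappa_{p^{c}}(E_N)=\sum_{p'\trianglelefteq p}N^{-{\sf df}(p',p)}\kappa_{p'}(E_N)$ and the matricial limit argument of Theorem~\ref{equivalence}. Your organisation is in fact slightly cleaner than the paper's: by first isolating the scalar identity $m_{p^{c}}(E_N)=\frac{N!}{(N-{\sf nc}(p))!\,N^{{\sf nc}(p)}}\,\kappa_{p^{c}}(E_N)$ you get the equivalence (1)$\Leftrightarrow$(2) and the equality $m_{p^{c}}(E)=\kappa_{p^{c}}(E)$ (the paper's Theorem~\ref{egaliteexclu}) in one stroke, whereas the paper runs two separate triangular-matrix arguments for $m_{p^{c}}$ and $\kappa_{p^{c}}$ and only deduces $m_{p^{c}}(E)=\kappa_{p^{c}}(E)$ afterwards via $\mathcal{M}=\mathcal{M}^{c\to}\mathcal{M}^{\to c}$.
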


\begin{remarque}
Let us suppose that $(E_N)_{N \in \mathbb{N}}$ converges, let $\phi_{E}^{m^{c}}$ be the linear form in $(\mathbb{C}[\mathcal{P}_k])^{*}$ such that $\phi_{E}^{m^{c}}(p) = m_{p^{c}}(E)$ for any $p \in \mathcal{P}_k$: the theorem asserts that $\phi_{E}^{m} = \mathcal{M}^{c \to}(\phi_{E}^{m^{c}})$.
\end{remarque}

\begin{proof}
Let $(E_N)_{N \in \mathbb{N}}$ be an element of $\prod\limits_{N \in \mathbb{N}} \mathbb{C}[\mathcal{P}_k(N)]$. For any positive integer $N$: 
\begin{align*}
E_N &= \sum_{p \in \mathcal{P}_k}  \frac{\kappa_{p}(E_N)}{N^{{\sf nc}(p) - {\sf nc}(p \vee \mathrm{id}_k)}} p 
\\&= \sum_{p \in \mathcal{P}_k}  \frac{\kappa_{p}(E_N)}{N^{{\sf nc}(p) - {\sf nc}(p \vee \mathrm{id}_k)}} \sum_{p' \in \mathcal{P}_k\mid p\trianglelefteq p'} p'^{c} \\
&= \sum_{p \in \mathcal{P}_k, p' \in \mathcal{P}_k \mid  p \trianglelefteq p'} \kappa_{p}(E_N) N^{-{\sf nc}(p) + {\sf nc}(p\vee \mathrm{id}_k) +{\sf nc}(p')-{\sf nc}(p'\vee \mathrm{id}_k)} \frac{p'^c}{N^{{\sf nc}(p')-{\sf nc}(p'\vee \mathrm{id}_k)}}, 
\end{align*} 
and using Equation (\ref{definitiondefect}): 
\begin{align*}
E_N = \sum_{p' \in \mathcal{P}_k} \left(\sum_{p \in \mathcal{P}_k, p \trianglelefteq p'} \kappa_{p}(E_N) N^{-{\sf df}(p,p')} \right) \frac{p'^c}{N^{{\sf nc}(p')-{\sf nc}(p'\vee \mathrm{id}_k)}}. 
\end{align*}
Thus, for any integer $N$, for any $p' \in \mathcal{P}_k$
\begin{align}
\label{kappapc}
\kappa_{{p'}^{c}}(E_N) = \sum_{p \in \mathcal{P}_k, p \trianglelefteq p'} \kappa_{p}(E_N) N^{-{\sf df}(p,p')}. 
\end{align}
Besides, it is easy to see that:
\begin{align*}
m_{p}(E_N) = \sum_{ p \trianglelefteq p'} N^{{\sf nc}(p'\vee \mathrm{id}_k) - {\sf nc}(p \vee \mathrm{id}_k)}m_{p'^{c}}(E_N). 
\end{align*}
Using the same arguments as the one we used for Theorem \ref{equivalence}, we deduce the equivalence of the conditions for convergence. Besides we have that:
\begin{align*}
m_{p}(E) &= \sum_{ p\trianglelefteq p', p' \leq p} m_{p'^{c}}(E), \\
\kappa_{{p}^{c}}(E_N)& = \sum_{p' \leq p, p' \trianglelefteq p} \kappa_{p'}(E).
\end{align*}
Using Lemma \ref{lemme:carac1}, we deduce the Equalities (\ref{kappamom2}) and (\ref{kappamom3}). 
\end{proof}

Using Theorem \ref{th:convexclu}, we deduce the following theorem. From now on, let us suppose that $(E_N)_{N \in \mathbb{N}}$ converges.

\begin{theorem}
\label{egaliteexclu}
For any $p \in \mathcal{P}_k$, $m_{p^{c}}(E) = \kappa_{p^{c}}(E). $
\end{theorem}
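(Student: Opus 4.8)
The plan is to show $m_{p^c}(E)=\kappa_{p^c}(E)$ for every $p\in\mathcal{P}_k$ by comparing the two triangular expansions already established in Theorem \ref{th:convexclu}, namely Equations (\ref{kappamom2}) and (\ref{kappamom3}). The key structural input is the matrix identity $G = CS$ of Theorem \ref{th:lienmatriciel}, together with the fact (coming from the proof of Theorem \ref{th:convexclu}) that $\phi_E^m = \mathcal{M}^{c\to}(\phi_E^{m^c})$ and, dually, $\phi_E^{m^c}$ relates to the $\kappa$-data via $\mathcal{M}^{\to c}$. Writing $\phi_E^m = \mathcal{M}(\phi_E^\kappa) = \mathcal{M}^{c\to}\mathcal{M}^{\to c}(\phi_E^\kappa)$ (Equation (\ref{eq:decompo})) and comparing with $\phi_E^m = \mathcal{M}^{c\to}(\phi_E^{m^c})$, the injectivity of $\mathcal{M}^{c\to}$ forces $\phi_E^{m^c} = \mathcal{M}^{\to c}(\phi_E^\kappa)$, i.e. $m_{p^c}(E) = \sum_{p'\sqsupset p}\kappa_{p'}(E)$. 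But by Equation (\ref{kappamom3}) this last sum is exactly $\kappa_{p^c}(E)$, which gives the claim.

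Concretely, the steps I would carry out are: first, recall from Theorem \ref{th:convexclu} the relation $m_p(E) = \sum_{p'\dashv p} m_{p'^c}(E)$, which says in matrix form $\phi_E^m = \mathcal{M}^{c\to}(\phi_E^{m^c})$ (here $\mathcal{M}^{c\to}$ uses the matrix $C$ of $\dashv$). Second, recall $\phi_E^m = \mathcal{M}(\phi_E^\kappa)$ from Theorem \ref{equivalence} together with the factorization $\mathcal{M} = \mathcal{M}^{c\to}\mathcal{M}^{\to c}$ from Equation (\ref{eq:decompo}). Third, since $\mathcal{M}^{c\to}$ is invertible (its matrix $C$ is unitriangular), deduce $\phi_E^{m^c} = \mathcal{M}^{\to c}(\phi_E^\kappa)$; written out, $m_{p^c}(E) = \sum_{p'\sqsupset p}\kappa_{p'}(E)$ for every $p$. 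Fourth, compare with Equation (\ref{kappamom3}), which states precisely $\kappa_{p^c}(E) = \sum_{p'\sqsupset p}\kappa_{p'}(E)$. The two right-hand sides coincide, hence $m_{p^c}(E)=\kappa_{p^c}(E)$.

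Alternatively, and perhaps more transparently, one can argue entirely at the finite-$N$ level and then pass to the limit: using Equation (\ref{eq:fondaexclu}) one computes directly that for a basis partition $p'$ seen in $\mathbb{C}[\mathcal{P}_k(N)]$, the exclusive moment $m_{p^c}(p'^c)$ and something like $\delta_{p\trianglelefteq p'}$ differ only by a ratio $\frac{N!}{(N-{\sf nc}(p'))!}\big/ N^{{\sf nc}(p\vee\mathrm{id}_k)}$ which, after the normalization built into $\kappa_{p^c}$, tends to the same limit as the coordinate-based quantity $\kappa_{p^c}(E_N)$; taking $N\to\infty$ collapses the two. The main obstacle is bookkeeping: one must be careful that the $\dashv$ versus $\sqsupset$ versus $\trianglelefteq$ constraints appearing in (\ref{kappamom2}), (\ref{kappamom3}) and Lemma \ref{lemme:carac1} are handled consistently, and that the invertibility/triangularity of $C$ (so that $\mathcal{M}^{c\to}$ may be cancelled) is invoked correctly. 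Once the algebra of the three transforms $\mathcal{M}, \mathcal{M}^{\to c}, \mathcal{M}^{c\to}$ is lined up, the identity is immediate.
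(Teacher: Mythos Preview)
Your proposal is correct and follows essentially the same route as the paper: both arguments combine $\phi_E^m = \mathcal{M}(\phi_E^\kappa)$ (Theorem \ref{equivalence}), the factorization $\mathcal{M} = \mathcal{M}^{c\to}\mathcal{M}^{\to c}$ (Equation (\ref{eq:decompo})), the relation $\phi_E^m = \mathcal{M}^{c\to}(\phi_E^{m^c})$ from (\ref{kappamom2}), and the relation $\phi_E^{\kappa^c} = \mathcal{M}^{\to c}(\phi_E^\kappa)$ from (\ref{kappamom3}), then cancel the invertible $\mathcal{M}^{c\to}$ to conclude $\phi_E^{m^c} = \phi_E^{\kappa^c}$. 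The paper writes this as a single chain of equalities, while you phrase it as ``compare and cancel,'' but the content is identical.
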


\begin{proof}
Let $\phi_{E}^{\kappa^{c}}$ be the linear form in $(\mathbb{C}[\mathcal{P}_k])^{*}$ such that $\phi_{E}^{\kappa^{c}}(p) = \kappa_{p^{c}}(E)$ for any $p \in \mathcal{P}_k$. Using Theorems \ref{equivalence},  \ref{th:convexclu} and Equation (\ref{eq:decompo}), $\phi_{E}^{\kappa^{c}} = \mathcal{M}^{\to c}(\phi_{E}^{\kappa}) = \mathcal{M}^{\to c} \mathcal{M}^{-1}( \phi_E^{m}) = (\mathcal{M}^{c \to})^{-1} ( \phi_E^{m}) = \phi_{E}^{m^{c}}$.
\end{proof}

This result implies the equality $\phi_{E}^{m^{c}} = \mathcal{M}^{\to c}(\phi_{E}^{\kappa})$ which can be stated also in the following form. 
\begin{theorem}
\label{inversion}
For any $p \in \mathcal{P}_k$, $m_{p^{c}}\left(E\right) = \sum_{p' \sqsupset p } \kappa_{p'}(E).$
\end{theorem}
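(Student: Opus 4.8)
The plan is to combine the two identities already established in Theorem \ref{egaliteexclu} and the remark preceding Theorem \ref{inversion}. First I would recall that Theorem \ref{egaliteexclu} gives $m_{p^{c}}(E) = \kappa_{p^{c}}(E)$ for every $p \in \mathcal{P}_k$, which at the level of linear forms reads $\phi_{E}^{m^{c}} = \phi_{E}^{\kappa^{c}}$. Next, I would use the identity $\kappa_{p^{c}}(E) = \sum_{p' \sqsupset p} \kappa_{p'}(E)$, which is exactly Equation (\ref{kappamom3}) of Theorem \ref{th:convexclu}; in terms of the $\mathcal{M}^{\to c}$-transform this says $\phi_{E}^{\kappa^{c}} = \mathcal{M}^{\to c}(\phi_{E}^{\kappa})$, since $\mathcal{M}^{\to c}$ is by definition the transform built from the order $\sqsupset$.

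Putting these two together, $\phi_{E}^{m^{c}} = \phi_{E}^{\kappa^{c}} = \mathcal{M}^{\to c}(\phi_{E}^{\kappa})$. Evaluating this equality of linear forms on an arbitrary partition $p \in \mathcal{P}_k$ and unwinding the definition of $\mathcal{M}^{\to c}$ yields
\begin{align*}
m_{p^{c}}(E) = \phi_{E}^{m^{c}}(p) = \big(\mathcal{M}^{\to c}(\phi_{E}^{\kappa})\big)(p) = \sum_{p' \sqsupset p} \phi_{E}^{\kappa}(p') = \sum_{p' \sqsupset p} \kappa_{p'}(E),
\end{align*}
which is the claimed formula. Thus the statement is essentially a bookkeeping consequence of results proved just above it.

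There is no real obstacle here: the only point requiring a moment of care is making sure the three linear forms $\phi_{E}^{m^{c}}$, $\phi_{E}^{\kappa^{c}}$ and $\mathcal{M}^{\to c}(\phi_{E}^{\kappa})$ are compared correctly, i.e. that the decomposition $\mathcal{M} = \mathcal{M}^{c\to}\mathcal{M}^{\to c}$ of Equation (\ref{eq:decompo}) is being applied in the same direction as in the proof of Theorem \ref{egaliteexclu}. Once that alignment is checked, evaluating at a fixed $p$ and reading off the definition of $\mathcal{M}^{\to c}$ finishes the argument.
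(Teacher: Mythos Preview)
Your proof is correct and follows exactly the paper's own reasoning: the paper states Theorem \ref{inversion} as a direct reformulation of the identity $\phi_{E}^{m^{c}} = \mathcal{M}^{\to c}(\phi_{E}^{\kappa})$, which it obtains by chaining Theorem \ref{egaliteexclu} with Equation (\ref{kappamom3}), just as you do. There is nothing to add.
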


From this result, we get that if $p$ is a partition in  $\mathcal{P}_k$ which does not have any pivotal block, then: 
\begin{align}
\label{egalitepckappa}
 m_{p^{c}}\left( E\right) = \kappa_{p}\left(E\right). 
\end{align}
In particular, for any $p \in \mathcal{B}_k$, the Equality (\ref{egalitepckappa}) is satisfied.

\subsection{The special case: $\prod_{N=1}^{\infty}\mathbb{C}[\mathcal{A}_k(N)]$}
\subsubsection{Generalization}
\label{sec:generalization}
Using the same notations as in Section \ref{sec:projection}, we consider  $\mathcal{A} \subset \{\mathfrak{S}, \mathcal{B}, \mathcal{B}s, \mathcal{H}, \mathcal{P}\}$. When we consider an element $(E_N)_{N \in \mathbb{N}}$ of $\prod_{N=1}^{\infty} \mathbb{C}[\mathcal{A}_k(N)]$, we can consider the following notion of convergence. 

\begin{definition}
The sequence $(E_N)_{N \in \mathbb{N}}$ converges in $\mathcal{A}$-moments if for any $p \in \mathcal{A}_k$, $m_{p}(E_N)$ converges when $N$ goes to infinity. 
\end{definition}

Then, following a similar proof, Theorem \ref{equivalence} can be generalized easily.
\begin{theorem}
\label{th:equiA}
The sequence $(E_N)_{N \in \mathbb{N}}$ converges in $\mathcal{A}$-moments if and only if for any $p \in \mathcal{A}_k$, $\kappa_{p}(E_N)$ converges as $N$ goes to infinity. Let us suppose that  $(E_N)_{N \in \mathbb{N}}$ converges in $\mathcal{A}$-moments, for any $p \in \mathcal{A}_k$: 
\begin{align*}
m_{p}(E) = \sum_{p' \in \mathcal{A}_k | p' \leq p} \kappa_{p'}(E). 
\end{align*}
\end{theorem}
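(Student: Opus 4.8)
The statement is the $\mathcal{A}$-analogue of Theorem~\ref{equivalence}, so the plan is to rerun the proof of that theorem, checking at each step that nothing leaves the world of $\mathcal{A}$-partitions. First I would observe that for $(E_N)_{N\in\mathbb{N}}$ in $\prod_{N=1}^\infty \mathbb{C}[\mathcal{A}_k(N)]$ and any $p\in\mathcal{A}_k$, the computation of $m_p(E_N)$ from Equation~(\ref{egal1}) still holds verbatim, but now the coordinates $\kappa_{p'}(E_N)$ vanish unless $p'\in\mathcal{A}_k$, so
\begin{align*}
m_p(E_N) = \sum_{p' \in \mathcal{A}_k} \kappa_{p'}(E_N)\, N^{-{\sf df}(p',p)}.
\end{align*}
This is the key reduction: the sum is now indexed by $\mathcal{A}_k$, a finite set, and the matrix $G_N^{\mathcal{A}} = \left(N^{-{\sf df}(p',p)}\right)_{p,p'\in\mathcal{A}_k}$ is the restriction of $G_N$ to $\mathcal{A}_k\times\mathcal{A}_k$.

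The two implications then go exactly as before. If each $\kappa_{p'}(E_N)$ converges for $p'\in\mathcal{A}_k$, then since ${\sf df}(p',p)\geq 0$ with equality iff $p'\leq p$ (the triangle inequality for $d$, as used in Theorem~\ref{equivalence}), each term with ${\sf df}(p',p)>0$ tends to $0$ and $m_p(E_N)\to\sum_{p'\in\mathcal{A}_k,\ p'\leq p}\kappa_{p'}(E)$, which gives both the convergence in $\mathcal{A}$-moments and the stated formula. Conversely, if $(E_N)$ converges in $\mathcal{A}$-moments, I would write the finite linear system $m^N = G_N^{\mathcal{A}}\kappa_N$ with $m^N = (m_p(E_N))_{p\in\mathcal{A}_k}$ and $\kappa_N = (\kappa_p(E_N))_{p\in\mathcal{A}_k}$, and note that $G_N^{\mathcal{A}}\to G^{\mathcal{A}}$, the matrix of the order $\leq$ restricted to $\mathcal{A}_k$. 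The point that needs a word is that $G^{\mathcal{A}}$ is invertible: it is lower triangular for the restriction of $\leq$ to $\mathcal{A}_k$ with ones on the diagonal (since $p\leq p$ always and ${\sf df}(p,p)=0$), hence unipotent and invertible. Then $\kappa_N = (G_N^{\mathcal{A}})^{-1} m^N \to (G^{\mathcal{A}})^{-1} m$, so each $\kappa_p(E_N)$ converges.

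The only genuine content beyond bookkeeping is the invertibility of $G^{\mathcal{A}}$, i.e.\ that the restriction of the geodesic order $\leq$ to $\mathcal{A}_k$ is still a partial order (reflexive, antisymmetric, transitive) so that $G^{\mathcal{A}}$ is triangular; this is immediate because the restriction of any partial order to a subset is a partial order, and the diagonal entries of $G^{\mathcal{A}}$ are $N^{-{\sf df}(p,p)} = N^0 = 1$. I expect this to be the main (and essentially only) obstacle, and it is mild. I would end by noting that the displayed formula is simply the restriction of Equation~(\ref{kappamom}) to $\mathcal{A}_k$, and remark (as the text does for the full case) that in terms of the transforms $\mathcal{M}_{\mathcal{A}}$ of Section~\ref{sec:projection} this reads $\phi_E^{m} = \mathcal{M}_{\mathcal{A}}(\phi_E^{\kappa})$ on $(\bigoplus_{k}\mathbb{C}[\mathcal{A}_k])^*$, which is why no new computation is required.
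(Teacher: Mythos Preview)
Your proposal is correct and follows exactly the approach the paper intends: the paper itself does not write out a proof for Theorem~\ref{th:equiA} but simply says ``following a similar proof, Theorem~\ref{equivalence} can be generalized easily,'' and your argument is precisely that generalization. The only nontrivial point you had to check, namely the invertibility of $G^{\mathcal{A}}$, is handled correctly by observing it is unipotent triangular for the restricted order.
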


\begin{remarque}
If we use the same notations as before for $\phi_{E,\mathcal{A}}^{\kappa}$ and $\phi_{E,\mathcal{A}}^{m}$, except that they are elements of $(\mathbb{C}[\mathcal{A}_k])^{*}$, then $\phi_{E,\mathcal{A}}^{m} = \mathcal{M}_{\mathcal{A}}(\phi_{E,\mathcal{A}}^{\kappa})$.
\end{remarque}

\begin{theorem}
\label{equivalence2}
Let us suppose that $(E_N)_{N \in \mathbb{N}}$ converges in $\mathcal{A}$-moments then it converges in moments: for any $p\in \mathcal{P}_{k}$, the limit of $m_{p}(E_N)$ exists. Besides, for any $p \in \mathcal{P}_k$, the following equality holds:
$$ m_{p}(E) = \sum_{p' \in \mathcal{A}_k, p' \leq p} \kappa_{p'}(E).$$
\end{theorem}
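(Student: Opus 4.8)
The plan is to reduce everything to Theorems \ref{equivalence} and \ref{th:equiA} via the single observation that an element of $\mathbb{C}[\mathcal{A}_k(N)]$ has vanishing coordinates outside $\mathcal{A}_k$. First I would note that since $E_N \in \mathbb{C}[\mathcal{A}_k(N)]$, we have $(E_N)_p = 0$ for every $p \in \mathcal{P}_k \setminus \mathcal{A}_k$, and therefore, by the definition of the cumulant $\kappa_p(F) = N^{{\sf nc}(p) - {\sf nc}(p \vee \mathrm{id}_k)} F_p$, we get $\kappa_p(E_N) = 0$ for all $N$ and all $p \notin \mathcal{A}_k$. In particular $\kappa_p(E_N)$ trivially converges (to $0$) for these $p$.

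Next I would invoke Theorem \ref{th:equiA}: the hypothesis that $(E_N)_{N \in \mathbb{N}}$ converges in $\mathcal{A}$-moments is equivalent to the convergence of $\kappa_p(E_N)$ as $N \to \infty$ for every $p \in \mathcal{A}_k$. Combining this with the previous paragraph, $\kappa_p(E_N)$ converges for every $p \in \mathcal{P}_k$, with limit $\kappa_p(E) = 0$ whenever $p \notin \mathcal{A}_k$.

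Now I would apply Theorem \ref{equivalence} in the reverse direction: since $\kappa_p(E_N)$ converges for all $p \in \mathcal{P}_k$, the sequence $(E_N)_{N \in \mathbb{N}}$ converges in moments, and for every $p \in \mathcal{P}_k$,
\begin{align*}
m_p(E) = \sum_{p' \leq p} \kappa_{p'}(E) = \sum_{\substack{p' \in \mathcal{A}_k \\ p' \leq p}} \kappa_{p'}(E),
\end{align*}
where the last equality uses that the terms with $p' \notin \mathcal{A}_k$ vanish. This gives both assertions of the theorem.

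I do not expect any real obstacle here: the content is entirely bookkeeping, and the only point that genuinely needs to be checked is that $\kappa_p(E_N) = 0$ for $p \notin \mathcal{A}_k$, which is immediate from the fact that the cumulant is a rescaling of a coordinate of $E_N$ in the partition basis. One could add a sentence remarking that, at the level of linear forms, this says precisely that $\phi_E^\kappa = {\sf E}_{\mathcal{A}}(\phi_{E,\mathcal{A}}^\kappa)$ and hence $\phi_E^m = \mathcal{M}({\sf E}_{\mathcal{A}}(\phi_{E,\mathcal{A}}^\kappa))$, tying the statement to the projection formalism of Section \ref{sec:projection}, but this is not needed for the proof itself.
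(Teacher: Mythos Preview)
Your proof is correct and follows essentially the same approach as the paper's own proof: use Theorem \ref{th:equiA} for $p \in \mathcal{A}_k$, the vanishing of $\kappa_p(E_N)$ for $p \notin \mathcal{A}_k$, and then Theorem \ref{equivalence} to conclude. The paper's argument is slightly terser but structurally identical.
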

\begin{proof}
Indeed, if $\left(E_N\right)_{N \in \mathbb{N}} \in \prod\limits_{N \in \mathbb{N}} \mathbb{C}[\mathcal{A}_{k}(N)]$ converges in $\mathcal{A}$-moments then, by Theorem \ref{th:equiA}, for any $p \in \mathcal{A}_{k}$, $\kappa_{p}(E_N)$ converges. By definition, for any $p \notin  \mathcal{A}_{k}$ and any integer $N$, $\kappa_{p}(E_N)=0$: for any $p \in \mathcal{P}_k$, $\kappa_{p}(E_N)$ converges and by Theorem \ref{equivalence}, for any $p\in \mathcal{P}_{k}$, the limit of $m_{p}(E_N)$ exists. The Equation (\ref{kappamom}) allows us to conclude. 
\end{proof}

Using Definition \ref{setbbarre} and Lemma \ref{pcoarsersigma}, when $\mathcal{A}\in \{\mathfrak{S}, \mathcal{B}\}$, the limit of the exclusive moments of $(E_N)_{N \in \mathbb{N}}$ are easy to compute. 

\begin{theorem}
Let us suppose that $\mathcal{A}\in \{\mathfrak{S}, \mathcal{B}\}$ and that $(E_N)_{N \in \mathbb{N}}$ converges in $\mathcal{A}$-moments. For any $p \in \mathcal{P}_k$: 
\begin{align*}
m_{p^c}(E) = \delta_{p \in \overline{\mathcal{A}_k}} \kappa_{{\sf Mb}(p)}(E).
\end{align*}
\end{theorem}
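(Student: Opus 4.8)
The plan is to read the claim off Theorem \ref{inversion}, which already expresses the limiting exclusive moments in terms of the cumulants, and then use the fact that convergence in $\mathcal{A}$-moments forces all cumulants indexed by partitions outside $\mathcal{A}_k$ to vanish.

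First I would note that, by Theorem \ref{equivalence2}, a sequence converging in $\mathcal{A}$-moments converges in moments, so Theorem \ref{inversion} applies and gives, for every $p \in \mathcal{P}_k$,
\begin{align*}
m_{p^{c}}(E) = \sum_{p' \sqsupset p} \kappa_{p'}(E) = \sum_{p' \in {\sf Sp}(p)} \kappa_{p'}(E),
\end{align*}
the second equality being Lemma \ref{fin}. Next, since each $E_N$ lies in $\mathbb{C}[\mathcal{A}_k(N)]$, its coordinate on any partition $p' \notin \mathcal{A}_k$ vanishes, hence $\kappa_{p'}(E_N) = 0$ and so $\kappa_{p'}(E) = 0$ for all $p' \notin \mathcal{A}_k$ (this is exactly the observation used in the proof of Theorem \ref{equivalence2}). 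Therefore the sum above reduces to $\sum_{p' \in {\sf Sp}(p) \cap \mathcal{A}_k} \kappa_{p'}(E)$.

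It then remains to count the surviving terms. For $\mathcal{A} = \mathcal{B}$, Lemma \ref{pcoarsersigma} states precisely that ${\sf Sp}(p) \cap \mathcal{B}_k$ is either empty or a single partition. For $\mathcal{A} = \mathfrak{S}$, one reduces to this via the inclusion $\mathfrak{S}_k \subset \mathcal{B}_k$: the set ${\sf Sp}(p) \cap \mathfrak{S}_k$ is contained in ${\sf Sp}(p) \cap \mathcal{B}_k$, hence is empty or a singleton, and if it equals $\{\sigma\}$ then $\sigma \in \mathfrak{S}_k \subset \mathcal{B}_k$ is necessarily the unique element of ${\sf Sp}(p) \cap \mathcal{B}_k$, i.e. $\sigma = {\sf Mb}(p)$. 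In both cases ${\sf Sp}(p) \cap \mathcal{A}_k$ is nonempty exactly when $p \in \overline{\mathcal{A}_k}$ (Definition \ref{setbbarre}), and its unique element is then ${\sf Mb}(p)$. Substituting this into the reduced sum yields $m_{p^{c}}(E) = \delta_{p \in \overline{\mathcal{A}_k}}\, \kappa_{{\sf Mb}(p)}(E)$, which is the assertion.

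I do not expect a real obstacle here: the statement is essentially a specialization of Theorem \ref{inversion}. The only care needed is bookkeeping in the last step — ensuring the $\mathfrak{S}$ case is genuinely subsumed by the $\mathcal{B}$ case through $\mathfrak{S}_k \subset \mathcal{B}_k$, so that ${\sf Mb}(p)$ is well defined and lands in $\mathfrak{S}_k$ — together with checking that the inversion identities of Theorems \ref{inversion} and \ref{equivalence2} are indeed available once $\mathcal{A}$-convergence is assumed.
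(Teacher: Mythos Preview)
The proposal is correct and matches the paper's approach: the paper states the theorem as an immediate consequence of Definition \ref{setbbarre} and Lemma \ref{pcoarsersigma} (applied to the identity of Theorem \ref{inversion} after noting $\kappa_{p'}(E)=0$ for $p'\notin\mathcal{A}_k$), which is exactly the argument you give. Your explicit handling of the $\mathfrak{S}$ case via the inclusion $\mathfrak{S}_k\subset\mathcal{B}_k$ is a clean way to justify that ${\sf Mb}(p)$ is the right element in both cases.
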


\subsubsection{Projection by integration}
In this section, we motivate the definitions and results obtained in Section \ref{sec:projection}. The notation $\mathcal{G}(\mathcal{A})$ was set in Section \ref{sec:projection} (Table 1). 
\begin{notation}
For any integer $N$, the notation: 
\begin{itemize}
\item $U(N)$ stands for the unitary group of size $N$, 
\item $O(N)$ stands for the orthogonal group of size $N$, 
\item $H(N)$ stands for the hyperoctahedral group of size $N$, which consists of matrices which have exactly one nonzero enty in each row and each column which is equal to $\pm 1$, 
\item $B(N)$ stands for the orthogonal bistochastic group of size $N$, which consists of orthogonal matrices having sum $1$ in each row and each column. 
\end{itemize}
\end{notation}

Let $(E_N)_{N \in \mathbb{N}}$ be an element of $\prod_{N \in \mathbb{N}} \mathbb{C}[\mathcal{P}_k(N)]$ which converges when $N$ goes to infinity. For any positive integer $N$, we define $$ E^{\mathcal{G}(\mathcal{A})}_N = \int_{\mathcal{G}(\mathcal{A})(N)} g^{\otimes k}\rho_{N}(E_N) (g^{*})^{\otimes k} dg,$$
where $dg$ is the Haar probability measure on $\mathcal{G}(\mathcal{A})(N)$ and $\rho_{N}$ was defined in Section~\ref{sec:rep}. Recall Remark \ref{rq:restrictionak}. 
 
\begin{proposition} 
There exists a sequence $(\mathbb{E}_N)_{N \in \mathbb{N}} \in \prod_{N \in \mathbb{N}} \mathbb{C}[\mathcal{A}_k(N)]$ such that for any positive integer $N$: $\rho_{N}(\mathbb{E}_N) = E^{\mathcal{G}(\mathcal{A})}_N.$ The sequence $(\mathbb{E}_N)_{N \in \mathbb{N}}$ converges as $N$ goes to infinity and the three following equalities hold: 
\begin{align}
\label{eq:projection}
\phi_{\mathbb{E}}^{\kappa} = \mathcal{C}_{\mathcal{A}}^{\kappa} \left( \phi_E^{\kappa} \right), \ \ \ \ \  \phi_{\mathbb{E}}^{m} =  \mathcal{C}_{\mathcal{A}}^{m} \left(\phi_E^{m} \right), \ \ \ \ \  \phi_{\mathbb{E}}^{m^{c}} =  \mathcal{C}_{\mathcal{A}}^{m^{c}} \left( \phi_E^{m^{c}} \right).
\end{align}
\end{proposition}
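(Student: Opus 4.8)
The plan is to first establish the single structural fact from which all three equalities follow: that averaging by conjugation $E_N \mapsto \int_{\mathcal{G}(\mathcal{A})(N)} g^{\otimes k}\rho_N(E_N)(g^*)^{\otimes k}\,dg$ corresponds, at the level of coordinates in the exclusive basis, to the restriction-extension operator ${\sf E}_{\mathcal{A}}\circ{\sf R}_{\mathcal{A}}$ composed with the appropriate renormalizations. Concretely, I would use the classical Schur-Weyl-type fact (available in \cite{Halv}) that the commutant of $\mathcal{G}(\mathcal{A})(N)^{\otimes k}$ acting on $(\mathbb{C}^N)^{\otimes k}$ is exactly $\rho_N(\mathbb{C}[\mathcal{A}_k(N)])$ for $N$ large; hence $E^{\mathcal{G}(\mathcal{A})}_N$ is a well-defined element of $\rho_N(\mathbb{C}[\mathcal{A}_k(N)])$, and since $\rho_N$ is injective for $N\geq 2k$ there is a unique $\mathbb{E}_N\in\mathbb{C}[\mathcal{A}_k(N)]$ with $\rho_N(\mathbb{E}_N)=E_N^{\mathcal{G}(\mathcal{A})}$. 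This gives the first sentence of the proposition immediately.

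Next I would identify the averaging map explicitly on the natural (non-exclusive) basis. The key computation is that for a partition $p'\in\mathcal{P}_k$, the conjugation average $\int g^{\otimes k}\rho_N(p')(g^*)^{\otimes k}\,dg$ is the orthogonal projection of $\rho_N(p')$ onto the commutant, and when one uses the \emph{exclusive} delta functions $(p'^{\,c})$ — which form an orthogonal-type family under the pairing induced by ${\sf Tr}_N$, cf. Equation (\ref{eq:fondaexclu}) — this projection simply kills every $p'\notin\cup_k\mathcal{A}_k$ and fixes the exclusive vectors indexed by $\mathcal{A}_k$. That is the content one needs: in the exclusive basis, $E_N\mapsto\mathbb{E}_N$ acts coordinatewise by the indicator $\delta_{p\in\cup_k\mathcal{A}_k}$. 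Translating this through the definition of exclusive cumulants $\kappa_{p^c}(E_N)=N^{{\sf nc}(p)-{\sf nc}(p\vee\mathrm{id}_k)}(E_N)_{p^c}$, we get $\kappa_{p^c}(\mathbb{E}_N)=\delta_{p\in\cup_k\mathcal{A}_k}\,\kappa_{p^c}(E_N)$ for every $N$. Since $(E_N)$ converges, Theorem \ref{th:convexclu} gives that $\kappa_{p^c}(E_N)$ converges for all $p$, hence so does $\kappa_{p^c}(\mathbb{E}_N)$, hence (again by Theorem \ref{th:convexclu}) $(\mathbb{E}_N)$ converges; and in the limit $\phi_{\mathbb{E}}^{\kappa^c}={\sf E}_{\mathcal{A}}\circ{\sf R}_{\mathcal{A}}(\phi_E^{\kappa^c})$.

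Finally I would derive the three displayed equalities by threading through the transforms identified in Section \ref{sec:structure}. By Theorem \ref{egaliteexclu} and the proof of that theorem, $\phi_E^{\kappa^c}=\mathcal{M}^{\to c}(\phi_E^{\kappa})$ and likewise for $\mathbb{E}$; combining with the coordinatewise identity just established and the definition $\mathcal{C}^{\kappa}_{\mathcal{A}}={\sf E}_{\mathcal{A}}\circ\mathcal{R}_{\mathcal{A}}\circ{\sf R}_{\mathcal{A}}\circ\mathcal{M}$, together with the fact that ${\sf E}_{\mathcal{A}},{\sf R}_{\mathcal{A}},\mathcal{M}^{\to c}$ restrict compatibly to $\mathcal{A}$ (the argument of Lemma \ref{explication}), yields $\phi_{\mathbb{E}}^{\kappa}=\mathcal{C}^{\kappa}_{\mathcal{A}}(\phi_E^{\kappa})$. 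Then $\phi_{\mathbb{E}}^m=\mathcal{M}(\phi_{\mathbb{E}}^{\kappa})=\mathcal{M}\,\mathcal{C}^{\kappa}_{\mathcal{A}}(\phi_E^{\kappa})=\mathcal{M}\,\mathcal{C}^{\kappa}_{\mathcal{A}}\,\mathcal{R}(\phi_E^m)=\mathcal{C}^m_{\mathcal{A}}(\phi_E^m)$ by Theorem \ref{equivalence} and the definition of $\mathcal{C}^m_{\mathcal{A}}$, and similarly $\phi_{\mathbb{E}}^{m^c}=\mathcal{M}^{\to c}(\phi_{\mathbb{E}}^{\kappa})=\mathcal{M}^{\to c}\,\mathcal{C}^{\kappa}_{\mathcal{A}}\,(\mathcal{M}^{\to c})^{-1}(\phi_E^{m^c})=\mathcal{C}^{m^c}_{\mathcal{A}}(\phi_E^{m^c})$ using Theorem \ref{egaliteexclu} once more. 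The main obstacle I anticipate is the second step: showing cleanly that the Haar conjugation average is diagonal in the exclusive basis with indicator eigenvalues — this requires carefully combining the commutant description of $\rho_N(\mathbb{C}[\mathcal{A}_k(N)])$ with the orthogonality relation (\ref{eq:fondaexclu}) and checking that $\mathbb{E}_N$ has no spurious contributions from partitions outside $\mathcal{A}_k$; everything after that is bookkeeping with the transforms already constructed.
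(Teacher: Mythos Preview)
Your central claim---that the Haar projection $E_N\mapsto\mathbb{E}_N$ acts diagonally in the exclusive basis, i.e.\ $(\mathbb{E}_N)_{p^c}=\delta_{p\in\mathcal{A}_k}\,(E_N)_{p^c}$---is false, both at finite $N$ and in the limit. Take $\mathcal{A}=\mathfrak{S}$, $k=2$, and $E_N=\mathrm{id}_2^c=\mathrm{id}_2-{\sf 0}_2$. Then $\rho_N({\sf 0}_2)=\sum_i E^i_i\otimes E^i_i$, and a short Weingarten computation gives that its projection onto the commutant of $U(N)^{\otimes 2}$ is $\frac{1}{N+1}\bigl(\rho_N(\mathrm{id}_2)+\rho_N((1,2))\bigr)$. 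Hence $\mathbb{E}_N=\frac{N}{N+1}\mathrm{id}_2-\frac{1}{N+1}(1,2)$, so $\kappa_{(1,2)}(\mathbb{E}_N)=-\frac{N}{N+1}\to -1$, and one checks $\phi_{\mathbb{E}}^{\kappa^c}((1,2))=-1$ and $\phi_{\mathbb{E}}^{\kappa^c}({\sf 0}_2)=1$, whereas your formula $\phi_{\mathbb{E}}^{\kappa^c}={\sf E}_{\mathfrak{S}}\circ{\sf R}_{\mathfrak{S}}(\phi_E^{\kappa^c})$ predicts both to be $0$. The point is that for $\sigma\in\mathfrak{S}_k$, the element $\rho_N(\sigma^c)=\tilde\rho_N(\sigma)$ is \emph{not} in the commutant of $U(N)^{\otimes k}$ (already $\tilde\rho_N(\mathrm{id}_2)=I\otimes I-\sum_iE^i_i\otimes E^i_i$ fails to commute with $g\otimes g$), so the projection cannot fix it.

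The paper avoids this by working with moments rather than exclusive coordinates: for $p\in\mathcal{A}_k$ one has $m_p(\mathbb{E}_N)=m_p(E_N)$ directly, because $\rho_N({}^t p)$ lies in the commutant and ${\sf Tr}^k$ is cyclic, so the conjugation averages out. This gives ${\sf R}_{\mathcal{A}}(\phi_{\mathbb{E}}^m)={\sf R}_{\mathcal{A}}(\phi_E^m)$ at once; convergence of $(\mathbb{E}_N)$ then follows from Theorem~\ref{equivalence2}, and the identity $\phi_{\mathbb{E}}^{\kappa}=\mathcal{C}^{\kappa}_{\mathcal{A}}(\phi_E^{\kappa})$ drops out by composing with ${\sf E}_{\mathcal{A}}\circ\mathcal{R}_{\mathcal{A}}$ on the left (using that $\phi_{\mathbb{E}}^{\kappa}$ is supported on $\mathcal{A}_k$) and $\mathcal{M}$ on the right. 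Your final bookkeeping step, deriving the $m$- and $m^c$-identities from the $\kappa$-identity via $\mathcal{M}$ and $\mathcal{M}^{\to c}$, is correct and matches the paper.
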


\begin{proof}
The first part of the proof is a direct consequence of the Schur-Weyl duality for the unitary, orthogonal groups \cite{goodman}. Let us prove that $(\mathbb{E}_N)_{N \in \mathbb{N}}$ converges as $N$ goes to infinity and that Equations \ref{eq:projection} holds. Let $p$ be in $\mathcal{A}_k$ and let $N$ be a positive integer. Using the traciality of ${\sf Tr}^{k}$, $m_{p}(\mathbb{E}_N)$ is equal to:  
\begin{align*}
\frac{1}{N^{{\sf nc}(p \vee \mathrm{id}_k)}} {\sf Tr}^{k}\left(\rho_{N}(\mathbb{E}_n) \rho_{N}(\!\!\text{ }^{t}p) \right)=\frac{1}{N^{{\sf nc}(p \vee \mathrm{id}_k)}} \int_{\mathcal{G}(\mathcal{A})(N)} {\sf Tr}^{k}\left( \rho_{N}^{\mathcal{P}_k}(E_N) (g^{*})^{\otimes k} \rho_{N}(\!\!\text{ }^{t}p) g^{\otimes k}\right)dg. 
\end{align*}
For any $g\in \mathcal{G}(\mathcal{A})(N)$, $(g^{*})^{\otimes k} \rho_{N}(\!\!\text{ }^{t}p) g^{\otimes k} = \rho_N(^{t}p)$, thus $m_{p}(\mathbb{E}_N)=m_{p}(E_N). $ It implies that for any $p\in \mathcal{A}_k$, $m_{p}(\mathbb{E}_N)$ converges as $N$ goes to infinity: by Theorem \ref{equivalence2}, $(\mathbb{E}_{N})_{N \in \mathbb{N}}$ converges and: 
\begin{align*}
{\sf R}_{\mathcal{A}} (\phi_{\mathbb{E}}^{m}) = {\sf R}_{\mathcal{A}} (\phi_{E}^{m}).
\end{align*}
Let us recall that $\phi_{\mathbb{E}}^{\kappa} = {\sf E}_{\mathcal{A}} \circ {\sf R}_{\mathcal{A}} (\phi_{\mathbb{E}}^{\kappa} )$ since $(\mathbb{E}_N)_{N \in \mathbb{N}} \in \prod_{N \in \mathbb{N}} \mathbb{C}[\mathcal{A}_k(N)]$, besides $ {\sf R}_{\mathcal{A}} (\phi_{\mathbb{E}}^{\kappa} ) = \mathcal{R}_{\mathcal{A}} \circ {\sf R}_{\mathcal{A}} (\phi_{\mathbb{E}}^{m})$, thus: 
\begin{align*}
\phi_{\mathbb{E}}^{\kappa} = {\sf E}_{\mathcal{A}} \circ {\sf R}_{\mathcal{A}} (\phi_{\mathbb{E}}^{\kappa}) \!=\!  {\sf E}_{\mathcal{A}} \circ \mathcal{R}_{\mathcal{A}} \circ {\sf R}_{\mathcal{A}} (\phi_{\mathbb{E}}^{m}) = {\sf E}_{\mathcal{A}} \circ \mathcal{R}_{\mathcal{A}} \circ {\sf R}_{\mathcal{A}}(\phi_{E}^{m}) &={\sf E}_{\mathcal{A}} \circ \mathcal{R}_{\mathcal{A}} \circ {\sf R}_{\mathcal{A}} \circ \mathcal{M}  (\phi_{E}^{\kappa})\\
&= \mathcal{C}_{\mathcal{A}}^{\kappa}(\phi_{E}^{\kappa}).
\end{align*}
The two other equalities can be proved with the same kind of computations.
\end{proof}

\subsection{Convergence of the modified algebras}

Let us define a deformation of the partition algebra $\mathbb{C}[\mathcal{P}_k(N)]$ by modifying the multiplication which was set in Definition \ref{multiplication}. This deformation is motivated by the fact that for any $p \in \mathcal{P}_k$, the sequence in $\prod_{N =1}^{\infty} \mathbb{C}[\mathcal{P}_{k}(N)]$ which is constant and equal to $p$ does not converge. Thus, the basis $(p)_{p \in \mathcal{P}_k}$ is not a good basis for the study of the asymptotic of the partition algebra $\mathbb{C}[\mathcal{P}_k(N)]$.

\begin{definition}
\label{MN}
We define the application: 
\begin{align*}
M_N: \mathcal{P}_k &\to \mathcal{P}_k\\
p &\mapsto \frac{1}{N^{{\sf nc}(p) - {\sf nc}(p \vee \mathrm{id}_k)}}p.
\end{align*}
This application can be extended as an isomorphism of vector spaces from $\mathbb{C}[\mathcal{P}_k]$ to itself. 
\end{definition}

The definition of $M_N$ was set such that for any $E \in \mathbb{C}[\mathcal{P}_k(N)]$, $\left(M_k^{N}\right)^{-1}(E)$ is simply equal to $\sum_{p \in \mathcal{P}_k} \kappa_{p}(E) p$: a sequence $(E_N)_{N \in \mathbb{N}} \in \prod_{N=1}^{\infty} \mathbb{C}[\mathcal{P}_k(N)]$ converges if and only if $(M_{N})^{-1}(E_N)$ converges for the usual convergence in finite dimensional vector spaces. 

Let us remark also that for any $p$ in $\mathcal{P}_k$ and any $p'$ in $\mathcal{P}_{k'}$, 
\begin{align}
\label{Mcomp}
M^{N}(p \otimes p') = M^{N}( p) \otimes M^{N}(p'). 
\end{align}
We can now define the deformed algebra $\mathbb{C}[\mathcal{P}_k(N,N)]$. 

\begin{definition}
We endow $\mathbb{C}[\mathcal{P}_k]$ with a structure of associative algebra by taking the pullback of the structure of algebra of $\mathbb{C}[\mathcal{P}_k(N)]$ by $M_{N}$. For any $p_1, p_2$ in $\mathcal{P}_{k}$ the new product of $p_1$ with $p_2$ is given by: 
\begin{align*}
p_1._{\!{}_N} p_2 =  \left(M_{N}\right)^{-1} \big[ M_{N}(p_1) M_{N}(p_2) \big]. 
\end{align*} 
This is the deformed algebra $\mathbb{C}[\mathcal{P}_k(N,N)]$. 
\end{definition}

The application $M_{N}$ can be extended as an isomorphism of algebra from ${\mathbb{C}[\mathcal{P}_k(N,N)]}$ to $\mathbb{C}[\mathcal{P}_k(N)].$ Its extension will be also denoted by $M_{N}$. In the following, we study the limit of the deformed algebras in the following sense. 

\begin{definition}
\label{convalggen}
Let $C$ be a finite set of elements. For any $N \in \mathbb{N}\cup \{\infty\}$, let $A_N$ be an algebra such that $C$ is a linear basis of $L_N$. For any elements $x$ and $y$ of $C$, for each $N \in \mathbb{N}\cup \{\infty\}$, we denote the product of $x$ with $y$ in $L_N$ by $x._{\!{}_N}y$. The algebra $A_N$ converges to the algebra $A_\infty$ when $N$ goes to infinity if for any $x$ and $y$ in $C$, $$x._{\!{}_N}y \underset{N \to \infty}{\longrightarrow}x._{\!{}_\infty} y\ \  \text{ in }\ \  \mathbb{C}[C],$$
for the usual notion of convergence in finite dimensional linear spaces. 
\end{definition}

\begin{theorem}
\label{Convalg}
The deformed algebra $\mathbb{C}[\mathcal{P}_k(N,N)]$ converges, when $N$ goes to infinity, to the deformed algebra $\mathbb{C}[\mathcal{P}_{k}(\infty,\infty)]$ which is the associative algebra over $\mathbb{C}$ with basis $\mathcal{P}_k$ endowed with the multiplication defined by: 
\begin{align*}
\forall p, p' \in \mathcal{P}_k,\ p._{\!{}_\infty}p' =  \delta_{p' \in {\sf K}_{p\circ p'}(p) }\ (p\circ p'),
\end{align*}
where the Kreweras complement was defined in Definition \ref{geo2}.
\end{theorem}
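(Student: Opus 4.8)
The claim is that the structure constants of the deformed algebra $\mathbb{C}[\mathcal{P}_k(N,N)]$ converge coefficient-wise to those of $\mathbb{C}[\mathcal{P}_k(\infty,\infty)]$. By Definition \ref{convalggen}, it suffices to fix two partitions $p,p'\in\mathcal{P}_k$, expand $p._{\!{}_N}p' = \left(M_N\right)^{-1}\big[M_N(p)M_N(p')\big]$ in the basis $\mathcal{P}_k$, and show that the coefficient on each $q\in\mathcal{P}_k$ tends to $\delta_{p'\in{\sf K}_{p\circ p'}(p)}$ if $q = p\circ p'$ and to $0$ otherwise. The plan is to carry out the computation directly and identify the exponent of $N$ governing each coefficient, then argue that exactly one term survives in the limit.

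First I would unwind the definitions. By Definition \ref{MN}, $M_N(p) = N^{-({\sf nc}(p)-{\sf nc}(p\vee\mathrm{id}_k))}p$ and similarly for $p'$, and in $\mathbb{C}[\mathcal{P}_k(N)]$ we have $M_N(p)M_N(p') = N^{-({\sf nc}(p)-{\sf nc}(p\vee\mathrm{id}_k))-({\sf nc}(p')-{\sf nc}(p'\vee\mathrm{id}_k))}N^{\kappa(p,p')}(p\circ p')$, using $pp' = N^{\kappa(p,p')}(p\circ p')$ from Definition \ref{multiplication}. Now $p\circ p'$ is a single basis partition, so applying $\left(M_N\right)^{-1}$ multiplies it back by $N^{{\sf nc}(p\circ p')-{\sf nc}(p\circ p'\vee\mathrm{id}_k)}$. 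Collecting exponents, the coefficient of $p\circ p'$ in $p._{\!{}_N}p'$ is exactly $N^{-\alpha(p,p')}$ where
\begin{align*}
\alpha(p,p') = {\sf nc}(p) - {\sf nc}(p\vee\mathrm{id}_k) + {\sf nc}(p') - {\sf nc}(p'\vee\mathrm{id}_k) - {\sf nc}(p\circ p') + {\sf nc}(p\circ p'\vee\mathrm{id}_k) - \kappa(p,p'),
\end{align*}
and the coefficient of every other $q$ is $0$ exactly; so the structure constant on $q\ne p\circ p'$ converges trivially to $0 = \delta_{p'\in{\sf K}_{p\circ p'}(p)}\,\delta_{q = p\circ p'}$, which matches the target.

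It remains to analyze the single surviving coefficient $N^{-\alpha(p,p')}$. Comparing the displayed $\alpha(p,p')$ with Equation (\ref{definitiondefect2}) shows that $\alpha(p,p') = \eta(p,p')$, the $\prec$-defect of Definition \ref{defect}. By the discussion following Theorem \ref{th:dautresvaleurs} (namely that $\eta$ is a sum of two ${\sf df}$-defects for $\leq$, each non-negative), we have $\eta(p,p')\geq 0$ always. Hence $N^{-\eta(p,p')}\to 1$ if $\eta(p,p')=0$ and $N^{-\eta(p,p')}\to 0$ if $\eta(p,p')>0$; that is, the limiting structure constant on $p\circ p'$ is $\delta_{\eta(p,p')=0}$. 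Finally, by Theorem \ref{th:caractK} (with $p_0 = p_1\circ p_2$, so $p_1 = p$, $p_2 = p'$), the condition $\eta(p,p')=0$ — equivalently ${\sf df}(p,p\circ p') = 0$ and ${\sf df}(p',{}^{t}p\circ p\circ p') = 0$ — is precisely $p'\in{\sf K}_{p\circ p'}(p)$. This identifies the limit product as $\delta_{p'\in{\sf K}_{p\circ p'}(p)}(p\circ p')$, which is the asserted multiplication on $\mathbb{C}[\mathcal{P}_k(\infty,\infty)]$; associativity of the limit is inherited as a limit of associative products (or checked directly via the co-associativity computation already used for $\Delta_{\boxtimes}$).

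The only genuinely non-routine point is recognizing that the exponent $\alpha(p,p')$ coming out of the bookkeeping equals the $\prec$-defect $\eta(p,p')$ and invoking its non-negativity together with the characterization of its vanishing; everything else is substitution into the definitions. So the main obstacle is simply setting up the exponent arithmetic cleanly enough that the identification with $\eta$ (and hence with Theorem \ref{th:caractK}) is transparent.
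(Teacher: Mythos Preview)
Your proof is correct and follows essentially the same route as the paper: compute $p._{\!{}_N}p'$ explicitly, recognize the exponent as $-\eta(p,p')$, use the non-negativity of $\eta$, and identify the vanishing of $\eta$ with the Kreweras condition. The only cosmetic difference is that the paper reads off $\eta(p,p')=0 \Leftrightarrow p'\in{\sf K}_{p\circ p'}(p)$ directly from Definition~\ref{geo2} (condition~2 there is literally the equality case of Inequality~(\ref{autreequation})), whereas you route through Theorem~\ref{th:caractK}; both are fine.
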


\begin{proof}
For any $N \in \mathbb{N} \cup \{\infty\}$, $\mathcal{P}_k$ is a linear basis of $\mathbb{C}[\mathcal{P}_k(N,N)]$. It is enough to prove that for any $p$ and $p'$ in $\mathcal{P}_k$, $p._{\!{}_N}p'$ converges to $\delta_{p \prec p \circ p'} p \circ p'$. By a straightforward computation: 
\begin{align*}
p._{\!{}_N}p' = N^{d(\mathrm{id}_k,p\circ p') - d(\mathrm{id}_k,p) - d(\mathrm{id}_k, p') + \frac{k + {\sf nc}(p \circ p') - {\sf nc}(p ) - {\sf nc}(p')}{2}+ \kappa(p,p')} (p \circ p') = N^{-\eta(p,p')} (p\circ p'). 
\end{align*}
Using Inequality \ref{autreequation} and Definition \ref{geo2}, $p._{\!{}_N}p' \underset{N \to \infty}{\longrightarrow} \delta_{p' \in {\sf K}_{p\circ p'}(p) }\ (p\circ p').$
\end{proof}

\subsection{Some consequences}

\subsubsection{Combinatorial consequences}

Using the associativity of the product $._{\!{}_N}$ and its limit when $N$ goes to infinity, one can deduce the folllowing proposition. 
\begin{proposition}
\label{prop:transprec}
The relation $\prec$, defined in Definition \ref{geo2}, is transitive.
\end{proposition}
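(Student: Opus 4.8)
The plan is to leverage the associativity of the family of products $._{\!{}_N}$ on $\mathbb{C}[\mathcal{P}_k(N,N)]$ together with the explicit limit product $._{\!{}_\infty}$ computed in Theorem~\ref{Convalg}, and to read off transitivity of $\prec$ as an identity among structure constants of the limit algebra $\mathbb{C}[\mathcal{P}_k(\infty,\infty)]$. The key point is that by Theorem~\ref{Convalg} the product $._{\!{}_\infty}$ is associative (being a limit of associative products, or directly: the structure constants converge and associativity passes to the limit), and its structure constants are $p._{\!{}_\infty}p' = \delta_{p'\in {\sf K}_{p\circ p'}(p)}\,(p\circ p')$. So associativity gives, for all $p_1,p_2,p_3\in\mathcal{P}_k$,
\begin{align*}
(p_1 ._{\!{}_\infty} p_2) ._{\!{}_\infty} p_3 = p_1 ._{\!{}_\infty} (p_2 ._{\!{}_\infty} p_3).
\end{align*}

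First I would unwind both sides. The left side equals $\delta_{p_2\in {\sf K}_{p_1\circ p_2}(p_1)}\,\delta_{p_3\in {\sf K}_{(p_1\circ p_2)\circ p_3}(p_1\circ p_2)}\,(p_1\circ p_2\circ p_3)$, i.e.\ it is nonzero exactly when $p_1\prec p_1\circ p_2$ with $p_2\in{\sf K}_{p_1\circ p_2}(p_1)$ and $p_1\circ p_2 \prec p_1\circ p_2\circ p_3$ with $p_3\in{\sf K}_{p_1\circ p_2\circ p_3}(p_1\circ p_2)$; similarly the right side is nonzero exactly when $p_2\prec p_2\circ p_3$ with $p_3\in{\sf K}_{p_2\circ p_3}(p_2)$ and $p_1\prec p_1\circ(p_2\circ p_3)$ with $p_2\circ p_3\in{\sf K}_{p_1\circ p_2\circ p_3}(p_1)$. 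Since the associator identity holds for every triple, the supports of the two "$\delta$-coefficient" functions coincide: for a fixed product $q = p_1\circ p_2\circ p_3$, summing over all factorizations, the condition "$p_1\prec p_1\circ p_2$ and $p_1\circ p_2\prec q$ with the appropriate Kreweras constraints" is equivalent to "$p_2\prec p_2\circ p_3$ and $p_1\prec q$ with the appropriate Kreweras constraints." This is exactly the statement recorded in the co-associativity computation for $\Delta_{\boxtimes}$ (the displayed Equation~(\ref{eq:deltaassociatif}) referenced there), and it is the combinatorial heart of transitivity.

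Now I would extract transitivity. Suppose $p'\prec p$ and $p''\prec p'$. By Definition~\ref{geo2}, there exist $q$ with $p = p'\circ q$ and $q\in{\sf K}_{p}(p')$, and $r$ with $p' = p''\circ r$ and $r\in{\sf K}_{p'}(p'')$. Then $p = p''\circ r\circ q = p''\circ(r\circ q)$, so $p$ decomposes with prefix $p''$. It remains to check condition~(2) of Definition~\ref{geo2}, i.e.\ that $p''$ is an admissible prefix of $p$ with complement $r\circ q$; equivalently, in the limit algebra, that $p'' ._{\!{}_\infty}(r\circ q) $ has coefficient $1$ on $p$, i.e.\ that $r\circ q\in{\sf K}_{p}(p'')$. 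But $(p''._{\!{}_\infty} r)._{\!{}_\infty} q = p'._{\!{}_\infty} q = p$ (nonzero), so by associativity $p''._{\!{}_\infty}(r ._{\!{}_\infty} q) = p$ as well; since $r._{\!{}_\infty} q$ is a single partition with coefficient $1$ whenever nonzero, this forces $r._{\!{}_\infty} q = r\circ q$ and $p''._{\!{}_\infty}(r\circ q) = p$, which is precisely the admissibility of $p''$ as a prefix of $p$. Hence $p''\prec p$.

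The main obstacle is purely bookkeeping: making sure the $\eta$-defect / $d$-identity bookkeeping in unwinding "$p' \prec p$" matches the $\delta$-coefficients of $._{\!{}_\infty}$ exactly, and that the Kreweras-complement sets behave correctly under the telescoping $p = p''\circ r\circ q$ — in particular that $r._{\!{}_\infty} q$ being nonzero (which is where one uses that $d(\mathrm{id}_k,r\circ q) = d(\mathrm{id}_k,r)+d(\mathrm{id}_k,q)-\tfrac{k+\cdots}{2}-\kappa(r,q)$, i.e.\ $\eta(r,q)=0$) is not an extra hypothesis but follows from associativity applied to the triple $(p'',r,q)$. Everything else is a direct translation between Definition~\ref{geo2} and the structure constants of $\mathbb{C}[\mathcal{P}_k(\infty,\infty)]$ via Theorem~\ref{Convalg} and Theorem~\ref{th:caractK}.
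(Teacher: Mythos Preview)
Your proof is correct and follows essentially the same route as the paper: both exploit that the limit product $._{\!{}_\infty}$ of Theorem~\ref{Convalg} inherits associativity from the products $._{\!{}_N}$, then read off from the identity $(p''._{\!{}_\infty} r)._{\!{}_\infty} q = p''._{\!{}_\infty}(r._{\!{}_\infty} q)$ the Kreweras-complement equation (your argument is exactly the content of the paper's Equation~(\ref{eq:deltaassociatif})), which immediately yields $p''\prec p$ from $p''\prec p'\prec p$. The only cosmetic difference is that the paper first isolates the $\delta$-identity~(\ref{eq:deltaassociatif}) and then applies it, whereas you argue directly with the algebra elements; the substance is identical.
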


\begin{proof}
Let $p_1$, $p_2$, $p_3$ be three partitions in $\mathcal{P}_k$. Let us consider the product $p_1._{\!{}_N}p_2._{\!{}_N}p_3$. We can compute the limit of this product in two ways by looking either at $(p_1._{\!{}_N}p_2)._{\!{}_N}p_3$ or $p_1._{\!{}_N}(p_2._{\!{}_N}p_3)$. We get two limits which are equal and considering the coefficients, we get: 
\begin{align}
\label{eq:deltaassociatif}\delta_{p_2 \in {\sf K}_{p_1\circ p_2}(p_1)}\delta_{p_3 \in {\sf K}_{p_1\circ p_2 \circ p_3}(p_1\circ p_2)} = \delta_{p_3 \in {\sf K}_{p_2\circ p_3}(p_2)}\delta_{p_2 \circ p_3 \in {\sf K}_{p_1\circ p_2 \circ p_3}(p_1)}.
\end{align}

Let us consider $p_1$, $p'$ and $p''$ in $\mathcal{P}_k$ such that $p_1\prec p'$ and $p' \prec p''$. There exists $p_2$ and $p_3$ such that $p_1 \circ p_2 = p'$, $p_1 \circ p_2 \circ p_3 = p''$, $p_2 \in {\sf K}_{p_1\circ p_2}(p_1)$ and $p_3 \in {\sf K}_{p_1\circ p_2 \circ p_3}(p_1\circ p_2)$. Using Equation (\ref{eq:deltaassociatif}), we get that: 
\begin{align*} 
\delta_{p_3 \in {\sf K}_{p_2\circ p_3}(p_2)}\delta_{p_2 \circ p_3 \in {\sf K}_{p_1\circ p_2 \circ p_3}(p_1)}\ne 0. 
\end{align*}
In particular, $p_2 \circ p_3 \in {\sf K}_{p_1\circ p_2 \circ p_3}(p_1)$. Let $\tilde{p}$ be equal to $p_2 \circ p_3$, then $\tilde{p} \in {\sf K}_{p''}(p_1)$ and $p_1 \circ \tilde{p} = p''$: it proves that $p_1 \prec p''$. 
\end{proof}

\subsubsection{Convergence of the product}

A consequence of Theorem \ref{Convalg} is the continuity of the product for the notion of convergence in moments.

\begin{theorem}
\label{multi1}
Let $(E_N)_{N \in \mathbb{N}}$ and $(F_N)_{N \in \mathbb{N}}$ be two elements of $\prod\limits_{N \in \mathbb{N}}\mathbb{C}[\mathcal{P}_k(N)]$. Let us suppose that $(E_N)_{N \in \mathbb{N}}$ and $(F_N)_{N \in \mathbb{N}}$ converge (in moments), then the sequence $\big(E_NF_N\big)_{N \in \mathbb{N}}$ converges (in moments). Besides:
\begin{align}
\label{1''} \phi_{EF}^{\kappa} = \phi_{E}^{\kappa} \boxtimes \phi_{F}^{\kappa},\ \ \ \ \ \ \ \ \ \ 
 \phi_{EF}^{m} =  \phi_{E}^{\kappa} \boxtimes_{d}^{m} \phi_{F}^{m},\ \ \ \ \ \ \ \ \ \ \phi_{EF}^{m} =  \phi_{E}^{m} \boxtimes_{g}^{m} \phi_{F}^{\kappa}.
\end{align}
\end{theorem}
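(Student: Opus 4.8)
The plan is to reduce the statement to the algebra convergence result (Theorem \ref{Convalg}) by passing through the isomorphism $M_N$ which conjugates the partition algebra $\mathbb{C}[\mathcal{P}_k(N)]$ with the deformed algebra $\mathbb{C}[\mathcal{P}_k(N,N)]$. First I would recall that a sequence $(E_N)_{N}$ converges in moments if and only if $(M_N)^{-1}(E_N) = \sum_{p} \kappa_p(E_N) p$ converges coordinatewise in the fixed finite-dimensional space $\mathbb{C}[\mathcal{P}_k]$, i.e.\ if and only if each $\kappa_p(E_N)$ has a limit $\kappa_p(E)$. Writing $\widehat{E}_N = (M_N)^{-1}(E_N)$ and $\widehat{F}_N = (M_N)^{-1}(F_N)$, the product $E_N F_N$ in $\mathbb{C}[\mathcal{P}_k(N)]$ corresponds under $M_N$ to $\widehat{E}_N \cdot_N \widehat{F}_N$, the product in the deformed algebra $\mathbb{C}[\mathcal{P}_k(N,N)]$. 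Since $\widehat{E}_N \to \widehat{E} := \sum_p \kappa_p(E) p$ and $\widehat{F}_N \to \widehat{F} := \sum_p \kappa_p(F) p$ in the finite-dimensional space, and since by Theorem \ref{Convalg} the structure constants of $\cdot_N$ converge to those of $\cdot_\infty$, bilinearity and continuity of multiplication give $\widehat{E}_N \cdot_N \widehat{F}_N \to \widehat{E} \cdot_\infty \widehat{F}$. Hence $(M_N)^{-1}(E_N F_N)$ converges, which is exactly the statement that $(E_N F_N)_N$ converges in moments.

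The second task is to identify the limits. By the previous paragraph the limiting cumulant form $\phi_{EF}^{\kappa}$ is the linear form whose value on $p_0$ is the coefficient of $p_0$ in $\widehat{E}\cdot_\infty \widehat{F}$, namely
\begin{align*}
\phi_{EF}^{\kappa}(p_0) = \sum_{\substack{p,p' \in \mathcal{P}_k \\ p\circ p' = p_0,\ p' \in {\sf K}_{p_0}(p)}} \kappa_p(E)\, \kappa_{p'}(F),
\end{align*}
using the explicit formula for $\cdot_\infty$ from Theorem \ref{Convalg}. Comparing with the definition of $\Delta_{\boxtimes}$ and of the $\boxtimes$-convolution, this is precisely $(\phi_E^{\kappa} \otimes \phi_F^{\kappa})\Delta_{\boxtimes} = \phi_E^{\kappa} \boxtimes \phi_F^{\kappa}$, giving the first equality in (\ref{1''}). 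For the two remaining equalities I would apply the $\mathcal{M}$-transform: by the remark following Theorem \ref{equivalence}, $\phi_{EF}^{m} = \mathcal{M}(\phi_{EF}^{\kappa}) = \mathcal{M}(\phi_E^{\kappa} \boxtimes \phi_F^{\kappa})$. Now invoke Equation (\ref{eq:metconvol}), which states $\mathcal{M}(\phi_1 \boxtimes \phi_2) = \phi_1 \boxtimes_d^m \mathcal{M}(\phi_2) = \mathcal{M}(\phi_1) \boxtimes_g^m \phi_2$; taking $\phi_1 = \phi_E^{\kappa}$, $\phi_2 = \phi_F^{\kappa}$ and using $\mathcal{M}(\phi_F^{\kappa}) = \phi_F^{m}$, $\mathcal{M}(\phi_E^{\kappa}) = \phi_E^{m}$ yields $\phi_{EF}^{m} = \phi_E^{\kappa}\boxtimes_d^m \phi_F^{m}$ and $\phi_{EF}^{m} = \phi_E^{m}\boxtimes_g^m \phi_F^{\kappa}$, as required.

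The main obstacle, such as it is, is mostly bookkeeping: one must be careful that $M_N$ really is an algebra isomorphism $\mathbb{C}[\mathcal{P}_k(N,N)]\to \mathbb{C}[\mathcal{P}_k(N)]$ (true by definition of the deformed algebra) and that ``convergence in moments of $(E_N)$'' is genuinely equivalent to ``$(M_N)^{-1}(E_N)$ converges in the fixed vector space $\mathbb{C}[\mathcal{P}_k]$'' — this equivalence is exactly Theorem \ref{equivalence} combined with the defining property of $M_N$ noted just after Definition \ref{MN}. Once these identifications are in place, the convergence of the product is a formal consequence of the joint continuity of a bilinear map whose structure constants converge (Theorem \ref{Convalg}), and the identification of the three limits is a matter of matching the combinatorial formula for $\cdot_\infty$ against the definitions of $\boxtimes$, $\boxtimes_g^m$, $\boxtimes_d^m$ and then feeding everything through (\ref{eq:metconvol}). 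No delicate estimate is needed; the only genuine input beyond definitions is the inequality (\ref{autreequation}), but that is already absorbed into Theorem \ref{Convalg}.
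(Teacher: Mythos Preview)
Your proposal is correct and follows essentially the same route as the paper: pull back through $M_N$, use Theorem \ref{Convalg} to pass to the limit in the deformed algebra, read off the cumulant identity from the formula for $\cdot_\infty$, and then push the remaining two equalities through $\mathcal{M}$ via Equation (\ref{eq:metconvol}). Your write-up is in fact slightly more explicit about the bookkeeping (the equivalence between moment-convergence and coordinatewise convergence of $(M_N)^{-1}(E_N)$, and the role of $M_N$ as an algebra isomorphism), but there is no substantive difference in strategy.
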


\begin{remarque}
Using the notations $\kappa_{p}$ and $m_{p}$, the Equations (\ref{1''}) can be written in the following form: for any $p_0\in \mathcal{P}_k$: 
\begin{align}
\label{1}\kappa_{p_0}(EF) &= \sum_{p \in \mathcal{P}_k, p \prec p_0} \kappa_{p}(E)\kappa_{{\sf K}_{p_0}( p)}(F), \\
\label{2}\ m_{p_0} (EF) &= \sum_{p \in \mathcal{P}_k, p \leq p_0} \kappa_{p}(E) m_{\!\!\text{ }^{t}p \circ p_0}(F),\\
\label{mom2}
m_{p_0}(E F) &= \sum_{p \in \mathcal{P}_k, p\leq p_0} m_{p_0\circ \!\!\text{ }^{t}p}(E)\kappa_p(F). 
\end{align}
\end{remarque}

\begin{proof}
By definition for any integer $N$, $(M_{N})^{-1}(E_NF_N)=\left(M_{N}\right)^{-1}(E_N)._{\!{}_N} \left(M_{N}\right)^{-1}(F_N).$ But $\left(M_{N}\right)^{-1}(E_N)$ and $\left(M_{N}\right)^{-1}(F_N)$, seen as elements of $\mathbb{C}[\mathcal{P}_k]$, converge when $N$ goes to infinity. Besides, the algebra $\mathbb{C}[\mathcal{P}_k(N,N)]$ converges to $\mathbb{C}[\mathcal{P}_{k}(\infty, \infty)]$, as it was proved in Theorem \ref{Convalg}. Thus $(M_{k}^{N})^{-1}(E_NF_N)$ converges when $N$ goes to infinity. Again this shows that $\left(E_NF_N\right)_{N \in \mathbb{N}}$ converges. Besides: 
\begin{align*}
(M_{N})^{-1}(E_NF_N)&=\sum_{p\in \mathcal{P}_k} \kappa_{p_0}(E_NF_N) p_0, \\
\left(M^{N}\right)^{-1}(E_N)._N \left(M_{N}\right)^{-1}(F_N) &= \sum_{p\in \mathcal{P}_k, p' \in \mathcal{P}_k} \kappa_{p}(E_N)\kappa_{p'}(F_N) p._{\!{}_N}p'. 
\end{align*}
Using the formula for the limit of $._{\!{}_\infty}$ in Theorem \ref{Convalg}, for any $p_0 \in \mathcal{P}_k$: 
\begin{align*}
\kappa_{p_0}(EF) = \sum_{p \in \mathcal{P}_k, p \prec p_0}  \kappa_{p}(E)\kappa_{{\sf K}_{p_0}( p)}(F),
\end{align*}
which gives us the first equality of (\ref{1''}) (or equivalently Equation (\ref{1})).

The other equalities are consequences of the Equation $(\ref{eq:metconvol})$ since, by Theorem \ref{equivalence}, $\phi_{EF}^{m} = \mathcal{M}(\phi_{EF}^{\kappa})$, $\phi_{E}^{m} = \mathcal{M}(\phi_{E}^{\kappa})$ and $\phi_{F}^{m} = \mathcal{M}(\phi_{F}^{\kappa})$.
\end{proof}

\subsubsection{Convergence of multiplicative semi-groups}

\label{semi-groups}

\begin{definition}
A family $\left((E^N_{t})_{N}\right)_{t \geq0}$ of elements of $\prod_{N \in \mathbb{N}} \mathbb{C}[\mathcal{P}_k(N)]$ is a semi-group if there exists $(H_N)_{N \in \mathbb{N}} \in \prod_{N \in \mathbb{N}} \mathbb{C}[\mathcal{P}_k(N)]$, called the generator, such that for any $t \geq 0$, for any integer $N$: 
\begin{align*}
\frac{d}{dt}_{\mid t=t_0} E^N_t = H_N E^N_{t_0}. 
\end{align*}
\end{definition}

Let us suppose that $\left((E^N_{t})_{N}\right)_{t \geq0}$ is a semi-group in $\prod_{N \in \mathbb{N}} \mathbb{C}[\mathcal{P}_k(N)]$ whose generator is $(H_N)_{N \in \mathbb{N}}$. 

\begin{definition}
\label{convergencesemigroup}
The semi-group $\left((E^N_{t})_{N}\right)_{t \geq0}$ converges if and only if for any $t \geq 0$, $E^N_t$ converges as $N$ goes to infinity. 
\end{definition}

The next theorem shows that a semi-group in $\prod_{N \in \mathbb{N}} \mathbb{C}[\mathcal{P}_k(N)]$ converges if the initial condition and the generator converge. 

\begin{theorem}
\label{semigroup}
The semi-group $\left((E^N_{t})_{N}\right)_{t \geq0}$ converges if the sequences $(E^N_{0})_{N \in \mathbb{N}}$ and $(H_N)_{N \in \mathbb{N}}$ converge as $N$ goes to infinity. Besides, we have the three differential systems of equations: for any $t_0 \geq 0$: 
\begin{align}
\label{eq:diff}
\frac{d}{dt}_{| t=t_0}\phi_{E_t}^{\kappa} = \phi_{H}^{\kappa} \boxtimes \phi_{E_{t_0}}^{\kappa}, \ \ \ 
\frac{d}{dt}_{| t=t_0}\phi_{E_t}^{m} = \phi_{H}^{\kappa} \boxtimes_{d}^{m} \phi_{E_{t_0}}^{m}, \ \ \ 
\frac{d}{dt}_{| t=t_0}\phi_{E_t}^{m} = \phi_{H}^{m} \boxtimes_{g}^{m} \phi_{E_{t_0}}^{\kappa}.
\end{align}

\end{theorem}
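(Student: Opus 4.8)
The plan is to transport the differential equation defining the semi-group through the linear isomorphism $M_N$ of Definition \ref{MN}, exactly as was done for the product in Theorem \ref{multi1}, and then invoke the convergence of the deformed algebras established in Theorem \ref{Convalg}. First I would set $\tilde E^N_t = (M_N)^{-1}(E^N_t)$ and $\tilde H_N = (M_N)^{-1}(H_N)$, so that, since $M_N$ is an algebra isomorphism from $\mathbb{C}[\mathcal{P}_k(N,N)]$ to $\mathbb{C}[\mathcal{P}_k(N)]$, the defining relation $\frac{d}{dt}_{|t=t_0} E^N_t = H_N E^N_{t_0}$ becomes $\frac{d}{dt}_{|t=t_0} \tilde E^N_t = \tilde H_N ._{\!{}_N} \tilde E^N_{t_0}$ inside the fixed finite-dimensional space $\mathbb{C}[\mathcal{P}_k]$. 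By hypothesis $\tilde E^N_0$ and $\tilde H_N$ converge as $N\to\infty$, and by Theorem \ref{Convalg} the structure constants of $._{\!{}_N}$ converge to those of $._{\!{}_\infty}$.

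The key step is then a continuity-of-flows argument: the solution of a linear ODE $\frac{d}{dt}\tilde E^N_t = L_N(\tilde E^N_t)$, where $L_N = \tilde H_N ._{\!{}_N} (\cdot)$ is a linear operator on the finite-dimensional space $\mathbb{C}[\mathcal{P}_k]$, is $\tilde E^N_t = e^{tL_N}(\tilde E^N_0)$; since $L_N \to L_\infty := \tilde H_\infty ._{\!{}_\infty} (\cdot)$ in operator norm and $\tilde E^N_0 \to \tilde E^\infty_0$, we get $e^{tL_N}(\tilde E^N_0) \to e^{tL_\infty}(\tilde E^\infty_0)$ for every fixed $t\ge 0$ (uniformly on compact $t$-intervals). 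Transporting back, $E^N_t = M_N(\tilde E^N_t)$ converges in moments for every $t\ge 0$, which is the stated convergence of the semi-group. Moreover the limit $\tilde E^\infty_t = e^{tL_\infty}(\tilde E^\infty_0)$ satisfies $\frac{d}{dt}_{|t=t_0}\tilde E^\infty_t = \tilde H_\infty ._{\!{}_\infty} \tilde E^\infty_{t_0}$, and reading off coordinates exactly as in the proof of Theorem \ref{multi1} (using the explicit formula for $._{\!{}_\infty}$ from Theorem \ref{Convalg} and the identity (\ref{eq:metconvol}) relating $\boxtimes$, $\boxtimes^m_g$, $\boxtimes^m_d$ and $\mathcal{M}$) yields the first of the three displayed differential systems; the other two follow by applying $\mathcal{M}$ and using $\phi^m_{E_t} = \mathcal{M}(\phi^\kappa_{E_t})$, $\phi^m_H = \mathcal{M}(\phi^\kappa_H)$ together with Equation (\ref{eq:metconvol}), just as in Theorem \ref{multi1}.

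The part requiring the most care is the interchange of the limit $N\to\infty$ with the time evolution. One cannot merely differentiate term by term; the clean way is to note everything lives in the single finite-dimensional space $\mathbb{C}[\mathcal{P}_k]$, that the map $(L,v)\mapsto e^{tL}v$ is continuous (indeed smooth) there, and that $L_N\to L_\infty$ is immediate from Theorem \ref{Convalg} because $L_N$ is the matrix of left-multiplication by $\tilde H_N$ whose entries are the structure constants against the fixed basis $\mathcal{P}_k$. An equivalent and perhaps more self-contained route is a Gr\"onwall estimate: writing the ODE in integral form $\tilde E^N_t = \tilde E^N_0 + \int_0^t L_N(\tilde E^N_s)\,ds$, subtract the $N=\infty$ equation, bound $\|\tilde E^N_t - \tilde E^\infty_t\|$ by $\|\tilde E^N_0 - \tilde E^\infty_0\| + t\,\|L_N - L_\infty\|\sup_{s\le t}\|\tilde E^\infty_s\| + \|L_N\|\int_0^t\|\tilde E^N_s - \tilde E^\infty_s\|\,ds$, and conclude by Gr\"onwall that the difference tends to $0$ uniformly on $[0,T]$. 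Either way, once convergence is in hand the identification of the limits with the $\boxtimes$-type convolutions is a routine repetition of the computation in Theorem \ref{multi1}.
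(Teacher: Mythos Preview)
Your proposal is correct and follows essentially the same route as the paper: transport through $M_N$ to the deformed algebra, use the convergence of the structure constants from Theorem~\ref{Convalg}, pass to the limit in the resulting linear ODE, and then derive the second and third equations from the first via Equation~(\ref{eq:metconvol}). The only notable difference is that you are more explicit about the ``interchange of $N\to\infty$ with the flow'' step---giving both the $(L,v)\mapsto e^{tL}v$ continuity argument and a Gr\"onwall alternative---whereas the paper simply writes the system $\frac{d}{dt}\kappa_{p_0}(E^N_t)=\sum_{p\circ p'=p_0}\kappa_p(H_N)\kappa_{p'}(E^N_t)N^{-\eta(p,p')}$ and asserts that it converges; your added care here is a genuine improvement in rigor but not a different idea.
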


\begin{remarque}
Using the notations $\kappa_{p}$ and $m_{p}$, the Equations (\ref{eq:diff}) can be written in the following form: for any $p \in \mathcal{P}_k$, for any $t_0 \geq 0$: 
\begin{align}
\label{1'}\frac{d}{dt}_{\mid t=t_0}\kappa_{p} (E_t) &= \sum_{p_1 \in \mathcal{P}_k, p_1 \prec p} \kappa_{p_1} (H) \kappa_{{\sf K}_p(p_1)}(E_{t_0}), \
\\
\label{2'}\frac{d}{dt}_{\mid t=t_0}m_{p} (E_t) &= \sum_{p_1\in \mathcal{P}_k, p_1 \leq p } \kappa_{p_1} (H) m_{\!\!\text{ }^{t}p_1 \circ p}(E_{t_0}),\\
\label{3'} \frac{d}{dt}_{\mid t=t_0}m_{p}(E_t) &= \sum_{p_1 \in \mathcal{P}_k,  p_1\leq p} m_{p\circ \!\!\text{ }^{t}p_1}(H) \kappa_{p_1}(E_{t_0}).
\end{align}
\end{remarque}

\begin{proof}
Let us suppose that $(E^N_{0})_{N \in \mathbb{N}}$ and $(H_N)_{N \in \mathbb{N}}$ converge. For any integer $N$ and any $t\geq 0$, we define ${\bold E_t^N} = (M_{N})^{-1}(E_t^{N})$ and ${\bold H_N}=(M_{N})^{-1}(H_{N}).$ Since $M_{N}$ is a morphism of algebra, the family $\big(({\bold E_t^N})_{N \in \mathbb{N}}\big)_{t \geq 0}$ is a semi-group in $\prod\limits_{N \in \mathbb{N}}\mathbb{C}[\mathcal{P}_k(N,N)]$ and its generator is $\big({\bold H_N}\big)_{N \in \mathbb{N}}$. For any $t_0 \geq 0$:
\begin{align*}
\frac{d}{dt}_{\mid t=t_0} \sum_{p_0 \in \mathcal{P}_k} \kappa_{p_0}(E^{N}_t) p_0 = \bigg(\sum_{p \in \mathcal{P}_k} \kappa_{p}(H_{N})p\bigg) ._{\!{}_N} \bigg(\sum_{p' \in \mathcal{P}_k} \kappa_{p'}(E^{N}_{t_0})p'\bigg). 
\end{align*}
Thus, for any $p_0 \in \mathcal{P}_k$: 
\begin{align*}
\frac{d}{dt}_{\mid t=t_0}&\kappa_{p_0}(E^{N}_t) = \sum_{p,p' \in \mathcal{P}_k,\ p\circ p' =p_0} \kappa_{p}(H_{N}) \kappa_{p'}(E^{N}_t)N^{-\eta(p,p')}. 
\end{align*}

When $N$ goes to infinity, because of the hypotheses and since the $\prec$-defect is always positive, this differential system converges: $\kappa_{p}(E^{N}_t)$ converges for any $p\in \mathcal{P}_k$ and any real $t \geq 0$. Besides, for any $t_0\geq 0$, for any $p \in \mathcal{P}_k$: 
\begin{align*}
\frac{d}{dt}_{\mid t=t_0}\kappa_{p} (E_t) &= \sum_{p_1 \in \mathcal{P}_k, p_1 \prec p} \kappa_{p_1} (H) \kappa_{{\sf K}_p(p_1)}(E_{t_0}). 
\end{align*}
This gives us the first equation in (\ref{eq:diff}). The other equations are again consequences of Proposition \ref{eq:metconvol}.
\end{proof}

\begin{remarque}
\label{rq:genersemigroup}
Let the letter $\mathcal{A}$ stand either for $\mathfrak{S}$ or $\mathcal{B}$. If $((E_t^N)_{N \in \mathbb{N}})_{t \geq 0}$ is a semi-group in $\prod_{N \in \mathbb{N}} \mathbb{C}[\mathcal{A}_k(N)]$, we can state a more general result: if the sequences $(E^N_{0})_{N \in \mathbb{N}}$ and $(H_N)_{N \in \mathbb{N}}$ converge in $\mathcal{A}$-moments as $N$ goes to infinity then $((E_t^N)_{N \in \mathbb{N}})_{t \geq 0}$ converges in $\mathcal{P}$-moments.
\end{remarque}

\section{Fluctuations}

\label{sec:Fluctuations}
In this section, we generalize Section \ref{sec:obsconv} in order to study the asymptotic developments of the moments and cumulants. This would allow us to generalize the notions of $\mathcal{M}$-, $\mathcal{R}$-transforms, $\boxtimes$ and $\boxplus$ convolutions which would be defined on $(\bigoplus_{k=0}^{\infty} \mathbb{C}[(\mathcal{P}_k/\mathfrak{S}_k) \times \{0,...,n\}])^{*}$: the definitions are straightfoward after reading this section. Yet the characters of $(\bigoplus_{k=0}^{\infty} \mathbb{C}[(\mathcal{P}_k/\mathfrak{S}_k) \times \{0,...,n\}])^{*}$ are not interesting in the asymptotic study of random matrices which are invariant in law by conjugation by the symmetric group thus we will not spend more time to explain these generalizations, only will we give short definitions in \ref{notation:notationsfluctu} in order to simplify results in the following article \cite{Gab2}. This section can also be easily generalized for the sets of partitions $\mathcal{A}$ when $\mathcal{A} \in \{\mathfrak{S}, \mathcal{B}, \mathcal{B}s, \mathcal{H}\}$.

Let $(E_N)_{N \in \mathbb{N}}$ be an element of $\prod_{N \in \mathbb{N}} \mathbb{C}[\mathcal{P}_k(N)]$. We  define and study a notion of strong convergence up to the $n^{th}$ order of fluctuations.

\begin{definition}
\label{convergencemomentfluctuation}
The sequence $(E_N)_{N \in \mathbb{N}}$ converges in moments up to the $n^{th}$ order of fluctuations if for any $p \in \mathcal{P}_k$, for any $i \in \{0,...,n\}$, there exist a real $m_{p}^{i}(E)$ such that: 
\begin{align*}
N^{n} \left( m_{p}(E_N) - \sum_{i=0}^{n-1} \frac{m_{p}^{i}(E)}{N^{i}} \right) \underset{ N \to \infty}{\longrightarrow} m_{p}^{n}(E)
\end{align*}
\end{definition}

\subsection{Cumulants of higher orders and the key result}
The generalization of Theorem \ref{equivalence} is given in the following theorem. 

\begin{theorem}
\label{equivalencefortmomentfluctu}
The sequence  $(E_N)_{N \in \mathbb{N}}$ converges in moments up to the $n^{th}$ order of fluctuations if and only if for any $p \in \mathcal{P}_k$, for any $i \in \{0,...,n\}$, there exists a real $\kappa_{p}^{i}(E)$ such that for any $p \in \mathcal{P}_k$: 
\begin{align}
\label{eq:cumufluctu}
N^{n} \left( \kappa_{p}(E_N) - \sum_{i=0}^{n-1} \frac{\kappa_{p}^{i}(E)}{N^{i}} \right) \underset{ N \to \infty}{\longrightarrow} \kappa_{p}^{n}(E)
\end{align}
Let us suppose that  $(E_N)_{N \in \mathbb{N}}$ converges in moments up to the $n^{th}$ order of fluctuations, for any $p \in \mathcal{P}_k$, for any $i_0 \in \{0,…,n\}$: 
\begin{align}
\label{cumulantmomentfluctuation}
m^{i_0}_p(E) = \sum_{p' \in \mathcal{P}_k, \text{\sf df}{(p',p)}\leq i_0}\kappa_{p'}^{i_0- \text{\sf df}{(p',p)}}(E).
\end{align}
\end{theorem}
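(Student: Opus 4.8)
The strategy is to mimic exactly the proof of Theorem~\ref{equivalence}, carrying out all expansions to the appropriate order in $N^{-1}$. Recall from that proof the key identity (\ref{egal1}), namely $m_p(E_N) = \sum_{p' \in \mathcal{P}_k} \kappa_{p'}(E_N) N^{-{\sf df}(p',p)}$, which is an exact (non-asymptotic) relation valid for every $N$. This identity is the only input we need: everything else is bookkeeping in powers of $N^{-1}$.

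\textbf{From cumulant expansions to moment expansions.} First I would assume that each $\kappa_{p'}(E_N)$ admits an expansion $\kappa_{p'}(E_N) = \sum_{i=0}^{n-1} \kappa_{p'}^i(E) N^{-i} + O(N^{-n})$, with the $O(N^{-n})$ term converging after multiplication by $N^n$ to $\kappa_{p'}^n(E)$. Substituting into (\ref{egal1}) and collecting powers of $N^{-1}$: the coefficient of $N^{-i_0}$ in $m_p(E_N)$ receives a contribution from the term $\kappa_{p'}^{j}(E)N^{-j}\cdot N^{-{\sf df}(p',p)}$ precisely when $j + {\sf df}(p',p) = i_0$, i.e. when ${\sf df}(p',p) \le i_0$ and $j = i_0 - {\sf df}(p',p)$. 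Since ${\sf df}$ takes only finitely many nonnegative integer values and $\mathcal{P}_k$ is finite, only finitely many terms contribute to each order, so the rearrangement is legitimate and one reads off (\ref{cumulantmomentfluctuation}): $m^{i_0}_p(E) = \sum_{p' :\, {\sf df}(p',p)\le i_0} \kappa_{p'}^{i_0 - {\sf df}(p',p)}(E)$. This simultaneously shows that convergence of cumulants up to order $n$ implies convergence of moments up to order $n$, together with the stated formula.

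\textbf{The converse.} For the reverse implication I would again follow Theorem~\ref{equivalence} and argue by inverting a matrix relation, but now working in the ring of truncated power series $\mathbb{C}[N^{-1}]/(N^{-(n+1)})$ (or more carefully, with explicit remainder estimates). Write (\ref{egal1}) as $m^N = G_N \kappa_N$ with $(G_N)_{p,p'} = N^{-{\sf df}(p',p)}$. Since $G_N = G + (\text{terms in } N^{-1})$ where $G$ is the invertible matrix of $\leq$, and since all entries of $G_N$ are polynomials in $N^{-1}$, the matrix $G_N^{-1}$ also has entries that are power series in $N^{-1}$ (by the Neumann series / Cramer's rule, using $\det G_N \to \det G \ne 0$). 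Then $\kappa_N = G_N^{-1} m^N$: assuming $m^N$ has an expansion to order $n$, so does $\kappa_N$, proving (\ref{eq:cumufluctu}). One should present this as: expand $G_N^{-1}$ to order $n$, multiply by the assumed expansion of $m^N$, and collect terms — the finiteness of $\mathcal{P}_k$ and the fact that ${\sf df}$ is integer-valued again guarantee everything is finite at each order.

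\textbf{Main obstacle.} The conceptual content is light; the delicate point is purely analytic/bookkeeping, namely justifying that an asymptotic expansion to order $n$ of the vector $m^N$ genuinely transfers through multiplication by $G_N^{-1}$ to an expansion of $\kappa_N$ to the same order $n$, with control of the $N^n$-scaled remainder. Concretely one must check that $G_N^{-1}$ itself has a controlled expansion $G_N^{-1} = \sum_{i=0}^{n-1} A_i N^{-i} + O(N^{-n})$ with matrix coefficients $A_i$; this follows because $G_N - G$ has all entries $O(N^{-1})$ with the off-diagonal structure forcing strict triangularity of the correction, so the Neumann series $G_N^{-1} = G^{-1}\sum_{l\ge 0}(-1)^l((G_N-G)G^{-1})^l$ terminates as a polynomial identity in $N^{-1}$ up to any fixed order. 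Once this is in hand, matching coefficients of $N^{-i_0}$ on both sides of $m^N = G_N\kappa_N$ and of $\kappa_N = G_N^{-1} m^N$ gives both directions and the explicit formula; I would organize the write-up so that the forward direction (which directly yields (\ref{cumulantmomentfluctuation})) is done first by the explicit substitution above, and the converse is deduced from invertibility of $G_N$ as a matrix over the relevant ring of expansions.
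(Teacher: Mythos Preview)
Your proposal is correct and the forward direction (cumulant expansion implies moment expansion, yielding (\ref{cumulantmomentfluctuation})) is exactly the paper's argument. For the converse, the paper proceeds by induction on $n$: assuming the cumulant expansion holds to order $l$, it computes $m_p(E_N)$, uses the assumed moment convergence to order $l+1$ to deduce that $\sum_{p'\leq p} N\big(\kappa_{p'}^{l}(E_N)-\kappa_{p'}^{l}(E)\big)$ converges for every $p$, and then inverts the matrix $G$ of the order $\leq$ to conclude convergence of each individual $N\big(\kappa_{p'}^{l}(E_N)-\kappa_{p'}^{l}(E)\big)$. Your approach instead inverts $G_N$ directly as a matrix with entries admitting expansions in $N^{-1}$; this is a valid and slightly more streamlined packaging of the same idea, since each inductive step in the paper's argument corresponds to extracting one further term of your Neumann series for $G_N^{-1}$.

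One small correction: your parenthetical claim that $G_N-G$ has ``off-diagonal structure forcing strict triangularity of the correction'' is not accurate. The matrix $G_N-G$ has nonzero entries precisely where $p'\not\leq p$, which is not a triangularity condition. What you actually need (and what holds) is simply that every entry of $G_N-G$ is $O(N^{-1})$, since $\mathsf{df}(p',p)$ is a strictly positive \emph{integer} whenever $p'\not\leq p$; this alone guarantees that $\big((G_N-G)G^{-1}\big)^{l}=O(N^{-l})$, so the Neumann series contributes only finitely many terms at each fixed order. Drop the triangularity remark and the argument is clean.
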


\begin{proof}
Let us suppose that for any $p \in \mathcal{P}_k$, for any $i \in \{0,...,n\}$, there exist a real $\kappa_{p}^{i}(E)$ such that: 
\begin{align}
\label{eq:cumu1}
\kappa^{n}_{p}(E_N) := N^{n}\left( \kappa_{p}(E_N) - \sum_{i=0}^{n-1} \frac{\kappa_{p}^{i}(E)}{N^{i}} \right) \underset{ N \to \infty}{\longrightarrow} \kappa_{p}^{n}(E)
\end{align}

Let us denote for any $i \leq n-1$ and any $p\in \mathcal{P}_k$, $\kappa_{p}^{i}(E_N) = \kappa_{p}^{i}(E)$. This change of notations allows us to write the Equation (\ref{eq:cumu1}) as following: 
\begin{align*}
E_N = \sum_{p \in \mathcal{P}_k} \sum_{i=0}^{n}\frac{\kappa^{i}_{p}(E_N)}{N^{i}}\frac{p}{N^{{\sf nc}(p) - {\sf nc}(p \vee \mathrm{id}_k)}}. 
\end{align*}
We can compute the $p$-moment of $E_N$: 
\begin{align*}
m_{p}(E_N) = \frac{1}{{\sf Tr}_N( p)}{\sf Tr}_N(E_N \!\text{ }^{t}p) &= \sum_{p' \in \mathcal{P}_k} \sum_{i=0}^{n} \kappa_{p'}^{i}(E_N) \frac{1}{N^{i+{\sf df}(p',p)}} \\
&= \sum_{j=0}^{n-1} \left(\sum_{(p',i) \in \mathcal{P}_k \times \{0,…,n-1\}, i+{\sf df}(p',p) = j} \kappa^{i}_{p'}(E_N)\right) \frac{1}{N^{j}}  \\
&\ \ \ \ \  + \left(\sum_{(p',i) \in \mathcal{P}_k \times \{0,…,n\}, i+{\sf df}(p',p) \geq n} \frac{\kappa^{i}_{p'}(E_N)}{N^{i+{\sf df}(p',p) - n}}\right)\frac{1}{N^{n}}.
\end{align*}
For any $N \in \mathbb{N}$, any $j \in \{0,…,n-1\}$ and any $p \in \mathcal{P}_k$: 
\begin{align*}
m_{p}^{j}(E_N) =  \sum_{(p',i) \in \mathcal{P}_k \times \{0,…,n-1\}, i+{\sf df}(p',p) = j} \kappa^{i}_{p'}(E_N)
\end{align*}
and $$m_{p}^{n}(E_N) = \sum_{(p',i) \in \mathcal{P}_k \times \{0,…,n\}, i+{\sf df}(p',p) \geq n} \frac{\kappa^{i}_{p'}(E_N)}{N^{i+{\sf df}(p',p) - n}}, $$
so that, for any $p \in \mathcal{P}_k$ and any $N\in \mathbb{N}$:
$$m_{p}(E_N) = \sum_{j=0}^{n-1} \frac{m^{i}_{p}(E_N)}{N^{j}} + \frac{m_{p}^{n}(E_N)}{N^{n}}.$$
For any $p \in \mathcal{P}_k$ and any $i \leq n-1$, $m_{p}^{i}(E_N)$ does not depend on $N$ and $m_{p}^{n}(E_N)$ converges to 
$$\sum_{p'\in \mathcal{P}_k, {\sf df}(p',p)\leq n} \kappa^{n-{\sf df}(p',p)}_{p'}(E)$$
when $N$ goes to infinity. This shows that $(E_N)_{N \in \mathbb{N}}$ converges in moments up to the $n^{th}$ order of fluctuations and Equation (\ref{cumulantmomentfluctuation}) holds. 

Let us suppose now that $(E_N)_{N \in \mathbb{N}}$ converges in moments up to the $n^{th}$ order of fluctuations. We prove that Equations (\ref{eq:cumufluctu}) and  (\ref{cumulantmomentfluctuation}) hold by recurrence on $n$. Since $(E_N)_{N \in \mathbb{N}}$ converges, by Theorem \ref{equivalence}, $\kappa_{p}(E_N)$ converges to $\kappa_{p}(E)$. Let us suppose that Equation (\ref{eq:cumufluctu}) holds for $l < n$: for any $p \in \mathcal{P}_k$, any $i \in \{0,...,l\}$, there exists $\kappa_{p}^{i}(E)$ and for any $N$ there exists $\kappa_{p}^l(E_N)$ which converges to $\kappa_{p}^{l}(E)$ such that: 
\begin{align*}
E_N= \sum_{p \in \mathcal{P}_k} \left(\sum_{j=0}^{l-1}\frac{\kappa_{p}^j(E)}{N^{j}} + \frac{\kappa_{p}^{l}(E_N)}{N^{l}} \right) p, 
\end{align*}
Using the computations we already did: 

\begin{align*}
m_p(E_N) &= \sum_{j=0}^{l-1} \left(\sum_{(p',i) \in \mathcal{P}_k \times \{0,…,l-1\}, i+{\sf df}(p',p) = j} \kappa^{i}_{p'}(E)\right) \frac{1}{N^{j}}  \\
&\ \ \ \ \  + \left(\sum_{(p',i) \in \mathcal{P}_k \times \{0,…,l\}, i+{\sf df}(p',p) \geq l} \frac{\kappa^{i}_{p'}(E_N)}{N^{i+{\sf df}(p',p) - l}}\right)\frac{1}{N^{l}}. 
\end{align*}
Using Equation (\ref{cumulantmomentfluctuation}) for $i_0 \leq l$, we get: 
\begin{align*}
m_p(E_N) = \sum_{j=0}^{l} \frac{m_p^{j}(E)}{N^{j}}&+\sum_{p' \in \mathcal{P}_k, {\sf df}(p',p) = 0} \frac{\kappa_{p'}^{l}(E_N)-\kappa_{p'}^{l}(E)}{N^{l}}
\\& + \sum_{(p',i) \in \mathcal{P}_k\times \{0,...,l\}, i+{\sf df}(p',p)-l = 1} \frac{\kappa_{p'}^{i}(E_N)}{N^{l+1}} + o\left(\frac{1}{N^{l+1}}\right). 
\end{align*}
Since $(E_N)_{N \in \mathbb{N}}$ converges in moments up to the order $l+1$ of fluctuations: for any $p' \in \mathcal{P}_k$,
\begin{align*} 
N^{l+1}\left(m_{p}(E_N) - \sum_{j=0}^{l}\frac{m_{p}^{j}(E)}{N^{j}}\right)
\end{align*}
converges as $N$ goes to infinity. This implies that for any $p \in \mathcal{P}_k$, 
\begin{align*}
\sum_{p' \in [\mathrm{id}_k, p]} N(\kappa_{p'}^{l}(E_N)-\kappa_{p'}^{l}(E))
\end{align*}
converges as $N$ goes to infinity. By inverting the order $\leq$, for any $p \in \mathcal{P}_k$: 
\begin{align*}
N(\kappa_{p'}^{l}(E_N)-\kappa_{p'}^{l}(E))
\end{align*}
converges as $N$ goes to infinity. Thus Equation (\ref{eq:cumufluctu}) holds for $n=l+1$, and using the first part of the proof, Equation  (\ref{cumulantmomentfluctuation}) holds for $n=l+1$. A recurrence allows us to finish the proof. 
\end{proof}

From now on, we will say that $(E_N)_{N \in \mathbb{N}}$ converges up to the $n^{th}$ order of fluctuations if it converges in moments up to the $n^{th}$ order of fluctuations. Let us remark that Theorems \ref{th:equiA} and \ref{equivalence2} are easily generalized in this setting of fluctuations.

\subsection{The $N$-development algebra of order $n$}
\subsubsection{Definition}
We can also generalize the algebra $\mathbb{C}[\mathcal{P}_k(N,N)]$ in order to study the algebraic fluctuations. We need to consider a formal variable $X$. 

\begin{definition}
The $N$-development algebra of order $n$ of $\mathcal{P}_k$, denoted by $\mathbb{C}_{(n)}[\mathcal{P}_k(N)]$, is the associative algebra generated by the elements of the form: $\frac{p}{X^{i}},$ where $p\in \mathcal{P}_k$ and $i \in \{0,…, n\}$. The product is defined such that, for any $p$ and $p'$ in $\mathcal{P}_k$, and any $i$ and $j$ in $ \{0,…, n\}$: 
\begin{align*}
\frac{p}{X^{i}}. \frac{p'}{X^{j}} = \frac{1}{N^{\max(i+j+\eta(p,p')-n,0)}}\frac{p\circ p'}{X^{\min(i+j+\eta(p,p'),n)}}.
\end{align*}
\end{definition}

The $\prec$-defect is non-negative, thus for any $i, j \in \{0,…, n\}$ and any $p, p' \in \mathcal{P}_k$, $min(i+j+\eta(p,p'), m )\geq 0$. This implies that $\frac{p\circ p'}{X^{\min(i+j+\eta(p,p'),m)}}$ is an element of the canonical basis of $\mathbb{C}_{(n)}[\mathcal{P}_k(N)]$. This shows that the product is well defined. 

When $n=0$, we recover the algebra $\mathbb{C}[\mathcal{P}_k(N,N)]$: the application $\Phi : \mathcal{P}_k \to \mathbb{C}_{(n)}[\mathcal{P}_k(N)]$ which sends $p$  on $\frac{p}{X^{0}}$ can be extended as an isomorphism of algebra between $\mathbb{C}[\mathcal{P}_k(N,N)]$ and $\mathbb{C}_{(0)}[\mathcal{P}_k(N)]$. Indeed if $p$ and $p'$ are two partitions in $\mathcal{P}_k$, 
\begin{align*} 
\Phi(p) \Phi(p') = \frac{p}{X^{0}} \frac{p'}{X^{0}}
=\frac{1}{N^{\eta(p,p')}}\frac{p\circ p'}{X^{0}}
=\frac{1}{N^{\eta(p,p')}} \Phi(p \circ p') = \Phi(p._{\!{}_N} p'). 
\end{align*}

\begin{notation}
\label{coordonnes}
For any $E \in \mathbb{C}[\mathcal{P}_k(N,N)]$, we denote by $\kappa_{p}^{i}(E)$ the coordinate of $E$ on $\frac{p}{X^{i}}$. 
\end{notation}

We will use a slight modification of the usual notion of convergence. 
\begin{definition}
A sequence $({E_N})_{N \in \mathbb{N}}$ in $\prod_{N \in \mathbb{N}} \mathbb{C}_{(n)}[\mathcal{P}_k(N)]$ converges if and only if for any $i \in \{0,…,n-1\}$, and any $p \in \mathcal{P}_k$, $\kappa_{p}^i (E_N)$ does not depend on $N$ and for any $p\in \mathcal{P}_k$, $\kappa_{p}^{n}(E_N)$ converges when $N$ goes to infinity. We denote then $\kappa^{n}_{p}(E)$ the limit of $\kappa^n_{p}(E_N)$. 
\end{definition}

\subsubsection{Convergence of $\mathbb{C}_{(n)}[\mathcal{P}_k(N)]$}
As for $\mathbb{C}[\mathcal{P}_k(N,N)]$, the algebra  $\mathbb{C}_{(n)}[\mathcal{P}_k(N)]$ converges as $N$ goes to infinity. Recall Definition \ref{convalggen} where we define the notion of convergence of a sequence of algebras. Theorem \ref{Convalg} has the following generalization. 

\begin{theorem}
\label{Convalg2}
The algebra $\mathbb{C}_{(n)}[\mathcal{P}_k(N)]$ converges, when $N$ goes to infinity, to the $\infty$-development algebra of order $n$, denoted by $\mathbb{C}_{(n)}[\mathcal{P}_{k}(\infty)]$, which is the associative algebra over $\mathbb{C}$ with basis $(\frac{p}{X^{i}})_{i\in \{0,...,n\}, p \in \mathcal{P}_k}$, endowed with the multiplication defined by: 
\begin{align*}
\forall p, p' \in \mathcal{P}_k, \forall i, j \in  \{0,…, n\},\ \  \frac{p}{X^{i}} \frac{p'}{X^{j}} = \delta_{i+j+\eta(p,p')\leq n}\frac{p \circ p'}{X^{i+j+\eta(p,p')}}. 
\end{align*}
\end{theorem}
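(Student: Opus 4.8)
The plan is to follow the proof of Theorem \ref{Convalg} essentially verbatim, the only new feature being the extra grading variable $X$. First I would fix the common linear basis $C=\{\frac{p}{X^{i}}\mid p\in\mathcal{P}_k,\ i\in\{0,\dots,n\}\}$, which is a finite set and a basis of every $\mathbb{C}_{(n)}[\mathcal{P}_k(N)]$ for $N\in\mathbb{N}\cup\{\infty\}$. By Definition \ref{convalggen}, convergence of the algebras reduces to convergence of each structure constant: I must show that for every pair of basis vectors $\frac{p}{X^{i}},\frac{p'}{X^{j}}$ the product $\frac{p}{X^{i}}._{\!{}_N}\frac{p'}{X^{j}}$, computed in $\mathbb{C}_{(n)}[\mathcal{P}_k(N)]$, converges in $\mathbb{C}[C]$ as $N\to\infty$ to $\delta_{i+j+\eta(p,p')\le n}\,\frac{p\circ p'}{X^{i+j+\eta(p,p')}}$.

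Next I would bring in the one nontrivial ingredient, namely $\eta(p,p')\ge 0$; this is exactly Inequality (\ref{autreequation}), which in turn follows from Theorem \ref{th:dautresvaleurs} expressing $\eta$ as a sum of two nonnegative ${\sf df}$-defects. Writing $e:=i+j+\eta(p,p')\ge 0$, the verification then splits into two cases. If $e\le n$, then $\max(e-n,0)=0$ and $\min(e,n)=e$, so $\frac{p}{X^{i}}._{\!{}_N}\frac{p'}{X^{j}}=\frac{p\circ p'}{X^{e}}$ for every $N$, and this constant sequence converges to $\frac{p\circ p'}{X^{e}}$, which agrees with the announced limit since $\delta_{e\le n}=1$. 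If $e>n$, then $\max(e-n,0)=e-n\ge 1$ and $\min(e,n)=n$, so $\frac{p}{X^{i}}._{\!{}_N}\frac{p'}{X^{j}}=N^{-(e-n)}\frac{p\circ p'}{X^{n}}$, whose single nonzero coefficient $N^{-(e-n)}$ tends to $0$; hence in $\mathbb{C}[C]$ the product converges to $0=\delta_{e\le n}\,\frac{p\circ p'}{X^{e}}$.

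Finally I would observe that a pointwise limit of associative products is associative, so the limiting structure constants genuinely define an algebra, which is $\mathbb{C}_{(n)}[\mathcal{P}_k(\infty)]$ by construction, completing the argument. I do not expect any real obstacle here: all the combinatorial substance is already available — the well-definedness of the product of $\mathbb{C}_{(n)}[\mathcal{P}_k(N)]$ (checked right before the statement) and the positivity of $\eta$ (Section \ref{sec:Kreweras}) — and the computation is a two-line case analysis. The only point demanding a little care is the borderline value $e=n$, where the power of $N$ in the denominator is exactly $N^{0}$ and the term therefore persists in the limit; this is why it is grouped with the case $e\le n$.
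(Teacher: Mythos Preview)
Your proposal is correct and follows essentially the same approach as the paper: write out the product $\frac{p}{X^{i}}._{\!{}_N}\frac{p'}{X^{j}}=N^{-\max(i+j+\eta(p,p')-n,0)}\frac{p\circ p'}{X^{\min(i+j+\eta(p,p'),n)}}$ and let $N\to\infty$, using $\eta(p,p')\ge 0$. The paper compresses your two-case analysis into a single displayed limit, whereas you spell out the cases $e\le n$ and $e>n$ and add the remark on associativity; these are expository differences, not mathematical ones.
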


\begin{proof}
Let $p$ and $p'$ be two partitions in $\mathcal{P}_k$ and let $i$ and $j$ be two positive integers. 
$$\frac{p}{X^i} \frac{p'}{X^{i'}} =\frac{1}{N^{\max(i+j+\eta(p,p')-n,0)}}\frac{p\circ p'}{X^{\min(i+j+\eta(p,p'),n)}} \underset{N \to \infty}{\longrightarrow} \delta_{i+j+\eta(p,p')\leq n}\frac{p \circ p'}{X^{i+j+\eta(p,p')}}, $$
where the first product is seen in $\mathbb{C}_{(n)}[\mathcal{P}_k(N)]$.  
\end{proof}

\subsubsection{Convergences: $\mathbb{C}_{(n)}[\mathcal{P}_k(N)]$, multiplication and semi-groups}
\label{sec:convalgebric} 
 
Using Theorem \ref{Convalg2} and using similar ideas as for the $n=0$ case, we deduce the two followings results. 

\begin{proposition}
\label{convfluctu2}
Let $(E_N)_{N \in \mathbb{N}}$ and $(F_N)_{N \in \mathbb{N}}$ be elements of $\prod_{N \in \mathbb{N}}\mathbb{C}_{(n)}[\mathcal{P}_k(N)]$ which converge. The sequence $(E_NF_N)_{N \in \mathbb{N}}$ converges. For any $i_0 \in \{0,…,n\}$ and for any $p_0 \in \mathcal{P}_k$:  

\begin{align*}
\kappa_{p_0}^{i_0}(EF) = \sum_{p,p' \in \mathcal{P}_k, i,i' \in \mathbb{N} | p \circ p' = p_0, i+i'+\eta(p,p') = i_0} \kappa_{p}^i(E)\kappa_{p'}^{i'}(F).
\end{align*}
\end{proposition}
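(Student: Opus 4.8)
The plan is to deduce Proposition \ref{convfluctu2} directly from the convergence of the algebras $\mathbb{C}_{(n)}[\mathcal{P}_k(N)]$ established in Theorem \ref{Convalg2}, exactly mimicking the argument used for the $n=0$ case in Theorem \ref{multi1}. First I would translate the statement into the language of $\mathbb{C}_{(n)}[\mathcal{P}_k(N)]$: using Notation \ref{coordonnes}, write
\begin{align*}
E_N = \sum_{p \in \mathcal{P}_k}\sum_{i=0}^{n} \kappa_p^i(E_N)\, \frac{p}{X^i}, \qquad F_N = \sum_{p \in \mathcal{P}_k}\sum_{i=0}^{n} \kappa_p^i(F_N)\, \frac{p}{X^i},
\end{align*}
so that by hypothesis, for $i<n$ the coefficients $\kappa_p^i(E_N)$, $\kappa_p^i(F_N)$ do not depend on $N$, and for $i=n$ they converge. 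The key observation, just as in the proof of Theorem \ref{multi1}, is that the product $E_N F_N$ taken in $\mathbb{C}_{(n)}[\mathcal{P}_k(N)]$ is governed by the structure constants of that algebra, and Theorem \ref{Convalg2} says precisely that these structure constants converge (here: stabilize for the relevant exponents and vanish outside the range $i+i'+\eta(p,p')\le n$) as $N\to\infty$ to those of $\mathbb{C}_{(n)}[\mathcal{P}_{k}(\infty)]$.

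Next I would expand the product coefficient-wise. Using the multiplication rule of $\mathbb{C}_{(n)}[\mathcal{P}_k(N)]$,
\begin{align*}
E_N F_N = \sum_{\substack{p,p' \in \mathcal{P}_k \\ i,i' \in \{0,\dots,n\}}} \kappa_p^i(E_N)\kappa_{p'}^{i'}(F_N)\, \frac{1}{N^{\max(i+i'+\eta(p,p')-n,0)}}\, \frac{p \circ p'}{X^{\min(i+i'+\eta(p,p'),n)}},
\end{align*}
and collecting the coefficient of $\frac{p_0}{X^{i_0}}$ for a fixed $p_0\in\mathcal{P}_k$ and $i_0\in\{0,\dots,n\}$ gives
\begin{align*}
\kappa_{p_0}^{i_0}(E_NF_N) = \sum_{\substack{p\circ p'=p_0,\ i+i'+\eta(p,p')\le n \\ i+i'+\eta(p,p')=i_0 \ \text{or}\ (i_0=n\le i+i'+\eta(p,p'))}} \kappa_p^i(E_N)\kappa_{p'}^{i'}(F_N)\, N^{-\max(i+i'+\eta(p,p')-n,0)}.
\end{align*}
For $i_0<n$ only the terms with $i+i'+\eta(p,p')=i_0$ survive (the factor $N^{-\max(\dots)}$ equals $1$), so these coefficients are independent of $N$; for $i_0=n$ the extra terms with $i+i'+\eta(p,p')>n$ carry a genuinely negative power of $N$ and hence vanish in the limit, leaving the clean sum in the statement. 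Since $\kappa_p^i(E_N)$ and $\kappa_{p'}^{i'}(F_N)$ converge for all $i,i'$ by hypothesis, $\kappa_{p_0}^{i_0}(E_NF_N)$ converges, which shows $(E_NF_N)_{N\in\mathbb{N}}$ converges in the sense of the modified notion and yields the displayed formula.

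The only real point to check is bookkeeping: that the definition of ``converges'' for sequences in $\prod_N \mathbb{C}_{(n)}[\mathcal{P}_k(N)]$ is genuinely closed under taking products, i.e.\ that the coefficients indexed by $i_0<n$ really are $N$-independent (not merely convergent). This follows because $\eta$ is integer-valued and non-negative (so the exponents $i+i'+\eta(p,p')$ are integers and the condition $i+i'+\eta(p,p')=i_0<n$ forces $\max(i+i'+\eta(p,p')-n,0)=0$), combined with the assumed $N$-independence of the lower-order coefficients of $E_N$ and $F_N$. The main obstacle, if any, is purely notational — keeping the index ranges and the two cases $i_0<n$ versus $i_0=n$ straight — rather than mathematical; conceptually the proposition is an immediate corollary of Theorem \ref{Convalg2}, exactly as Theorem \ref{multi1} was a corollary of Theorem \ref{Convalg}.
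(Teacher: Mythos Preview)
Your approach is correct and is exactly what the paper intends: it states only that Proposition \ref{convfluctu2} follows ``using Theorem \ref{Convalg2} and using similar ideas as for the $n=0$ case'', and you have simply written out those details. One small presentational slip: in your displayed expression for $\kappa_{p_0}^{i_0}(E_NF_N)$, the constraint $i+i'+\eta(p,p')\le n$ on the first line of the subscript should be dropped (it contradicts the $i_0=n$ branch you correctly discuss just afterwards), but your subsequent analysis handles both cases properly and the argument goes through.
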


The good behavior of the product, given by Proposition \ref{convfluctu2}, implies a criterion for the convergence of semi-groups in $\prod_{N \in \mathbb{N}}\mathbb{C}_{(n)}[\mathcal{P}_k(N)]$. Let $\left((E^N_{t})_{N}\right)_{t \geq0}$ be semi-group in  $\prod_{N \in \mathbb{N}}\mathbb{C}_{(n)}[\mathcal{P}_k(N)]$, which generator is denoted by $(H_N)_{N \in \mathbb{N}}$. By definition, it converges if and only if for any $t \geq 0$, $\left(E^N_t\right)_{N \in \mathbb{N}}$ converges.

\begin{proposition}
\label{semigroupalgebric}
The semi-group $\left((E^N_{t})_{N}\right)_{t \geq0}$ converges if the sequences $(E^{N}_0)_{N \in \mathbb{N}}$ and $(H_N)_{N \in \mathbb{N}}$ converge. Besides, for any $t_0\geq 0$, for any $p \in \mathcal{P}_k$ and any $i \in \{0, …, n\}$,
\begin{align*}
\frac{d}{dt}_{\mid t=t_0} \kappa_{p_0}^{i_0}(E_t) =  \sum_{p,p' \in \mathcal{P}_k, i,i' \in \mathbb{N} | p \circ p' = p_0, i+i'+\eta(p,p') = i_0} \kappa_{p}^i(H)\kappa_{p'}^{i'}(E_{t_0}).
\end{align*}
\end{proposition}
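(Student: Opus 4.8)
The strategy mirrors the proof of Theorem \ref{semigroup}: transport the semi-group to the model algebra where the convergence of the structure constants is already known, namely $\mathbb{C}_{(n)}[\mathcal{P}_k(N)]$, and then differentiate. First I would fix $t_0 \geq 0$ and write, in the basis $(\frac{p}{X^{i}})_{i \in \{0,\dots,n\}, p \in \mathcal{P}_k}$ of $\mathbb{C}_{(n)}[\mathcal{P}_k(N)]$,
\begin{align*}
E^N_t = \sum_{p \in \mathcal{P}_k} \sum_{i=0}^{n} \kappa^{i}_{p}(E^N_t) \frac{p}{X^{i}}, \qquad H_N = \sum_{p \in \mathcal{P}_k} \sum_{i=0}^{n} \kappa^{i}_{p}(H_N) \frac{p}{X^{i}}.
\end{align*}
The defining relation of the semi-group, $\frac{d}{dt}_{\mid t = t_0} E^N_t = H_N E^N_{t_0}$, then reads coordinate-wise, using the multiplication rule of $\mathbb{C}_{(n)}[\mathcal{P}_k(N)]$,
\begin{align*}
\frac{d}{dt}_{\mid t = t_0} \kappa^{i_0}_{p_0}(E^N_t) = \sum_{\substack{p, p' \in \mathcal{P}_k,\ i, i' \in \{0,\dots,n\} \\ p \circ p' = p_0,\ i + i' + \eta(p,p') = i_0}} \kappa^{i}_{p}(H_N)\, \kappa^{i'}_{p'}(E^N_{t_0}) \ \frac{1}{N^{\max(i+i'+\eta(p,p')-n,0)}}.
\end{align*}
Since we are summing only over indices with $i+i'+\eta(p,p') = i_0 \leq n$, the factor $N^{-\max(i+i'+\eta(p,p')-n,0)}$ is identically $1$, so in fact the $N$-dependence on the right-hand side comes only through the coordinates $\kappa^{i}_{p}(H_N)$ and $\kappa^{i'}_{p'}(E^N_{t_0})$.

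Next I would run the usual triangularity/Picard–Lindelöf argument: the system above, together with the initial condition $E^N_0$, has a unique solution for each $N$, and I would argue by induction on the pair $(i_0, \mathrm{level})$ — more precisely on $i_0$ first and, within fixed $i_0$, on the $\prec$-level of $p_0$ — that if $(E^N_0)_{N}$ and $(H_N)_N$ converge in the sense of Definition on $\prod_N \mathbb{C}_{(n)}[\mathcal{P}_k(N)]$ (i.e. the coordinates of order $< n$ are eventually constant in $N$ and those of order $n$ converge), then so do the coordinates of $E^N_t$ for every $t \geq 0$, with the limits satisfying the stated differential system. For the orders $i < n$, the coordinates $\kappa^{i}_{p_0}(E^N_t)$ satisfy a closed linear ODE whose coefficients do not depend on $N$ (by the observation above and the induction hypothesis that lower-order coordinates are $N$-independent), hence $\kappa^{i}_{p_0}(E^N_t)$ is itself $N$-independent; for $i = n$ one gets a linear ODE whose coefficients converge as $N \to \infty$, hence the solution converges, using continuous dependence of solutions of linear ODEs on their coefficients on compact time intervals. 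Passing to the limit in the displayed equation gives precisely
\begin{align*}
\frac{d}{dt}_{\mid t = t_0} \kappa^{i_0}_{p_0}(E_t) = \sum_{\substack{p, p' \in \mathcal{P}_k,\ i, i' \in \mathbb{N} \\ p \circ p' = p_0,\ i + i' + \eta(p,p') = i_0}} \kappa^{i}_{p}(H)\, \kappa^{i'}_{p'}(E_{t_0}).
\end{align*}

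The main obstacle is organizing the induction so that the ODE system genuinely decouples in the right order: one must check that whenever $p \circ p' = p_0$ with $i + i' + \eta(p,p') = i_0$ and the term actually contributes, the pair $(i', p')$ is strictly below $(i_0, p_0)$ in the chosen well-founded order — for the $i'$-coordinate this is clear when $i > 0$ or $\eta(p,p') > 0$, while the case $i = 0$ and $\eta(p,p') = 0$ forces $p \prec p_0$ with $p \ne \mathrm{id}_k$ unless $p' = p_0$, and the $p' = p_0$, $p = \mathrm{id}_k$ term is exactly the ``diagonal'' contribution $\kappa^{0}_{\mathrm{id}_k}(H)\, \kappa^{i_0}_{p_0}(E_{t_0})$ which is harmless for a linear ODE. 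This is the same triangularity phenomenon already exploited in Theorems \ref{th:infini}, \ref{semigroup} and \ref{Convalg}, and it rests on Theorem \ref{th:caractK} (via $p \in {\sf K}_{p_0}(p')\Rightarrow p \leq p_0$) together with the non-negativity of $\eta$ established after Theorem \ref{th:dautresvaleurs}. Once this bookkeeping is in place, everything else is the routine transfer through the isomorphism $M_N$ (extended as in the $n = 0$ case via $\Phi$) and an application of Theorem \ref{Convalg2}, exactly as in the proof of Theorem \ref{semigroup}.
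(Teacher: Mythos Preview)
Your overall strategy is correct and is essentially the paper's intended argument (the paper says only ``using similar ideas as for the $n=0$ case'', i.e.\ the proof of Theorem \ref{semigroup}). However, your coordinate computation contains an error at the top order $i_0 = n$ which invalidates one of your intermediate claims.

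The multiplication rule in $\mathbb{C}_{(n)}[\mathcal{P}_k(N)]$ gives
\[
\frac{p}{X^{i}}\cdot\frac{p'}{X^{i'}}
=\frac{1}{N^{\max(i+i'+\eta(p,p')-n,0)}}\,\frac{p\circ p'}{X^{\min(i+i'+\eta(p,p'),n)}}.
\]
Hence the coefficient of $\frac{p_0}{X^{i_0}}$ in $H_N E^N_{t_0}$ is a sum over pairs with $\min(i+i'+\eta(p,p'),n)=i_0$, not with $i+i'+\eta(p,p')=i_0$. For $i_0<n$ these coincide and your displayed ODE is correct (and indeed the $N$-factor equals~$1$). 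For $i_0=n$, however, you must sum over all $(p,p',i,i')$ with $i+i'+\eta(p,p')\geq n$, each contributing a factor $N^{-(i+i'+\eta(p,p')-n)}$. Thus your assertion that ``the $N$-dependence on the right-hand side comes only through the coordinates'' fails at order $n$: there is genuine explicit $N$-dependence through these decaying factors. The fix is easy: at order $n$ one still has a linear ODE whose coefficients converge as $N\to\infty$ (the terms with $i+i'+\eta>n$ carry a factor $N^{-\text{positive}}$ and vanish in the limit, while the terms with $i+i'+\eta=n$ involve $\kappa$'s that converge by hypothesis and by the already-established $N$-independence at lower orders). With this correction your induction goes through unchanged and yields exactly the stated limiting system.

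One small further remark: the closing sentence about ``the routine transfer through the isomorphism $M_N$'' is out of place here. Proposition \ref{semigroupalgebric} is stated directly for semi-groups in $\prod_N \mathbb{C}_{(n)}[\mathcal{P}_k(N)]$, so no transport via $M_N$ is needed; that step belongs to the subsequent Theorem \ref{semigroupfluctu}, which deduces the result in $\mathbb{C}[\mathcal{P}_k(N)]$ from this proposition.
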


\subsubsection{ $\prod_{N=1}^{\infty}\mathbb{C}_{(n)}[\mathcal{P}_k(N)]$ and $\prod_{N=1}^{\infty}\mathbb{C}[\mathcal{P}_k(N)]$}
\label{sec:link}

Let us consider an element $(E_N)_{N \in \mathbb{N}}$ in $\prod_{N=1}^{\infty}\mathbb{C}[\mathcal{P}_k(N)]$ which converges up to the $n^{th}$ order of fluctuations. Using Theorem \ref{equivalencefortmomentfluctu}, we can associate a real number $\kappa_{p}^{i}(E)$ for any $p \in \mathcal{P}_k$ and any $i \in \{0,...,n\}$ such that for any $p \in \mathcal{P}_k$, Equation (\ref{eq:cumufluctu}) holds. Let us denote by $\kappa^{n}_{p}(E_N)$ the left hand side of Equation (\ref{eq:cumufluctu}). 

\begin{definition}
The lift of the sequence $(E_N)_{N \in \mathbb{N}}$ in $\prod_{N \in \mathbb{N}} \mathbb{C}_{(n)}[\mathcal{P}_k(N)]$, denoted by $({\bold {E}_N})_{N \in \mathbb{N}} $ is:
\begin{align*}
{\bold {E}_N} = \sum_{p \in \mathcal{P}_k} \left(\left( \sum_{i=0}^{n-1} \kappa_{p}^{i}(E) \frac{p}{X^{i}}\right) + \kappa^{n}_{p}(E_{N}) \frac{p}{X^{n}}\right). 
\end{align*}
\end{definition}

By definition, $({\bold E_N})_{N \in \mathbb{N}}$ converges as $N$ goes to infinity and for any $N \in \mathbb{N}$, one has $\mathcal{E}^{N}_{(n)}({\bold {E}_N}) = E_N$, where $\mathcal{E}^{N}_{(n)}$ is the evaluation morphism: 
\begin{align*}
\mathcal{E}^{N}_{(n)}:\ \ \ \mathbb{C}_{(n)}[{\mathcal{P}_k} (N)]  \ \ \ \ \ \ &\to \ \ \ \ \ \ \ \ \ \ \ \ \ \mathbb{C}[\mathcal{P}_k(N)]\\
\sum_{p \in \mathcal{P}_k} \sum_{i=0}^{n} \kappa_{p}^i(E) \frac{p}{X^{i}}\ &\mapsto \sum_{p \in \mathcal{P}_k} \sum_{i=0}^{n} \kappa_{p}^i(E) \frac{1}{N^{i}}\frac{p}{N^{{\sf nc}(p)- {\sf nc}(p \vee \mathrm{id}_k))}}. 
\end{align*}
The fact that  $\mathcal{E}^{N}_{(n)}$ is a morphism of algebra follows from a simple calculation. Besides, by definition, if a sequence $({\bold E_N})_{N \in \mathbb{N}}$ in $\prod_{N =1}^{\infty} \mathbb{C}_{n}[\mathcal{P}_k(N)]$ converges then $\mathcal{E}^{N}_{(n)}({\bold {E}_N})$ converges up to the $n^{th}$ order of fluctuations. 

\subsection{Convergences: $\mathbb{C}[\mathcal{P}_k(N)]$, multiplication and semi-groups}
Using the discussion in Section \ref{sec:link}, we can use Section \ref{sec:convalgebric} in order to state results for $\mathbb{C}[\mathcal{P}_k(N)]$. Let us consider $(E_N)_{N \in \mathbb{N}}$ and $(F_N)_{N \in \mathbb{N}}$ two elements of  $\prod_{N \in \mathbb{N}} \mathbb{C} [\mathcal{P}_k(N)]$ which converge up to the $n^{th}$ order of fluctuations.

\begin{theorem}
\label{convfluctuprod}
The sequence $\left(E_NF_N\right)_{N \in \mathbb{N}}$ converges up to the $n^{th}$ order of fluctuations. Besides, for any $i_0 \in \{0,…,n\}$, for any $p_0 \in \mathcal{P}_k$: 
\begin{align}
\label{produitfluctuationformule1}
\kappa_{p_0}^{i_0}(EF) &=\sum_{p,p' \in \mathcal{P}_k, i,i' \in \mathbb{N} | p \circ p' = p_0, i+i'+\eta(p,p') = i_0}  \kappa_{p}^i(E)\kappa_{p'}^{i'}(F),\\ 
\label{produitfluctuationformule2}
m_{p_0}^{i_0}(EF) &=  \sum_{p \in \mathcal{P}_k} \sum_{i+j+{\sf df}(p,p_0) = i_0} \kappa_{p}^{i}(E)m^{j}_{\!\!\text{ }^{t}p \circ p_0}(F),\\
\label{produitfluctuationformule3}
m_{p_0}^{i_0}(EF) &= \sum_{p \in \mathcal{P}_k} \sum_{i+j+{\sf df}(p,p_0) = i_0} m^{j}_{p_0\circ\! \text{ }^{t}p }(E) \kappa_{p}^{i}(F).
\end{align}
\end{theorem}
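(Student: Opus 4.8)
The plan is to reduce everything to the already-established convergence of the algebra $\mathbb{C}_{(n)}[\mathcal{P}_k(N)]$ via the lift construction of Section \ref{sec:link}. First I would observe that since $(E_N)_{N\in\mathbb{N}}$ and $(F_N)_{N\in\mathbb{N}}$ converge up to the $n^{th}$ order of fluctuations, Theorem \ref{equivalencefortmomentfluctu} furnishes the cumulant coefficients $\kappa_p^i(E)$ and $\kappa_p^i(F)$, and I may form their lifts $({\bold E}_N)_{N\in\mathbb{N}}$ and $({\bold F}_N)_{N\in\mathbb{N}}$ in $\prod_{N\in\mathbb{N}}\mathbb{C}_{(n)}[\mathcal{P}_k(N)]$, both of which converge in the sense of the modified notion for $N$-development algebras. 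Because $\mathcal{E}^{N}_{(n)}$ is a morphism of algebras with $\mathcal{E}^{N}_{(n)}({\bold E}_N)=E_N$ and $\mathcal{E}^{N}_{(n)}({\bold F}_N)=F_N$, we get $\mathcal{E}^{N}_{(n)}({\bold E}_N {\bold F}_N)=E_N F_N$. By Proposition \ref{convfluctu2}, $({\bold E}_N {\bold F}_N)_{N\in\mathbb{N}}$ converges; hence, by the last sentence of Section \ref{sec:link}, $E_N F_N = \mathcal{E}^{N}_{(n)}({\bold E}_N {\bold F}_N)$ converges up to the $n^{th}$ order of fluctuations. This gives the convergence part at once.

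Next I would extract the formulas. For $(\ref{produitfluctuationformule1})$, I note that $\kappa_{p_0}^{i_0}(EF)$ is, by definition of the lift and of the coordinates in $\mathbb{C}_{(n)}[\mathcal{P}_k(N)]$, the coordinate of ${\bold E}_N {\bold F}_N$ on $\frac{p_0}{X^{i_0}}$ in the limit; applying the product formula of Proposition \ref{convfluctu2} (which itself comes from the limit multiplication of Theorem \ref{Convalg2}, namely $\frac{p}{X^{i}}\frac{p'}{X^{j}} = \delta_{i+j+\eta(p,p')\leq n}\frac{p\circ p'}{X^{i+j+\eta(p,p')}}$) yields exactly the sum over $p\circ p' = p_0$, $i+i'+\eta(p,p')=i_0$. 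This is the fluctuation analogue of Equation $(\ref{1})$ and is obtained by the same mechanism as in the proof of Theorem \ref{multi1}, only now keeping track of the extra grading in powers of $X$ produced by the $\prec$-defect $\eta$.

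For the two moment formulas $(\ref{produitfluctuationformule2})$ and $(\ref{produitfluctuationformule3})$, the plan is to pass from cumulants to moments using the relation $(\ref{cumulantmomentfluctuation})$ of Theorem \ref{equivalencefortmomentfluctu}, which expresses $m^{i_0}_{p}(E)$ as $\sum_{{\sf df}(p',p)\leq i_0}\kappa^{i_0-{\sf df}(p',p)}_{p'}(E)$, together with the identity $d(\mathrm{id}_k,\cdot) $ and the defect factorization already exploited in Section \ref{sec:obsconv}. Concretely, one substitutes $(\ref{produitfluctuationformule1})$ into $(\ref{cumulantmomentfluctuation})$ applied to $EF$, reorganizes the triple sum over $(p,p',p_0)$ by grouping according to $p_0$ and using $\eta(p,p')$ together with ${\sf df}(p_0,\cdot)$, and then recognizes the inner sum as $m^{j}_{{}^{t}p\circ\,\cdot}(F)$ (resp. $m^{j}_{\,\cdot\,\circ\,{}^{t}p}(E)$) via Theorem \ref{th:caractK} and Theorem \ref{th:dautresvaleurs}; the crucial input is the equality ${\sf df}(p_1\circ p_2,p_0)+\eta(p_1,p_2) = {\sf df}(p_1,p_0)+{\sf df}(p_2,{}^{t}p_1\circ p_0)$ from Theorem \ref{th:dautresvaleurs}, which converts $\eta + {\sf df}$ bookkeeping into a single ${\sf df}$ in the target partition. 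The degree counting then forces $i+j+{\sf df}(p,p_0)=i_0$, giving the stated formulas.

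I expect the main obstacle to be the moment formulas $(\ref{produitfluctuationformule2})$ and $(\ref{produitfluctuationformule3})$: one must carefully track which defect is attached to which factor when expanding cumulants into moments, ensuring that the ``degree shift'' produced by $\eta(p,p')$ in the cumulant product recombines correctly with the degree shift ${\sf df}$ coming from the moment-cumulant inversion, so that the total shift is ${\sf df}(p,p_0)$ in the final expression. This is precisely where Theorem \ref{th:dautresvaleurs} is needed, exactly as in the proof of Theorem \ref{multi1}, and the only new feature is the extra summation index $i_0 = i+j+{\sf df}$ which is handled by matching the powers of $N$ (equivalently of $X$) on both sides. Everything else is a routine transcription of the $n=0$ arguments of Section \ref{sec:obsconv}, now carried out in $\mathbb{C}_{(n)}[\mathcal{P}_k(N)]$ rather than $\mathbb{C}[\mathcal{P}_k(N,N)]$.
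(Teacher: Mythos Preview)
Your proposal is correct and follows essentially the same route as the paper: lift to $\mathbb{C}_{(n)}[\mathcal{P}_k(N)]$, apply Proposition~\ref{convfluctu2}, push back down via the evaluation morphism $\mathcal{E}^{N}_{(n)}$ to obtain convergence and Equation~(\ref{produitfluctuationformule1}), and then derive the moment formulas (\ref{produitfluctuationformule2}) and (\ref{produitfluctuationformule3}) from Theorem~\ref{th:dautresvaleurs}. Your elaboration on how the defect identity ${\sf df}(p_1\circ p_2,p_0)+\eta(p_1,p_2) = {\sf df}(p_1,p_0)+{\sf df}(p_2,{}^{t}p_1\circ p_0)$ converts the cumulant formula into the moment formulas is exactly what the paper means when it writes that these are ``consequences of Theorem~\ref{th:dautresvaleurs}''.
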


\begin{proof}
Let $({\bold {E}_N})_{N \in \mathbb{N}}$ (respectively  $({\bold {F}_N})_{N \in \mathbb{N}}$) be the lifts of $(E_N)_{N \in \mathbb{N}}$ (resp. $(F_N)_{N \in \mathbb{N}}$) in $\prod_{N \in \mathbb{N}} \mathbb{C}_{(n)}[\mathcal{P}_k(N)]$. The two sequences $({\bold {E}_N})_{N \in \mathbb{N}}$ and $({\bold {F}_N})_{N \in \mathbb{N}}$ converge. According to Proposition \ref{convfluctu2}, the sequence $({\bold {E}_N}{\bold {F}_N})_{N \in \mathbb{N}}$ converges. For any $i_0 \in \{0, …, n\}$ and for any $p_0 \in \mathcal{P}_k$: 

\begin{align}
\label{eq}
\kappa_{p_0}^{i_0}({\bold E}{\bold F}) = \sum_{p,p' \in \mathcal{P}_k, i,i' \in \mathbb{N} | p \circ p' = p_0, i+i'+\eta(p,p') = i_0} \kappa_{p}^i({\bold E})\kappa_{p'}^{i'}({\bold F}).
\end{align}
This implies that the sequence $\left(\mathcal{E}_{(n)}^{N}({\bold E_N}{\bold F_N})\right)_{N \in \mathbb{N}}$ converges in moment up to the $n^{th}$ order of fluctuations. Since $\mathcal{E}_{(n)}^{N}$ is a morphism of algebra, $\mathcal{E}_{(n)}^{N}({\bold E_N}) = E_N$ and $\mathcal{E}_{(n)}^{N}({\bold F_N}) = F_N$, for any $N \in \mathbb{N}$, $\mathcal{E}_{(n)}^{N}({\bold E_N}{\bold F_N}) =  E_NF_N.$ We deduce that $\left(E_NF_N\right)_{N \in \mathbb{N}}$ converges up to the $n^{th}$ order of fluctuations. The Equation (\ref{produitfluctuationformule1})  is a consequence of Equation (\ref{eq}). The Equations (\ref{produitfluctuationformule2}) and  (\ref{produitfluctuationformule3}) are consequences of Theorem \ref{th:dautresvaleurs}. 
\end{proof}

Let us suppose that $\left( \left( E_t^{N}\right)_N\right)_{ t \geq 0}$ is a semi-group in $\prod_{N \in \mathbb{N}} \mathbb{C}[\mathcal{P}_k(N)]$ whose generator is $\left(H_N\right)_{N \in \mathbb{N}}$. We say that $\left( \left( E_t^{N}\right)_N\right)_{ t \geq 0}$ converges up to the $n^{th}$ order of fluctuations if and only if for any $t \geq 0$, $\left( E_t^{N}\right)_{N \in \mathbb{N}}$ converges up to the $n^{th}$ order of fluctuations. The following result is a generalization of Theorem \ref{semigroup}: it is a direct consequence of Proposition \ref{semigroupalgebric} and some usual arguments. 

\begin{theorem}
\label{semigroupfluctu}
The semi-group $\left( \left( E_t^{N}\right)_N\right)_{ t \geq 0}$ converges up to the $n^{th}$ order of fluctuations if the sequences $(E_0^{N})_{N \in \mathbb{N}}$ and $(H_N)_{N \in \mathbb{N}}$ converge up to the $n^{th}$ order of fluctuations. Besides, we have the two differential systems of equations: for any $p_0 \in \mathcal{P}_k$, for any $t_0 \geq 0$, for any $ i_0 \in \{0,…,n\}$: 
\begin{align}
\frac{d}{dt}_{\mid t=t_0}\kappa_{p_0}^{i_0}(E_t) &= \sum_{p,p' \in \mathcal{P}_k, i,i' \in \mathbb{N} | p \circ p' = p_0, i+i'+\eta(p,p') = i_0}  \kappa_{p}^i(H)\kappa_{p'}^{i'}(E_{t_0}), \\
\frac{d}{dt}_{\mid t=t_0} m_{p_0}^{i_0} (E_t)  &=  \sum_{p \in \mathcal{P}_k} \sum_{i+j+{\sf df}(p,p_0) = i_0} \kappa_{p}^{i}(H)m^{j}_{\!\!\text{ }^{t}p \circ p_0}(E_{t_0}),\\
\frac{d}{dt}_{\mid t=t_0} m_{p_0}^{i_0} (E_t)  &= \sum_{p \in \mathcal{P}_k} \sum_{i+j+{\sf df}(p,p_0) = i_0} m^{j}_{p_0\circ\! \text{ }^{t}p }(H) \kappa_{p}^{i}(E_{t_0}).
\end{align}
\end{theorem}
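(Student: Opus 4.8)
The plan is to deduce Theorem~\ref{semigroupfluctu} from the already-established convergence of the $N$-development algebras (Theorem~\ref{Convalg2}) and the discussion of lifts in Section~\ref{sec:link}, exactly mirroring the argument used for the $n=0$ case in Theorem~\ref{semigroup}. First I would take the hypotheses that $(E_0^N)_{N\in\mathbb{N}}$ and $(H_N)_{N\in\mathbb{N}}$ converge up to the $n^{th}$ order of fluctuations, and form their lifts $({\bold E}_0^N)_{N\in\mathbb{N}}$ and $({\bold H}_N)_{N\in\mathbb{N}}$ in $\prod_{N\in\mathbb{N}}\mathbb{C}_{(n)}[\mathcal{P}_k(N)]$ as in Section~\ref{sec:link}. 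Since $\mathcal{E}^N_{(n)}$ is a morphism of algebras, the family $\big(({\bold E}_t^N)_N\big)_{t\geq 0}$ defined by solving $\frac{d}{dt}_{|t=t_0}{\bold E}_t^N = {\bold H}_N {\bold E}_{t_0}^N$ with initial condition ${\bold E}_0^N$ is a semi-group in $\prod_{N\in\mathbb{N}}\mathbb{C}_{(n)}[\mathcal{P}_k(N)]$ whose image under $\mathcal{E}^N_{(n)}$ is precisely $(E_t^N)_N$ (uniqueness of solutions to the linear ODE plus the fact that $\mathcal{E}^N_{(n)}$ intertwines the two multiplications).

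Next I would apply Proposition~\ref{semigroupalgebric}: the lifts of the initial condition and of the generator converge (by construction of the lift), hence $\big(({\bold E}_t^N)_N\big)_{t\geq 0}$ converges in the sense of $\prod_{N\in\mathbb{N}}\mathbb{C}_{(n)}[\mathcal{P}_k(N)]$, and one has the differential identity
\begin{align*}
\frac{d}{dt}_{\mid t=t_0} \kappa_{p_0}^{i_0}(\mathbf{E}_t) = \sum_{p,p' \in \mathcal{P}_k,\, i,i' \in \mathbb{N} \,\mid\, p \circ p' = p_0,\, i+i'+\eta(p,p') = i_0} \kappa_{p}^i(\mathbf{H})\,\kappa_{p'}^{i'}(\mathbf{E}_{t_0}).
\end{align*}
Because $\mathcal{E}^N_{(n)}({\bold E}_t^N)=E_t^N$ for every $N$ and every $t$, the remark at the end of Section~\ref{sec:link} gives that $(E_t^N)_{N\in\mathbb{N}}$ converges up to the $n^{th}$ order of fluctuations for each fixed $t\geq 0$; this is precisely the convergence claim. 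The coordinates $\kappa_p^i({\bold E}_t)$ of the limit are, by definition of the lift, the fluctuation-cumulants $\kappa_p^i(E_t)$, so the first displayed differential system of the theorem is just the identity above read in the $\mathbb{C}[\mathcal{P}_k(N)]$-language. The remaining two systems (the $m$-versions) follow from the first by applying Theorem~\ref{th:dautresvaleurs}, exactly as Equations~(\ref{produitfluctuationformule2}) and~(\ref{produitfluctuationformule3}) were derived from~(\ref{produitfluctuationformule1}) in the proof of Theorem~\ref{convfluctuprod}; alternatively one invokes Proposition~\ref{convfluctu2} together with the $\mathcal{M}$-transform relations of Equation~(\ref{eq:metconvol}) suitably graded by the defect.

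The main obstacle is not any single hard computation but the bookkeeping needed to make the ``$\mathcal{E}^N_{(n)}$ intertwines everything'' argument airtight: one must check that the lift of $(E_t^N)$ coincides with the semi-group generated by the lift of $(H_N)$ with initial datum the lift of $(E_0^N)$, which requires that the lift construction is compatible with the evaluation morphism and with the flow, and that convergence in $\mathbb{C}_{(n)}[\mathcal{P}_k(N)]$ (coordinate-wise, with the first $n$ coefficients stationary in $N$) transports under $\mathcal{E}^N_{(n)}$ to convergence up to the $n^{th}$ order of fluctuations. All of these are ``direct consequences of Proposition~\ref{semigroupalgebric} and some usual arguments'' as the statement announces, so I expect the proof to be short; the only genuinely delicate point is confirming that the solution of the $\mathbb{C}_{(n)}[\mathcal{P}_k(N)]$-valued ODE stays a convergent sequence, which is guaranteed because the structure constants $\delta_{i+j+\eta(p,p')\leq n}$ of $\mathbb{C}_{(n)}[\mathcal{P}_k(\infty)]$ are the $N\to\infty$ limits of those of $\mathbb{C}_{(n)}[\mathcal{P}_k(N)]$ and the non-negativity of $\eta$ makes the ODE triangular, so uniform convergence on compact $t$-intervals is automatic.
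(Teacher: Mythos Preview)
Your proposal is correct and follows essentially the same route the paper intends: lift the initial condition and generator to $\prod_N\mathbb{C}_{(n)}[\mathcal{P}_k(N)]$, invoke Proposition~\ref{semigroupalgebric} there, and push back down via the evaluation morphism $\mathcal{E}^N_{(n)}$, exactly as in the proof of Theorem~\ref{convfluctuprod}; the $m$-versions then come from Theorem~\ref{th:dautresvaleurs}. The paper itself only records that the result ``is a direct consequence of Proposition~\ref{semigroupalgebric} and some usual arguments,'' and what you have written is precisely that argument spelled out.
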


The Remark \ref{rq:genersemigroup} can be generalized to fluctuations.

\begin{notation}\label{notation:notationsfluctu}
As explained at the beginning of Section \ref{sec:Fluctuations}, for sake of clarity, in the following article \cite{Gab2}, we will use some $\boxplus$ and $\boxtimes$ convolutions in the setting of fluctuations of higher order. We will define them on $(\bigoplus_{k=0}^{\infty} \mathbb{C}[\mathcal{P}_k])^{*}$. 

For any set $X$, $\mathcal{E}(X)$ is the notation for the set of sets of $X$. For any $I \in \mathcal{E}(X)$,  let $I^{c}$ be the complement of $I$ in $X$. Recall that $p_{I}$ is the extraction of $p$ to $I$ defined in the beginning of Section \ref{sub:irredu}. Let $k$ be an integer and $p \in \mathcal{P}_k$, the set of factorizations $\mathfrak{F}_{2}(p)$ is the set of $(p_1,p_2,I) \in \mathcal{P} \times \mathcal{P} \times \mathcal{E}(\{1,...,k,1',...,k'\})$ such that $p_{ I} = p_1$, $p_{ I^{c}} = p_2$  and ${\sf nc}(p_1)+{\sf nc}(p_2) = {\sf nc}(p)$.

Let $\phi_1$ and $\phi_2$ in $(\bigoplus_{k=0}^{\infty} \mathbb{C}[\mathcal{P}_k])^{*}$, for any $p_0 \in \cup_{k} \mathcal{P}_k$ and any $i \in \{0,...,n\}$: 
\begin{align*}
\phi_1 \boxplus \phi_2 (p_0,i_0) &= \!\sum_{(p_1, p_2, I ) \in \mathfrak{F}_2(p)} \sum_{i_1,i_2 \in \mathbb{N}|i_1+i_2 = i_0}\!\!\!\!\! \phi_1(p_1,i_1) \phi_{2}(p_2, i_2), \\
\phi_1 \boxtimes \phi_2( p_0,i_0 ) &= \sum_{p_1,p_2 \in \mathcal{P}_k, i_1, i_2 \in \mathbb{N} | p_1\circ p_2=p_0, i_1+i_2+\eta(p_1,p_2) = i_0} \phi_1(p_1,i_1) \phi_2(p_2,i_2).
\end{align*}
\text{ }\\
\end{notation}

\section{Conclusion}
By introducing a distance and a new order on partitions, we defined new structures on $\mathcal{P}= \cup_{k} \mathcal{P}_k$ and on $(\mathbb{C}[\mathcal{P}])^{*}$. In \cite{Gab2}, we will define the notion of $\mathcal{P}$-tracial algebras which are algebras endowed with multi-linear observables. These observables are compatible with the product and are indexed by the partitions in $\mathcal{P}$. The structures defined and studied in this article will allow us to study  $\mathcal{P}$-tracial algebras and to define a notion of $\mathcal{P}$-freeness. 

Besides, we emulated in this article the theory of random matrices. Using these results, we will define in \cite{Gab2} the notion of finite dimensional cumulants for random matrices. This will allow us to get various results on asymptotics of random matrices. 

In \cite{Gab3}, we will use these results in order to study the asymptotic of general random walks on the symmetric group. This will allow us to define the $\mathfrak{S}(\infty)$-master field which is in some sense the limit of the $\mathfrak{S}(N)$-Yang-Mills measure.\\

{\em Acknowledgements.} The author would like to gratefully thank A. Dahlqvist, Yvain Bruned,  Tom Halverson, Arun Ram and G. C\'{e}bron for the useful discussions, his PhD advisor Pr. T. L\'{e}vy for his helpful comments and his postdoctoral supervisor, Pr. M. Hairer, for giving him the time to finalize this article. 

The first version of this work has been made during the PhD of the author at the university Paris 6 UPMC. This final version of the paper was completed during his postdoctoral position at the University of Warwick where the author is supported by the ERC grant, “Behaviour near criticality”, held by Pr. M. Hairer.

\bibliographystyle{plain}
\bibliography{biblio}

\end{document}